\newtheorem{theorem}{Theorem}[section]
\newtheorem{thm}[theorem]{Theorem}
\newtheorem{definition}[theorem]{Definition}
\newtheorem{lemma}[theorem]{Lemma}
\newtheorem{abbreviatio}[theorem]{Abbreviations}
\newtheorem{observation}[theorem]{Observation}
\newtheorem{lem}[theorem]{Lemma}
 \newtheorem{deff}[theorem]{Definition}
\newtheorem{rem}[theorem]{Remark}
\newtheorem{cor}[theorem]{Corollary}
\newtheorem{remark}[theorem]{Remark}
\newcommand*{\IKP}{\textbf{IKP}}
\newcommand*{\dotin}{\mathrel{\dot{\in}}}
\newcommand{\force}[2]{\large{\Vdash}^{\negthickspace\negthinspace^{#1}}_{\negthickspace \negthinspace_{#2}}}
\newcommand{\goed}[1]{\ulcorner #1\urcorner}
\newcommand{\AC}{{\mathbf{AC}}}
\newcommand{\dbi}{\mbox{
${\mathbf\Delta}^{ 1}_{ 2}$--${\mathbf{CA}}+{\mathbf{BI}}$}}
\newcommand{\KP}{{\mathbf{KP}}}
\newcommand{\OO}{\Omega}
\newcommand{\BI}{{\mathbf{BI}}}
\newcommand{\beq}{\begin{eqnarray}}
\newcommand{\eeq}{\end{eqnarray}}
\newcommand{\RR}{\mathsf{R}}
\newcommand{\LL}{{\mathbf{L}}}
\newcommand{\prf}{{\bf Proof\/}: }
\newcommand{\Cut}{\text{(Cut)}}
\newcommand{\GA}{\Gamma}
\newcommand{\CH}{{\mathcal H}}
\newcommand{\In}{\in}
\newcommand{\lev}[1]{{\mid}\,{#1}\,{\mid}}
\newcommand{\om}{\omega}
\newcommand{\al}{\alpha}
\newcommand{\nIn}{\!\notin\!}
\newcommand{\infone}[8]
{\rule[-0.5cm]{0cm}{1.5cm}
\begin{array}{c}
    \provx {{#1}} {{#2}} {{#3}} {{#4}}\rule{0cm}{5mm}\\
    \hline \rule{0cm}{5mm}
    \provx {{#5}} {{#6}} {{#7}} {{#8}}
 \end{array}}
\newcommand{\inftwo}[9]
{\rule[-0.5cm]{0cm}{1.5cm}
 \begin{array}{c}
    \provx {{#1}} {{#2}} {{}} {{#3}}\qquad \provx {{#4}} {{#5}} {}
{{#6}} \rule{0cm}{5mm}
\\
    \hline  \rule{0cm}{5mm}
    \provx {{#7}} {{#8}} {} {{#9}}
 \end{array}}
\newcommand{\ifthree}[3]
{\rule[-0.5cm]{0cm}{1.5cm}
 \begin{array}{c}
     {{#1}} \qquad {{#2}}
 \rule{0cm}{5mm}
\\
    \hline  \rule{0cm}{5mm}
{{#3}}
 \end{array}}
\newcommand{\und}{\,\wedge\,}
\newcommand{\PA}{{\mathbf{PA}}}
\newcommand{\psioi}{\psi_{\Omega}}
\newcommand{\TI}{{\mathrm{TI}}}
\newcommand{\ZF}{{\mathbf{ZF}}}
\newcommand{\CZF}{{\mathbf{CZF}}}
\newcommand{\KPP}{{\mathbf{KP}}({\mathcal P})}
\newcommand{\Deltaop}{\Delta_0^{\mathcal P}}
\newcommand{\Sigmap}{\Sigma^{\mathcal P}}
\newcommand{\Sigmaop}{\Sigma^{\mathcal P}}
\newcommand{\PC}{{\mathcal P}}
\newcommand{\subb}{\subseteq}
\newcommand{\Vb}[1]{{\mathbb V}_{#1}}
\newcommand{\Va}{{\mathbb V}_{\alpha}}
\newcommand{\SRP}{(\Sigma^{\mathcal P}\mbox{-}Ref)}
\newcommand{\RSOP}{RS_{\Omega}^{\mathcal P}}
\newcommand{\POW}{{\mathbf{Pow}}}
\mathchardef\str='1066
\def\negprov#1#2#3{
\setbox1=\hbox{\kern1.5pt$\scriptstyle#2$} \setbox4=\hbox{$\str$}
\def\zeichen{#1}
\ifx\zeichen\empty\setbox0=\hbox to 1em{}\else\setbox0=\hbox
{\kern1.5pt$\scriptstyle#1$}\fi \dimen1=\dp0 \ifdim \dimen1=0pt
\advance \dimen1 by 1.5ex \else \advance \dimen1 by 1.2ex
\fi\dimen3=2ex\dimen4=.5ex\ifdim \wd0<\wd1 \dimen2=\wd1 \else
\dimen2=\wd0
\fi\hbox{\hskip.5em$\kern-1.9pt\raise1pt\copy4\kern-\wd4\kern1.9pt\vrule
height\dimen3 depth\dimen4\raise\dimen1\copy0\hskip-1\wd0
\lower\ht1\copy1\hskip-1\wd1\vrule width\dimen2 height.7ex
depth-.6ex \hskip3pt minus1.5pt#3\hskip2pt plus2pt minus2pt$}}
\def\mod#1#2{
\def\zeichen{#1}
\hbox{\hskip 2pt plus3pt minus 2pt\vrule width.5pt height2ex
depth.5ex \vbox{\ifx\zeichen\empty\hbox to .75em{}\else
\hbox{\kern1.5pt $\scriptstyle#1$}\fi \kern2pt \hrule \kern1.7pt
\hrule\kern1.7pt} \hskip3pt minus 2pt$#2$}\hskip2pt plus3pt
minus2pt}
\def\notmod#1#2{\hbox{\hskip 2pt plus 3pt minus 3pt\vrule width.5pt
height2ex depth.5ex \vbox{\hbox{\kern1.5pt
$\scriptstyle#1$}\kern3pt \setbox0=\hbox{\kern2pt$\scriptstyle/$}
\hrule \kern-1.7pt \copy0 \kern-\ht0 \kern 1.7pt
\hrule\kern1.7pt}n \hskip3pt minus 2pt$#2$}\hskip2pt plus3pt
minus2pt}
\def\sq{\hbox{\rlap{$\sqcap$}$\sqcup$}}
\def\qed{\ifmmode\sq\else{\unskip\nobreak\hfil\penalty50\hskip1em\null
\nobreak\hfil\sq\parfillskip=0pt\finalhyphendemerits=0\endgraf}\fi\medskip}
\def\lleq{\hbox{\hskip3pt minus3pt\kern1pt\lower4pt
\vbox{\hbox{$\scriptstyle\ll$}
\kern-7pt\hbox{\kern1pt$\scriptstyle=$}}\hskip3pt minus 3pt}}
\mathchardef\res='1152 \mathchardef\qin='1062
\mathchardef\qprec='1036 \mathchardef\qless='474
\mathchardef\dpkt='72
\def\EnableBpAbbreviations{%
    \let\AX\Axiom
    \let\AXC\AxiomC
    \let\UI\UnaryInf
    \let\UIC\UnaryInfC
    \let\BI\BinaryInf
    \let\BIC\BinaryInfC
    \let\TI\TrinaryInf
    \let\TIC\TrinaryInfC
    \let\QI\QuaternaryInf
    \let\QIC\QuaternaryInfC
    \let\QuI\QuinaryInf
    \let\QuIC\QuinaryInfC
    \let\LL\LeftLabel
    \let\RL\RightLabel
    \let\DP\DisplayProof
}
\def\ScoreOverhang{4pt}         
\def\ScoreOverhangLeft{\ScoreOverhang}
\def\ScoreOverhangRight{\ScoreOverhang}
\def\extraVskip{2pt}            
\def\ruleScoreFiller{\hrule}        
\def\defaultScoreFiller{\ruleScoreFiller}  
\def\defaultBuildScore{\buildSingleScore}  
\def\defaultHypSeparation{\hskip.2in}   
\def\labelSpacing{3pt}      
\def\proofSkipAmount{\vskip.8ex plus.8ex minus.4ex}
\def\fCenter{\relax}
\def\theHypSeparation{\defaultHypSeparation}
\def\alwaysScoreFiller{\defaultScoreFiller} 
\def\alwaysBuildScore{\defaultBuildScore}
\def\theScoreFiller{\alwaysScoreFiller} 
\def\buildScore{\alwaysBuildScore}   
\def\hypKernAmt{0pt}    
\def\defaultLeftLabel{}
\def\defaultRightLabel{}
\def\myTrue{Y}
\def\bottomAlignFlag{N}
\def\centerAlignFlag{N}
\def\defaultRootAtBottomFlag{Y}
\def\rootAtBottomFlag{Y}
\def\makeatletter{\catcode`\@=11\relax}
\def\makeatother{\catcode`\@=12\relax}
\def\newcount{\alloc@0\count\countdef\insc@unt}
\def\newdimen{\alloc@1\dimen\dimendef\insc@unt}
\def\newskip{\alloc@2\skip\skipdef\insc@unt}
\def\newbox{\alloc@4\box\chardef\insc@unt}
\newenvironment{prooftree}%
{\begin{center}\proofSkipAmount \leavevmode}%
{\DisplayProof \proofSkipAmount \end{center} }
\def\thecur#1{\csname#1\number\theLevel\endcsname}
\newbox\myBoxA      
\newbox\myBoxB
\newbox\myBoxC
\newbox\myBoxD
\newbox\myBoxLL     
\newbox\myBoxRL
\newdimen\thisAboveSkip     
\newdimen\thisBelowSkip     
\newdimen\newScoreStart     
\newdimen\newScoreEnd
\newdimen\newCenter
\newdimen\displace
\newdimen\leftLowerAmt
\newdimen\rightLowerAmt
\newdimen\scoreHeight
\newdimen\scoreDepth
\newdimen\htLbox%
\newdimen\htRbox%
\newdimen\htRRbox%
\newdimen\htRRRbox%
\newdimen\htAbox%
\newdimen\htCbox%
\def\allocatemore{%
    \ifnum\theLevel>\myMaxLevel%
        \expandafter\newbox\curBox%
        \expandafter\newdimen\curScoreStart%
        \expandafter\newdimen\curCenter%
        \expandafter\newdimen\curScoreEnd%
        \global\advance\myMaxLevel by1%
    \fi%
}
\def\prepAxiom{%
    \advance\theLevel by1%
    \edef\curBox{\thecur{myBox}}%
    \edef\curScoreStart{\thecur{myScoreStart}}%
    \edef\curCenter{\thecur{myCenter}}%
    \edef\curScoreEnd{\thecur{myScoreEnd}}%
    \allocatemore%
}
\def\Axiom$#1\fCenter#2${%
    \prepAxiom%
    \setbox\myBoxA=\hbox{$\mathord{#1}\fCenter\mathord{\relax}$}%
    \setbox\myBoxB=\hbox{$#2$}%
    \global\setbox\curBox=%
         \hbox{\hskip\ScoreOverhangLeft\relax%
        \unhcopy\myBoxA\unhcopy\myBoxB\hskip\ScoreOverhangRight\relax}%
    \global\curScoreStart=0pt \relax
    \global\curScoreEnd=\wd\curBox \relax
    \global\curCenter=\wd\myBoxA \relax
    \global\advance \curCenter by \ScoreOverhangLeft%
    \ignorespaces
}
\def\AxiomC#1{      
    \prepAxiom%
    \setbox\myBoxA=\hbox{#1}%
    \global\setbox\curBox =%
        \hbox{\hskip\ScoreOverhangLeft\relax%
                        \unhcopy\myBoxA\hskip\ScoreOverhangRight\relax}%
        \global\curScoreStart=0pt \relax
        \global\curScoreEnd=\wd\curBox \relax
        \global\curCenter=.5\wd\curBox \relax
        \global\advance \curCenter by \ScoreOverhangLeft%
    \ignorespaces
}
\def\prepUnary{%
    \ifnum \theLevel<1
        \errmessage{Hypotheses missing!}
    \fi%
    \edef\curBox{\thecur{myBox}}%
    \edef\curScoreStart{\thecur{myScoreStart}}%
    \edef\curCenter{\thecur{myCenter}}%
    \edef\curScoreEnd{\thecur{myScoreEnd}}%
}
\def\UnaryInf$#1\fCenter#2${%
    \prepUnary%
    \buildConclusion{#1}{#2}%
    \joinUnary%
    \resetInferenceDefaults%
    \ignorespaces%
}
\def\UnaryInfC#1{
    \prepUnary%
    \buildConclusionC{#1}%
    \joinUnary%
    \resetInferenceDefaults%
    \ignorespaces%
}
\def\prepBinary{%
    \ifnum\theLevel<2
        \errmessage{Hypotheses missing!}
    \fi%
    \edef\rcurBox{\thecur{myBox}}
    \edef\rcurScoreStart{\thecur{myScoreStart}}%
    \edef\rcurCenter{\thecur{myCenter}}%
    \edef\rcurScoreEnd{\thecur{myScoreEnd}}%
    \advance\theLevel by-1%
    \edef\lcurBox{\thecur{myBox}}
    \edef\lcurScoreStart{\thecur{myScoreStart}}%
    \edef\lcurCenter{\thecur{myCenter}}%
    \edef\lcurScoreEnd{\thecur{myScoreEnd}}%
}
\def\BinaryInf$#1\fCenter#2${%
    \prepBinary%
    \buildConclusion{#1}{#2}%
    \joinBinary%
    \resetInferenceDefaults%
    \ignorespaces%
}
\def\BinaryInfC#1{%
    \prepBinary%
    \buildConclusionC{#1}%
    \joinBinary%
    \resetInferenceDefaults%
    \ignorespaces%
}
\def\prepTrinary{%
    \ifnum\theLevel<3
        \errmessage{Hypotheses missing!}
    \fi%
    \edef\rcurBox{\thecur{myBox}}
    \edef\rcurScoreStart{\thecur{myScoreStart}}%
    \edef\rcurCenter{\thecur{myCenter}}%
    \edef\rcurScoreEnd{\thecur{myScoreEnd}}%
    \advance\theLevel by-1%
    \edef\ccurBox{\thecur{myBox}}
    \edef\ccurScoreStart{\thecur{myScoreStart}}%
    \edef\ccurCenter{\thecur{myCenter}}%
    \edef\ccurScoreEnd{\thecur{myScoreEnd}}%
    \advance\theLevel by-1%
    \edef\lcurBox{\thecur{myBox}}
    \edef\lcurScoreStart{\thecur{myScoreStart}}%
    \edef\lcurCenter{\thecur{myCenter}}%
    \edef\lcurScoreEnd{\thecur{myScoreEnd}}%
}
\def\TrinaryInf$#1\fCenter#2${%
    \prepTrinary%
    \buildConclusion{#1}{#2}%
    \joinTrinary%
    \resetInferenceDefaults%
    \ignorespaces%
}
\def\TrinaryInfC#1{%
    \prepTrinary%
    \buildConclusionC{#1}%
    \joinTrinary%
    \resetInferenceDefaults%
    \ignorespaces%
}
\def\prepQuaternary{%
    \ifnum\theLevel<4
        \errmessage{Hypotheses missing!}
    \fi%
    \edef\rrcurBox{\thecur{myBox}}
    \edef\rrcurScoreStart{\thecur{myScoreStart}}%
    \edef\rrcurCenter{\thecur{myCenter}}%
    \edef\rrcurScoreEnd{\thecur{myScoreEnd}}%
    \advance\theLevel by-1%
    \edef\rcurBox{\thecur{myBox}}
    \edef\rcurScoreStart{\thecur{myScoreStart}}%
    \edef\rcurCenter{\thecur{myCenter}}%
    \edef\rcurScoreEnd{\thecur{myScoreEnd}}%
    \advance\theLevel by-1%
    \edef\ccurBox{\thecur{myBox}}
    \edef\ccurScoreStart{\thecur{myScoreStart}}%
    \edef\ccurCenter{\thecur{myCenter}}%
    \edef\ccurScoreEnd{\thecur{myScoreEnd}}%
    \advance\theLevel by-1%
    \edef\lcurBox{\thecur{myBox}}
    \edef\lcurScoreStart{\thecur{myScoreStart}}%
    \edef\lcurCenter{\thecur{myCenter}}%
    \edef\lcurScoreEnd{\thecur{myScoreEnd}}%
}
\def\QuaternaryInf$#1\fCenter#2${%
    \prepQuaternary%
    \buildConclusion{#1}{#2}%
    \joinQuaternary%
    \resetInferenceDefaults%
    \ignorespaces%
}
\def\QuaternaryInfC#1{%
    \prepQuaternary%
    \buildConclusionC{#1}%
    \joinQuaternary%
    \resetInferenceDefaults%
    \ignorespaces%
}
\def\joinQuaternary{
    \setbox\myBoxA=\hbox{\theHypSeparation}%
    \lcurScoreEnd=\rrcurScoreEnd%
    \advance\lcurScoreEnd by\wd\rcurBox%
    \advance\lcurScoreEnd by\wd\lcurBox%
    \advance\lcurScoreEnd by\wd\ccurBox%
    \advance\lcurScoreEnd by3\wd\myBoxA%
    \displace=\lcurScoreEnd%
    \advance\displace by -\lcurScoreStart%
    \lcurCenter=.5\displace%
    \advance\lcurCenter by\lcurScoreStart%
    \ifx\rootAtBottomFlag\myTrue%
        \setbox\lcurBox=%
            \hbox{\box\lcurBox\unhcopy\myBoxA\box\ccurBox%
                      \unhcopy\myBoxA\box\rcurBox
                      \unhcopy\myBoxA\box\rrcurBox}%
    \else%
        \htLbox = \ht\lcurBox%
        \htAbox = \ht\myBoxA%
        \htCbox = \ht\ccurBox%
        \htRbox = \ht\rcurBox%
        \htRRbox = \ht\rrcurBox%
        \setbox\lcurBox=%
            \hbox{\lower\htLbox\box\lcurBox%
                  \lower\htAbox\copy\myBoxA\lower\htCbox\box\ccurBox%
                  \lower\htAbox\copy\myBoxA\lower\htRbox\box\rcurBox%
                  \lower\htAbox\copy\myBoxA\lower\htRRbox\box\rrcurBox}%
    \fi%
    \displace=\newCenter%
    \advance\displace by -.5\newScoreStart%
    \advance\displace by -.5\newScoreEnd%
    \advance\lcurCenter by \displace%
    \edef\curBox{\lcurBox}%
    \edef\curScoreStart{\lcurScoreStart}%
    \edef\curScoreEnd{\lcurScoreEnd}%
    \edef\curCenter{\lcurCenter}%
    \joinUnary%
}
\def\prepQuinary{%
    \ifnum\theLevel<5
        \errmessage{Hypotheses missing!}
    \fi%
    \edef\rrrcurBox{\thecur{myBox}}
    \edef\rrrcurScoreStart{\thecur{myScoreStart}}%
    \edef\rrrcurCenter{\thecur{myCenter}}%
    \edef\rrrcurScoreEnd{\thecur{myScoreEnd}}%
    \advance\theLevel by-1%
    \edef\rrcurBox{\thecur{myBox}}
    \edef\rrcurScoreStart{\thecur{myScoreStart}}%
    \edef\rrcurCenter{\thecur{myCenter}}%
    \edef\rrcurScoreEnd{\thecur{myScoreEnd}}%
    \advance\theLevel by-1%
    \edef\rcurBox{\thecur{myBox}}
    \edef\rcurScoreStart{\thecur{myScoreStart}}%
    \edef\rcurCenter{\thecur{myCenter}}%
    \edef\rcurScoreEnd{\thecur{myScoreEnd}}%
    \advance\theLevel by-1%
    \edef\ccurBox{\thecur{myBox}}
    \edef\ccurScoreStart{\thecur{myScoreStart}}%
    \edef\ccurCenter{\thecur{myCenter}}%
    \edef\ccurScoreEnd{\thecur{myScoreEnd}}%
    \advance\theLevel by-1%
    \edef\lcurBox{\thecur{myBox}}
    \edef\lcurScoreStart{\thecur{myScoreStart}}%
    \edef\lcurCenter{\thecur{myCenter}}%
    \edef\lcurScoreEnd{\thecur{myScoreEnd}}%
}
\def\QuinaryInf$#1\fCenter#2${%
    \prepQuinary%
    \buildConclusion{#1}{#2}%
    \joinQuinary%
    \resetInferenceDefaults%
    \ignorespaces%
}
\def\QuinaryInfC#1{%
    \prepQuinary%
    \buildConclusionC{#1}%
    \joinQuinary%
    \resetInferenceDefaults%
    \ignorespaces%
}
\def\joinQuinary{
    \setbox\myBoxA=\hbox{\theHypSeparation}%
    \lcurScoreEnd=\rrrcurScoreEnd%
    \advance\lcurScoreEnd by\wd\rrcurBox%
    \advance\lcurScoreEnd by\wd\rcurBox%
    \advance\lcurScoreEnd by\wd\lcurBox%
    \advance\lcurScoreEnd by\wd\ccurBox%
    \advance\lcurScoreEnd by4\wd\myBoxA%
    \displace=\lcurScoreEnd%
    \advance\displace by -\lcurScoreStart%
    \lcurCenter=.5\displace%
    \advance\lcurCenter by\lcurScoreStart%
    \ifx\rootAtBottomFlag\myTrue%
        \setbox\lcurBox=%
            \hbox{\box\lcurBox\unhcopy\myBoxA\box\ccurBox%
                      \unhcopy\myBoxA\box\rcurBox
                      \unhcopy\myBoxA\box\rrcurBox
                      \unhcopy\myBoxA\box\rrrcurBox}%
    \else%
        \htLbox = \ht\lcurBox%
        \htAbox = \ht\myBoxA%
        \htCbox = \ht\ccurBox%
        \htRbox = \ht\rcurBox%
        \htRRbox = \ht\rrcurBox%
        \htRRRbox = \ht\rrrcurBox%
        \setbox\lcurBox=%
            \hbox{\lower\htLbox\box\lcurBox%
                  \lower\htAbox\copy\myBoxA\lower\htCbox\box\ccurBox%
                  \lower\htAbox\copy\myBoxA\lower\htRbox\box\rcurBox%
                  \lower\htAbox\copy\myBoxA\lower\htRRbox\box\rrcurBox%
                  \lower\htAbox\copy\myBoxA\lower\htRRRbox\box\rrrcurBox}%
    \fi%
    \displace=\newCenter%
    \advance\displace by -.5\newScoreStart%
    \advance\displace by -.5\newScoreEnd%
    \advance\lcurCenter by \displace%
    \edef\curBox{\lcurBox}%
    \edef\curScoreStart{\lcurScoreStart}%
    \edef\curScoreEnd{\lcurScoreEnd}%
    \edef\curCenter{\lcurCenter}%
    \joinUnary%
}
\def\buildConclusion#1#2{
        \setbox\myBoxA=\hbox{$\mathord{#1}\fCenter\mathord{\relax}$}%
        \setbox\myBoxB=\hbox{$#2$}%
    \setbox\myBoxC =%
          \hbox{\hskip\ScoreOverhangLeft\relax%
        \unhcopy\myBoxA\unhcopy\myBoxB\hskip\ScoreOverhangRight\relax}%
    \newScoreStart=0pt \relax%
    \newCenter=\wd\myBoxA \relax%
    \advance \newCenter by \ScoreOverhangLeft%
    \newScoreEnd=\wd\myBoxC%
}
\def\buildConclusionC#1{
    \setbox\myBoxA=\hbox{#1}%
    \setbox\myBoxC =%
        \hbox{\hbox{\hskip\ScoreOverhangLeft\relax%
                        \unhcopy\myBoxA\hskip\ScoreOverhangRight\relax}}%
    \newScoreStart=0pt \relax%
    \newCenter=.5\wd\myBoxC \relax%
    \newScoreEnd=\wd\myBoxC%
        \advance \newCenter by \ScoreOverhangLeft%
}
\def\joinUnary{
    \global\advance\curCenter by -\hypKernAmt%
    \ifnum\curCenter<\newCenter%
        \displace=\newCenter%
        \advance \displace by -\curCenter%
        \kernUpperBox%
    \else%
        \displace=\curCenter%
        \advance \displace by -\newCenter%
        \kernLowerBox%
    \fi%
        \ifnum \newScoreStart < \curScoreStart %
        \global \curScoreStart = \newScoreStart \fi%
    \ifnum \curScoreEnd < \newScoreEnd %
        \global \curScoreEnd = \newScoreEnd \fi%
    \ifnum \curScoreStart<\wd\myBoxLL%
        \global\displace = \wd\myBoxLL%
        \global\advance\displace by -\curScoreStart%
        \kernUpperBox%
        \kernLowerBox%
    \fi%
    \buildScore%
    \buildScoreLabels%
    \ifx\rootAtBottomFlag\myTrue%
        \buildRootBottom%
    \else%
        \buildRootTop%
    \fi%
    \global \curScoreStart=\newScoreStart%
    \global \curScoreEnd=\newScoreEnd%
    \global \curCenter=\newCenter%
}
\def\buildRootBottom{%
    \global \setbox \curBox =%
        \vbox{\box\curBox%
            \vskip\thisAboveSkip \relax%
            \nointerlineskip\box\myBoxD%
            \vskip\thisBelowSkip \relax%
            \nointerlineskip\box\myBoxC}%
}
\def\buildRootTop{%
    \global \setbox \curBox =%
        \vbox{\box\myBoxC%
            \vskip\thisAboveSkip \relax%
            \nointerlineskip\box\myBoxD%
            \vskip\thisBelowSkip \relax%
            \nointerlineskip\box\curBox}%
}
\def\kernUpperBox{%
        \global\setbox\curBox =%
            \hbox{\hskip\displace\box\curBox}%
        \global\advance \curScoreStart by \displace%
        \global\advance \curScoreEnd by \displace%
        \global\advance\curCenter by \displace%
}
\def\kernLowerBox{%
        \global\setbox\myBoxC =%
            \hbox{\hskip\displace\unhbox\myBoxC}%
        \global\advance \newScoreStart by \displace%
        \global\advance \newScoreEnd by \displace%
        \global\advance\newCenter by \displace%
}
\def\joinBinary{
    \setbox\myBoxA=\hbox{\theHypSeparation}%
    \lcurScoreEnd=\rcurScoreEnd%
    \advance\lcurScoreEnd by\wd\lcurBox%
    \advance\lcurScoreEnd by\wd\myBoxA%
    \displace=\lcurScoreEnd%
    \advance\displace by -\lcurScoreStart%
    \lcurCenter=.5\displace%
    \advance\lcurCenter by\lcurScoreStart%
    \ifx\rootAtBottomFlag\myTrue%
        \setbox\lcurBox=%
            \hbox{\box\lcurBox\unhcopy\myBoxA\box\rcurBox}%
    \else%
        \htLbox = \ht\lcurBox%
        \htAbox = \ht\myBoxA%
        \htRbox = \ht\rcurBox%
        \setbox\lcurBox=%
            \hbox{\lower\htLbox\box\lcurBox%
                  \lower\htAbox\box\myBoxA\lower\htRbox\box\rcurBox}%
    \fi%
    \displace=\newCenter%
    \advance\displace by -.5\newScoreStart%
    \advance\displace by -.5\newScoreEnd%
    \advance\lcurCenter by \displace%
    \edef\curBox{\lcurBox}%
    \edef\curScoreStart{\lcurScoreStart}%
    \edef\curScoreEnd{\lcurScoreEnd}%
    \edef\curCenter{\lcurCenter}%
    \joinUnary%
}
\def\joinTrinary{
    \setbox\myBoxA=\hbox{\theHypSeparation}%
    \lcurScoreEnd=\rcurScoreEnd%
    \advance\lcurScoreEnd by\wd\lcurBox%
    \advance\lcurScoreEnd by\wd\ccurBox%
    \advance\lcurScoreEnd by2\wd\myBoxA%
    \displace=\lcurScoreEnd%
    \advance\displace by -\lcurScoreStart%
    \lcurCenter=.5\displace%
    \advance\lcurCenter by\lcurScoreStart%
    \ifx\rootAtBottomFlag\myTrue%
        \setbox\lcurBox=%
            \hbox{\box\lcurBox\unhcopy\myBoxA\box\ccurBox%
                      \unhcopy\myBoxA\box\rcurBox}%
    \else%
        \htLbox = \ht\lcurBox%
        \htAbox = \ht\myBoxA%
        \htCbox = \ht\ccurBox%
        \htRbox = \ht\rcurBox%
        \setbox\lcurBox=%
            \hbox{\lower\htLbox\box\lcurBox%
                  \lower\htAbox\copy\myBoxA\lower\htCbox\box\ccurBox%
                  \lower\htAbox\copy\myBoxA\lower\htRbox\box\rcurBox}%
    \fi%
    \displace=\newCenter%
    \advance\displace by -.5\newScoreStart%
    \advance\displace by -.5\newScoreEnd%
    \advance\lcurCenter by \displace%
    \edef\curBox{\lcurBox}%
    \edef\curScoreStart{\lcurScoreStart}%
    \edef\curScoreEnd{\lcurScoreEnd}%
    \edef\curCenter{\lcurCenter}%
    \joinUnary%
}
\def\DisplayProof{%
    \ifnum \theLevel=1 \relax \else
        \errmessage{Proof tree badly specified.}%
    \fi%
    \edef\curBox{\thecur{myBox}}%
    \ifx\bottomAlignFlag\myTrue%
        \displace=0pt%
    \else%
        \displace=.5\ht\curBox%
        \ifx\centerAlignFlag\myTrue\relax
        \else%
                \advance\displace by -3pt%
        \fi%
    \fi%
    \leavevmode%
    \lower\displace\hbox{\copy\curBox}%
    \global\theLevel=0%
    \global\def\alwaysBuildScore{\defaultBuildScore}
    \global\def\alwaysScoreFiller{\defaultScoreFiller}
    \global\def\bottomAlignFlag{N}%
    \global\def\centerAlignFlag{N}%
    \resetRootPosition
    \resetInferenceDefaults%
    \ignorespaces
}
\def\buildSingleScore{
    \displace=\curScoreEnd%
    \advance \displace by -\curScoreStart%
    \global\setbox \myBoxD =%
        \hbox to \displace{\expandafter\xleaders\theScoreFiller\hfill}%
}
\def\buildDoubleScore{
    \buildSingleScore%
    \global\setbox\myBoxD=%
        \hbox{\hbox to0pt{\copy\myBoxD\hss}\raise2pt\copy\myBoxD}%
}
\def\buildNoScore{
    \global\setbox\myBoxD=\hbox{\vbox{\vskip1pt}}%
}
\def\noLine{%
    \gdef\buildScore{\buildNoScore}
    \ignorespaces
}
\def\LeftLabel#1{%
    \global\setbox\myBoxLL=\hbox{{#1}\hskip\labelSpacing}%
    \ignorespaces
}
\def\RightLabel#1{%
    \global\setbox\myBoxRL=\hbox{\hskip\labelSpacing #1}%
    \ignorespaces
}
\def\buildScoreLabels{%
    \scoreHeight = \ht\myBoxD%
    \scoreDepth = \dp\myBoxD%
    \leftLowerAmt=\ht\myBoxLL%
    \advance \leftLowerAmt by -\dp\myBoxLL%
    \advance \leftLowerAmt by -\scoreHeight%
    \advance \leftLowerAmt by \scoreDepth%
    \leftLowerAmt=.5\leftLowerAmt%
    \rightLowerAmt=\ht\myBoxRL%
    \advance \rightLowerAmt by -\dp\myBoxRL%
    \advance \rightLowerAmt by -\scoreHeight%
    \advance \rightLowerAmt by \scoreDepth%
    \rightLowerAmt=.5\rightLowerAmt%
    \displace = \curScoreStart%
    \advance\displace by -\wd\myBoxLL%
    \global\setbox\myBoxD =%
        \hbox{\hskip\displace%
            \lower\leftLowerAmt\copy\myBoxLL%
            \box\myBoxD%
            \lower\rightLowerAmt\copy\myBoxRL}%
    \global\thisAboveSkip = \ht\myBoxLL%
    \global\advance \thisAboveSkip by -\leftLowerAmt%
    \global\advance \thisAboveSkip by -\scoreHeight%
    \ifnum \thisAboveSkip<0 %
        \global\thisAboveSkip=0pt%
    \fi%
    \displace = \ht\myBoxRL%
    \advance \displace by -\rightLowerAmt%
    \advance \displace by -\scoreHeight%
    \ifnum \displace<0 %
        \displace=0pt%
    \fi%
    \ifnum \displace>\thisAboveSkip %
        \global\thisAboveSkip=\displace%
    \fi%
    \global\thisBelowSkip = \dp\myBoxLL%
    \global\advance\thisBelowSkip by \leftLowerAmt%
    \global\advance\thisBelowSkip by -\scoreDepth%
    \ifnum\thisBelowSkip<0 %
        \global\thisBelowSkip = 0pt%
    \fi%
    \displace = \dp\myBoxRL%
    \advance\displace by \rightLowerAmt%
    \advance\displace by -\scoreDepth%
    \ifnum\displace<0 %
        \displace = 0pt%
    \fi%
    \ifnum\displace>\thisBelowSkip%
        \global\thisBelowSkip = \displace%
    \fi%
    \global\thisAboveSkip = -\thisAboveSkip%
    \global\thisBelowSkip = -\thisBelowSkip%
    \global\advance\thisAboveSkip by\extraVskip
    \global\advance\thisBelowSkip by\extraVskip
}
\def\resetInferenceDefaults{%
    \global\def\theHypSeparation{\defaultHypSeparation}%
    \global\setbox\myBoxLL=\hbox{\defaultLeftLabel}%
    \global\setbox\myBoxRL=\hbox{\defaultRightLabel}%
    \global\def\buildScore{\alwaysBuildScore}%
    \global\def\theScoreFiller{\alwaysScoreFiller}%
    \gdef\hypKernAmt{0pt}
}
\def\resetRootPosition{%
    \global\edef\rootAtBottomFlag{\defaultRootAtBottomFlag}
}
\def\provx#1#2#3#4{
\setbox1=\hbox{\kern1.5pt$\scriptstyle#3$}
\def\zeichen{#2}
\ifx\zeichen\empty\setbox0=\hbox to .75em{}\else\setbox0=\hbox
{\kern1.5pt$\scriptstyle#2$}\fi
\dimen1=\dp0 \ifdim \dimen1=0pt
\advance \dimen1 by 1.5ex \else \advance \dimen1 by 1.2ex
\fi\dimen3=2ex\dimen4=.5ex\ifdim \wd0<\wd1 \dimen2=\wd1 \else \dimen2=\wd0
\fi\hbox{$#1\hskip 5pt minus5pt\vrule height\dimen3
depth\dimen4\raise\dimen1\copy0\hskip-1\wd0 \lower\ht1
\copy1\hskip-1\wd1\vrule width\dimen2 height.7ex depth-.6ex\hskip3pt
minus1.5pt#4\hskip2pt plus2pt minus2pt$}}
\def\prov#1#2#3{
\setbox1=\hbox{\kern1.5pt$\scriptstyle#2$}
\def\zeichen{#1}
\ifx\zeichen\empty\setbox0=\hbox to .75em{}\else\setbox0=\hbox
{\kern1.5pt$\scriptstyle#1$}\fi
\dimen1=\dp0
\ifdim \dimen1=0pt
\advance \dimen1 by 1.5ex \else \advance \dimen1 by 1.2ex
\fi\dimen3=2ex\dimen4=.5ex\ifdim \wd0<\wd1 \dimen2=\wd1 \else \dimen2=\wd0
\fi\hbox{\hskip0pt plus 4pt
$\vrule height\dimen3
depth\dimen4\raise\dimen1\copy0\hskip-1\wd0
\lower\ht1\copy1\hskip-1\wd1\vrule width\dimen2 height.7ex depth-.6ex
\hskip3pt minus1.5pt#3\hskip2pt plus2pt minus2pt$}}
\def\prv#1#2{
\setbox1=\hbox{\kern1.5pt$\scriptstyle#2$}
\ifx\zeichen\empty\setbox0=\hbox to .75em{}\else\setbox0=\hbox
{\kern1.5pt$\scriptstyle#1$}\fi
\dimen1=\dp0 \ifdim \dimen1=0pt
\advance \dimen1 by 1.5ex \else \advance \dimen1 by 1.2ex
\fi\dimen3=2ex\dimen4=.5ex\ifdim \wd0<\wd1 \dimen2=\wd1 \else \dimen2=\wd0
\fi\hbox{\hskip.5em$\vrule height\dimen3
depth\dimen4\raise\dimen1\copy0\hskip-1\wd0
\lower\ht1\copy1\hskip-1\wd1\vrule width\dimen2 height.7ex depth-.6ex
\hskip3pt minus1.5pt$}}
\mathchardef\str='1066
\def\negprov#1#2#3{
\setbox1=\hbox{\kern1.5pt$\scriptstyle#2$}
\setbox4=\hbox{$\str$}
\def\zeichen{#1}
\ifx\zeichen\empty\setbox0=\hbox to 1em{}\else\setbox0=\hbox
{\kern1.5pt$\scriptstyle#1$}\fi
\dimen1=\dp0
\ifdim \dimen1=0pt
\advance \dimen1 by 1.5ex \else \advance \dimen1 by 1.2ex
\fi\dimen3=2ex\dimen4=.5ex\ifdim \wd0<\wd1 \dimen2=\wd1 \else \dimen2=\wd0
\fi\hbox{\hskip.5em$\kern-1.9pt\raise1pt\copy4\kern-\wd4\kern1.9pt\vrule height\dimen3
depth\dimen4\raise\dimen1\copy0\hskip-1\wd0
\lower\ht1\copy1\hskip-1\wd1\vrule width\dimen2 height.7ex depth-.6ex
\hskip3pt minus1.5pt#3\hskip2pt plus2pt minus2pt$}}
\def\goed#1{\setbox5=\hbox{$#1$}\dimen1=.25em \dimen2=\dimen1 \advance \dimen2
by -1pt\hbox{\raise.65\ht5 \hbox{\vrule height.5\ht5 depth0pt width.4pt\vrule
height.5\ht5 width\dimen1 depth-.48\ht5}\kern-\dimen2\copy5\kern-\dimen2
\raise.65\ht5 \hbox{\vrule height .5\ht5 width\dimen1 depth-.48\ht5\vrule
height.5\ht5 depth 0pt width.4pt}\hskip4pt plus2pt minus2pt}}
\def\mod#1#2{
\def\zeichen{#1}
\hbox{\hskip 2pt plus3pt minus 2pt\vrule width.5pt height2ex depth.5ex
\vbox{\ifx\zeichen\empty\hbox to .75em{}\else
\hbox{\kern1.5pt $\scriptstyle#1$}\fi
\kern2pt
\hrule
\kern1.7pt
\hrule\kern1.7pt}
\hskip3pt minus 2pt$#2$}\hskip2pt
plus3pt minus2pt}
\def\notmod#1#2{\hbox{\hskip 2pt plus 3pt minus 3pt\vrule width.5pt
height2ex depth.5ex
\vbox{\hbox{\kern1.5pt $\scriptstyle#1$}\kern3pt
\setbox0=\hbox{\kern2pt$\scriptstyle/$}
\hrule
\kern-1.7pt
\copy0
\kern-\ht0
\kern 1.7pt
\hrule\kern1.7pt}n
\hskip3pt minus 2pt$#2$}\hskip2pt
plus3pt minus2pt}
\def\sq{\hbox{\rlap{$\sqcap$}$\sqcup$}}
\def\qed{\ifmmode\sq\else{\unskip\nobreak\hfil\penalty50\hskip1em\null
\nobreak\hfil\sq\parfillskip=0pt\finalhyphendemerits=0\endgraf}\fi\medskip}
\def\lleq{\hbox{\hskip3pt minus3pt\kern1pt\lower4pt
\vbox{\hbox{$\scriptstyle\ll$}
\kern-7pt\hbox{\kern1pt$\scriptstyle=$}}\hskip3pt minus 3pt}}
\mathchardef\res='1152
\mathchardef\qin='1062
\mathchardef\qprec='1036
\mathchardef\qless='474
\mathchardef\dpkt='72
\newcommand{\RSX}{{\mathbf{RS}}_{\Omega}(X)}
\newcommand{\SRX}{(\Sigma\mbox{-}\mathrm{Ref}_{\Omega}(X))}
\newcommand{\emt}{\bar{\emptyset}}
\newcommand{\RSOPX}{{\mathbf{RS}}_{\Omega}^{\mathcal P}(X)}
\newcommand{\RSOPXR}{{\mathbf{RS}}_{\Omega}^{\mathcal P}(\RR,X)}
\newcommand{\kld}{\,\dot{<}\,}
\newcommand{\kldg}{\,\dot{\leq}\,}
\newcommand{\su}{\ell}
\newcommand{\GAC}{\mathbf{AC}_{\!\mbox{\it\tiny global}}}
\begin{document}
\title{Classifying the provably total set functions of $\KP$ and $\KPP$}
\author{Jacob Cook and Michael Rathjen\\
{\small Department of Pure Mathematics, University of Leeds}\\
{\small Leeds LS2 9JT, United Kingdom}\\
 {\tt \scriptsize jacob$\underline{\!\phantom{a}}\,$knows@hotmail.co.uk $\;\;$
rathjen@maths.leeds.ac.uk}
}

\date{}
\maketitle
\begin{abstract} This article is concerned with classifying the provably total set-functions of Kripke-Platek set theory, $\textbf{KP}$,
and Power Kripke-Platek set theory, $\KPP$, as well as proving several (partial) conservativity results.
The main technical tool used in this paper is a relativisation technique where ordinal analysis is carried out relative to an arbitrary but fixed set $x$.

A classic result from ordinal analysis is the characterisation of the provably recursive functions of Peano Arithmetic, \textbf{PA}, by means of the fast growing hierarchy \cite{stan-PA}. Whilst it is possible to formulate the natural numbers within $\KP$, the theory speaks primarily about sets. For this reason it is desirable to obtain a characterisation of its provably total set functions. We will show that $\KP$ proves the totality of a set function precisely when it falls within a hierarchy of set functions based upon a relativised constructible hierarchy stretching up in length to any ordinal below the Bachmann-Howard ordinal.  As a consequence of this result we obtain that $\IKP+\forall x\forall y\,(x\in y\,\vee\,x\notin y)$ is conservative over $\KP$ for $\Pi_2$-formulae, where $\IKP$
stands for intuitionistic Kripke-Platek set theory.

 In a similar vein, utilising \cite{powerKP}, it is shown that $\KPP$ proves the totality of a set function precisely when it falls within a hierarchy of set functions based upon a relativised von Neumann hierarchy of the same length. The relativisation technique applied to $\KPP$ with
 the global axiom of choice, $\GAC$, also yields a parameterised extension of a result in \cite{powerKPAC}, showing that
 $\KPP+\GAC$ is conservative over $\KPP+\AC$ and $\CZF+\AC$ for $\Pi_2^{\mathcal P}$ statements. Here $\AC$ stands for the ordinary axiom of choice and $\CZF$ refers to constructive Zermelo-Fraenkel set theory.

\end{abstract}

\section{Introduction}
A major application of the techniques of ordinal analysis has been the classification of the provably total recursive functions of a theory. Usually the theories to which this methodology has been applied have been arithmetic theories, in that context it makes most sense to speak about arithmetic functions.  The concept of a recursive function on natural numbers can be extended to a more general recursion theory on arbitrary sets. For more details see \cite{mosc}, \cite{normann} and \cite{sacks}. Since $\KP$ speaks primarily about sets, it is perhaps desirable to obtain a classification of its provably total recursive set functions.\\

\noindent To provide some context we first state a classic result from proof theory, the classification of the {\em provably total recursive functions} of $\PA$. A classification can be gleaned from Gentzen's 1938 \cite{gentzen38} and 1943 \cite{gentzen43} papers.
The first explicit characterization of these functions as those definable by recursions on ordinals less than $\varepsilon_0$ was given by Kreisel \cite{kreisel1,kreisel2} in the early 1950s. Many people re-proved or provided variants of this classification result (see \cite[Chap. 4]{schwicht-wainer} for the history). As to techniques for extracting numerical bounds from infinite proofs, Schwichtenberg's \cite{schwichtenberg-PA} and the considerably more elegant approach by Buchholz and Wainer in \cite{stan-PA} and its generalization and simplification by Weiermann in \cite{weiermann-PA} are worth mentioning.  For the following definitions, suppose we have an ordinal representation system for ordinals below $\varepsilon_0$, together with an assignment of fundamental sequences to the limit ordinal terms. For an ordinal term $\al$, let $\al_n$ denote the $n$-th element of the fundamental sequence for $\al$, ie. $\al_{n+1}<\al_n$ and $\text{sup}_{n<\omega}(\al_n)=\al$. There are certain technical properties that such an assignment must satisfy, these will not be gone into here, for a detailed presentation see \cite{stan-PA}.
\begin{definition}{\em For each $\al<\varepsilon_0$ we define the function $F_\al:\omega\rightarrow\omega$ by transfinite recursion as follows
\begin{align*}
F_0(n)&:=n+1\\
F_{\al+1}(n)&:=F^{n+1}_\al(n)\;(:=\overbrace{F_\al\circ \ldots\circ F_\al}^{n+1}(n))\\
F_\al(n)&:=F_{\al_n}(n)\quad\text{if $\al$ is a limit.}
\end{align*}
}\end{definition}
\noindent This hierarchy is known as the {\em fast growing hierarchy}. Given unary functions on the natural numbers $f$ and $g$, we say that $f$ majorises $g$ if there is some $n$ such that $(\forall m>n)(g(m)<f(m))$. For a recursive function $f$ let $A_f(n,m)$ be the $\Sigma$ formula expressing that on input $n$ the Turing machine for computing $f$ outputs $m$, to avoid frustrating counter examples let us suppose $A_f$ does this in some `natural' way.
\begin{theorem}\label{PA-fun}{\em Suppose $f:\omega\rightarrow\omega$ is a recursive function. Then
\begin{description}
\item[i)] If $\PA\vdash\forall x\exists! y A_f(x,y)$ then $f$ is majorised by $F_\al$ for some $\al<\varepsilon_0$.
\item[ii)] $\PA\vdash\forall x\exists! yA_{F_\al}(x,y)$ for every $\al<\varepsilon_0$.
\end{description}
}\end{theorem}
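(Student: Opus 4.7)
My plan is to follow the infinitary proof-theoretic approach of Buchholz and Wainer \cite{stan-PA}, which splits cleanly into an upper-bound half (i) and a lower-bound half (ii).

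For part (i), the strategy is a three-step ordinal analysis. First, I would embed $\PA$ into the infinitary calculus $\PA_\omega$ (with the $\omega$-rule in place of the induction schema) equipped with an operator-controlled derivability relation $H \vdash^{\al}_{\rho} \Gamma$, where $\al$ is an ordinal bound on proof height and $\rho$ a bound on cut rank. A standard embedding lemma shows that if $\PA\vdash A$ then $\PA_\omega \vdash^{\omega\cdot k}_{r} A$ for some finite $k,r$ depending on the $\PA$-proof. Second, I would apply iterated predicative cut elimination: each application reduces the cut rank by one at the cost of an $\omega$-exponentiation of the ordinal bound, so starting from height $\omega\cdot k$ and rank $r$ one reaches a cut-free derivation whose ordinal is bounded by an iterated exponential of $\omega$, hence by some $\al<\varepsilon_0$. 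Third, and this is the main technical step, I would prove a bounding/witness-extraction lemma: if $H\vdash^{\al}_{0}\exists y\,A_f(\bar{n},y)$ with $A_f$ a $\Sigma_1$ matrix, then the unique witness $f(n)$ is bounded by $F_\al(n+c)$ for a uniform constant $c$. Combined with the reduction to cut-free form, this majorises $f$ by some $F_\al$ with $\al<\varepsilon_0$.

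For part (ii), the strategy is Gentzen's classical result that $\PA$ proves transfinite induction $\TI(\al,\Phi)$ for every $\al<\varepsilon_0$ and every arithmetical $\Phi$, but not for $\al=\varepsilon_0$. Using a primitive recursive coding of the ordinal notation system below $\varepsilon_0$ together with the fundamental sequence assignment, one internally defines a $\Sigma_1$ predicate $A_{F_\al}(x,y)$ expressing ``$F_\al(x)=y$'' by the given transfinite recursion. Totality $\forall x\,\exists! y\,A_{F_\al}(x,y)$ is then an arithmetical consequence of $\TI(\al,\Phi)$ for an appropriate $\Phi(\beta)\equiv\forall x\,\exists! y\,A_{F_\beta}(x,y)$, proved by a straightforward case analysis on whether $\al$ is zero, a successor, or a limit, using the recursion clauses of the hierarchy.

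The main obstacle is the bounding lemma in (i): one must show by induction on $\al$ that cut-free derivations of $\Sigma_1$ statements with height $\al$ yield witnesses majorised by $F_\al$. The difficulty concentrates in the $\omega$-rule and in inferences whose principal formula is a bounded existential: each reduction in the ordinal must correspond to at most a controlled application of $F_\al$, and care is needed to handle the side formulas, free number parameters, and the interplay between the monotonicity of $F_\al$ and the collapsing ordinal bounds produced by cut reduction. Once this lemma is in place and matched to the embedding and cut-elimination estimates, both (i) and (ii) follow; the separation between ``majorised by some $F_\al$, $\al<\varepsilon_0$'' and ``provably total'' mirrors exactly the separation between $\PA$-provable instances of $\TI$ and $\TI(\varepsilon_0)$.
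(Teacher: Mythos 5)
Your proposal is correct and follows essentially the same route as the paper, which simply defers to Buchholz--Wainer \cite{stan-PA}: embedding into the $\omega$-rule calculus with operator/ordinal control, iterated cut elimination below $\varepsilon_0$, a witness-bounding lemma against the fast growing hierarchy for (i), and provable transfinite induction below $\varepsilon_0$ for (ii). The sketch accurately identifies where the real work lies (the bounding lemma and its interaction with the $\omega$-rule), so nothing further is needed beyond the details already worked out in the cited source.
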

\begin{proof}
This classic result is proved in full in \cite{stan-PA}.\end{proof}
\noindent This chapter will be focused on obtaining a similar result for the provably total set functions of $\KP$.\footnote{There are many papers concerned with the provably recursive {\bf number-theoretic} functions of $\KP$ and much stronger theories. The basic idea consists in adding another layer of control to the ordinal analysis that allows one to extract bounds for numerical witnesses.
These techniques were initially engineered
by Buchholz, Wainer \cite{stan-PA} and Weiermann \cite{weiermann-PA} and then got extended by Blankertz, Weiermann \cite{blankertz-w,blankertz,blankertz-w2}, Michelbrink \cite{michelbrink}, Pohlers and Stegert \cite{po-ste} to ever stronger theories.
 Another route for obtaining classifications of provably numerical functions proceeds as follows. The ordinal analysis of a set theory $T$ shows that the arithmetic part of $T$ can be reduced to $\PA$ plus transfinite induction for every ordinal below the proof-theoretic ordinal of $T$. Thus it suffices to characterize the provably numerical functions of the latter system. This leads to the descent recursive functions in the sense of \cite{fs}. That this  method is perfectly general was first sketched in \cite{fs} and then proved rigorously in \cite{buch-not}. The latter approach has the advantage that the ordinal analysis of $T$ needn't be burdened with the extra task of controlling numerical witnesses. }
 A similar role to the fast growing hierarchy in Theorem \ref{PA-fun} will be played by the {\em relativised constructible hierarchy}.
\begin{definition}{\em
Let $X$ be any set. We may relativise the constructible hierarchy to $X$ as follows:
\begin{align*}
L_0(X)&:=TC(\{X\})\quad\text{the \textit{transitive closure} of $\{X\}$}\\
L_{\alpha+1}(X)&:=\{B\subseteq L_\alpha (X)\::\:B\text{ is definable over }\left\langle L_\alpha(X), \in\right\rangle\}\\
L_\theta (X) &:=\displaystyle\bigcup_{\xi<\theta} L_\xi (X)\quad\text{when $\theta$ is a limit.}
\end{align*}
}\end{definition}
\noindent In section 2 we build an ordinal notation system relativised to an arbitrary set $X$, this will be used for the rest of the article. In section 3 we define the infinitary system $\RSX$, based on the relativised constructible hierarchy and show that we can eliminate cuts for derivations of $\Sigma$ formulae. In section 4 we embed $\KP$ into $\RSX$, allowing us to obtain cut free infinitary derivations of $\KP$ provable $\Sigma$ formulae. Technically we use Buchholz' operator controlled derivations (see \cite{bu93}) which are also used in \cite{po}. In section 5 we give a well ordering proof in $\KP$ for the ordinal notation system given in section 2. Finally we combine the results of this chapter to give a classification of the provably total set functions of $\KP$ in section 6. This result, whilst perhaps known to those who have thought hard about these things, has not appeared in the literature to date.
Section 7 contains applications to semi-intuitionistic Kripke-Platek set theory.
Section 8 carries out a relativised ordinal analysis of Power Kripke-Platek set theory, $\KPP$, from which ensues a classification of its provable set functions. This closely follows the treatment in \cite{powerKP}. In section 9, a further ingredient is added to the infinitary system by incorporating a global choice relation. Due to the relativisation one gets partial conservativity results for $\KPP+\GAC$ over $\KPP+\AC$ and $\CZF+\AC$
that provide  improvements on \cite[Theorem 3.3]{powerKPAC} and \cite[Corollary 5.2]{powerKPAC}. These theories can also be added  to the list of theories \cite[Theorem 15.1]{ml} with the same proof-theoretic strength.

\section{A relativised ordinal notation system}
The aim of this section is to relativise the construction of the Bachmann-Howard ordinal to contain an arbitrary set $X$ or rather its rank $\theta$. We will construct an ordinal representation system that will be set primitive recursive given access to an oracle for $X$. Here the notion of recursive and primitive recursive is extended to arbitrary sets, see \cite{normann} or \cite{sacks} for more detail. The construction of an ordinal representation system for the Bachmann-Howard ordinal is now fairly standard in proof theory, carried out for example in \cite{bu86}. Intuitively our system will appear similar, only the ordering $W$ will be inserted as an initial segment before new ordinals start being `named'  via the collapsing function.\\

\noindent Before defining the formal terms and the procedure for computing their ordering, it is informative to give definitions for the corresponding ordinals and ordinal functions themselves. To this end we will begin working in \textbf{ZFC}, later it will become clear that the necessary ordinals can be expressed as formal terms and comparisons between these terms can be made primitive recursively relative to $W\!$.\\

\noindent In what follows $\textbf{ON}$ will denote the class of all ordinals. First we require some information about the $\varphi$ function on ordinals. These definitions and results are well known, see \cite{sch77}.

\begin{definition}{\em For each $\al\in\textbf{ON}$ we define a class of ordinals $Cr(\al)\subseteq\textbf{ON}$ and a class function
\begin{equation*}
\varphi_\al:\textbf{ON}\rightarrow\textbf{ON}
\end{equation*} by transfinite recursion.
\begin{description}
\item[i)] $Cr(0):=\{\omega^\beta\;|\;\beta\in\textbf{ON}\}$ and $\varphi_0(\beta):=\omega^\beta$.
\item[ii)] For $\al>0$ $Cr(\al):=\{\beta\;|\;(\forall\gamma<\al)(\varphi_\gamma(\beta)=\beta)\}$.
\item[iii)] For each $\al\in\textbf{ON}$ $\varphi_\al(\cdot)$ is the function enumerating $Cr(\al)$.
\end{description}
}\end{definition}
\noindent The convention is to write $\varphi\al\beta$ instead of $\varphi_\al(\beta)$. An ordinal $\beta\in Cr(0)$ is often referred to as {\em additive principal}, since for all $\beta_1,\beta_2<\beta$ we have $\beta_1+\beta_2<\beta$.
\begin{theorem}\label{phifunction}{\em $\phantom{a}$
\begin{description}
\item[i)] $\varphi\al_1\beta_1=\varphi\al_2\beta_2$ if and only if $\left\{\begin{array}{ll}& \al_1<\al_2\quad\text{and}\quad\beta_1=\varphi\al_2\beta_2\\
\text{or }&\al_1=\al_2\quad\text{and}\quad\beta_1=\beta_2\\
\text{or }&\al_2<\al_1\quad\text{and}\quad\varphi\al_1\beta_1=\beta_2.
\end{array}\right.$
\item[ii)] $\varphi\al_1\beta_1<\varphi\al_2\beta_2$ if and only if $\left\{\begin{array}{ll}& \al_1<\al_2\quad\text{and}\quad\beta_1<\varphi\al_2\beta_2\\
\text{or }&\al_1=\al_2\quad\text{and}\quad\beta_1<\beta_2\\
\text{or }&\al_2<\al_1\quad\text{and}\quad\varphi\al_1\beta_1<\beta_2.
\end{array}\right.$
\item[iii)] For any additive principal $\beta$ there are unique ordinals $\beta_1\leq\beta$ and $\beta_2<\beta$ such that $\beta=\varphi\beta_1\beta_2$.
\end{description}
}\end{theorem}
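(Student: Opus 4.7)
The plan is to derive all three parts from the standard structural properties of the Veblen hierarchy, namely that each $Cr(\alpha)$ is closed unbounded in $\mathbf{ON}$, that $\varphi_\alpha$ is the (normal) enumeration function of $Cr(\alpha)$, and the monotonicity relation $\alpha < \beta \Rightarrow Cr(\beta) \subseteq Cr(\alpha)$. These facts I would establish first by simultaneous transfinite recursion on $\alpha$, checking that at each stage $Cr(\alpha)$ is closed (intersection of the closed classes $\{\gamma : \varphi_{\gamma'}(\gamma) = \gamma\}$ for $\gamma' < \alpha$, all of which are closed since each $\varphi_{\gamma'}$ is normal by induction hypothesis) and unbounded (from below every ordinal, iterate the map $\xi \mapsto \sup_{\gamma' < \alpha} \varphi_{\gamma'}(\xi)$ $\omega$-many times).

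The crucial absorption lemma for parts (i) and (ii) is: if $\alpha_1 < \alpha_2$ then every value $\varphi_{\alpha_2}(\beta_2)$ is a fixed point of $\varphi_{\alpha_1}$. This holds because $\varphi_{\alpha_2}(\beta_2) \in Cr(\alpha_2) \subseteq Cr(\alpha_1 + 1)$, and membership in $Cr(\alpha_1+1)$ directly forces $\varphi_{\alpha_1}(\varphi_{\alpha_2}(\beta_2)) = \varphi_{\alpha_2}(\beta_2)$.

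For (i) and (ii) I then proceed by case analysis on $\alpha_1$ vs.\ $\alpha_2$. The case $\alpha_1 = \alpha_2$ is immediate from the strict monotonicity of the normal function $\varphi_{\alpha_1}$. For $\alpha_1 < \alpha_2$, rewrite $\varphi_{\alpha_2}(\beta_2) = \varphi_{\alpha_1}(\varphi_{\alpha_2}(\beta_2))$ using the absorption lemma; then by strict monotonicity of $\varphi_{\alpha_1}$, the comparison $\varphi_{\alpha_1}(\beta_1) \lessgtr \varphi_{\alpha_2}(\beta_2)$ reduces to $\beta_1 \lessgtr \varphi_{\alpha_2}(\beta_2)$, giving both clauses. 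The case $\alpha_2 < \alpha_1$ is symmetric.

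For (iii), given an additive principal $\beta$, I would set $\beta_1 := \sup\{\alpha : \beta \in Cr(\alpha)\}$ and first argue that this set is nonempty (since $\beta = \omega^\gamma \in Cr(0)$) and bounded above, so that the supremum is attained. Boundedness follows from the observation that $\varphi_\beta(0) > \beta$ for $\beta > 0$, proved by a short induction, so $\beta \notin Cr(\beta+1)$; closure under taking suprema then yields that the set of such $\alpha$ has a maximum, which is automatically $\leq \beta$. Since $\beta \in Cr(\beta_1)$, there exists a unique $\beta_2$ with $\varphi_{\beta_1}(\beta_2) = \beta$; uniqueness is from the injectivity of $\varphi_{\beta_1}$, and $\beta_2 < \beta$ follows from the maximality of $\beta_1$, which forces $\varphi_{\beta_1}(\beta) > \beta = \varphi_{\beta_1}(\beta_2)$, hence $\beta_2 < \beta$ by monotonicity. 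The main obstacle is this last step: verifying that the supremum defining $\beta_1$ is attained and lies below or at $\beta$, which rests on the quantitative estimate that the Veblen ranks grow strictly faster than the identity on the principals, itself proved by an easy induction using part (ii).
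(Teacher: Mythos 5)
The paper does not actually prove this theorem; it delegates the proof to \cite{sch77}. Your argument is the standard one from that source: club-ness of the classes $Cr(\alpha)$, normality of the enumerating functions, the absorption property $\varphi_{\alpha_1}(\varphi_{\alpha_2}\beta_2)=\varphi_{\alpha_2}\beta_2$ for $\alpha_1<\alpha_2$, and the trichotomy-based case analysis for (i) and (ii). Those parts are sound as you describe them.

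In part (iii), however, one intermediate claim is false: $\varphi_\beta(0)>\beta$ does \emph{not} hold for all $\beta>0$. Its failure points are precisely the strongly critical ordinals, i.e.\ those $\Gamma$ with $\varphi_\Gamma(0)=\Gamma$ --- the very $\Gamma$-numbers on which this paper's notation system is built. The correct estimate is $\varphi_\beta(0)\geq\beta$. The conclusion you need survives the correction: an additive principal $\beta$ is nonzero, so strict monotonicity gives $\varphi_\beta(\beta)>\varphi_\beta(0)\geq\beta$, hence $\beta\notin Cr(\beta+1)$ and your set $S=\{\alpha:\beta\in Cr(\alpha)\}$ is bounded; being nonempty, downward closed and closed under suprema, it equals $\beta_1+1$ for some $\beta_1\leq\beta$, and the rest of your construction of $\beta_2$ goes through. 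A second, smaller point: injectivity of $\varphi_{\beta_1}$ only yields uniqueness of $\beta_2$ \emph{given} $\beta_1$; for uniqueness of $\beta_1$ itself observe that if $\varphi_{\gamma_1}\gamma_2=\beta$ with $\gamma_1<\beta_1$, then $\beta\in Cr(\beta_1)\subseteq Cr(\gamma_1+1)$ gives $\varphi_{\gamma_1}(\beta)=\beta=\varphi_{\gamma_1}(\gamma_2)$ and hence $\gamma_2=\beta$, contradicting $\gamma_2<\beta$, while $\gamma_1>\beta_1$ would contradict the maximality of $\beta_1$. (Equivalently, the uniqueness in (iii) is just an instance of part (i).)
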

\begin{proof}
This result is proved in full in \cite{sch77}.
\end{proof}
\begin{definition}{\em
We define $\Gamma_{(\cdot)}:\textbf{ON}\rightarrow\textbf{ON}$ to be the class function enumerating the ordinals $\beta$ such that for all $\beta_1,\beta_2<\beta$ we have $\varphi\beta_1\beta_2<\beta$. Ordinals of the form $\Gamma_\beta$ will be referred to as {\em strongly critical}.
}\end{definition}
\noindent Now let $\theta\in\textbf{ON}$ be the unique ordinal that is the set-theoretic   rank of $X$.

\begin{definition}\label{theta}{\em
Let $\Omega$ be the least uncountable cardinal greater than $\theta\!$. The sets $B_\theta(\alpha)\subseteq\textbf{ON}$ and ordinals $\psi_\theta (\alpha)$ are defined by transfinite recursion on $\alpha$ as follows:
\begin{align*}
B_\theta (\alpha)&:=\text{Closure of }\{0,\Omega\}\cup\{\Gamma_\beta\::\:\beta\leq\theta\}\text{ under $+$, $\varphi$ and $\psi_\theta |_\alpha$}\\
\psi_\theta (\alpha)&:=\text{min}\{\beta\::\:\beta\notin B_\theta(\alpha)\}
\end{align*}
}\end{definition}
\noindent For the remainder of this section, since $\theta$ remains fixed, the subscripts will be dropped from 
 $B_\theta$ and $\psi_\theta$ to improve readability. At first glance it may appear strange having the elements from $\theta$ inserted into the $\Gamma$-numbers. Ultimately we aim to have $+$ and $\varphi$ as primitive symbols in our notation system, simply having $\theta$ as an initial segment here would cause problems with unique representation. Some ordinals could get a name directly from $\theta$ and other names by applying $+$ and $\varphi$ to smaller elements.

\begin{lemma}\label{card}{\em For each $\alpha\in\textbf{ON}$:
\begin{description}
\item[i)] The cardinality of $B(\alpha)$ is max$\{\aleph_0,|\theta|\}$, where $|\theta|$ denotes the cardinality of $\theta$.
\item[ii)] $\psi\alpha<\Omega$.
\end{description}
}\end{lemma}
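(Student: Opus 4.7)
The plan is to prove both parts simultaneously by transfinite induction on $\alpha$, setting $\kappa := \max\{\aleph_0,|\theta|\}$. The definitions of $B(\alpha)$ and $\psi(\alpha)$ are intertwined -- $B(\alpha)$ is built using $\psi|_{\alpha}$, whose values $\psi(\beta)$ for $\beta<\alpha$ are in turn defined via $B(\beta)$ -- so simultaneous induction is the natural way to establish well-definedness together with the desired bounds.

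For (i), I would present $B(\alpha)$ as the ascending union $\bigcup_{n<\omega} B_n(\alpha)$, where $B_0(\alpha):=\{0,\Omega\}\cup\{\Gamma_\beta:\beta\leq\theta\}$ and $B_{n+1}(\alpha)$ adjoins to $B_n(\alpha)$ all sums $\gamma_1+\gamma_2$, all values $\varphi\gamma_1\gamma_2$ with $\gamma_1,\gamma_2\in B_n(\alpha)$, and all values $\psi(\gamma)$ with $\gamma\in B_n(\alpha)\cap\alpha$. Clearly $|B_0(\alpha)|\leq\kappa$, and since $\kappa$ is infinite and hence closed under pairing and under the action of a unary function, one gets $|B_{n+1}(\alpha)|\leq |B_n(\alpha)|^2+|B_n(\alpha)|\leq\kappa$ inductively. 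A countable union preserves this bound, giving $|B(\alpha)|\leq\kappa$. For the reverse direction, $\{\Gamma_\beta:\beta\leq\theta\}\subseteq B(\alpha)$ already has cardinality $\geq|\theta|$ (using injectivity of $\Gamma_{(\cdot)}$), and the iterated sums $\Omega,\,\Omega+\Omega,\,\Omega+\Omega+\Omega,\ldots$ witness that $B(\alpha)$ is infinite, so $|B(\alpha)|\geq\kappa$.

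For (ii), I would leverage (i) directly. Since $\Omega$ is an uncountable cardinal with $\Omega>\theta$, we have $\aleph_0<\Omega$ and $|\theta|\leq\theta<\Omega$, hence $\kappa<\Omega$. Therefore $|B(\alpha)\cap\Omega|\leq|B(\alpha)|=\kappa<\Omega$, so $B(\alpha)\cap\Omega$ cannot exhaust $\Omega$. Picking any $\gamma<\Omega$ with $\gamma\notin B(\alpha)$ shows $\psi(\alpha)\leq\gamma<\Omega$, as required.

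The only genuinely delicate point, rather than any hard calculation, is handling the circularity in Definition \ref{theta}: one must verify at stage $\alpha$ that $\psi|_\alpha$ is a total function with values below $\Omega$, which is exactly what the induction hypothesis for (ii) at stages $\beta<\alpha$ supplies. Once this is in place the argument reduces to a routine cardinality bookkeeping; no combinatorial obstacle arises.
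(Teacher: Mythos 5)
Your proof is correct and follows essentially the same route as the paper: stratify $B(\alpha)$ as $\bigcup_{n<\omega}B^n(\alpha)$, bound each level by $\max\{\aleph_0,|\theta|\}$, and derive (ii) from (i) by noting that $B(\alpha)$ cannot contain all of $\Omega$. The paper phrases (ii) contrapositively ($\psi\alpha\geq\Omega$ would force $\Omega\subseteq B(\alpha)$), but this is the same cardinality argument.
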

\begin{proof}
i) Let
\begin{align*}
B^0(\alpha):=&\{0,\Omega\}\cup\{\Gamma_\beta\::\:\beta\leq\theta\}\\
B^{n+1} (\alpha):=&B^n (\alpha)\cup\{\xi+\eta\::\:\xi,\eta\in B^n(\alpha)\}\\
&\cup\{\varphi\xi\eta\::\:\xi,\eta\in B^n (\alpha)\}\\
&\cup\{\psi\xi\::\:\xi\in B^n(\alpha)\cap\alpha\}.
\end{align*}
Observe that $B(\alpha)=\displaystyle\cup_{n<\omega} B^n (\alpha)$, this can be proved by a straightforward induction on $n$.\\

\noindent If $\theta$ is finite then, again by induction on $n$, we can show that each $B^n(\alpha)$ is also finite. Since $B(\alpha)$ is a countable union of finite sets and $\omega\subseteq B(\alpha)$ it follows that it must have cardinality $\aleph_0$.\\

\noindent Now suppose $\theta$ is infinite, so $B(\alpha)$ is the countable union of sets of cardinality $|\theta|$ and thus also has cardinality $|\theta|$.\\

\noindent ii) If $\psi\alpha\geq\Omega$ then $\Omega\subset B(\alpha)$ contradicting i).
\end{proof}
\begin{lemma}\label{collapseprop}$\phantom{a}${\em
\begin{description}
\item[i)] If $\gamma\leq\delta$ then $B(\gamma)\subseteq B(\delta)$ and $\psi\gamma\leq\psi\delta$.
\item[ii)] If $\gamma\in B(\delta)\cap\delta$ then $\psi\gamma<\psi\delta$.
\item[iii)] If $\gamma\leq\delta$ and $[\gamma,\delta)\cap B(\gamma)=\emptyset$ then $B(\gamma)=B(\delta)$.
\item[iv)] If $\xi$ is a limit then $B(\xi)=\cup_{\eta<\xi} B(\eta)$.
\item[v)] $\psi\gamma$ is a strongly critical and $\psi\gamma\geq\Gamma_{\theta+1}$.
\item[vi)] $B(\gamma)\cap\Omega=\psi\gamma$.
\item[vii)] If $\xi$ is a limit then $\psi\xi=\text{sup}_{\eta<\xi}\psi\eta$.
\item[viii)] $\psi(\gamma+1)\leq(\psi\gamma)^\Gamma$, where $\delta^\Gamma$ denotes the smallest strongly critical ordinal above $\delta$.
\item[ix)] If $\alpha\in B(\alpha)$ then $\psi(\alpha+1)=(\psi\alpha)^\Gamma$.
\item[x)] If $\alpha\notin B(\alpha)$ then $\psi(\alpha+1)=\psi\alpha$ and $B(\alpha+1)=B(\alpha)$.
\item[xi)] If $\gamma\in B(\gamma)$ and $\delta\in B(\delta)$ then $[\gamma<\delta$ if and only if $\psi\gamma<\psi\delta]$.
\end{description}
}\end{lemma}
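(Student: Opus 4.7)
The plan is to prove the eleven items in an order that respects their logical dependencies, with the heart of the argument being a simultaneous transfinite induction on $\gamma$ for items v) and vi). First I would dispose of the monotonicity claims i) and iv) using the fixed-point description of $B(\alpha)$. For i), verify that $B(\delta)$ satisfies the defining closure conditions of $B(\gamma)$ whenever $\gamma\leq\delta$: it contains the generating set and is closed under $+$ and $\varphi$, while closure under $\psi|_\gamma$ follows because any $\xi\in B(\delta)\cap\gamma$ is in particular in $B(\delta)\cap\delta$, forcing $\psi\xi\in B(\delta)$. The inequality $\psi\gamma\leq\psi\delta$ is then immediate since every ordinal below $\psi\gamma$ lies in $B(\gamma)\subseteq B(\delta)$. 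For iv), check that $\bigcup_{\eta<\xi}B(\eta)$ also satisfies the defining closure conditions for $B(\xi)$, where the $\psi|_\xi$-step uses that for $\alpha$ in the union with $\alpha<\xi$ one can pick $\eta'=\max(\eta,\alpha+1)<\xi$ (since $\xi$ is a limit) with $\alpha\in B(\eta')\cap\eta'$. Item ii) then drops out: $\gamma\in B(\delta)\cap\delta$ forces $\psi\gamma\in B(\delta)$, so $\psi\gamma\neq\psi\delta$, and combined with $\psi\gamma\leq\psi\delta$ from i) this yields strict inequality.

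The core step is the simultaneous induction on $\gamma$ for v) and vi). Under the inductive hypothesis, every $\psi\xi$ with $\xi<\gamma$ that occurs as a generator of $B(\gamma)$ is strongly critical and lies in $[\Gamma_{\theta+1},\Omega)$. To see that $\psi\gamma$ is strongly critical, suppose otherwise: either $\psi\gamma$ is not additive principal, giving $\psi\gamma=\beta_1+\beta_2$ with $\beta_1,\beta_2<\psi\gamma$, or it is additive principal but not strongly critical, in which case Theorem \ref{phifunction}(iii) yields $\psi\gamma=\varphi\xi_1\xi_2$ with $\xi_1,\xi_2<\psi\gamma$. In either case, all the arguments lie in $B(\gamma)$ by minimality of $\psi\gamma$, and closure of $B(\gamma)$ under $+$ and $\varphi$ forces $\psi\gamma\in B(\gamma)$, a contradiction. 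The bound $\psi\gamma\geq\Gamma_{\theta+1}$ holds because every ordinal $<\Gamma_{\theta+1}$ is obtained from $0$ and $\{\Gamma_\beta:\beta\leq\theta\}$ by $+$ and $\varphi$, hence lies in $B(\gamma)$. For vi), the inclusion $\psi\gamma\subseteq B(\gamma)\cap\Omega$ is immediate from the minimality of $\psi\gamma$ together with Lemma \ref{card}(ii). For the reverse inclusion, induct on the finite stages $B^n(\gamma)$: the base case uses $\Gamma_\beta<\Gamma_{\theta+1}\leq\psi\gamma$; the $+,\varphi$-steps use strong criticality of $\psi\gamma$; and the $\psi$-step uses ii) applied to $\xi\in B^n(\gamma)\cap\gamma$ (which is legal because i) and the outer induction hypothesis have already been established).

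Once v) and vi) are in place the remaining items follow fairly mechanically. For iii), show $B(\gamma)$ itself satisfies the defining closure properties of $B(\delta)$: any $\alpha\in B(\gamma)\cap\delta$ must satisfy $\alpha<\gamma$ by the hypothesis $[\gamma,\delta)\cap B(\gamma)=\emptyset$, hence $\psi\alpha\in B(\gamma)$. Item vii) is immediate from iv) and vi): $\psi\xi=B(\xi)\cap\Omega=\bigcup_{\eta<\xi}(B(\eta)\cap\Omega)=\sup_{\eta<\xi}\psi\eta$. For viii), the generators of $B(\gamma+1)\cap\Omega$ are all bounded by $\psi\gamma$ (by i)), and since $(\psi\gamma)^\Gamma$ is strongly critical, $+,\varphi$-closure cannot reach it. For ix), the extra hypothesis $\alpha\in B(\alpha)$ places $\psi\alpha$ in $B(\alpha+1)$, so $+,\varphi$-closure starting from $\{0\}\cup\{\Gamma_\beta:\beta\leq\theta\}\cup\{\psi\alpha\}$ generates every ordinal below $(\psi\alpha)^\Gamma$ via Veblen normal forms, matching the upper bound from viii). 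Item x) is a direct application of iii) to $\gamma=\alpha$, $\delta=\alpha+1$, since $\{\alpha\}\cap B(\alpha)=\emptyset$. Finally, xi) combines i) and ii): the forward direction uses $\gamma\in B(\gamma)\subseteq B(\delta)$ with $\gamma<\delta$ to get $\gamma\in B(\delta)\cap\delta$ and hence $\psi\gamma<\psi\delta$ by ii); the reverse is the contrapositive of i).

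The main technical obstacle is the simultaneous induction for v) and vi). These statements feed into each other: the strong criticality of $\psi\gamma$ is precisely what keeps the $+,\varphi$-closure of $B(\gamma)\cap\Omega$ below $\psi\gamma$, while the characterisation $B(\gamma)\cap\Omega=\psi\gamma$ is what forces $\psi\gamma$ to be a genuine "new" ordinal not expressible as $\beta_1+\beta_2$ or $\varphi\xi_1\xi_2$ from smaller constituents. Careful bookkeeping is required to separate what the outer induction hypothesis guarantees about $\psi\xi$ for $\xi<\gamma$ from the claims being actively established about $\psi\gamma$ itself; the rest of the lemma unpacks routinely from this core argument.
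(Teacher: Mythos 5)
Your proposal is correct and follows essentially the same route as the paper: the normal-form argument for v), the closed-set/stagewise-closure arguments for vi) and viii), and the derivations of the remaining items from i)--vi) all match the paper's proof. The only real differences are presentational --- the ``simultaneous transfinite induction on $\gamma$'' for v) and vi) is unnecessary scaffolding, since v) follows outright from minimality of $\psi\gamma$ and closure of $B(\gamma)$ under $+$ and $\varphi$, after which vi) follows using ii) --- and your shortcut for xi) via ii) is a slight simplification of the paper's appeal to ix).
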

\begin{proof}
i) Suppose $\gamma\leq\delta$, now note that $B(\delta)$ is closed under $\psi|_\delta$ which includes $\psi|_\gamma$ so $B(\gamma)\subseteq B(\delta)$. From this it immediately follows from the definition that $\psi\gamma\leq\psi\delta$.\\

\noindent ii) From $\gamma\in B(\delta)\cap\delta$ we get $\psi\gamma\in B(\delta)$, thus $\psi\gamma<\psi\delta$ b the definition of $\psi\delta$.\\

\noindent iii) It is enough to show that $B(\gamma)$ is closed under $\psi|_\delta$. Let $\beta\in B(\gamma)$ and $\beta<\delta$, then by assumption $\beta<\gamma$, thus $\psi\beta\in B(\gamma)$.\\

\noindent iv) By i) we have $\cup_{\eta<\xi} B(\eta)\subseteq B(\xi)$. It remains to verify that $Y:=\cup_{\eta<\xi}B(\eta)$ is closed under $\psi|_{\xi}$. So let $\delta\in Y\cap\xi$, since $\xi$ is a limit there is some $\xi_0<\xi$ such that $\delta\in Y\cap\xi_0$ and there is some $\xi_1<\xi$ such that $\delta\in B(\xi_1)$. Therefore $\delta\in B(\xi^*)\cap\xi^\star$ where $\xi^*=\text{max}\{\xi_0,\xi_1\}$, thus $\psi\delta\in B(\xi^*)\subseteq Y$.\\

\noindent v) We may write the ordinal $\psi\alpha$ in Cantor normal form, so that $\psi\alpha=\omega^{\alpha_1}+\ldots+\omega^{\alpha_n}$ with $\alpha_1\geq\ldots\geq\alpha_n$. If $n>1$ then $\alpha_1,\ldots,\alpha_n<\psi\alpha$ which implies by the definition of $\psi\alpha$ that $\alpha_1,\ldots,\alpha_n\in B(\alpha)$. But by closure of $B(\alpha)$ under $+$ and $\varphi$ we get $\varphi 0\alpha_1+\ldots+\varphi 0\alpha_n=\omega^{\alpha_1}+\ldots+\omega^{\alpha_n}\in B(\alpha)$ contradicting $\psi\alpha\notin B(\alpha)$. Thus $\psi\alpha$ is additive principal and it follows from Theroem \ref{phifunction}iii) that we may find ordinals $\gamma\leq\psi\al$ and $\delta<\psi\alpha$ such that $\psi\alpha=\varphi\gamma\delta$. If $\delta>0$ then $\gamma<\psi\alpha$ since $\gamma\leq\varphi\gamma 0<\varphi\gamma\delta$, but if $\delta,\gamma<\psi\alpha$ then we have $\delta,\gamma\in B(\alpha)$ and hence $\varphi\gamma\delta\in B(\alpha)$ contradicting $\psi\alpha\notin B(\alpha)$. Thus $\psi\alpha=\varphi\gamma 0$, but if $\gamma<\psi\alpha$ then again we get $\varphi\gamma 0\in B(\alpha)$; a contradiction. So it must be the case that $\psi\alpha=\gamma$, ie. $\psi\al$ is strongly critical.\\

\noindent For the second part note that $\psi\alpha\neq\Gamma_\beta$ for any $\beta\leq\theta$ since by definition each such $\Gamma_\beta\in B(\alpha)$.\\

\noindent vi) By \ref{card}ii) and the definition of $\psi$ it is clear that $\psi\alpha\subseteq B(\alpha)\cap\Omega$. Now let
\begin{equation*}
Y:=\psi\alpha\cup\{\delta\geq\Omega\:|\:\delta\in B(\alpha)\}
\end{equation*}
by v) $Y$ contains $0,\OO$ and $\Gamma_\beta$ for $\beta\leq\theta$, moreover it is closed under $+$ and $\varphi$. It remains to show that $Y$ is closed under $\psi|_\alpha$, this follows immediately from ii).\\

\noindent vii) Let $\xi$ be a limit ordinal. Using parts vi), iv) and i) we have
\begin{equation*}
\psi\xi=B(\xi)\cap\Omega=(\cup_{\eta<\xi} B(\eta))\cap\Omega=\cup_{\eta<\xi}(B(\eta)\cap\Omega)=\cup_{\eta<\xi}\psi\eta=\text{sup}_{\eta<\xi}\psi\eta.
\end{equation*}
viii) Let
\begin{equation*}
Y:=(\psi\alpha)^\Gamma\cup\{\delta\geq\Omega\:|\:\delta\in B(\alpha)\}.
\end{equation*}
$Y$ is closed under $+$ and $\varphi$, also it contans $\Gamma_\beta$ for any $\beta\leq\theta$ by v). Moreover it contains $\psi\gamma$ for any $\gamma\leq\alpha$ by i), so it is closed under $\psi|_{(\alpha+1)}$. Therefore $Y$ must contain $B(\alpha+1)$, and so $\psi(\alpha+1)\leq(\psi\alpha)^\Gamma$.\\

\noindent ix) From $\alpha\in B(\alpha)$ we get $\alpha\in B(\alpha+1)$, it then follows from ii) that $\psi\alpha<\psi(\alpha+1)$. Thus $\psi(\alpha+1)\leq(\psi\alpha)^\Gamma$ by viii) and $\psi(\alpha+1)\geq(\psi\alpha)^\Gamma$ from v), so it must be the case that $\psi(\alpha+1)=(\psi\alpha)^\Gamma$.\\

\noindent x) Suppose $\alpha\notin B(\alpha)$, then $[\alpha,\alpha+1)\cap B(\alpha)=\emptyset$ so we may apply iii) to give $B(\alpha+1)=B(\alpha)$ from which $\psi(\alpha+1)=\psi\alpha$ follows immediately.\\

\noindent xi) Suppose $\gamma\in B(\gamma)$ and $\delta\in B(\delta)$. If $\gamma<\delta$ then from ix) we get $\psi(\gamma+1)=(\psi\gamma)^\Gamma>\psi\gamma$, but by i) $\psi(\gamma+1)\leq\psi\delta$.\\

\noindent Now if $\psi\gamma<\psi\delta$ then from the contraposition of i) we get $\gamma<\delta$.
\end{proof}
\begin{definition}{\em
We write
\begin{description}
\item[i)] $\alpha=_{NF}\alpha_1+\ldots+\alpha_n$ if $\alpha=\alpha_1+\ldots+\alpha_n$, $n>1$, $\alpha_1,\ldots,\alpha_n$ are additive principal numbers and $\alpha_1\geq\ldots\geq\alpha_n$.
\item[ii)] $\alpha=_{NF}\varphi\gamma\delta$ if $\alpha=\varphi\gamma\delta$ and $\gamma,\delta<\varphi\gamma\delta$.
\item[iii)] $\alpha=_{NF}\psi\gamma$ if $\alpha=\psi\gamma$ and $\gamma\in B(\gamma)$
\end{description}
}\end{definition}
\begin{lemma}\label{!NF}{\em $\phantom{a}$
\begin{description}
\item[i)] If $\alpha=_{NF}\alpha_1+\ldots+\alpha_n$ then for any $\eta\in\textbf{ON}$
\begin{equation*}
\alpha\in B(\eta)\quad\text{if and only if}\quad\alpha_1,\ldots,\alpha_n\in B(\eta).
\end{equation*}
\item[ii)] If $\alpha=_{NF}\varphi\gamma\delta$ then for any $\eta\in\textbf{ON}$
\begin{equation*}
\alpha\in B(\eta)\quad\text{if and only if}\quad\gamma,\delta\in B(\eta).
\end{equation*}
\item[iii)] If $\alpha=_{NF}\psi\gamma$ then for any $\eta\in\textbf{ON}$
\begin{equation*}
\alpha\in B(\eta)\quad\text{if and only if}\quad\gamma\in B(\eta)\cap\eta.
\end{equation*}
\end{description}
}\end{lemma}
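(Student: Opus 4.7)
Each right-to-left direction is immediate from the closure of $B(\eta)$ under $+$, $\varphi$ and $\psi|_\eta$. For the forward direction of (iii) I would give a direct argument: since $\psi\gamma < \Omega$ by Lemma \ref{card}(ii), we have $\alpha \in B(\eta)\cap\Omega = \psi\eta$ by Lemma \ref{collapseprop}(vi); hence $\psi\gamma < \psi\eta$, and the contrapositive of Lemma \ref{collapseprop}(i) yields $\gamma < \eta$; finally $\gamma \leq \eta$ together with the NF condition $\gamma \in B(\gamma)$ and another application of Lemma \ref{collapseprop}(i) deliver $\gamma \in B(\gamma) \subseteq B(\eta)$, so $\gamma \in B(\eta)\cap\eta$ as required.

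For the forward directions of (i) and (ii), the plan is to induct on the least $k$ with $\alpha \in B^k(\eta)$, using the stratification $B(\eta) = \bigcup_n B^n(\eta)$ from the proof of Lemma \ref{card}(i). The base case $k = 0$ is vacuous: every element of $B^0(\eta) = \{0,\Omega\} \cup \{\Gamma_\beta : \beta \leq \theta\}$ is either $0$ or strongly critical, so admits no non-trivial additive NF decomposition and no NF $\varphi$-decomposition (which would require both arguments strictly less than $\alpha$). In the successor step, $\alpha \in B^{k+1}(\eta)\setminus B^k(\eta)$ must have entered via one of $\xi + \zeta$, $\varphi\xi\zeta$ with $\xi,\zeta \in B^k(\eta)$, or $\psi\xi$ with $\xi \in B^k(\eta)\cap\eta$.

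For (i), the assumption $\alpha =_{NF} \alpha_1 + \ldots + \alpha_n$ with $n > 1$ makes $\alpha$ non-additive-principal and hence rules out the $\varphi$ and $\psi$ introductions; applying the induction hypothesis to the (possibly trivial) NFs of $\xi$ and $\zeta$ places their additive-principal summands in $B(\eta)$, and the standard absorption identity $\xi_i + \zeta_j = \zeta_j$ for $\xi_i < \zeta_j$ reduces the concatenated list to the unique NF of $\alpha$, whose summands are thus in $B(\eta)$. For (ii), the NF assumption makes $\alpha$ additive principal but not strongly critical, so the only viable introduction is $\alpha = \varphi\xi\zeta$ with $\xi,\zeta \in B^k(\eta)$; comparing $\varphi\gamma\delta = \varphi\xi\zeta$ via Theorem \ref{phifunction}(i) and eliminating the two degenerate alternatives by the NF constraint $\delta < \alpha$ and by $\alpha \notin B^k(\eta)$ forces $\gamma = \xi$ and $\delta = \zeta$, both in $B(\eta)$. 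The main subtlety lies in this case-analysis, where the uniqueness properties of NF need to be matched against the introduction data; the direct proof of (iii), by contrast, lets us sidestep altogether the potentially delicate scenario $\psi\gamma = \psi\xi$ with $\gamma \neq \xi$ that would otherwise arise in an inductive treatment of (iii).
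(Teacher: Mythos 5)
Your proof is correct. Part (iii) is exactly the paper's argument: from $\psi\gamma\in B(\eta)\cap\Omega=\psi\eta$ one gets $\psi\gamma<\psi\eta$, hence $\gamma<\eta$ by the contrapositive of Lemma \ref{collapseprop}(i), and then $\gamma\in B(\gamma)\subseteq B(\eta)$. For the forward directions of (i) and (ii) you and the paper both induct over the generation of $B(\eta)$, but the executions differ genuinely. The paper introduces auxiliary predecessor sets $AP(\alpha)$ and $PP(\alpha)$ and shows that $Y:=\{\gamma\in B(\eta)\mid AP(\gamma)\subseteq B(\eta)\}$ (respectively with $PP$) contains the generators and is closed under $+$, $\varphi$ and $\psi|_\eta$, so that $B(\eta)\subseteq Y$; the normal form of a newly generated element is tracked \emph{forward} through each closure clause, and Theorem \ref{phifunction} is never invoked. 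You instead take a minimal stage $k$ in the stratification $B(\eta)=\bigcup_n B^n(\eta)$ and analyse \emph{backwards} which clause introduced $\alpha$, which forces you to reconcile the introduction data with the unique normal form: for sums via the absorption identity, and for $\varphi$ via the trichotomy of Theorem \ref{phifunction}(i), with minimality of $k+1$ killing the two degenerate branches. Your route costs a little more case analysis (and, as you note, would be delicate for (iii), which is why your direct argument there is the right call), but it avoids defining the $AP$/$PP$ bookkeeping and makes the use of normal-form uniqueness explicit; the paper's closure formulation is slicker but hides exactly that matching step inside the definition of the auxiliary sets. Both are sound; just make sure in (i) that you also note $\xi,\zeta\neq 0$ and that an additive-principal $\xi$ contributes itself (already in $B^k(\eta)$) as its sole summand, which your parenthetical about trivial normal forms already covers.
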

\begin{proof}
i) Suppose $\alpha=_{NF}\alpha_1+\ldots+\alpha_n$, the $\Leftarrow$ direction is clear from the closure of $B(\eta)$ under $+$. For the other direction let
\begin{equation*}
AP(\alpha):=\begin{cases}
\emptyset\quad\text{if $\alpha=0$}\\
\{\alpha\}\quad\text{if $\alpha$ is additive principal}\\
\{\alpha_1,\ldots,\alpha_n\}\quad\text{if $\alpha=_{NF}\alpha_1+\ldots+\alpha_n$}
\end{cases}
\end{equation*}
$AP(\alpha)$ stands for the \textit{additive predecessors} of $\alpha$. Now let
\begin{equation*}
Y:=\{\gamma\in B(\eta)\:|\:AP(\gamma)\subseteq B(\eta)\}.
\end{equation*}
Observe that $0,\Omega\in Y$ and $\{\Gamma_\beta\:|\:\beta\leq\theta\}\subseteq Y$. Now choose any $\gamma,\delta\in Y$, we have $AP(\gamma+\delta)\subseteq AP(\gamma)\cup AP(\delta)\subseteq B(\eta)$, thus $Y$ is closed under $+$. Now $AP(\varphi\gamma\delta)=\{\varphi\gamma\delta\}$ since the range of $\varphi$ is the additive principal numbers thus $Y$ is closed under $\varphi$. Finally $AP(\psi\gamma)=\{\psi\gamma\}$ for any $\gamma\in Y\cap\eta$ so $Y$ is closed under $\psi|_{\eta}$. It follows that $B(\eta)\subseteq Y$ and thus the other direction is proved.\\

\noindent ii) Again the $\Leftarrow$ direction follows immediately from the closure of $B(\eta)$ under $\varphi$. For the other direction we let
\begin{equation*}
PP(\alpha):=\begin{cases}
\emptyset\quad\text{if $\al=0$}\\
\{\al\}\quad\text{if $\al$ is strongly critical}\\
\{\gamma,\delta\}\quad\text{if $\alpha=_{NF}\varphi\gamma\delta$}\\
\{\al_1,\ldots,\al_n\}\quad\text{if $\al=_{NF}\al_1+\ldots+\al_n$.}
\end{cases}
\end{equation*}
for want of a better phrase $PP(\alpha)$ stands for the \textit{predicative predecessors} of $\alpha$. Now set
\begin{equation*}
Y:=\{\gamma\in B(\eta)\:|\:PP(\gamma)\subseteq B(\eta)\}
\end{equation*}
It is easily seen that $Y$ contains $0$,$\Omega$ and $\Gamma_\beta$ for any $\beta\leq\theta$. $PP(\gamma+\delta)\subseteq PP(\gamma)\cup PP(\delta)$ so $Y$ is closed under $+$. $PP(\varphi\gamma\delta)\subseteq\{\gamma,\delta\}$ so $Y$ is closed under $\varphi$. Finally $PP(\psi\gamma)=\{\psi\gamma\}$ for any $\gamma<\eta$ by \ref{collapseprop}v). It follows that $Y$ must contain $B(\eta)$, which proves the $\Rightarrow$ direction.\\

\noindent iii) Suppose $\alpha=_{NF}\psi\gamma$, the $\Leftarrow$ direction is clear by the closure of $B(\eta)$ under $\psi|_\eta$. For the other direction suppose $\alpha\in B(\eta)$, from this we get $\psi\gamma<\psi\eta$ which gives us $\gamma<\eta$. Now by assumption $\gamma\in B(\gamma)$, and $B(\gamma)\subseteq B(\eta)$ so $\gamma\in B(\eta)\cap\eta$.
\end{proof}
\noindent In order to create an ordinal notation system from the ordinal functions described above, we single out a set $R(\theta)$ of ordinals which have a unique canonical description.
\begin{definition}{\em We give an inductive definition of the set $R(\theta)$, and the complexity $G\alpha <\omega$ for every $\alpha\in R(\theta)$
\begin{description}
\item[(R1)] $0,\Omega\in R(\theta)$ and $G0:=G\Omega:=0$.
\item[(R2)] For each $\beta\leq\theta$, $\Gamma_\beta\in R(\theta)$ and $G\Gamma_\beta:=0$.
\item[(R3)] If $\alpha=_{NF}\alpha_1+\ldots+\alpha_n$ and $\alpha_1,\ldots,\alpha_n\in R(\theta)$ then $\alpha\in R(\theta)$ and $G\alpha:=\text{max}\{G\alpha_1,\ldots,G\alpha_n\}+1$.
\item[(R4)] If $\gamma,\delta<\Omega$,  $\alpha=_{NF}\varphi\gamma\delta$ and $\gamma,\delta\in R(\theta)$ then $\alpha\in R(\theta)$ and $G\alpha:=\text{max}\{G\gamma, G\delta\}+1$.
\item[(R5)] If $\gamma\geq\Omega$,  $\alpha=_{NF}\varphi 0\gamma$ and $\gamma\in R(\theta)$ then $\alpha\in R(\theta)$ and $G\alpha:=G\gamma+1$
\item[(R6)] If $\alpha=_{NF}\psi\gamma$ and $\gamma\in R(\theta)$ then $\alpha\in R(\theta)$ and $G\alpha:=G\gamma+1$
\end{description}
}\end{definition}
\begin{lemma}\label{!R}{\em
Every element $\alpha\in R(\theta)$ is included due to precisely one of the rules (R1)-(R6) and thus the complexity $G\alpha$ is uniquely defined.
}\end{lemma}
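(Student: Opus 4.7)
The plan is to establish two facts and combine them: (a) the clauses (R1)--(R6) are pairwise exclusive, so each $\alpha\in R(\theta)$ is produced by exactly one rule, and (b) within each clause the sub-data witnessing $\alpha$ are uniquely determined. Once these are in hand, the value $G\alpha$ is unambiguously defined by recursion on the $G$-complexity of these witnesses.

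For the mutual exclusivity in (a), I would classify the ordinals produced by each clause via a distinguishing invariant, and check the classes are disjoint. Clauses (R1) and (R2) produce only $0$, $\Omega$, or a strongly critical ordinal of the shape $\Gamma_\beta$ with $\beta\le\theta$; these are all distinct (using strict monotonicity of $\Gamma$) and each is strictly below $\Omega$ except for $\Omega$ itself. Clause (R3) yields exactly the non-additive-principal ordinals (the normal form condition $n>1$), so it never collides with (R1), (R2), (R4), (R5) or (R6), whose outputs are all additive principal. Clause (R4) yields additive principal but non-strongly-critical ordinals $<\Omega$: $\alpha=_{NF}\varphi\gamma\delta$ with $\gamma,\delta<\Omega$ forces $\alpha<\Omega$ since $\Omega$ is strongly critical, and the NF condition $\gamma<\alpha$ forbids $\alpha$ being strongly critical. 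Clause (R5) yields additive principal ordinals strictly above $\Omega$: from $\gamma\ge\Omega$ and the NF condition $\gamma<\alpha=\omega^\gamma$ we exclude $\gamma=\Omega$ (where $\omega^{\Omega}=\Omega$), hence $\gamma>\Omega$ and $\alpha>\Omega$. Clause (R6) yields strongly critical ordinals in the interval $[\Gamma_{\theta+1},\Omega)$, using Lemma \ref{card}(ii) for the upper bound and Lemma \ref{collapseprop}(v) for the lower bound. These categories are manifestly pairwise disjoint.

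For the uniqueness within each clause in (b), the case (R3) reduces to the standard uniqueness of the additive normal form as a non-increasing sum of additive principals. The cases (R4) and (R5) reduce to the uniqueness statement in Theorem \ref{phifunction}(iii) giving a unique $\varphi$-normal form $\varphi\gamma\delta$ for an additive principal $\alpha$ that is not itself strongly critical. The case (R6) uses Lemma \ref{collapseprop}(xi), which gives strict monotonicity of $\psi$ on the set $\{\gamma:\gamma\in B(\gamma)\}$ and hence injectivity, so that $\gamma$ is determined by $\alpha=\psi\gamma$.

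Putting (a) and (b) together, exactly one clause applies to $\alpha\in R(\theta)$ and its witness data is unique; since the witness ordinals have strictly smaller $G$-complexity, the recursion defining $G\alpha$ is unambiguous. The most delicate point is the non-overlap of (R6) with (R2), which is exactly where Lemma \ref{collapseprop}(v) is used in essential form, separating the range of the collapsing function from the $\Gamma_\beta$ listed in (R2); the remainder of the argument is routine case analysis driven by comparison with $\Omega$ and by whether the ordinal is (strongly) critical.
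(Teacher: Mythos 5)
Your proof is correct, and it is in substance the argument the paper intends: the paper disposes of this lemma with a one-line appeal to the preceding normal-form machinery, whereas you spell out the underlying case analysis (disjointness of the clauses via additive principality, strong criticality and position relative to $\Omega$; uniqueness of witnesses via Cantor normal form, Theorem \ref{phifunction}(iii), and the injectivity of $\psi$ on $\{\gamma:\gamma\in B(\gamma)\}$ from Lemma \ref{collapseprop}(xi)). No gaps; your identification of the (R2)/(R6) separation via Lemma \ref{collapseprop}(v) as the only non-routine point is exactly right.
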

\begin{proof}
This follows immediately from \ref{!NF}.
\end{proof}

\noindent Our goal is to turn $R(\theta)$ into a formal representation system, the main obstacle to this is that it is not immediately clear how to deal with the constraint $\gamma\in B(\gamma)$ in a computable way. This problem leads to the following definition.

\begin{definition}\label{K}{\em To each $\alpha\in R(\theta)$ we assign a set $K\alpha$ of ordinal terms by induction on the complexity $G\alpha$:
\begin{description}
\item[(K1)] $K0:=K\Omega:=K\Gamma_\beta:=\emptyset$ for all $\beta\leq\theta$.
\item[(K2)] If $\alpha=_{NF}\alpha_1+\ldots+\alpha_n$ then $K\alpha:=K\alpha_1\cup\ldots\cup K\alpha_n$.
\item[(K3)] If $\alpha=_{NF}\varphi \gamma\delta$ then $K\alpha:=K\gamma\cup K\delta$.
\item[(K4)] If $\alpha=_{NF}\psi\gamma$ then $K\alpha:=\{\gamma\}\cup K\gamma$.
\end{description}
}\end{definition}

\noindent $K\alpha$ consists of the ordinals that occur as arguments of the $\psi$ function in the normal form representation of $\alpha$. Note that each ordinal in $K\alpha$ belongs to $R(\theta)$ itself and has complexity lower than $G\alpha$.

\begin{lemma}\label{KK}{\em For any $\alpha,\eta\in R(\theta)$
\begin{equation*}
\alpha\in B(\eta)\quad\text{if and only if}\quad (\forall \xi\in K\alpha)(\xi<\eta)
\end{equation*}
}\end{lemma}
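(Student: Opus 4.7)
The plan is to prove the lemma by induction on the complexity $G\alpha$, reading off the recursive structure of $K\alpha$ from Definition \ref{K} and matching it against the corresponding clauses of Lemma \ref{!NF}. The base case $G\alpha=0$ covers $\alpha\in\{0,\Omega\}\cup\{\Gamma_\beta:\beta\leq\theta\}$; here $K\alpha=\emptyset$, so the right-hand side holds vacuously, and the left-hand side holds because $B(\eta)$ contains these constants by definition of $B_\theta$ in Definition \ref{theta}.

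For the induction step, the three clauses dispatch straightforwardly. If $\alpha=_{NF}\alpha_1+\ldots+\alpha_n$, then by Lemma \ref{!NF}(i), $\alpha\in B(\eta)$ iff each $\alpha_i\in B(\eta)$; applying the induction hypothesis to each $\alpha_i$ (each has complexity $<G\alpha$) and using $K\alpha=K\alpha_1\cup\ldots\cup K\alpha_n$ gives the equivalence. If $\alpha=_{NF}\varphi\gamma\delta$, the same pattern works via Lemma \ref{!NF}(ii) together with $K\alpha=K\gamma\cup K\delta$. The only slightly subtle case is $\alpha=_{NF}\psi\gamma$: Lemma \ref{!NF}(iii) gives $\alpha\in B(\eta)\iff\gamma\in B(\eta)\cap\eta$, i.e.\ $\gamma\in B(\eta)$ and $\gamma<\eta$. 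The induction hypothesis converts $\gamma\in B(\eta)$ into $(\forall\xi\in K\gamma)(\xi<\eta)$, and conjoining the extra clause $\gamma<\eta$ reproduces exactly the quantification over $K\alpha=\{\gamma\}\cup K\gamma$.

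There is no real obstacle here; the work has been done in Lemma \ref{!NF}, and the present lemma essentially just unwinds that characterization recursively, observing that the definition of $K\alpha$ was designed to collect precisely the $\psi$-arguments whose bound against $\eta$ is the residual content of $\alpha\in B(\eta)$ once the closure of $B(\eta)$ under $+$, $\varphi$ and the constants has been stripped away. One small bookkeeping point worth flagging is that the induction is on $G\alpha$ rather than on the ordinal $\alpha$ itself, and we rely on the fact recorded after Definition \ref{K} that every element of $K\alpha$ belongs to $R(\theta)$ and has strictly smaller complexity than $\alpha$, so the induction hypothesis is legitimately applicable to $\gamma$ in the $\psi$-case (and to $\alpha_i$, $\gamma$, $\delta$ in the other clauses).
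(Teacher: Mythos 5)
Your proof is correct and follows essentially the same route as the paper: induction on the complexity $G\alpha$, with the base case handled by $K\alpha=\emptyset$ and the three inductive cases dispatched by matching Lemma \ref{!NF}(i)--(iii) against the clauses (K2)--(K4), the $\psi$-case absorbing the extra condition $\gamma<\eta$ into the element $\gamma\in K\alpha$. Nothing further is needed.
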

\begin{proof}
The proof is by induction on $G\alpha$. If $G\alpha=0$ then $\alpha\in B(\eta)$ for any $\eta$, and $K\alpha=\emptyset$ by (K1) so the result holds.\\

\noindent Case 1. If $\alpha=_{NF}\alpha_1+\ldots+\alpha_n$ then $\alpha\in B(\eta)$ iff $\alpha_1,\ldots,\alpha_n\in B(\eta)$ by \ref{!NF}i). Now inductively $\alpha_1,\ldots,\alpha_n\in B(\eta)$ iff $(\forall\xi\in K\alpha_1\cup \ldots\cup K\alpha_n)(\xi<\eta)$, but by (K2) $K\alpha=K\alpha_1\cup \ldots\cup K\alpha_n$.\\

\noindent Case 2. If $\alpha=_{NF}\varphi\gamma\delta$ we may argue in a similar fashion to Case 1, using \ref{!NF}ii) and (K3) instead.\\

\noindent Case 3. If $\alpha=_{NF}\psi\gamma$ then $\alpha\in B(\eta)$ iff $\gamma\in B(\eta)\cap\eta$ by \ref{!NF}iii). Now by induction hypothesis $\gamma\in B(\eta)\cap \eta$ iff $(\forall\xi\in K\gamma)(\xi<\eta)\:\text{and}\:\gamma<\eta$, and by (K4) this occurs precisely when $(\forall\xi\in K\alpha)(\xi<\eta)$.
\end{proof}
\noindent Recall that $\theta$ is the rank of $X$.   Let
\begin{align*}
\mathcal{L}_{\theta}:&=\{0,\Omega,+,\varphi,\psi\}\cup\{\Gamma_{\xi}\::\:\xi\leq\theta\}\quad\text{and}\\
\mathcal{L}_{\theta}^*:&=\{s\;|\;s\text{ is a finite string of symbols from $\mathcal{L}_{\theta}$}\}.
\end{align*}
Now let $T(\theta)\subseteq \mathcal{L}_{\theta}^*$ be the set of strings that correspond to ordinals in $R(\theta)$ expressed in normal form. Owing to Lemma \ref{!R} there is a one to one correspondence between $T(\theta)$ and $R(\theta)$. The ordering on $T(\theta)$ induced from the ordering of the ordinals in $R(\theta)$ will be denoted $\prec$. To differentiate between elements of the two sets, Greek letters $\al,\beta,\gamma,\eta,\xi,\ldots$ range over ordinals and Roman letters $a,b,c,d,e,\ldots$ range over finite strings from $\mathcal{L}_{\theta}^*$.
\begin{theorem}\label{ttheta}{\em
 The set $T(\theta)$ and the relation $\prec$ on $T(\theta)$ are set primitive recursive in $\theta$.
}\end{theorem}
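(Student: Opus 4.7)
The plan is to give a simultaneous set primitive recursive definition, relative to the parameter $\theta$, of three things at once: the predicate $s\in T(\theta)$, the auxiliary set $Ks$ from Definition \ref{K}, and the ordering $s\prec t$ restricted to $T(\theta)$. Every clause in these definitions will refer only to strictly shorter strings (equivalently, to terms of strictly smaller complexity $G$), so the recursion is on a well-founded and plainly set primitive recursive relation. The parameter $\theta$ is used in only two essentially bounded ways: first, to test whether a proposed basic term $\Gamma_\xi$ satisfies $\xi\leq\theta$ (a set p.r.\ test in $\theta$), and second, to compare two such $\Gamma_\xi$'s via $\xi\in\xi'$.

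First I would fix a straightforward set primitive recursive coding of elements of $\mathcal{L}_\theta^*$ as finite sequences, and define the syntactic class of \emph{candidate strings}, namely those built by the formation rules of (R1)–(R6) \emph{without} regard to normal form or to $\prec$. Membership in this class, together with the decomposition into a head operation and immediate subterms, is obtained by a routine recursion on length. On this class I would define, by recursion on length, the set $Ks$ exactly as in Definition \ref{K} (clauses (K1)–(K4) all reduce $Ks$ to $K$ of strictly shorter strings). Simultaneously I would define $s\prec t$ by recursion on the sum of the lengths of $s$ and $t$, following Theorem \ref{phifunction}(ii) in the $\varphi$-clauses, Lemma \ref{collapseprop}(xi) in the $\psi$-clauses, the lexicographic rule on additive normal forms in the $+$-clauses, and the base comparisons $0\prec\Gamma_\beta\prec\Gamma_{\beta'}\prec\Omega$ (when $\beta\in\beta'\leq\theta$) and $\psi\gamma\prec\Omega$ at atoms.

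With $\prec$ available on strictly shorter strings, the normal form conditions can now be verified recursively: for (R3) one checks additive principality of each summand and the descending condition $s_1\succeq\dots\succeq s_n$; for (R4) and (R5) one checks $\gamma,\delta\prec\varphi\gamma\delta$; and for (R6), by Lemma \ref{KK}, the condition $\gamma\in B(\gamma)$ becomes the set p.r.\ test $(\forall\xi\in K\gamma)(\xi\prec\gamma)$, involving only strictly shorter strings via the already-computed $K\gamma$. Packaging these clauses gives $T(\theta)$ as a set p.r.\ predicate in $\theta$, and the uniquely canonical representation granted by Lemma \ref{!R} ensures that $\prec$ on $T(\theta)$ coincides with the ordering transported from $R(\theta)$.

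The main obstacle is the genuine simultaneity of the three recursions: deciding $s\in T(\theta)$ depends on $\prec$ among shorter strings, $\prec$ depends on decompositions guaranteed only by $T(\theta)$-membership, and the clause (R6) further depends on $K$. To discharge this cleanly I would let the recursion produce, for each ordinal stage $n<\omega$, the triple consisting of the set of length-$\leq n$ candidate strings that lie in $T(\theta)$, the restriction of $K$ to this set, and the restriction of $\prec$ to pairs of such strings; the passage from stage $n$ to stage $n+1$ uses only data from stage $n$ and the set p.r.\ comparison $\xi\leq\theta$, and is therefore itself set primitive recursive in $\theta$. Taking the union over $n$ yields the desired set primitive recursive representations of $T(\theta)$ and $\prec$.
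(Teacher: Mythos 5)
Your proposal is correct and follows essentially the same route as the paper: a simultaneous recursion on term complexity (equivalently, string length) defining membership in $T(\theta)$, the ordering $\prec$, and the auxiliary function $K$, with Lemma \ref{KK} used to convert the condition $\gamma\in B(\gamma)$ in (R6) into the decidable test $(\forall \xi\in K\gamma)(\xi\prec\gamma)$ on shorter terms. The only cosmetic difference is that you package the simultaneity explicitly as a staged triple, whereas the paper presents it as two intertwined procedures \textbf{A)} and \textbf{B)} and unfolds the normal-form check for $\varphi$-terms into an explicit case analysis on the second argument.
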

\begin{proof} Below a $\theta$-primitive recursive procedure means a procedure that is primitive recursive in the two parameters $\theta$ and the ordering $<_{\theta}$ on the ordinals $\xi\leq \theta$.
We need to provide the following two procedures
\begin{description}
\item[A)] A $\theta$-primitive recursive procedure which decides for $a\in \mathcal{L}_{\theta}^*$ whether $a\in T(\theta)$.
\item[B)] A $\theta$-primitive recursive procedure which decides for non-identical $a,b\in T(\theta)$ whether $a\prec b$ or $b\prec a$.
\end{description}
\noindent We define \textbf{A)} and \textbf{B)} simultaneously by induction on the term complexity $Ga$.\\

\noindent For the base stage of \textbf{A)} we have $0,\Omega\in T(\theta)$ and $\Gamma_{\xi}\in T(\theta)$ for all $\xi\leq  \theta$.\\

\noindent For the base stage of \textbf{B)} we have $0\prec\Gamma_{\xi}\prec\Omega$ for all $\xi\leq \theta$ and the terms $\Gamma_{\xi}$ inherit the ordering from $\theta$, for which we have access to an oracle.\\

\noindent For the inductive stage of \textbf{A)} we require the following 3 things:\\

\noindent \textbf{A1)} A $\theta$-primitive recursive procedure that on input $a_1,\ldots,a_n\in T(\theta)$ decides whether $a_1+\ldots+a_n\in T(\theta)$.\\

\noindent \textbf{A2)} A $\theta$-primitive recursive procedure that on input $a_1,a_2\in T(\theta)$ decides whether $\varphi a_1 a_2\in T(\theta)$.\\

\noindent \textbf{A3)} A $\theta$-primitive recursive procedure that on input $a\in T(\theta)$ decides whether $\psi a\in T(\theta)$.\\

\noindent For \textbf{A1)} we need to decide if $n>1$ and if $a_1\succeq \ldots\succeq a_n,$ which we can do by the induction hypothesis. We also need to decide if $a_1,\ldots,a_n$ are additive principal; all terms other than those of the form $b_1+\ldots+b_m$ ($m>1$) and $0$ are additive principal.\\

\noindent For \textbf{A2)}, first let $ORD_{\theta}$ denote the set of $\mathcal{L}_{\theta}$ strings which represent an ordinal (not necessarily in normal form), ie. each function symbol has the correct arity. Next we define the set of strings which correspond to the strongly critical ordinals, where $\equiv$ signifies identity of strings.
\begin{equation*}
SC_{\theta}:=\{\Omega\}\cup\{\Gamma_{\xi}\::\;\xi\leq \theta\}\cup\{a\in ORD_\theta\::\:a\equiv \psi b\}
\end{equation*}
We may decide membership of $SC_\theta$ in a $\theta$-primitive recursive fashion. For the decision procedure we split into cases based upon the form of $a_2$:
\begin{description}
\item[i)] If $a_2\equiv 0$ then $\varphi a_1a_2\in T(\theta)$ whenever $a_1\notin SC_{\theta}$
\item[ii)] If $a_2\in SC_{\theta}$ then $\varphi a_1a_2\in T(\theta)$ whenever $a_1\succeq a_2$ and $a_2\neq\Omega$.
\item[iii)] If $a_2\succ\Omega$ then $\varphi a_1a_2\in T(\theta)$ exactly when $a_1=0$.
\item[iv)] If $a_2\equiv b_1+\ldots+b_n\prec\Omega$, with $n>1$ then $\varphi a_1a_2\in T(\theta)$ regardless of the form of $a_1$.
\item[iv)] If $a_2\equiv\varphi b_1 b_2\prec\Omega$ then $\varphi a_1 a_2\in T(\theta)$ whenever $a_1\succeq b_1$.
\end{description}
For a rigorous treatment of the $\varphi$ function see \cite{sch77}.\\

\noindent The function $K$ from Definition \ref{K} lifts to a $\theta$-primitive recursive function on $T(\theta)$. Moreover every $b\in Ka$ is a member of $T(\theta)$ of lower complexity than $a$. Owing to Lemma \ref{KK}, for the decision procedure \textbf{A3)} we may first compute $Ka$, then check whether $(\forall b\in Ka)(b\prec a)$, which we may do by the induction hypothesis.\\

\noindent Finally for the inductive stage of \textbf{B)}, given two elements of $T(\theta)$ we may decide their ordering using the following procedure.
\begin{description}
\item[B1)] $0\prec a$ for every $a\neq 0$.
\item[B2)] $\Gamma_\xi\prec\Omega$ for every $\xi\leq \theta$.
\item[B3)] The elements $\Gamma_\xi$ inherit the ordering from $\theta$.
\item[B4)] If $a\in SC_\theta$ or $a\equiv\varphi bc$ then $a_1+\ldots+a_n\prec a$ if $a_1\prec a$.
\item[B5)] If $a\in SC_\theta$ then $\varphi bc\prec a$ if $b,c\prec a$.
\item[B6)] $\psi b\prec\Omega$ for all $b$.
\item[B7)] $\psi a\succ\Gamma_\xi$ for all $\xi\leq \theta$.
\item[B8)] $a_1+\ldots+a_n\prec b_1+\ldots+b_m$ \begin{minipage}[t]{15cm} if $n<m\:\text{and}\:(\forall i\leq n )[a_i\equiv b_i]$\\
$\text{or }\exists i\leq\text{min}(n,m)[\forall j<i (a_j=b_j)\text{ and }a_i\prec b_i].$\end{minipage}
\item[B9)] $\varphi a_1 b_1\prec\varphi a_2 b_2$ \begin{minipage}[t]{15cm} if $a_1\prec a_2\:\wedge\: b_1\prec\varphi a_2 b_2$\\
or  $a_1=a_2\:\wedge\:b_1\prec b_2$\\
or  $a_2\prec a_1\:\wedge\:\varphi a_1b_1\prec b_2$.\end{minipage}
\item[B10)] $\psi a\prec\psi b$ if $a\prec b$.
\end{description}
\end{proof}

\section{The proof theory of $\RSX$}
\subsection{A Tait-style sequent calculus formulation of \textbf{KP}}
\begin{definition}{\em The language of \textbf{KP} consists of free variables $a_0,a_1,\ldots$, bound variables $x_0,x_1,\ldots$, the binary predicate symbols $\in$, $\notin$ and the logical symbols $\vee, \wedge, \forall, \exists$ as well as parentheses $),($.\\

\noindent The atomic formulas are those of the form
\begin{equation*}
(a\in b)\quad ,\quad (a\notin b)
\end{equation*}
The formulas of $\textbf{KP}$ are defined inductively by:
\begin{description}
\item[i)] Atomic formulas are formulas.
\item[ii)] If $A$ and $B$ are formulas then so are $A\vee B$ and $A\wedge B$.
\item[iii)] If $A(b)$ is a formula in which the bound variable $x$ does not occur, then $\forall x A(x)$, $\exists x A(x)$, $(\forall x\in a)A(x)$ and $(\exists x\in a)A(x)$ are all formulas.
\end{description}
\noindent Quantifiers of the form $\exists x$ and $\forall x$ will be called unbounded and those of the form $(\exists x\in a)$ and $(\forall x\in a)$ will be referred to as bounded quantifiers.\\

\noindent A formula is said to be $\Delta_0$ if it contains no unbounded quantifiers. A formula is said to be $\Sigma$ ($\Pi$) if it contains no unbounded universal (existential) quantifiers.\\

\noindent The negation $\neg A$ of a formula $A$ is obtained from $A$ by undergoing the following operations:

\begin{description}
\item[i)]Replacing every occurrence of $\in$,$\notin$ with $\notin$,$\in$ respectively.
\item[ii)]Replacing any occurrence of $\wedge,\vee,\forall x,\exists x,(\forall x\in a),(\exists x\in a)$ with $\vee,\wedge,\exists x,\forall x,(\exists x\in a),(\forall x\in a)$ respectively.
\end{description}
\noindent Thus the negation of a formula $A$ is in negation normal form. The expression $A\rightarrow B$ will be considered shorthand for $\neg A\vee B$.\\

\noindent The expression $a=b$ is to be treated as an abbreviation for $(\forall x\in a)(x\in b)\wedge (\forall x\in b)(x\in a)$.\\

\noindent The derivations of \textbf{KP} take place in a Tait-style sequent calculus, finite sets of formulae denoted by Greek capital letters are derived. Intuitively the sequent $\Gamma$ may be read as the disjunction of formulae occuring in $\Gamma$.\\

\noindent The axioms of \textbf{KP} are:\\

\noindent \begin{tabular} {ll}
{\em {Logical axioms:}} &$\Gamma,A,\neg A$ for any formula $A$.\\
{\em {Extensionality:}} &$\Gamma,a=b\wedge B(a)\rightarrow B(b)$ for any formula $B(a)$.\\
{\em {Pair:}} & $\Gamma,\exists z(a\in z\wedge b\in z)$.\\
{\em {Union:}} & $\Gamma,\exists z(\forall y\in a)(\forall x\in y)(x\in z)$.\\
{\em {$\Delta_0$-Separation:}} & $\Gamma,\exists y[(\forall x\in y)(x\in a\wedge B(x))\wedge (\forall x\in a)(B(x)\rightarrow x\in y)]$\\
& for any $\Delta_0$-formula $B(a)$.\\
{\em {Set Induction:}} &$\Gamma,\forall x[(\forall y\in x F(y)\rightarrow F(x)]\rightarrow\forall x F(x)$ for any formula $F(a)$.\\
{\em {Infinity:}} &$\Gamma,\exists x[(\exists z\in x)(z\in x)\wedge(\forall y\in x)(\exists z\in x)(y\in z)]$.\\
{\em {$\Delta_0$-Collection:}} & $\Gamma,(\forall x\in a)\exists y G(x,y)\rightarrow\exists z(\forall x\in a)(\exists y\in z)G(x,y)$\\
& for any $\Delta_0$-formula $G$.
\end{tabular}\\

\noindent The rules of inference are
\begin{prooftree}
\AxiomC{$\Gamma,A$}
\AxiomC{$\Gamma,B$}
\LeftLabel{$(\wedge)$}
\BinaryInfC{$\Gamma,A\wedge B$}
\end{prooftree}
\begin{prooftree}
\AxiomC{$\Gamma, A$}
\LeftLabel{$(\vee)$}
\UnaryInfC{$\Gamma,A\vee B$}
\AxiomC{$\Gamma, B$}
\UnaryInfC{$\Gamma,A\vee B$}
\noLine
\BinaryInfC{}
\end{prooftree}
\begin{prooftree}
\Axiom$\fCenter\Gamma, a\in b\wedge F(a)$
\LeftLabel{$(b\exists)$}
\UnaryInf$\fCenter\Gamma,(\exists x\in b)F(x)$
\Axiom$\fCenter\Gamma, F(a)$
\LeftLabel{$(\exists)$}
\UnaryInf$\fCenter\Gamma,\exists xF(x)$
\noLine
\BinaryInf$\fCenter$
\end{prooftree}
\begin{prooftree}
\Axiom$\fCenter\Gamma, a\in b\rightarrow F(a)$
\LeftLabel{$(b\forall)$}
\UnaryInf$\fCenter\Gamma,(\forall x\in b)F(x)$
\Axiom$\fCenter\Gamma, F(a)$
\LeftLabel{$(\forall)$}
\UnaryInf$\fCenter\Gamma,\forall xF(x)$
\noLine
\BinaryInf$\fCenter$
\end{prooftree}
\begin{prooftree}
\AxiomC{$\Gamma,A$}
\AxiomC{$\Gamma,\neg A$}
\LeftLabel{(Cut)}
\BinaryInfC{$\Gamma$}
\end{prooftree}
In both $(b\forall)$ and $(\forall)$, the variable $a$ must not be present in the conclusion, such a variable is referred to as the \textit{eigenvariable} of the inference.\\

\noindent The \textit{minor formulae} of an inference are those rendered prominently in the premises, the other formulae in the premises will be referred to as \textit{side formulae}. The \textit{principal} formula of an inference is the one rendered prominently in the conclusion. Note that the principal formula can also be a side formula of that inference, when this happens we say that there has been a \textit{contraction}. The rule $\Cut$ has no principal formula.
}\end{definition}
\noindent Formally, bounded and unbounded quantifiers are treated as logically separate operations. However, it is important to know and ensure that they interact with one another as expected. 
\begin{lemma}{\em The following are derivable within \textbf{KP}:
\begin{description}
\item[i)]$(\forall x\in b)F(x)\leftrightarrow\forall x(x\in b\rightarrow F(x))$.
\item[ii)]$(\exists x\in b)F(x)\leftrightarrow\exists x (x\in b\wedge F(x))$.
\end{description}
}\end{lemma}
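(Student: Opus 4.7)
The plan is to derive all four implications (both directions of (i) and (ii)) directly in the Tait-style sequent calculus, using only the logical axioms and the quantifier rules — no set-theoretic axiom of $\KP$ is needed, since the lemma concerns purely the logical interplay between bounded and unbounded quantifiers as separately introduced primitives.

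The common building block for all four derivations is the sequent $a\in b \wedge \neg F(a),\; a\notin b \vee F(a)$. This is constructed by taking the two logical axioms $\{a\in b, a\notin b\}$ and $\{F(a), \neg F(a)\}$, weakening each via $(\vee)$ to attach $a\notin b \vee F(a)$ as a side formula, and then applying $(\wedge)$ to combine the resulting principal formulas $a\in b$ and $\neg F(a)$. From this starting point, the forward direction of (i) — the sequent $(\exists x\in b)\neg F(x),\; \forall x(x\notin b \vee F(x))$ — follows by applying $(b\exists)$ to introduce the bounded existential, then $(\forall)$ with eigenvariable $a$ to close the unbounded universal. The backward direction — the sequent $\exists x(x\in b \wedge \neg F(x)),\; (\forall x\in b)F(x)$ — follows symmetrically, applying $(\exists)$ and then $(b\forall)$ in place of $(b\exists)$ and $(\forall)$. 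Part (ii) is completely analogous, with the roles of $\exists/\forall$ and $\wedge/\vee$ interchanged throughout.

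There is no substantive obstacle to these derivations: the eigenvariable conditions are trivially met since $a$ is a fresh free variable appearing nowhere else in the relevant end-sequents, no cut is invoked, and all the ingredients are already in place from the axiomatisation just introduced. If anything, the lemma serves mainly to confirm that treating bounded and unbounded quantifiers as formally distinct connectives — a syntactic convenience that will be exploited throughout the infinitary system $\RSX$ in later sections — does not disturb their expected semantic equivalence via $\in$-relativisation.
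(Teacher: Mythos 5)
Your derivation is correct and follows essentially the same route as the paper: both start from the sequent $a\in b\wedge\neg F(a),\; a\in b\rightarrow F(a)$ and obtain each implication by applying the bounded quantifier rule first and then the unbounded one (so that the eigenvariable condition is met), finishing with $(\vee)$ and $(\wedge)$ to assemble the biconditional. The only cosmetic difference is that the paper invokes this starting sequent directly as an instance of the logical axiom schema $\Gamma,A,\neg A$ (which holds for arbitrary formulae $A$), whereas you rebuild it from atomic axioms via $(\vee)$ and $(\wedge)$ -- both are fine.
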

\begin{proof}
We verify only i) as the proof of ii) is very similar. First note that $a\in b\wedge\neg F(a), a\in b\rightarrow F(a)$ is a logical axiom of \textbf{KP}, we have the following derivation in \textbf{KP}.
\begin{prooftree}
\Axiom$\fCenter a\in b\wedge\neg F(a),a\in b\rightarrow F(a)$
\LeftLabel{$(b\exists)$}
\UnaryInf$\fCenter (\exists x\in b)\neg F(x),a\in b\rightarrow F(a)$
\LeftLabel{$(\forall)$}
\UnaryInf$\fCenter (\exists x\in b)\neg F(x),\forall x(x\in b\rightarrow F(x))$
\LeftLabel{$(\vee)$ twice}
\UnaryInf$\fCenter (\forall x\in b)F(x)\rightarrow\forall x (x\in b\rightarrow F(x))$
\Axiom$\fCenter a\in b\wedge\neg F(a), a\in b\rightarrow F(a)$
\LeftLabel{$(\exists)$}
\UnaryInf$\fCenter \exists x(x\in b\wedge \neg F(x)), a\in b\rightarrow F(a)$
\LeftLabel{$(b\forall)$}
\UnaryInf$\fCenter\exists x(x\in b\wedge \neg F(x)), (\forall x\in b)F(x)$
\LeftLabel{$(\vee)$ twice}
\UnaryInf$\fCenter\forall x (x\in b\rightarrow F(x))\rightarrow (\forall x\in b)F(x)$
\LeftLabel{$(\wedge)$}
\BinaryInf$\fCenter(\forall x\in b)F(x)\leftrightarrow\forall x (x\in b\rightarrow F(x))$
\end{prooftree}
\end{proof}
\subsection{The infinitary system $\RSX$}
\noindent Let $X$ be an arbitrary (well founded) set and let $\theta$ be the set-theoretic rank of $X$ (hereby referred to as the $\in$-rank). Henceforth all ordinals are assumed to belong to the ordinal notation system  $T(\theta)$ developed in the previous section. The system $\RSX$ will be an infinitary proof system based on $L_\OO(X)$; the relativised constructible hierarchy up to $\OO$.
\begin{definition}\label{level1}{\em
We give an inductive definition of the set $\mathcal{T}$ of $\RSX$ terms, to each term $t\in\mathcal{T}$ we assign an ordinal level $\lev t$
\begin{description}
\item[i)] For every $u\in TC(\{X\})$, $\bar{u}\in\mathcal{T}$ and $\lev{\bar{u}}:=\Gamma_{\text{rank}(u)}$ [here rank$(u)$ is the $\in$-rank of $u$ and $TC$ denotes the \textit{transitive closure} operator.] Note that $\mbox{rank}(u)\leq\theta$.
\item[ii)]For every $\alpha<\Omega$, $\mathbb{L}_\alpha (X)\in\mathcal{T}$ and $\lev{\mathbb{L}_\al(X)}:=\Gamma_{\theta+1} + \alpha$.
\item[iii)]If  $\alpha<\Omega$, $A(a,b_1,\ldots,b_n)$ is a formula of \textbf{KP} with all free variables displayed and $s_1,\ldots,s_n$ are terms with levels less than $\Gamma_{\theta +1}+\alpha$ then
\begin{equation*}
[x\in\mathbb{L}_\alpha (X) | A(x,s_1,\ldots,s_n)^{\mathbb{L}_\alpha(X)}]
\end{equation*}
is a term of level $\Gamma_{\theta+1} + \alpha$. Here the superscript $\mathbb{L}_\alpha (X)$ indicates that all unbounded quantifiers occuring in $A$ are replaced by quantifiers bounded by $\mathbb{L}_\alpha (X)$.
\end{description}
}\end{definition}
\noindent The terms of $\RSX$ are to be viewed as purely formal, syntactic objects. However their names are highly suggestive of the intended interpretation in the relativised constructible hierarchy up to $\Omega$.
\begin{definition}{\em
The formulae of $\RSX$ are of the form $A(s_1,\ldots,s_n)$, where $A(a_1,\ldots,a_n)$ is a formula of $\KP$ with all free variables displayed and $s_1,\ldots,s_n$ are $\RSX$ terms.\\

\noindent Formulae of the form $\bar{u}\in\bar{v}$ and $\bar{u}\notin\bar{v}$ will be referred to as \textit{basic}. The properties $\Delta_0$, $\Sigma$ and $\Pi$ are inherited from $\KP$ formulae.
}\end{definition}
\noindent Note that the system $\RSX$ does not contain free variables\\

\noindent For the remainder of this section we shall refer to $\RSX$ terms and formulae simply as terms and formulae.\\

\noindent For any formula $A$ we define
\begin{align*}
k(A):=&\{\lev t\;|\;\text{$t$ occurs in $A$, subterms included}\}\\
&\cup\{\Omega\;|\;\text{if $A$ contains an unbounded quantifier}\}.
\end{align*}
If $\Gamma$ is a finite set of the $\RSX$ formulae $A_1,\ldots,A_n$ then we define
\begin{equation*}
k(\Gamma):=k(A_1)\cup\ldots\cup k(A_n).
\end{equation*}
\begin{abbreviatio}\label{abbreviations}{\em $\phantom{a}$
\begin{description}
\item[i)] For $\RSX$ terms $s$ and $t$, the expression $s=t$ will be considered as shorthand for
\begin{equation*}(\forall x\in s)(x\in t)\wedge (\forall x\in t)(x\in s).
\end{equation*}
\item[ii)] If $\lev s<\lev t$,  $A(s,t)$ is an $\RSX$ formula and $\Diamond$ is a propositional connective we define:
\begin{equation*}
s\dotin t \:\Diamond\: A(s,t):= \begin{cases} s\in t\:\Diamond\: A(s,t) &\mbox{if } t \equiv \bar{u} \\
A(s,t) & \mbox{if } t \equiv \mathbb{L}_\alpha (X)\\
B(s)\:\Diamond\: A(s,t) & \mbox{if } t\equiv [x\in\mathbb{L}_\alpha (X)\;|\;B(x)]
 \end{cases}
\end{equation*}
\end{description}
}\end{abbreviatio}
\noindent Our aim will be to remove cuts from certain $\RSX$ derivations of $\Sigma$ sentences. In order to do this we need to express a certain kind of uniformity in infinite derivations. The right tool for expressing this uniformity was developed by Buchholz in \cite{bu93} and is termed \textit{operator control}.
\begin{definition}\label{operator}{\em
Let $\mathcal{P}(\textbf{ON}):=\{Y : \text{ Y is a set of ordinals}\}$. A class function
\begin{equation*}
\mathcal{H}:\mathcal{P}(\textbf{ON})\rightarrow\mathcal{P}(\textbf{ON})
\end{equation*}
is called an Operator if the following conditions are satisfied for $Y,Y^\prime\in\mathcal{P}(\textbf{ON})$
\begin{description}
\item[(H1)] $0\in\mathcal{H}(Y)$ and $\Gamma_\beta\in\mathcal{H}(Y)$ for any $\beta\leq\theta+1$.
\item[(H2)] If $\alpha=_{\text{NF}}\alpha_1+\ldots+\alpha_n$ then $\alpha\in\mathcal{H}(Y)$ iff $\alpha_1,\ldots,\alpha_n\in\mathcal{H}(Y)$.
\item[(H3)] If $\alpha=_{\text{NF}}\varphi\alpha_1 \alpha_2$ then $\alpha\in\mathcal{H}(Y)$ iff $\alpha_1,\alpha_2\in\mathcal{H}(Y)$
\item[(H4)] $Y\subseteq\mathcal{H}(Y)$
\item[(H5)] $Y^\prime\subseteq\mathcal{H}(Y)\Rightarrow\mathcal{H}(Y^\prime)\subseteq\mathcal{H}(Y)$
\end{description}
}\end{definition}
\noindent Note that this definition of operator, as with the infinitary system $\RSX$ is dependent on the set $X$ and its $\in$-rank $\theta$.
\begin{abbreviatio}{\em For an operator $\mathcal{H}$:
\begin{description}
\item[i)] We write $\alpha\in\mathcal{H}$ instead of $\alpha\in\mathcal{H}(\emptyset)$.
\item[ii)] Likewise $Y\subseteq\mathcal{H}$ is shorthand for $Y\subseteq\mathcal{H}(\emptyset)$.
\item[iii)] For any $\RSX$ term $t$, $\mathcal{H}[t](Y):=\mathcal{H}(Y\cup\lev t)$.
 \item[iv)] If $\mathfrak{X}$ is an $\RSX$ formula or set of formulae then $\mathcal{H}[\mathfrak{X}](Y):=\mathcal{H}(Y\cup k(\mathfrak{X}))$.
\end{description}
}\end{abbreviatio}
\begin{lemma}\label{RSXop}{\em
Let $\CH$ be an operator $s$ an $\RSX$ term and $\mathfrak{X}$ an $\RSX$ formula or set of formulae.
\begin{description}
\item[i)] If $Y\subseteq Y^\prime$ then $\CH(Y)\subseteq\CH(Y^\prime)$.
\item[ii)] $\CH[s]$ and $\CH[\mathfrak{X}]$ are operators.
\item[iii)] If $\lev s\in\CH$ then $\CH[s]=\CH$.
\item[iv)] If $k(\mathfrak{X})\subseteq\CH$ then $\CH[\mathfrak{X}]=\CH$.
\end{description}
}\end{lemma}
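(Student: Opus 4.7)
The plan is to verify each of (i)--(iv) in turn, noting that all four statements rest mainly on the closure clauses (H4) and (H5) of Definition \ref{operator}, while (H1)--(H3) are only needed in (ii) to transfer ``operator'' status to the modified operators $\CH[s]$ and $\CH[\mathfrak{X}]$.

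For (i), the point is that from $Y\subseteq Y^\prime$ and (H4) we get $Y\subseteq Y^\prime\subseteq\CH(Y^\prime)$, whereupon (H5) (taking $Y$ in the role of the ``$Y^\prime$'' of (H5) and $Y^\prime$ in the role of the ``$Y$'') delivers $\CH(Y)\subseteq\CH(Y^\prime)$.

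For (ii), I would check (H1)--(H5) for $\CH[s]$; the verification for $\CH[\mathfrak{X}]$ is identical with $k(\mathfrak{X})$ in place of $\lev s$. Clauses (H1)--(H3) transfer immediately because they depend only on which ordinals lie in the image of the operator, and for each input $Y$ the set $\CH[s](Y)=\CH(Y\cup\lev s)$ is itself a value of $\CH$. For (H4), we simply observe
\[
Y\;\subseteq\;Y\cup\lev s\;\subseteq\;\CH(Y\cup\lev s)\;=\;\CH[s](Y),
\]
using (H4) for $\CH$. For (H5), suppose $Y^\prime\subseteq\CH[s](Y)=\CH(Y\cup\lev s)$; since $\lev s\subseteq\CH(Y\cup\lev s)$ again by (H4), we get $Y^\prime\cup\lev s\subseteq\CH(Y\cup\lev s)$, and then (H5) for $\CH$ yields
\[
\CH[s](Y^\prime)\;=\;\CH(Y^\prime\cup\lev s)\;\subseteq\;\CH(Y\cup\lev s)\;=\;\CH[s](Y).
\]

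For (iii) and (iv), the idea is the same for both. Assuming $\lev s\in\CH=\CH(\emptyset)$, one inclusion $\CH(Y)\subseteq\CH(Y\cup\lev s)=\CH[s](Y)$ is a direct instance of (i). For the reverse inclusion, from $\lev s\subseteq\CH(\emptyset)\subseteq\CH(Y)$ (by (i)) together with $Y\subseteq\CH(Y)$ from (H4), we obtain $Y\cup\lev s\subseteq\CH(Y)$, and then (H5) gives $\CH[s](Y)=\CH(Y\cup\lev s)\subseteq\CH(Y)$. Part (iv) follows verbatim with $k(\mathfrak{X})$ replacing $\lev s$. There is no real obstacle here: the lemma is a collection of routine book-keeping consequences of (H1)--(H5), packaged so that they can be invoked repeatedly in the operator-controlled derivations developed later.
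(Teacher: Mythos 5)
Your verification is correct: each part follows from (H4) and (H5) exactly as you argue, with (H1)--(H3) transferring to $\CH[s]$ and $\CH[\mathfrak{X}]$ because they hold for every value of $\CH$. The paper itself gives no details here (it only remarks that the results are easily checked and defers to a reference), and your argument is precisely the routine verification being alluded to.
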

\begin{proof}
These results are easily checked, they are proved in full in \cite{mi99}.
\end{proof}
\begin{definition}{\em
If $\mathcal{H}$ is an operator, $\alpha$ an ordinal and $\Gamma$ a finite set of $\RSX$-formulae, we give an inductive definition of the relation $\CH\;\prov{\alpha}{}{\GA}$ by recursion on $\al$. (\textit{$\mathcal{H}$-controlled derivability in $\RSX$}.) We require always that
\begin{equation*}
\{\alpha\}\cup k(\Gamma)\subseteq\mathcal{H}
\end{equation*}
this condition will not be repeated in the inductive clauses pertaining to the axioms and inference rules below.
We have the following axioms:
\begin{align*}
\provx{\mathcal{H}}{\alpha}{}{\Gamma,\bar{u}\in\bar{v}}\quad &\text{if}\quad u,v\in TC(X)\:\text{and}\: u\in v\\
\provx{\mathcal{H}}{\alpha}{}{\Gamma,\bar{u}\notin\bar{v}}\quad &\text{if}\quad u,v\in TC(X)\:\text{and}\: u\notin v.
\end{align*}
The following are the inference rules of $\RSX$, the column on the right gives the requirements on the ordinals, terms and formulae for each rule.
$$ \begin{array}{lcr}

(\wedge) & \inftwo{\CH}{\al_0}{\GA,A}{\CH}{\al_1}{\GA,B}
{\CH}{\al}{\GA,A\wedge B}
& \al_0,\al_1 < \alpha \\[0.6cm]

(\vee) & \infone{\mathcal H}{\alpha_0}{}
{\Gamma,C\quad\text{for some $C\in\{A,B\}$}} {\mathcal H}{\alpha}{}{\GA, A\vee B}
& \alpha_{0} < \alpha
\end{array}$$

$$\begin{array}{lcr}
(\notin) & \infone{\mathcal H[s]}{\alpha_s}{}
{\Gamma,s\dotin t\rightarrow r\neq s\quad\text{for all $\lev s<\lev t$}} {\mathcal H}{\alpha}{}{\GA, r\notin t}
&\begin{array}{r}\alpha_{s} < \alpha\\ \text{$r\in t$ is not basic}
\end{array}\\[0.6cm]

(\in) & \infone{\CH}{\al_0}{}{\Gamma,s\dotin t\wedge r=s}
{\CH}{\al}{}{\Gamma, r\in t} &\begin{array}{r} \al_0<\al\\
\lev s<\lev t\\
\lev s<\Gamma_{\theta+1}+\al\\
\text{$r\in t$ is not basic}\end{array}
\end{array}$$

$$\begin{array}{lcr}
(b\forall) &\infone{\CH[s]}{\al_s}{}{\GA,s\dotin t\rightarrow A(s)\quad\text{for all $\lev s<\lev t$}}
{\CH}{\al}{}{\GA,(\forall x\in t)A(x)} & \al_s<\al \\[0.6cm]

(b\exists) &\infone{\CH}{\al_0}{}{\GA, s\dotin t\wedge A(s)}
{\CH}{\al}{}{\GA, (\exists x\in t)A(x)} & \begin{array}{r} \al_0<\al\\
\lev s<\lev t\\
\lev s<\Gamma_{\theta+1}+\al\end{array}
\end{array}$$

$$\begin{array}{lcr}
(\forall) &\infone{\CH[s]}{\al_s}{}{\GA, A(s)\quad\text{for all $s$}}
{\CH}{\al}{}{\GA,\forall xA(x)} & \al_s<\al\\[0.6cm]

(\exists) &\infone{\CH}{\al_0}{}{\GA, A(s)}
{\CH}{\al}{}{\GA,\exists xA(x)} & \begin{array}{r} \al_0<\al\\
\lev s<\Gamma_{\theta+1}+\al\end{array}
\end{array}$$

$$\begin{array}{lcr}
\Cut & \inftwo{\CH}{\al_0}{\Gamma,A}{\CH}{\al_0}{\GA,\neg A}
{\CH}{\al}{\GA} & \al_0<\al\\[0.6cm]

\SRX &\infone{\CH}{\al_0}{}{\Gamma, A}
{\CH}{\al}{}{\GA,\exists zA^z} & \begin{array}{r} \al_0,\OO<\al\\
A\text{ is a $\Sigma$ formula}
\end{array}
\end{array}$$
}\end{definition}
\noindent $A^z$ results from $A$ by restricting all unbounded quantifiers in $A$ to $z$. The reason for the condition preventing the derivation of basic formulas in the rules $(\in)$ and $(\notin)$ is to prevent derivations of sequents which are already axioms, as this would cause a hindrance to cut-elimination. The condition that $\lev s<\Gamma_{\theta+1}+\alpha$ in $(\in)$ and $(\exists)$ inferences will allow us to place bounds on the location of witnesses in derivable $\Sigma$ formulas.

\subsection{Cut elimination for $\RSX$}

\noindent We need to keep track of the complexity of cuts appearing in a derivation, to this end we define the \textit{rank} of an $\RSX$ formula.
\begin{definition}{\em
The rank of a term or formula is defined by recursion on the construction as follows:
\begin{description}
\item[1.] $rk(\bar{u}):=\Gamma_{\textit{rank}(u)}$
\item[2.] $rk(\mathbb{L}_\alpha (X)):=\Gamma_{\theta +1} + \omega\cdot\alpha$
\item[3.] $rk([x\in\mathbb{L}_\alpha (X)|F(x)]):=\text{max}(\Gamma_{\theta +1} + \omega\cdot\alpha +1, rk(F(\bar{\emptyset}))+2)$
\item[4.] $rk(s\in t):=rk(s\notin t):=\text{max}(rk(t)+1,rk(s)+6)$
\item[5.] $rk((\exists x\in \bar{u})F(x)):=rk((\forall x\in \bar{u})F(x)):=\text{max}(rk(\bar{u})+3,rk(F(\bar{\emptyset}))+2)$.
\item[6.] $rk((\exists x\in t)F(x)):=rk((\forall x\in t)F(x)):=\text{max}(rk(t),rk(F(\bar{\emptyset}))+2)$ if $t$ is not of the form $\bar{u}$.
\item[7.] $rk(\exists xF(x)):=rk(\forall xF(x)):=\text{max}(\OO,rk(F(\bar\emptyset))+1)$
\item[8.] $rk(A\wedge B):=rk(A\vee B):=\text{max}(rk(A),rk(B))+1$
\end{description}
$\provx{\mathcal{H}}{\alpha}{\rho}{\Gamma}$ will be used to denote that $\provx{\mathcal{H}}{\alpha}{}{\Gamma}$ and all cut formulas appearing in the derivation have rank $<\rho$.
}\end{definition}

\begin{observation}\label{obs}{\em
\begin{description}
\item[i)] For each term $t$, $rk(t)=\omega\cdot\lev t+n$ for some $n<\omega$.
\item[ii)] For each formula $A$, $rk(A)=\omega\cdot\text{max}(k(A))+n$ for some $n<\omega$.
\item[iii)] $rk(A)<\OO$ if and only if $A$ is $\Delta_0$.
\end{description}
}\end{observation}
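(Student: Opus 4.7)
The plan is to establish (i) and (ii) by a simultaneous induction on the build-up of terms and formulae, and then deduce (iii) as an immediate corollary of (ii). The whole argument rests on two arithmetic facts about the ordinals appearing in levels: first, every $\Gamma_\beta$ is strongly critical, so in particular $\omega\cdot\Gamma_\beta=\Gamma_\beta$, which gives the key identity $\omega\cdot(\Gamma_{\theta+1}+\alpha)=\Gamma_{\theta+1}+\omega\cdot\alpha$; second, $\Omega$, being an uncountable cardinal, satisfies $\omega\cdot\gamma<\Omega$ for all $\gamma<\Omega$, and $\omega\cdot\Omega=\Omega$.

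For (i), the base cases are immediate: $rk(\bar{u})=\Gamma_{\mathrm{rank}(u)}=\omega\cdot\lev{\bar u}$, and $rk(\mathbb{L}_\alpha(X))=\Gamma_{\theta+1}+\omega\cdot\alpha=\omega\cdot\lev{\mathbb{L}_\alpha(X)}$ via the identity above. The one case worth attention is a separation term $t=[x\in\mathbb{L}_\alpha(X)\,|\,F(x)]$. Here I would use the formation rule \ref{level1} (iii): all subterms of $F$ have level strictly below $\Gamma_{\theta+1}+\alpha$ and every quantifier in $F$ is bounded by $\mathbb{L}_\alpha(X)$, so every ordinal in $k(F(\emt))$ is at most $\Gamma_{\theta+1}+\alpha=\lev{t}$, and in particular $\Omega\notin k(F(\emt))$. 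The induction hypothesis applied to $F(\emt)$ (simpler in the simultaneous measure) then gives $rk(F(\emt))=\omega\cdot\max(k(F(\emt)))+m\leq\omega\cdot\lev{t}+m$ for some $m<\omega$; combining with $\Gamma_{\theta+1}+\omega\alpha+1=\omega\cdot\lev{t}+1$ produces the required shape $\omega\cdot\lev{t}+n$.

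For (ii), I would run through each clause of the definition of $rk$ and check that every finite offset $(+1,+2,+3,+6)$ can be absorbed into the finite remainder. The typical case is $rk(s\in t)=\max(rk(t)+1,rk(s)+6)$, where, by the induction hypothesis and the fact that in ordinal arithmetic $\omega\xi+p<\omega\eta+q$ whenever $\xi<\eta$ (regardless of finite shifts), the maximum collapses to $\omega\cdot\max(\lev{s},\lev{t})+n'$ for some $n'<\omega$. For unbounded quantifier clauses the additional ordinal $\Omega$ enters $k(A)$, which forces $\omega\cdot\max(k(A))\geq\Omega$ and absorbs any $rk(F(\emt))+1$ via $\omega\cdot\Omega=\Omega$; the bounded-quantifier cases split into $t\equiv\bar{u}$ (where the extra $+3$ on $rk(\bar u)$ is there precisely to leave room for the uniform formula construction) and the general case, both handled by the same bookkeeping.

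Finally (iii) follows painlessly from (ii) together with a quick inspection of levels: by \ref{level1} every term level is an ordinal strictly below $\Omega$, because $\Gamma_{\theta+1}<\Omega$ (its cardinality is at most $\max(\aleph_0,|\theta|)<\Omega$) and $\alpha<\Omega$. Hence for $\Delta_0$ formulae $\max(k(A))<\Omega$, giving $rk(A)=\omega\cdot\max(k(A))+n<\Omega$; conversely, any unbounded quantifier places $\Omega$ into $k(A)$ and pushes $rk(A)\geq\omega\cdot\Omega=\Omega$. The proof is entirely routine; the only mild care required is in the separation-term case of (i), where one must exploit the explicit restriction of $F$'s quantifiers to $\mathbb{L}_\alpha(X)$ to keep $\max k(F(\emt))$ bounded by $\lev{t}$.
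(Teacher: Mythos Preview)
Your argument is correct. The paper itself offers no proof of this Observation --- it is simply stated and then used --- so there is nothing to compare against; the simultaneous induction you outline is precisely what any careful reader would supply, and you have correctly identified the one delicate point (the separation-term case of (i), where the restriction of all quantifiers to $\mathbb{L}_\alpha(X)$ is needed to bound $\max(k(F(\bar\emptyset)))$ by $\lev t$).
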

\noindent The next Lemma shows that the rank of a formula $A$ is determined \textit{only} by max$(k(A))$ and the logical structure of $A$.
\begin{lemma}\label{rksub}{\em
For each formula $A(s)$, if $\lev s<\text{max}(k(A(s)))$ then $rk(A(s))=rk(A(\bar{\emptyset}))$.
}\end{lemma}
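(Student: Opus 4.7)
The plan is to proceed by induction on the construction of $A(a)$, where $a$ marks the positions occupied by $s$ in $A(s)$. At each step one reads off $rk(A(s))$ and $rk(A(\bar\emptyset))$ from the appropriate clause of the rank definition and verifies, with the help of Observation \ref{obs} and the hypothesis $|s|<\max(k(A(s)))$, that both expressions evaluate to the same ordinal. The guiding principle is that $|s|$ is strictly dominated by the maximum of $k(A(s))$ while $|\bar\emptyset|=\Gamma_0$ sits at or below every term level occurring in $A$; hence the maximum is witnessed by a subterm or by $\Omega$ that is untouched by the substitution, and the contributions of $s$ and $\bar\emptyset$ are absorbed by this dominating witness in the rank formulas.

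First the atomic cases $A(a)\equiv b\in c$ (and dually for $\notin$) are handled via clause~4. Putting $rk(s)=\omega\cdot|s|+n_s$ via Observation \ref{obs}(i), the hypothesis forces both $rk(s)+6$ and $rk(\bar\emptyset)+6$ to lie strictly below $rk(t)+1$ for the dominating operand $t$, so the max in clause~4 depends only on $t$. When $s$ occurs not directly as an operand but inside a set-abstraction term of the form $[x\in\mathbb{L}_\alpha(X)\mid B(x,s)]$, the level of the outer term remains $\Gamma_{\theta+1}+\alpha$, and clause~3 reduces the outer rank to a maximum involving $rk(B(\bar\emptyset,s))$, to which the inductive hypothesis applies.

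The propositional cases (clause~8) and unbounded-quantifier cases (clause~7) follow at once from the inductive hypothesis applied to the immediate subformulae. The main obstacle is the bounded-quantifier case $A(a)\equiv(\exists x\in a)F(x)$ or $(\forall x\in a)F(x)$ with $s$ non-basic, since the two instances fall under different clauses: clause~6 for $A(s)$ gives $\max(rk(s),rk(F(\bar\emptyset))+2)$, while clause~5 for $A(\bar\emptyset)$ gives $\max(rk(\bar\emptyset)+3,rk(F(\bar\emptyset))+2)$. The hypothesis $|s|<\max(k(A(s)))$ forces $|s|<\max(k(F(\bar\emptyset)))$, so Observation \ref{obs} places both $rk(s)$ and $rk(\bar\emptyset)+3$ strictly below $rk(F(\bar\emptyset))+2$, collapsing both clauses to the common value $rk(F(\bar\emptyset))+2$. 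When $s$ occurs inside $F$ rather than as the bound, the inductive hypothesis applied to $F$ concludes the argument.
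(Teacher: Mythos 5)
Your overall strategy is exactly the paper's: induction on the build-up of $A$, reading off both ranks from the relevant clauses of the rank definition, and using Observation \ref{obs} to show that whichever of $s$, $\bar\emptyset$ fails to witness $\max(k(\cdot))$ contributes a rank that is absorbed by the dominating component. The atomic cases, the clause~5/clause~6 mismatch when $s$ is the quantifier bound, and the propositional and unbounded cases all match the paper's treatment. (The digression about $s$ sitting inside a set-abstraction term is vacuous in this setting: $s$ is substituted for a free variable of a $\KP$-formula, so it occupies whole argument positions and is never a proper subterm of another term of $A$.)

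The one step that does not go through as written is your closing sentence, that when $s$ occurs inside the matrix $F$ the induction hypothesis applied to $F$ concludes the argument. For $A(s)\equiv(\exists y\in\bar u)B(y,s)$ the hypothesis $\lev s<\max(k(A(s)))$ only places $\lev s$ below $\max\bigl(\lev{\bar u},\max(k(B(\bar\emptyset,s)))\bigr)$; if this maximum is witnessed by the bound $\bar u$, the precondition of the induction hypothesis for $B(\bar\emptyset,s)$ fails, and the sub-ranks can genuinely differ: for $B(y,s)\equiv y\in s$ with $\lev s<\lev{\bar u}$ one has $rk(B(\bar\emptyset,s))=rk(s)+1$ but $rk(B(\bar\emptyset,\bar\emptyset))=\Gamma_0+6$. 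The lemma survives in that sub-case only because both $rk(B(\bar\emptyset,s))+2$ and $rk(B(\bar\emptyset,\bar\emptyset))+2$ are then dominated by $rk(\bar u)+3$ --- i.e.\ by the absorption mechanism of your own guiding principle rather than by the induction hypothesis. This is precisely the split 4.1/4.2 in the paper's proof; add that case distinction (and its analogue for bounds not of the form $\bar u$) and the argument is complete.
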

\begin{proof}The proof is by induction on the complexity of $A$.\\

\noindent Case 1. If $A(s)\equiv s\in t$ then by assumption $\lev s<\lev t$, so $rk(A(s))=rk(t)+1=rk(A(\bar{\emptyset}))$.\\

\noindent Case 2. If $A(s)\equiv t\in s$ we may argue in a similar fashion to Case 1.\\

\noindent Case 3. It cannot be the case that $A(s)\equiv s\in s$.\\

\noindent Case 4. If $A(s)\equiv (\exists y\in \bar{u})B(y,s)$ then
\begin{equation*}
rk(A(s))=\text{max}(rk(\bar{u})+3,rk(B(\bar{\emptyset},s))+2)
\end{equation*}
and
\begin{equation*}
rk(A(\bar{\emptyset}))=\text{max}(rk(\bar{u})+3,rk(B(\bar{\emptyset},\bar{\emptyset}))+2).
\end{equation*}
4.1 If $\lev{\bar{u}}>\text{max}(k(B(\bar{\emptyset},\bar{\emptyset})))$ then $\lev{s}<\lev{\bar{u}}$ by assumption, so using observation \ref{obs}ii) gives us
\begin{equation*}
rk(A(s))=rk(\bar{u})+3=rk(A(\bar{\emptyset})).
\end{equation*}
4.2 If $\lev{\bar{u}}\leq\text{max}(k(B(\bar{\emptyset},\bar{\emptyset}))$ then $\lev s<\text{max}(k(B(\bar{\emptyset},\bar{\emptyset})))$ by assumption, so by induction hypothesis
\begin{equation*}
rk(B(\bar{\emptyset},s))=rk(B(\bar{\emptyset},\bar{\emptyset}))
\end{equation*}
and hence using Observation \ref{obs}ii) gives us
\begin{equation*}
rk(A(s))=rk(B(\bar{\emptyset},\bar{\emptyset}))+2=rk(A(\bar{\emptyset})).
\end{equation*}
Case 5. If $A(s)\equiv (\exists y\in t)B(y,s)$ for some $t$ not of the form $\bar{u}$, we may argue in a similar way to case 4.\\

\noindent Case 6. $A(s)\equiv (\exists y\in s)B(y,s)$, now $\lev s<\text{max}(k(A(\bar{\emptyset})))=\text{max}(k(B(\bar{\emptyset},\bar{\emptyset})))$, so by induction hypothesis
\begin{equation*}
rk(B(\bar{\emptyset},s))=rk(B(\bar{\emptyset},\bar{\emptyset}))
\end{equation*}
and hence using observation \ref{obs} we see that
\begin{align*}
rk(A(s))&=rk(B(\bar{\emptyset},s))+2\\
&=rk(B(\bar{\emptyset},\bar{\emptyset}))+2\\
&=rk(A(\bar{\emptyset})).
\end{align*}
Case 7. If $A(s)\equiv\exists xB(x,s)$ then by assumption $\lev s<\text{max}(k(A(s)))=\text{max}(k(B(\emptyset,s)))$ so we may apply the induction hypothesis to see that $rk(A(s))=\text{max}(\OO,rk(B(\emptyset,s))+1)=\text{max}(\OO,rk(B(\emptyset,\emptyset))+1)=rk(A(\emptyset)).$\\

\noindent Case 8. All other cases are either propositional in which case we may just use the induction hypothesis directly or are dual to cases already considered.
\end{proof}
\begin{definition}{\em To each non-basic formula $A$ we assign an infinitary disjunction $(\bigvee A_i)_{i\in y}$ or conjunction $(\bigwedge A_i)_{i\in y}$ as follows:
\begin{description}
\item[1.] $r\in t :\simeq\bigvee (s\dotin t\wedge r=s)_{\lev s<\lev t}$ provided $r\in t$ is not a basic formula.
\item[2.] $(\exists x\in t)B(x):\simeq\bigvee(s\dotin t\wedge B(s))_{\lev s<\lev t}$
\item[3.] $\exists xB(x):\simeq\bigvee(B(s))_{s\in\mathcal{T}}$
\item[4.] $B_0\vee B_1 :\simeq\bigvee(B_i)_{i\in\{0,1\}}$
\item[5.] $\neg B :\simeq\bigwedge (\neg B_i)_{i\in y}$ if $B$ is of the form considered in 1.-4.
\end{description}
The idea is that the infinitary conjunction or disjunction lists the premises required to derive $A$ as the principal formula of an $\RSX$-inference different from $\SRX$ or $\Cut$.
}\end{definition}
\begin{lemma} \label{ranklemma}
If $A\simeq(\bigvee A_i)_{i\in y}$ or $A\simeq(\bigwedge A_i)_{i\in y}$ then
\begin{equation*}
\forall i\in y(rk(A_i)<rk(A))
\end{equation*}
\end{lemma}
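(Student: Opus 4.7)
The plan is to proceed by case analysis on the five clauses defining the decomposition $A\simeq\bigvee A_i$ (with clause 5 covering the negated cases). Clause 5 is handled uniformly at the start by noting that the rank definition is symmetric under formal negation of the outermost connective: clauses 4--8 of the rank definition assign the same rank to $s\in t$ and $s\notin t$, to bounded $\forall$ and $\exists$, to unbounded $\forall$ and $\exists$, and to $A\wedge B$ and $A\vee B$, so it suffices to treat clauses 1--4. Clause 4 ($B_0\vee B_1$) is immediate from clause 8 of the rank definition. For the remaining three clauses the general tools are Observation~\ref{obs} (which gives $rk(F)=\omega\cdot\max(k(F))+n$ for some $n<\omega$ and so $rk(F)<\Omega$ iff $F$ is $\Delta_0$), Lemma~\ref{rksub} (which lets us substitute $s$ by $\bar{\emptyset}$ without changing the rank provided $\lev s$ is not maximal in $k$), and the fact that every $\RSX$-term has level strictly below $\Omega$ (since $\lev{\bar u}=\Gamma_{\mathrm{rank}(u)}\le\Gamma_\theta<\Gamma_{\theta+1}<\Omega$, and the other term types have level $\Gamma_{\theta+1}+\alpha$ for some $\alpha<\Omega$, which again lies below $\Omega$).

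Clause 3 ($\exists xB(x)$, with $A_s=B(s)$) is the prototype: if $\lev s<\max k(B(\bar{\emptyset}))$ then Lemma~\ref{rksub} yields $rk(B(s))=rk(B(\bar{\emptyset}))<rk(B(\bar{\emptyset}))+1\le rk(\exists xB)$; otherwise $\max k(B(s))=\lev s<\Omega$, so no unbounded quantifier can occur in $B$ (else $\Omega\in k(B(\bar{\emptyset}))$ would exceed $\lev s$), whence $B(s)$ is $\Delta_0$ and $rk(B(s))<\Omega\le rk(\exists xB)$ by Observation~\ref{obs}(iii). The same dichotomy will underlie every subsequent case: for any subformula $F(s)$ with $\lev s<\lev t$ one bounds $rk(F(s))$ either by $rk(F(\bar{\emptyset}))$ via Lemma~\ref{rksub}, or by $\omega\cdot\lev s+n$ which is in turn dominated by $rk(t)$ using $\lev s<\lev t$ and the strong criticality of $\Gamma_{\theta+1}$ (in particular closure under ordinal multiplication by $\omega$ and under addition of finite ordinals).

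The substantive work lies in clauses 1 and 2, namely $r\in t\simeq\bigvee(s\dotin t\wedge r=s)_{\lev s<\lev t}$ and $(\exists x\in t)B(x)\simeq\bigvee(s\dotin t\wedge B(s))_{\lev s<\lev t}$. Here one unfolds $s\dotin t$ into its three sub-cases according to the form of $t$. If $t\equiv\mathbb{L}_\alpha(X)$ the $s\dotin t$ conjunct disappears by Abbreviation~\ref{abbreviations}(ii) and we are back in an earlier case. If $t\equiv\bar u$, the constraint $\lev s<\lev t=\Gamma_{\mathrm{rank}(u)}$ together with the level inventory recalled above forces $s\equiv\bar v$ for some $v$ with $\mathrm{rank}(v)<\mathrm{rank}(u)$, so $s\in t$ is basic with $rk(s\in t)=rk(\bar u)+1$ (the summand $rk(s)+6$ is absorbed by strong criticality of $rk(\bar u)=\Gamma_{\mathrm{rank}(u)}$). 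If $t\equiv[x\in\mathbb{L}_\alpha(X)\mid C(x)]$ then $s\dotin t=C(s)$, and the split above is applied to both $C(s)$ and the remaining conjunct ($B(s)$ or $r=s$, the latter unfolded to a conjunction of bounded universals and treated in the same way). The main obstacle is bookkeeping the rank arithmetic cleanly: one must check in every sub-case that the rank of $A$ dominates the rank of each conjunct of $A_s$ by an additive constant of at least $2$, which is enough to absorb the outer ``$+1$'' from clause 8 of the rank definition applied to the conjunction $s\dotin t\wedge\ldots$, yielding the strict inequality $rk(A_s)<rk(A)$ throughout.
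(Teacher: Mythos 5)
Your proposal is correct and follows essentially the same route as the paper's proof: induction on the build-up of $A$, a case split on which clause of the $\simeq$-assignment applies and on the three possible forms of the bounding term $t$, with Observation \ref{obs} and Lemma \ref{rksub} supplying the two alternatives for bounding $rk(F(s))$ (either by $rk(F(\bar\emptyset))$ or by $\omega\cdot\lev s+n<rk(t)$). The only cosmetic differences are that you dispatch the $\bigwedge$ case by the negation-symmetry of the rank definition where the paper simply appeals to duality, and you organize the dichotomy by comparing $\lev s$ with $\max(k(\cdot))$ rather than by whether an unbounded quantifier occurs; the remaining rank arithmetic you defer to bookkeeping is exactly what the paper's sub-cases carry out.
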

\begin{proof}
We need only treat the case where $A\simeq(\bigvee A_i)_{i\in y}$ since the other case is dual to this one. We proceed by induction on the complexity of $A$.\\

\noindent Case 1. Suppose $A\equiv r\in t$ then by assumption either $r$ or $t$ is not of the form $\bar{u}$, we split cases based on the form of $t$.\\

\noindent 1.1 If $t\equiv\bar{u}$ then $r$ is not of the form $\bar v$ and $rk(A)=rk(r)+6$. In this case $A_i\equiv\bar{v}\in\bar{u}\wedge\bar{v}=r$ for some $\lev{\bar{v}}<\lev{\bar{u}}$ and we have
\begin{align*}
rk(A_i)&=\text{max}(rk(\bar{v}\in\bar{u}),rk(\bar{v}=r))+1\\
&=rk(\bar{v}=r)+1\\
&=\text{max}(rk((\forall x\in\bar{v})(x\in r)),rk((\forall x\in r)(x\in \bar{v})))+2\\
&=rk(r)+5<rk(r)+6=rk(A)
\end{align*}
1.2 If $t\equiv\mathbb{L}_\alpha (X)$ then $A_i\equiv s=r$ for some $\lev s<\lev t$. So we have
\begin{align*}
rk(A_i)&=rk((\forall x\in s)(x\in r)\wedge(\forall x\in r)(x\in s))\\
&=\text{max}(rk(s)+4,rk(r)+4)\\
&<\text{max}(rk(r)+1,rk(t)+6)=rk(A)
\end{align*}
1.3 If $t\equiv [x\in\mathbb{L}_\alpha (X)|B(x)]$ then $A_i\equiv B(s)\wedge s=r$ for some $\lev s<\lev t$. So we have
\begin{equation*}
rk(A_i)=\text{max}(rk(B(s))+1,rk(r=s)+1).
\end{equation*}
First note that $rk(r=s)+1=\text{max}(rk(s)+5,rk(r)+5)<rk(A)$. So it remains to verify that $rk(B(s))+1<rk(A)$, for this it is enough to show that $rk(B(s))<rk(t)$.\\

\noindent 1.3.1 If $\text{max}(k(B(s)))\leq\lev s$ then by Observation \ref{obs}ii) we have $rk(B(s))+1<\omega\cdot\lev s +\omega\leq rk(t)$.\\

\noindent 1.3.2 Otherwise $\text{max}(k(B(s)))>\lev s$ then by Lemma \ref{rksub} we have
\begin{align*}
rk(B(s))+1&=rk(B(\bar\emptyset))+1\\
&<\text{max}(\Gamma_{\theta+1}+\omega\cdot\alpha +1, rk(B(\bar\emptyset))+2)=rk(t)
\end{align*}

\noindent Case 2. Suppose $A\equiv (\exists x\in t)B(x)$, we split into cases based on the form of $t$.\\

\noindent 2.1 If $t\equiv\bar u$ then $rk(A):=\text{max}(rk(\bar{u})+3,rk(B(\bar{\emptyset}))+2)$. In this case $A_i\equiv\bar{v}\in\bar{u}\wedge B(\bar{v})$ for some $\lev{\bar{v}}<\lev{\bar{u}}$, so we have
\begin{equation*}
rk(A_i)=\text{max}(rk(\bar{u})+2,rk(B(\bar{v}))+1).
\end{equation*}
Clearly $rk(\bar{u})+2<rk(\bar{u})+3$ so it remains to verify that $rk(B(\bar{v}))+1<rk(A)$\\

\noindent 2.1.1 If $|\bar{v}|\geq\text{max}(k(B(\bar{v})))$ then by Observation \ref{obs}i) $rk(B(\bar{v}))+1<rk(\bar{u})<rk(\bar{u})+3$.\\

\noindent 2.1.2 If $|\bar{v}|<\text{max}(k(B(\bar{v})))$ then by Lemma \ref{rksub} $rk(B(\bar{v}))+1=rk(B(\bar{\emptyset}))+1<rk(B(\bar{\emptyset}))+2$.\\

\noindent 2.2 Now suppose $t\equiv\mathbb{L}_\alpha (X)$, so $rk(A)=\text{max}(rk(t), rk(B(\bar\emptyset))+2)$. In this case $A_i=B(s)$ for some $\lev s<\lev t$.\\

\noindent 2.2.1 If $\lev s\geq\text{max}(k(B(s)))$ then $rk(B(s))<rk(t)$ by Observation \ref{obs}.\\

\noindent 2.2.2 If $\lev s<\text{max}(k(B(s)))$ then by Lemma \ref{rksub} $rk(B(s))=rk(B(\bar\emptyset))<rk(A)$.\\

\noindent 2.3. Now suppose $t\equiv [y\in\mathbb{L}_\alpha (X)\:|\:C(y)]$, so we have
\begin{align*}
rk(A)&:=\text{max}(rk(t),rk(B(\bar\emptyset))+2)\\
&=\text{max}(\Gamma_{\theta+1}+\omega\cdot\alpha+1,rk(C(\bar\emptyset))+2,rk(B(\bar\emptyset))+2).
\end{align*}

\noindent In this case $A_i\equiv C(s)\wedge B(s)$ for some $\lev s<\lev t$.\\

\noindent 2.3.1 If $\lev s<\text{max}(k(B(s)))$ then $rk(B(s))+1=rk(B(\bar\emptyset))+1<rk(B(\bar\emptyset))+2$. It remains to show that $rk(C(s))<rk(A)$.\\

\noindent 2.3.1.1 If $\text{max}(k(C(s)))<\lev t$ then $rk(C(s))+1<rk(t)$ by Observation \ref{obs}.\\

\noindent 2.3.1.2 Now if $\text{max}(k(C(s)))\geq\lev t$ then we may apply Lemma \ref{rksub} to give
\begin{equation*}
rk(C(s))+1=rk(C(\bar\emptyset))+1<rk(C(\bar\emptyset))+2\leq rk(A).
\end{equation*}

\noindent 2.3.2 If $\lev s\geq\text{max}(k(B(s)))$ then $rk(B(s))<\Gamma_{\theta+1}+\omega\cdot\alpha$ by Observation \ref{obs}. Now we may apply the same argument as in 2.3.1.1 and 2.3.1.2 to yield $rk(C(s))+1<rk(A)$.\\

\noindent Case 3. If $A\equiv\exists xB(x)$ then $rk(A):=\text{max}(\OO,rk(B(\bar\emptyset))+1)$. In this case $A_i\equiv B(s)$ for some term $s$.\\

\noindent 3.1 If $B$ contains an unbounded quantifier then by Lemma \ref{rksub} $rk(B(s))=rk(B(\bar\emptyset))<rk(A)$.\\

\noindent 3.2 If $B$ does not contain an unbounded quantifier then $rk(B(s))<\OO$ by Observation \ref{obs}iii)\\

\noindent Case 4. If $A\equiv B\vee C$ then the result is clear immediately from the definition of $rk(A)$.
\end{proof}
\begin{lemma}\label{xinversion}{\em Let $\mathcal{H}$ be an arbitrary operator.  \begin{description}
\item[i)] If $\alpha\leq\alpha^{\prime}\in\mathcal{H}$, $\rho\leq\rho^{\prime}$, $k(\Gamma^{\prime})\subseteq\mathcal{H}$ and $\provx{\mathcal{H}}{\alpha}{\rho}{\Gamma}$
then $\provx{\mathcal{H}}{\alpha^{\prime}}{\rho^{\prime}}{\Gamma,\Gamma^{\prime}}$.
\item[ii)] If $C$ is a basic formula which holds true in the set $X$ and $\provx{\mathcal{H}}{\alpha}{\rho}{\Gamma,\neg C}$ then $\provx{\mathcal{H}}{\alpha}{\rho}{\Gamma}$.
\item[iii)] If $\provx{\mathcal{H}}{\alpha}{\rho}{\Gamma,A\vee B}$ then $\provx{\mathcal{H}}{\alpha}{\rho}{\Gamma,A,B}$.
\item[iv)] If $A\simeq\bigwedge(A_i)_{i\in y}$ and $\provx{\mathcal{H}}{\alpha}{\rho}{\Gamma,A}$ then $(\forall i\in y)\:\provx{\mathcal{H}[i]}{\alpha}{\rho}{\Gamma,A_i}$.
\item[v)] If $\gamma\in\mathcal{H}$ and $\provx{\mathcal{H}}{\alpha}{\rho}{\Gamma,\forall xF(x)}$ then $\provx{\mathcal{H}}{\alpha}{\rho}{\Gamma,(\forall x\in\mathbb{L}_\gamma (X))F(x)}$.
\end{description}
}\end{lemma}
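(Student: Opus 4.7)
All five parts proceed by induction on $\alpha$, analyzing the last inference of the given derivation; the closure properties of operators collected in Lemma \ref{RSXop} guarantee that control conditions are preserved when passing to premises and when reproducing rules with a modified end-sequent. The unifying template is: if the last inference does not touch the distinguished formula, apply the inductive hypothesis to each premise and re-apply the same rule; if it does, extract the relevant premises and convert them into a derivation of the new sequent, possibly with a single new bottom inference.

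Part (i) is a routine weakening. Every axiom schema tolerates extra side formulas, and every inference remains valid once the hypotheses $\alpha' \in \mathcal{H}$ and $k(\Gamma') \subseteq \mathcal{H}$ are used to discharge the control requirement $\{\alpha'\} \cup k(\Gamma, \Gamma') \subseteq \mathcal{H}$. Parts (iii) and (iv) are the standard inversion arguments for $\bigvee$- and $\bigwedge$-formulas: when the target formula is principal at the last step, the required premises are already present and weakening via (i) restores any missing side formulas; otherwise one inverts through the last rule using the induction hypothesis. Lemma \ref{ranklemma} guarantees every $A_i$ has strictly lower rank than $A$, so these inversions do not disturb the cut-rank bound $\rho$.

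Part (ii) requires a genuine case analysis of how $\neg C$ can appear in the final sequent. If the final line is an axiom, the basic formula witnessing truth in $X$ cannot be $\neg C$ itself, because $\neg C$ is false in $X$ whereas the axiom-witness must be true in $X$; hence the witness already lies in $\Gamma$, and $\Gamma$ itself is an axiom. No inference rule produces a basic formula as principal formula: the rules $(\in)$ and $(\notin)$ explicitly exclude basic conclusions, and every other rule has a non-atomic principal formula by construction. Therefore in any inferential step $\neg C$ can occur only as a side formula, so the induction hypothesis applied to each premise, followed by re-application of the rule, finishes the case.

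Part (v) is the substantive step. The only non-routine case is when the last rule is $(\forall)$ with principal formula $\forall x F(x)$, yielding $\provx{\mathcal{H}[s]}{\alpha_s}{\rho}{\Gamma, F(s)}$ for every term $s$ with $\alpha_s < \alpha$. Restricting to those $s$ with $\lev{s} < \Gamma_{\theta+1} + \gamma = \lev{\mathbb{L}_\gamma(X)}$ and invoking the convention of Abbreviations \ref{abbreviations} that $s \dotin \mathbb{L}_\gamma(X) \to F(s)$ literally abbreviates $F(s)$, a single application of $(b\forall)$ with bound $\mathbb{L}_\gamma(X)$ delivers $\provx{\mathcal{H}}{\alpha}{\rho}{\Gamma, (\forall x \in \mathbb{L}_\gamma(X))F(x)}$; the hypothesis $\gamma \in \mathcal{H}$ together with (H1)--(H3) places $\lev{\mathbb{L}_\gamma(X)} = \Gamma_{\theta+1} + \gamma$ inside $\mathcal{H}$, so the control condition for the new inference is met. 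Every other final rule is pushed through by the inductive hypothesis in the usual way. The only real obstacle across the whole lemma is the axiom case in (ii); the remaining parts are essentially bookkeeping on top of the operator closure laws.
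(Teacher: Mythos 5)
Your proposal is correct and follows essentially the same route as the paper: axiom/last-inference case analysis by induction on $\alpha$, the observation in (ii) that a false basic formula can be neither the active part of an axiom nor the principal formula of any rule, and in (v) the identification $s\dotin\mathbb{L}_\gamma(X)\to F(s)\equiv F(s)$ for $\lev s<\Gamma_{\theta+1}+\gamma$ followed by one $(b\forall)$. The only point you gloss over is that in (v) the premises of the final $(\forall)$ may still contain $\forall xF(x)$ as a side formula (a contraction), so one must first apply the induction hypothesis to those premises to replace that occurrence by $(\forall x\in\mathbb{L}_\gamma(X))F(x)$ before restricting to the terms of level below $\Gamma_{\theta+1}+\gamma$ and applying $(b\forall)$ --- exactly as the paper does.
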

\begin{proof}
All proofs are by induction on $\alpha$.\\

\noindent i) If $\Gamma$ is an axiom then $\Gamma,\Gamma^\prime$ is also an axiom, and since $\{\alpha^\prime\}\cup k(\Gamma^\prime)\subseteq\mathcal{H}$ there is nothing to show.\\

\noindent Now suppose $\Gamma$ is the result of an inference
\begin{prooftree}
\Axiom$\ldots\mathcal{H}_i\:\fCenter\prov{\alpha_i}{\rho}{\Gamma_i\ldots}$
\RightLabel{$ (i\in y)\quad\alpha_i<\alpha$}
\LeftLabel{(I)}
\UnaryInf$\mathcal{H}\:\fCenter\prov{\alpha}{\rho}{\Gamma}$
\end{prooftree}
Using the induction hypothesis we have
\begin{equation*}
\ldots\provx{\mathcal{H}_i}{\alpha_i}{\rho^\prime}{\Gamma_i,\Gamma^\prime}\ldots\quad (i\in y)\quad\alpha_i<\alpha
\end{equation*}
It's worth noting that $k(\Gamma^\prime)\subseteq\mathcal{H}_i$, since $\mathcal{H}_i(\emptyset)\supseteq\mathcal{H}(\emptyset)$, this can be observed by looking at each inference rule.\\

\noindent Finally we may apply the inference (I) again to obtain
\begin{equation*}
\provx{\mathcal{H}}{\alpha^\prime}{\rho^\prime}{\Gamma,\Gamma^\prime}
\end{equation*}
as required.\\

\noindent ii) If $\Gamma,\neg C$ is an axiom then so is $\Gamma$ so there is nothing to show.\\

\noindent Now suppose $\Gamma,\neg C$ was derived as the result of an inference rule (I), then $\neg C$ cannot have been the principal formula since it is basic so we have the premise(s)
\begin{equation*}
\provx{\mathcal{H}_i}{\alpha_i}{\rho}{\Gamma_i,\neg C}\quad \alpha_i<\alpha.
\end{equation*}
Now by induction hypothesis we obtain
\begin{equation*}
\provx{\mathcal{H}_i}{\alpha_i}{\rho}{\Gamma_i}\quad \alpha_i<\alpha
\end{equation*}
to which we may apply the inference rule (I) to complete the proof.\\

\noindent iii) If $\GA,A\vee B$ is an axiom then $\Gamma,A,B$ is also an axiom. If $A\vee B$ was not the principal formula of the last inference then we can apply the induction hypothesis to its premises and then the same inference again.\\

\noindent Now suppose that $A\vee B$ was the principal formula of the last inference. So we have
\begin{equation*}
\provx{\mathcal{H}}{\alpha_0}{\rho}{\Gamma,C}\quad\text{or}\quad\provx{\mathcal{H}}{\alpha_0}{\rho}{\Gamma,C,A\vee B}\quad\text{where $C\in\{A,B\}$ and $\alpha_0<\alpha$}
\end{equation*}
By i) we may assume that we are in the latter case. By the induction hypothesis, and a contraction, we obtain
\begin{equation*}
\provx{\mathcal{H}}{\alpha_0}{\rho}{\Gamma,A,B}
\end{equation*}
Finally using i) yields
\begin{equation*}
\provx{\mathcal{H}}{\alpha}{\rho}{\Gamma,A,B}.
\end{equation*}
\noindent iv) If $\Gamma,A$ is an axiom, then $\Gamma$ is also an axiom since $A$ cannot be the \textit{active part} of an axiom, so $\Gamma,A_i$ is an axiom for any $i\in y$. If $A$ was not the principal formula of the last inference then we may apply the induction hypothesis to its premises and then use that inference again.\\

\noindent Now suppose $A$ was the principal formula of the last inference. With the possible use of part $i)$, we may assume we are in the following situation:
\begin{equation*}
\provx{\mathcal{H}[i]}{\alpha_i}{\rho}{\Gamma, A, A_i}\quad (\forall i\in y)\quad \alpha_i<\alpha.
\end{equation*}
Inductively and via a contraction we obtain
\begin{equation*}
\provx{\mathcal{H}[i]}{\alpha_i}{\rho}{\Gamma,A_i}.
\end{equation*}
Here it is important to note that $\CH[i][i]\equiv\CH[i]$. To which we may apply part i) to obtain
\begin{equation*}
\provx{\mathcal{H}[i]}{\alpha}{\rho}{\Gamma,A_i}
\end{equation*}
as required.\\

\noindent v) The interesting case is where $\forall xF(x)$ was the principal formula of the last inference. In this case we may assume we are in the following situation:
\begin{equation*}
\tag{1}\provx{\mathcal{H}[s]}{\alpha_s}{\rho}{\Gamma, \forall xF(x), F(s)}\quad \text{for all terms $s$, with $\alpha_s<\alpha$.}
\end{equation*}
Using the induction hypothesis yields
\begin{equation*}
\tag{2}\provx{\mathcal{H}[s]}{\alpha_s}{\rho}{\Gamma, (\forall x\in\mathbb{L}_\gamma(X))F(x), F(s)}
\end{equation*}
Note that for $\lev s<\Gamma_{\theta+1}+\gamma$ we have $s\dotin\mathbb{L}_\gamma (X)\rightarrow F(s)\equiv F(s)$. So as a subset of (2) we have
\begin{equation*}
\provx{\mathcal{H}[s]}{\alpha_s}{\rho}{\Gamma, (\forall x\in\mathbb{L}_\gamma(X))F(x), s\dotin\mathbb{L}_\gamma (X)\rightarrow F(s)}\quad \text{for all $\lev{s}<\Gamma_{\theta+1}+\gamma$, with $\alpha_s<\alpha$.}
\end{equation*}
From which one application of $(b\forall)$ gives us the desired result.
\end{proof}

\begin{lemma}[Reduction for $\RSX$]\label{xreduction}{\em Suppose $C\equiv\bar{u}\in\bar{v}$ or $C\simeq\bigvee (C_i)_{i\in y}$ and $rk(C):=\rho\neq\Omega$.
\begin{equation*}
\text{If}\quad [\provx{\mathcal{H}}{\alpha}{\rho}{\Lambda,\neg C}\quad \&\quad \provx{\mathcal{H}}{\beta}{\rho}{\Gamma, C}]\quad\text{then}\quad\provx{\mathcal{H}}{\alpha+\beta}{\rho}{\Lambda,\Gamma}
\end{equation*}
}\end{lemma}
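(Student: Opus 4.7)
The plan is to proceed by induction on $\beta$, splitting cases on the last inference $(I)$ of the given derivation $\provx{\mathcal{H}}{\beta}{\rho}{\Gamma,C}$. If $\Gamma,C$ is an axiom, either the axiom witness already lies inside $\Gamma$ (so $\Lambda,\Gamma$ is itself an axiom), or $C$ is the axiom witness, which forces $C\equiv\bar u\in\bar v$ with $u\in v$. In the latter case $\neg C$ is a basic formula false in the ambient set, so Lemma~\ref{xinversion}(ii) yields $\provx{\mathcal{H}}{\alpha}{\rho}{\Lambda}$, and weakening via Lemma~\ref{xinversion}(i) gives the goal. When $(I)$ is not an axiom and $C$ is merely a side formula of $(I)$, the induction hypothesis applied individually to each premise (after first lifting the left derivation of $\Lambda,\neg C$ to the possibly extended operator used in that premise), followed by a re-application of $(I)$, closes this case.

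The main case is when $C$ is the principal formula of $(I)$. Since basic formulas are introduced in $\RSX$ only as axioms, the alternative $C\equiv\bar u\in\bar v$ is impossible here, so $C\simeq\bigvee(C_i)_{i\in y}$ and $(I)$ is one of $(\vee),(\in),(b\exists),(\exists)$; the rule $\SRX$ is excluded automatically because its principal formula has rank exactly $\Omega$, whereas $\rho\neq\Omega$. The premise of $(I)$ then has the form $\provx{\mathcal{H}}{\beta_0}{\rho}{\Gamma,C_{i_0}}$ for some $i_0\in y$, $\beta_0<\beta$, with $\lev{i_0}\in\mathcal{H}$ (by inspection: $\lev{i_0}$ is either trivially in $\mathcal{H}$, as for $(\vee)$, or appears inside $k$ of the premise sequent and is therefore controlled). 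Inversion via Lemma~\ref{xinversion}(iv), applied to $\provx{\mathcal{H}}{\alpha}{\rho}{\Lambda,\neg C}$ using $\neg C\simeq\bigwedge(\neg C_i)_{i\in y}$, yields $\provx{\mathcal{H}[i_0]}{\alpha}{\rho}{\Lambda,\neg C_{i_0}}$, which collapses to $\provx{\mathcal{H}}{\alpha}{\rho}{\Lambda,\neg C_{i_0}}$ since $\lev{i_0}\in\mathcal{H}$ (Lemma~\ref{RSXop}(iii)). By Lemma~\ref{ranklemma} we have $rk(C_{i_0})<rk(C)=\rho$, so a cut on $C_{i_0}$ stays within the cut-rank bound $\rho$ and delivers $\provx{\mathcal{H}}{\max(\alpha,\beta_0)+1}{\rho}{\Lambda,\Gamma}$. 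Since $\beta_0<\beta$ forces $\max(\alpha,\beta_0)+1\leq\alpha+\beta$, a final appeal to Lemma~\ref{xinversion}(i) gives the desired $\provx{\mathcal{H}}{\alpha+\beta}{\rho}{\Lambda,\Gamma}$.

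The most delicate point is the operator bookkeeping around the inversion step: one must verify, for each of the four potentially principal rules, that the distinguished index $i_0$ is already controlled by $\mathcal{H}$, so that $\mathcal{H}[i_0]=\mathcal{H}$ and no uncontrolled widening occurs. This is a routine but unavoidable check. The condition $\rho\neq\Omega$ is exactly what is needed to keep the reduction self-contained at the current cut rank: it rules out $\SRX$ as the principal rule for $C$, which would otherwise require a separate treatment linked to collapsing rather than the straightforward sub-formula cut used above.
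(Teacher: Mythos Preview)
Your overall structure matches the paper's, but there is a gap in the principal-formula case: you assume the premise of $(I)$ has the shape $\provx{\mathcal{H}}{\beta_0}{\rho}{\Gamma,C_{i_0}}$ with $C$ absent. In a Tait-style calculus this need not hold. The rule instance deriving $\Gamma,C$ may have been applied with side-formula set $\Gamma'=\Gamma\cup\{C\}$, so that the actual premise is $\Gamma,C,C_{i_0}$ (a contraction on $C$). Your direct cut between $\Gamma,C,C_{i_0}$ and $\Lambda,\neg C_{i_0}$ then only yields $\Lambda,\Gamma,C$, not $\Lambda,\Gamma$, and you have no ordinal slack left to remove the surviving $C$.

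The paper handles exactly this by \emph{first} invoking the induction hypothesis on the premise: from $\provx{\mathcal{H}}{\alpha}{\rho}{\Lambda,\neg C}$ and $\provx{\mathcal{H}}{\beta_0}{\rho}{\Gamma,C,C_{i_0}}$ (one can always assume $C$ is present by weakening) the IH gives $\provx{\mathcal{H}}{\alpha+\beta_0}{\rho}{\Lambda,\Gamma,C_{i_0}}$, and only then does one cut against the inverted $\provx{\mathcal{H}}{\alpha}{\rho}{\Lambda,\neg C_{i_0}}$ to reach $\provx{\mathcal{H}}{\alpha+\beta}{\rho}{\Lambda,\Gamma}$. Note that this is also why $\lev{i_0}\in\mathcal{H}$ is guaranteed: it comes from $k(\Lambda,\Gamma,C_{i_0})\subseteq\mathcal{H}$ in the derivation produced by the IH, rather than from a case-by-case inspection of the four rules. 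Once you insert this IH step, the remainder of your argument (inversion, $\mathcal{H}[i_0]=\mathcal{H}$, rank bound via Lemma~\ref{ranklemma}) goes through exactly as you wrote it.
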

\begin{proof}
If $C\equiv\bar{u}\in\bar{v}$ then by \ref{xinversion}ii) we have either $\provx{\mathcal{H}}{\alpha}{\rho}{\Lambda}$ or $\provx{\mathcal{H}}{\beta}{\rho}{\Gamma}$. Hence using \ref{xinversion}i) we obtain $\provx{\mathcal{H}}{\alpha+\beta}{\rho}{\Lambda,\Gamma}$ as required.\\

\noindent Now suppose $C\simeq\bigvee (C_i)_{i\in y}$, we proceed by induction on $\beta$. We have
\begin{align*}
\tag{1}&\provx{\mathcal{H}}{\alpha}{\rho}{\Lambda,\neg C}\\
\tag{2}&\provx{\mathcal{H}}{\beta}{\rho}{\Gamma, C}.
\end{align*}
If $C$ was not the principal formula of the last inference in (2), then we may apply the induction hypothesis to the premises of that inference and then the same inference again. Now suppose $C$ was the principal formula of the last inference in (2). If $B$ was the principal formula of the inference $\SRX$, then $B$ is of the form $\exists zF(s_1,\ldots,s_n)^z$, which implies $rk(B)=\Omega$, therefore the last inference in (2) was not $\SRX$. So we have
\begin{equation*}
\tag{3}\provx{\mathcal{H}}{\beta_0}{\rho}{\Gamma,C,C_{i_0}}\quad\text{for some $i_0\in y$, $\beta_0<\beta$ with $\lev{i_0}<\Gamma_{\theta+1}+\beta$.}
\end{equation*}
The induction hypothesis applied to (2) and (3) yields
\begin{equation*}
\tag{4}\provx{\mathcal{H}}{\alpha+\beta_0}{\rho}{\Lambda,\Gamma,C_{i_0}}.
\end{equation*}
Now applying Lemma \ref{xinversion}iv) to (1) provides
\begin{equation*}
\tag{5}\provx{\mathcal{H}[i_0]}{\alpha}{\rho}{\Lambda,\neg C_{i_0}}.
\end{equation*}
But $\lev{i_0}\in\mathcal{H}$ by (4), which means $\mathcal{H}[i_0]=\mathcal{H}$ by Lemma \ref{RSXop}iv), so in fact we have
\begin{equation*}
\tag{6}\provx{\mathcal{H}}{\alpha}{\rho}{\Lambda,\neg C_{i_0}}.
\end{equation*}
Thus we may apply $\Cut$ to (4) and (6) (noting that $rk(C_{i_0})<rk(C):=\rho$ by Lemma \ref{ranklemma}) to obtain
\begin{equation*}
\provx{\mathcal{H}}{\alpha+\beta}{\rho}{\Lambda,\Gamma}
\end{equation*}
as required.
\end{proof}
\begin{theorem}[Predicative cut elimination for $\RSX$]\label{xpredce}{\em$\phantom{a}$ \\
\noindent If $\provx{\mathcal{H}}{\beta}{\rho+\omega^\alpha}{\Gamma}$ and $\Omega\notin [\rho,\rho +\omega^\alpha)$ then $\provx{\mathcal{H}}{\varphi\alpha\beta}{\rho}{\Gamma}$
}\end{theorem}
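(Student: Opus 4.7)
The plan is to proceed by main induction on $\alpha$ and subsidiary induction on $\beta$, case-splitting on the last inference of the given derivation. The hypothesis $\Omega\notin[\rho,\rho+\omega^\alpha)$ is used precisely to guarantee that every cut-formula eligible for elimination has rank $\neq\Omega$, which is the side condition required by Reduction (Lemma~\ref{xreduction}).

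The easy cases are those in which the last inference~(I) is either not a $\Cut$ or is a $\Cut$ whose cut-formula has rank strictly below $\rho$. In both subcases I would apply the subsidiary induction hypothesis to each premise $\provx{\mathcal{H}_i}{\beta_i}{\rho+\omega^\alpha}{\Gamma_i}$ to obtain $\provx{\mathcal{H}_i}{\varphi\alpha\beta_i}{\rho}{\Gamma_i}$, and then reapply~(I). The resulting length is bounded by $\varphi\alpha\beta$ because $\beta_i<\beta$ forces $\varphi\alpha\beta_i+1\leq\varphi\alpha\beta$ (as $\varphi\alpha\beta$ is a limit whenever $\alpha+\beta>0$), and membership $\varphi\alpha\beta\in\mathcal{H}$ follows from (H3) together with $\alpha,\beta\in\mathcal{H}$.

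The substantive case is when the final inference is a $\Cut$ with cut-formula $C$ of rank $\sigma$ satisfying $\rho\leq\sigma<\rho+\omega^\alpha$, and premises
\[
\provx{\mathcal{H}}{\beta_0}{\rho+\omega^\alpha}{\Gamma,C},\qquad\provx{\mathcal{H}}{\beta_0}{\rho+\omega^\alpha}{\Gamma,\neg C},
\]
for some $\beta_0<\beta$. Applying the subsidiary induction hypothesis to each yields derivations at height $\varphi\alpha\beta_0$ and cut-rank $\rho$. Weakening the cut-rank up to $\sigma$ via Lemma~\ref{xinversion}(i) and then invoking Reduction (legal since $\sigma\neq\Omega$) produces $\provx{\mathcal{H}}{\varphi\alpha\beta_0+\varphi\alpha\beta_0}{\sigma}{\Gamma}$. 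It then remains to reduce the cut-rank from $\sigma$ down to $\rho$.

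For this final step I would write $\sigma-\rho$ in Cantor normal form as $\omega^{\alpha_1}+\cdots+\omega^{\alpha_k}$ with $\alpha>\alpha_1\geq\cdots\geq\alpha_k$, and split on whether $\alpha$ is a limit or a successor. If $\alpha$ is a limit then $\alpha_1+1<\alpha$ and $\sigma\leq\rho+\omega^{\alpha_1+1}$, so a single application of the main induction hypothesis with parameter $\alpha_1+1$ delivers cut-rank $\rho$. If $\alpha=\alpha_0+1$, then $\sigma<\rho+\omega^{\alpha_0}\cdot n$ for some finite $n$, and the main induction hypothesis with parameter $\alpha_0$ has to be iterated $n$ times. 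In either case the ordinal bound is dominated by $\varphi\alpha\beta$ since $\varphi\alpha\beta\in Cr(\alpha')$ for every $\alpha'<\alpha$, so $\varphi\alpha'(\gamma)<\varphi\alpha\beta$ whenever $\gamma<\varphi\alpha\beta$. The main obstacle is precisely this ordinal bookkeeping, especially the iteration in the successor case and the verification that at every step all ordinals introduced remain in~$\mathcal{H}$ via clauses (H2) and (H3).
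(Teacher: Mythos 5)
Your proposal is correct and follows essentially the same route as the paper's proof: main induction on $\alpha$ with subsidiary induction on $\beta$, the Reduction Lemma for the critical cut (legitimised by $\Omega\notin[\rho,\rho+\omega^\alpha)$ ensuring $rk(C)\neq\Omega$), and the fixed-point property $\varphi\alpha'(\varphi\alpha\beta)=\varphi\alpha\beta$ for $\alpha'<\alpha$ to absorb the ordinal growth. The only (harmless) difference is the final rank-reduction step, where the paper peels off the Cantor normal form of $rk(C)$ one exponent at a time, applying the main induction hypothesis with parameters $\alpha_n,\ldots,\alpha_1$, whereas you split on $\alpha$ being a limit or a successor; both variants work.
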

\begin{proof}
The proof is by main induction on $\alpha$ and subsidiary induction on $\beta$. If $\Gamma$ is an axiom then the result is immediate. If the last inference was anything other that (Cut) we may apply the subsidiary induction hypothesis to its premises and then the same inference again. The crucial case is where the last inference was (Cut), so suppose there is a formula $C$ with $rk(C)<\rho+\omega^\alpha$ such that
\begin{align*}
\tag{1}&\provx{\mathcal{H}}{\beta_0}{\rho+\omega^\alpha}{\Gamma,C}\quad\text{with $\beta_0<\beta$.}\\
\tag{2}&\provx{\mathcal{H}}{\beta_0}{\rho+\omega^\alpha}{\Gamma,\neg C}\quad\text{with $\beta_0<\beta$.}
\end{align*}
Applying the subsidiary induction hypothesis to (1) and (2) yields
\begin{align*}
\tag{3}&\provx{\mathcal{H}}{\varphi\al\beta_0}{\rho}{\Gamma,C}.\\
\tag{4}&\provx{\mathcal{H}}{\varphi\al\beta_0}{\rho}{\Gamma,\neg C}.
\end{align*}
Case 1. If $rk(C)<\rho$ then we may apply (Cut) to (3) and (4), noting that $\varphi\alpha\beta_0+1<\varphi\alpha\beta\in\mathcal{H}$, to give the desired result.\\

\noindent Case 2. Now suppose $rk(C)\in[\rho,\rho+\omega^\alpha)$, so we may write $rk(C)$ in the following form:
\begin{equation*}
\tag{5}rk(C)=\rho+\omega^{\alpha_1}+\ldots+\omega^{\alpha_n}\quad\text{with $\alpha>\alpha_1\geq \ldots\geq\alpha_n$.}
\end{equation*}
 If $n=0$, this means  that $rk(C)=\rho$. From (3) we know that $k(C)\subseteq\mathcal{H}$ and thus $rk(C)\in\mathcal{H}$. Now (5) and (H2) and (H3) from Definition \ref{operator} give us $\alpha_1,\ldots,\alpha_n\in\mathcal{H}$. Since $rk(C)\neq\Omega$ we may apply the Reduction Lemma \ref{xreduction} to (3) and (4) to obtain
\begin{equation*}
\tag{6}\provx{\mathcal{H}}{\varphi\alpha\beta_0+\varphi\alpha\beta_0}{\rho+\omega^{\alpha_1}+\ldots+\omega^{\alpha_n}}{\Gamma}.
\end{equation*}
Now $\varphi\alpha\beta_0+\varphi\alpha\beta_0<\varphi\alpha\beta$, so by Lemma \ref{xinversion}i) we have
\begin{equation*}
\tag{7}\provx{\mathcal{H}}{\varphi\alpha\beta}{\rho+\omega^{\alpha_1}+\ldots+\omega^{\alpha_n}}{\Gamma}.
\end{equation*}
Applying the main induction hypothesis (since $\alpha_n<\alpha$) to (7) gives
\begin{equation*}
\provx{\mathcal{H}}{\varphi\alpha_n(\varphi\alpha\beta)}{\rho+\omega^{\alpha_1}+\ldots+\omega^{\alpha_{n-1}}}{\Gamma}.
\end{equation*}
But since $\varphi\alpha\beta$ is a fixed point of the function $\varphi\alpha_n (\cdot)$ we have
\begin{equation*}
\provx{\mathcal{H}}{\varphi\alpha\beta}{\rho+\omega^{\alpha_1}+\ldots+\omega^{\alpha_{n-1}}}{\Gamma}.
\end{equation*}
Now since $\alpha_1,\ldots,\alpha_{n-1}<\alpha$ we may repeat this application of the main induction hypothesis a further $n-1$ times to obtain
\begin{equation*}
\provx{\mathcal{H}}{\varphi\alpha\beta}{\rho}{\Gamma}
\end{equation*}
as required.
\end{proof}
\begin{lemma}[Boundedness for $\RSX$]\label{xboundedness}{\em
If $C$ is a $\Sigma$ formula, $\alpha\leq\beta<\Omega$, $\beta\in\mathcal{H}$ and $\provx{\mathcal{H}}{\alpha}{\rho}{\Gamma,C}$ then $\provx{\mathcal{H}}{\alpha}{\rho}{\Gamma,C^{\mathbb{L}_\beta (X)}}$.
}\end{lemma}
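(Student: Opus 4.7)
The plan is to proceed by induction on $\alpha$. A preliminary observation makes the case analysis much lighter: the hypothesis $\alpha\leq\beta<\Omega$ rules out $\SRX$ as the last inference of the given derivation, since that rule requires $\Omega<\alpha$. Throughout, the operator side-condition $k(\Gamma,C^{\mathbb{L}_\beta(X)})\subseteq\mathcal{H}$ comes for free from the assumption $\beta\in\mathcal{H}$: the operator axioms (H1) and (H2) yield $\lev{\mathbb{L}_\beta(X)}=\Gamma_{\theta+1}+\beta\in\mathcal{H}$, and $\mathbb{L}_\beta(X)$ is the only new term introduced by passing from $C$ to $C^{\mathbb{L}_\beta(X)}$.

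In the axiom case, either $C$ is itself the basic formula of the axiom, in which case $C$ is $\Delta_0$ and $C^{\mathbb{L}_\beta(X)}\equiv C$, or $C$ is a side formula, in which case the axiom persists after substitution. If $C$ is not the principal formula of the last inference (which includes the $\Cut$ case, since $\Cut$ has no principal formula), I apply the induction hypothesis to each premise — legitimate since the premise ordinals $\alpha_i$ satisfy $\alpha_i<\alpha\leq\beta$, and $C$ remains a $\Sigma$-formula in every premise — and then reapply the same rule. All side-conditions on witness levels, operator restrictions, and cut-ranks are undisturbed by the substitution.

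The substantive cases are those in which $C$ is the principal formula. If $C$ has no unbounded quantifiers (in particular if $C$ is basic or of the form $r\in t$, $r\notin t$), then $C^{\mathbb{L}_\beta(X)}\equiv C$ and there is nothing to do. The propositional cases $(\wedge)$ and $(\vee)$ are routine: bound $C$ and the relevant $\Sigma$-subformula in each premise by iterated use of the IH, then reapply the rule. The crucial case is $(\exists)$, with $C\equiv\exists x\,A(x)$ and premise
\[
\provx{\mathcal{H}}{\alpha_0}{\rho}{\Gamma,C,A(s)}\qquad\text{where }\lev{s}<\Gamma_{\theta+1}+\alpha_0.
\]
Here I apply the induction hypothesis twice in succession (once with the distinguished $\Sigma$-formula $A(s)$, once with $C$) to obtain
\[
\provx{\mathcal{H}}{\alpha_0}{\rho}{\Gamma,C^{\mathbb{L}_\beta(X)},A^{\mathbb{L}_\beta(X)}(s)}.
\]
Because $\lev{s}<\Gamma_{\theta+1}+\alpha_0\leq\Gamma_{\theta+1}+\beta=\lev{\mathbb{L}_\beta(X)}$, the witness $s$ satisfies the level constraint of $(b\exists)$ for the bounding term $\mathbb{L}_\beta(X)$. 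Invoking $(b\exists)$ produces the formula $(\exists x\in\mathbb{L}_\beta(X))\,A^{\mathbb{L}_\beta(X)}(x)$, which is precisely $C^{\mathbb{L}_\beta(X)}$, and this contracts with the copy already in the sequent. A final application of Lemma \ref{xinversion}(i) weakens the ordinal back up to $\alpha$. The remaining principal cases $(b\exists)$ and $(b\forall)$ (recall that $(\forall x\in t)A(x)$ can be $\Sigma$) follow the same template, leaving the outer bounding term of $C$ fixed and only bounding the $\Sigma$-subformula in the premise(s) via the IH.

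The one point that genuinely uses the design of the infinitary system is ensuring that the witness $s$ extracted from an $(\exists)$-premise actually lives strictly below $\lev{\mathbb{L}_\beta(X)}$; this is guaranteed by the level condition $\lev{s}<\Gamma_{\theta+1}+\alpha$ deliberately built into the $(\exists)$ and $(b\exists)$ rules, combined with $\alpha\leq\beta$. Everything else is bookkeeping: iterating the IH to bound finitely many $\Sigma$-formulae inside a single premise is harmless since each premise ordinal is strictly below $\alpha$.
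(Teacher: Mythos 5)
Your proof is correct and follows essentially the same route as the paper: induction on $\alpha$, excluding $\SRX$ via $\alpha<\Omega$, passing through non-principal inferences, and in the principal $(\exists)$ case using the built-in level condition to trade $(\exists)$ for $(b\exists)$ with bound $\mathbb{L}_\beta(X)$. The only nit is that the witness condition in the $(\exists)$ rule reads $\lev{s}<\Gamma_{\theta+1}+\alpha$ (the conclusion ordinal), not $\Gamma_{\theta+1}+\alpha_0$, but since $\alpha\leq\beta$ your comparison with $\Gamma_{\theta+1}+\beta$ goes through unchanged.
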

\begin{proof}
The proof is by induction on $\alpha$. If $C$ is basic then $C\equiv C^{\mathbb{L}_\beta (X)}$ so there is nothing to show. If $C$ was not the principal formula of the last inference then we may apply the induction hypothesis to its premises and then the same inference again. Now suppose $C$ was the principal formula of the last inference. The last inference cannot have been $\SRX$ since $\al<\OO$.\\

\noindent Case 1. Suppose $C\simeq\bigwedge(C_i)_{i\in y}$ and $\provx{\mathcal{H}[i]}{\alpha_i}{\rho}{\Gamma,C,C_i}$ with $\alpha_i<\alpha$. Since $C$ is a $\Sigma$ formula, there must be some $\eta\in\CH(\emptyset)\cap\Omega$ such that $(\forall s\in y)(\lev s<\eta)$. Therefore $C^{\mathbb{L}_\beta (X)}\simeq\bigwedge(C_i^{\mathbb{L}_\beta (X)})_{i\in y}$. Now two applications of the induction hypothesis gives
\begin{equation*}
\provx{\mathcal{H}[i]}{\alpha_i}{\rho}{\Gamma,C^{\mathbb{L}_\beta (X)}, C_i^{\mathbb{L}_\beta (X)}}
\end{equation*}
to which we may apply the appropriate inference to gain the desired result.\\

\noindent Case 2. Now suppose $C\simeq\bigvee(C_i)_{i\in y}$ and $\provx{\mathcal{H}}{\alpha_0}{\rho}{\Gamma,C,C_{i_0}}$, with $i_0\in y$, $\lev{i_0}<\Gamma_{\theta+1}+\alpha$ and $\alpha_0<\alpha$. In this case $C^{\mathbb{L}_\beta (X)}\simeq\bigvee(C_i)_{i\in y^\prime}$ where either $y^\prime= y$ or $y^\prime=\{i\in y\:|\:\lev i<\Gamma_{\theta+1}+\beta\}$. Now by assumption $\lev{i_0}<\Gamma_{\theta+1}+\alpha<\Gamma_{\theta+1}+\beta$, so $i_0\in y^\prime$. Thus using the same inference again, or $(b\exists)$ in the case that the last inference was $(\exists)$, we obtain
\begin{equation*}
\provx{\mathcal{H}}{\alpha}{\rho}{\Gamma, C^{\mathbb{L}_\beta (X)}}
\end{equation*}
as required.
\end{proof}
\begin{definition}{\em For each $\eta\in T(\theta)$ we define
\begin{align*}
\CH_\eta&:\mathcal{P}(\textbf{ON})\mapsto\mathcal{P}(\textbf{ON})\\
\mathcal{H}_\eta (Y)&:=\bigcap\{B(\alpha)\:|\:Y\subseteq B(\alpha)\:\text{and}\:\eta<\alpha\}
\end{align*}
}\end{definition}
\begin{lemma}{\em
For any $\eta$, $\mathcal{H}_\eta$ is an operator.
}\end{lemma}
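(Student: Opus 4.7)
The plan is to verify each of the five operator axioms (H1)--(H5) directly, exploiting the fact that $\mathcal{H}_\eta(Y)$ is defined as an intersection of sets of the form $B(\alpha)$. The guiding principle is that each individual $B(\alpha)$ already enjoys analogues of the required closure conditions thanks to Lemmas \ref{collapseprop} and \ref{!NF}, and these closures pass through the intersection essentially for free; the only delicate point is the presence of $\Gamma_{\theta+1}$ in (H1), which is not a generator of $B(\alpha)$ and must be obtained via the collapsing function.

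First I would dispose of (H4) and the bulk of (H1). For (H4): every $B(\alpha)$ appearing in the intersection contains $Y$ by the defining side condition, so $Y\subseteq\mathcal{H}_\eta(Y)$ trivially. For (H1): the ordinals $0,\Omega$ and $\Gamma_\beta$ with $\beta\leq\theta$ lie in $B(\alpha)$ by Definition \ref{theta}, hence in the intersection. The one case requiring an extra observation is $\Gamma_{\theta+1}$. Here I would note that $B(0)$ is the closure of $\{0,\Omega\}\cup\{\Gamma_\beta:\beta\leq\theta\}$ under $+$ and $\varphi$ alone (since $\psi|_0$ is vacuous), and such closure cannot reach the next strongly critical ordinal above $\Gamma_\theta$; combined with Lemma \ref{collapseprop}(v), this yields $\psi 0=\Gamma_{\theta+1}$. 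Since any $\alpha$ in the intersection satisfies $\alpha>\eta\geq 0$ and therefore $\alpha\geq 1$, we have $\Gamma_{\theta+1}=\psi 0\in B(\alpha)$, whence $\Gamma_{\theta+1}\in\mathcal{H}_\eta(Y)$.

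For (H2) and (H3) I would apply Lemma \ref{!NF}(i) and (ii) pointwise across the intersection. If $\alpha=_{NF}\alpha_1+\ldots+\alpha_n$, then for every $\beta>\eta$ with $Y\subseteq B(\beta)$ we have the biconditional $\alpha\in B(\beta)\Leftrightarrow\alpha_1,\ldots,\alpha_n\in B(\beta)$; conjoining over all admissible $\beta$ gives $\alpha\in\mathcal{H}_\eta(Y)\Leftrightarrow\alpha_1,\ldots,\alpha_n\in\mathcal{H}_\eta(Y)$. The argument for (H3) is identical with $\varphi$ in place of $+$, using Lemma \ref{!NF}(ii).

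Finally, for (H5): suppose $Y'\subseteq\mathcal{H}_\eta(Y)$ and fix any $\beta>\eta$ with $Y\subseteq B(\beta)$. Then $B(\beta)$ is one of the sets whose intersection defines $\mathcal{H}_\eta(Y)$, so $Y'\subseteq\mathcal{H}_\eta(Y)\subseteq B(\beta)$, which means $B(\beta)$ also appears in the intersection defining $\mathcal{H}_\eta(Y')$. Hence $\mathcal{H}_\eta(Y')\subseteq B(\beta)$, and ranging over all such $\beta$ yields $\mathcal{H}_\eta(Y')\subseteq\mathcal{H}_\eta(Y)$. No step presents genuine difficulty; the work is entirely in making precise the slogan ``each $B(\alpha)$ is already almost an operator, and intersection preserves this,'' with the single subtle point being the treatment of $\Gamma_{\theta+1}$ described above.
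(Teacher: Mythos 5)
Your proof is correct and follows essentially the same route as the paper's: (H4) and most of (H1) are immediate from the definition of $B(\alpha)$, $\Gamma_{\theta+1}=\psi 0$ is obtained from the closure of each $B(\alpha)$ (with $\alpha\geq 1$) under $\psi|_\alpha$, (H2) and (H3) follow pointwise from Lemma \ref{!NF}, and (H5) follows because the family of sets intersected for $Y'$ contains that for $Y$. The only difference is that you spell out why $\psi 0=\Gamma_{\theta+1}$, which the paper simply asserts.
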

\begin{proof}
We must verify the conditions (H1) - (H5) from Definition \ref{operator}.\\

\noindent (H1) Clearly $0\in\mathcal{H}_\eta (Y)$ and $\{\Gamma_\beta\:|\:\beta\leq\theta\}\subseteq\mathcal{H}_\eta (Y)$ since these belong in any of the sets $B(\alpha)$. It remains to note that $\mathcal{H}_\eta (Y)\supseteq B(1)$ and since $\Gamma_{\theta+1}=\psi 0\in B(1)$ we have $\Gamma_{\theta+1}\in\mathcal{H}_\eta (Y)$.\\

\noindent (H2) and (H3) follow immediately from Lemma \ref{!NF}i) and ii) respectively.\\

\noindent (H4) is clear from the definition. Now for (H5) suppose $Y^\prime\subseteq\mathcal{H}_\eta (Y)$, then $Y^\prime\subseteq B(\alpha)$ for every $\alpha$ such that $\eta<\alpha$ and $Y\subseteq B(\alpha)$. It follows that $\mathcal{H}_\eta(Y^\prime)\subseteq\mathcal{H}_\eta(Y)$.
\end{proof}
\begin{lemma}\label{Heta1}{\em
\begin{description}
\item[i)] $\mathcal{H}_\eta (Y)$ is closed under $\varphi$ and $\psi|_{\eta+1}$.
\item[ii)] If $\delta<\eta$ then $\mathcal{H}_\delta (Y)\subseteq\mathcal{H}_\eta (Y)$
\item[iii)] If $\delta<\eta$ and $\provx{\mathcal{H}_\delta}{\alpha}{\rho}{\Gamma}$ then $\provx{\mathcal{H}_\eta}{\alpha}{\rho}{\Gamma}$
\end{description}
}\end{lemma}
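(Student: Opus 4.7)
My plan is to prove the three parts in order, each relying on the previous ones.

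For part (i), I would unfold the definition $\mathcal{H}_\eta(Y)=\bigcap\{B(\alpha)\,:\,Y\subseteq B(\alpha)\text{ and }\eta<\alpha\}$. Each set $B(\alpha)$ in this intersection is, by Definition \ref{theta}, closed under $\varphi$, so the intersection is too. For the closure under $\psi|_{\eta+1}$, suppose $\gamma\in\mathcal{H}_\eta(Y)$ with $\gamma\leq\eta$. For every $B(\alpha)$ appearing in the intersection we have $Y\subseteq B(\alpha)$ and $\eta<\alpha$, hence $\gamma\in B(\alpha)$ and $\gamma\leq\eta<\alpha$, i.e.\ $\gamma\in B(\alpha)\cap\alpha$. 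By closure of $B(\alpha)$ under $\psi|_{\alpha}$, we get $\psi\gamma\in B(\alpha)$. Since this holds for all sets in the intersection, $\psi\gamma\in\mathcal{H}_\eta(Y)$.

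For part (ii), I would simply observe that if $\delta<\eta$ then
\begin{equation*}
\{B(\alpha)\,:\,Y\subseteq B(\alpha),\,\eta<\alpha\}\;\subseteq\;\{B(\alpha)\,:\,Y\subseteq B(\alpha),\,\delta<\alpha\},
\end{equation*}
so the intersection over the left-hand family contains the intersection over the right-hand family, giving $\mathcal{H}_\delta(Y)\subseteq\mathcal{H}_\eta(Y)$.

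For part (iii), I would proceed by induction on $\alpha$. The control conditions $\{\alpha\}\cup k(\Gamma)\subseteq\mathcal{H}_\delta$ lift immediately to $\mathcal{H}_\eta$ using part (ii) with $Y=\emptyset$; this handles the axiom cases and the bookkeeping side-conditions that accompany each inference rule. For the step cases, since all side conditions on ordinals, terms, and formulae appearing in the rules of $\RSX$ (such as $\lev{s}<\Gamma_{\theta+1}+\alpha$, $\alpha_s<\alpha$, or $rk(A)<\rho$) are independent of the operator, the only thing to check is the operator enlargement. For rules of the form $\provx{\mathcal{H}_\delta[s]}{\alpha_s}{\rho}{\ldots}$ or $\provx{\mathcal{H}_\delta[\mathfrak{X}]}{\alpha_s}{\rho}{\ldots}$, note that for any set $Z$,
\begin{equation*}
\mathcal{H}_\delta[s](Z)=\mathcal{H}_\delta(Z\cup\lev{s})\subseteq\mathcal{H}_\eta(Z\cup\lev{s})=\mathcal{H}_\eta[s](Z)
\end{equation*}
by part (ii), and similarly $\mathcal{H}_\delta[\mathfrak{X}]\subseteq\mathcal{H}_\eta[\mathfrak{X}]$ pointwise. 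Hence the induction hypothesis, applied to each premise, yields a derivation controlled by $\mathcal{H}_\eta[s]$ (resp.\ $\mathcal{H}_\eta[\mathfrak{X}]$), and we can re-apply the same inference rule with $\mathcal{H}_\eta$ in place of $\mathcal{H}_\delta$.

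The only mildly subtle point is purely notational: one must check that the pointwise containment of operators, combined with the control condition $\{\alpha\}\cup k(\Gamma)\subseteq\mathcal{H}_\eta$, is all that is needed to instantiate any $\RSX$-inference; no other property of $\mathcal{H}_\delta$ or $\mathcal{H}_\eta$ beyond their being operators enters the derivability relation. I anticipate no genuine obstacle — this is a routine monotonicity argument, made possible by the fact that the $\RSX$-rules are parametric in the controlling operator.
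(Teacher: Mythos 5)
Your proof is correct and follows essentially the same route as the paper, which disposes of the lemma in two lines: for (i) it simply observes that $\mathcal{H}_\eta(Y)$ equals $B(\alpha)$ for a single $\alpha\geq\eta+1$ (the $B(\alpha)$ form an increasing chain, so the intersection collapses to its least member), while (ii) and (iii) are declared immediate from the definition. Your direct verification that the intersection inherits closure under $\varphi$ and $\psi|_{\eta+1}$, and your spelled-out monotonicity induction for (iii) via the pointwise containment $\mathcal{H}_\delta[s]\subseteq\mathcal{H}_\eta[s]$, are sound fillings-in of those omitted details.
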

\begin{proof}
i) Note that for any $X$, $\CH_\eta(X)=B(\al)$ for some $\al\geq\eta+1$. \\

\noindent ii) follows immediately from the definition of $\mathcal{H}_\eta$ and iii) follows easily from ii).
\end{proof}
\begin{lemma}\label{Heta2}{\em
Suppose $\eta\in B(\eta)$ and for any ordinal $\beta$ let $\hat{\beta}:=\eta+\omega^{\Omega+\beta}$.
\begin{description}
\item[i)] If $\alpha\in\mathcal{H}_\eta$ then $\hat{\alpha},\psi\hat{\alpha}\in\mathcal{H}_{\hat{\alpha}}$
\item[ii)] If $\alpha_0\in\mathcal{H}_\eta$ and $\alpha_0<\alpha$ then $\psi\hat{\alpha_0}<\psi\hat{\alpha}$
\end{description}
}\end{lemma}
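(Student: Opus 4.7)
The plan is to unfold the definition $\mathcal{H}_\eta(Y)=\bigcap\{B(\xi):Y\subseteq B(\xi)\text{ and }\eta<\xi\}$ and exploit the closure properties of the sets $B(\cdot)$ recorded in Lemma~\ref{collapseprop}. In both parts the work is essentially a bookkeeping verification that $\hat\alpha$ and $\hat{\alpha_0}$ are built up from ingredients already lying in the appropriate $B(\xi)$.

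For part i), I would first show $\hat\alpha\in\mathcal{H}_{\hat\alpha}$, i.e.\ $\hat\alpha\in B(\xi)$ for every $\xi>\hat\alpha$. Fix such $\xi$. Since $\eta\in B(\eta)$ by hypothesis and $\xi>\hat\alpha>\eta$, Lemma~\ref{collapseprop}~i) gives $\eta\in B(\xi)$; also $\Omega\in B(\xi)$ by Definition~\ref{theta}, and $\alpha\in\mathcal{H}_\eta\subseteq B(\xi)$ because $\xi>\eta$. Closure of $B(\xi)$ under $+$ and $\varphi$ then yields $\omega^{\Omega+\alpha}=\varphi 0(\Omega+\alpha)\in B(\xi)$, and finally $\hat\alpha=\eta+\omega^{\Omega+\alpha}\in B(\xi)$. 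For $\psi\hat\alpha$, with $\hat\alpha\in B(\xi)$ and $\hat\alpha<\xi$ already in hand, closure of $B(\xi)$ under $\psi|_\xi$ delivers $\psi\hat\alpha\in B(\xi)$, as required.

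For part ii), the cleanest route is to invoke Lemma~\ref{collapseprop}~ii), which states that $\gamma\in B(\delta)\cap\delta$ implies $\psi\gamma<\psi\delta$. I will check $\hat{\alpha_0}\in B(\hat\alpha)\cap\hat\alpha$. The inequality $\hat{\alpha_0}<\hat\alpha$ follows from $\alpha_0<\alpha$ by strict monotonicity of $\gamma\mapsto\eta+\omega^{\Omega+\gamma}$. Membership $\hat{\alpha_0}\in B(\hat\alpha)$ is a rerun of the closure calculation from part i) with $\xi:=\hat\alpha$: since $\hat\alpha>\eta$ one has $\eta\in B(\hat\alpha)$ and $\alpha_0\in\mathcal{H}_\eta\subseteq B(\hat\alpha)$, and closure of $B(\hat\alpha)$ under $+$ and $\varphi$ then gives $\hat{\alpha_0}=\eta+\omega^{\Omega+\alpha_0}\in B(\hat\alpha)$.

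No serious obstacle is expected; the only point worth flagging is that everything hinges on the hypothesis $\eta\in B(\eta)$, which via Lemma~\ref{collapseprop}~i) licenses $\eta\in B(\xi)$ for every $\xi>\eta$ and is what gets the closure argument off the ground. Absent this assumption the translations by $\eta$ used to define $\hat\alpha$ could not be shown to lie in the relevant $B(\xi)$.
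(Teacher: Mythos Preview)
Your proposal is correct and follows essentially the same approach as the paper. The only cosmetic difference is that the paper streamlines by first observing $\mathcal{H}_\eta(\emptyset)=B(\eta+1)$ (since the $B(\xi)$ are increasing) and then works inside the single set $B(\eta+1)$ rather than over all $\xi>\hat\alpha$; for part~ii) the paper appeals to $\psi\hat{\alpha_0}\in B(\hat{\alpha_0}+1)\subseteq B(\hat\alpha)$ together with Lemma~\ref{collapseprop}~vi), whereas you route through $\hat{\alpha_0}\in B(\hat\alpha)\cap\hat\alpha$ and Lemma~\ref{collapseprop}~ii), but these are interchangeable one-line variants of the same closure bookkeeping.
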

\begin{proof}
i) First note that $\mathcal{H}_\eta (\emptyset)=B(\eta+1)$. Now from $\alpha,\eta\in B(\eta+1)$ we get $\hat\alpha\in B(\eta+1)$ and thus $\hat\alpha\in B(\hat\alpha)$. It follows that $\psi\hat\alpha\in B(\hat\alpha +1)=\mathcal{H}_{\hat\alpha}(\emptyset)$.\\

\noindent ii) Suppose that $\alpha_0\in\mathcal{H}_\eta$ and $\alpha_0<\alpha$, using the preceding argument we get that $\psi\hat{\alpha_0}\in B(\hat{\alpha_0}+1)\subseteq B(\hat\alpha)$, thus $\psi\hat{\alpha_0}<\psi\hat\alpha$.
\end{proof}
\begin{theorem}[Collapsing for $\RSX$]\label{ximpredce}{\em Suppose $\Gamma$ is a set of $\Sigma$ formulae and $\eta\in B(\eta)$.
\begin{equation*}
\text{If}\quad\provx{\mathcal{H}_\eta}{\alpha}{\Omega+1}{\Gamma}\quad\text{then}\quad\provx{\mathcal{H}_{\hat\alpha}}{\psi\hat\alpha}{\psi\hat\alpha}{\Gamma}
\end{equation*}
}\end{theorem}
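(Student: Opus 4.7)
The argument proceeds by transfinite induction on $\alpha$, with a case distinction on the last inference of the given $\CH_\eta$-controlled derivation. The guiding principle is that every subderivation of ordinal height $\alpha_0 < \alpha$ is collapsed by the induction hypothesis to height $\psi\hat{\alpha_0}$, and by Lemma \ref{Heta2}(ii) this value is strictly below $\psi\hat\alpha$. Whenever the last inference is among $(\wedge)$, $(\vee)$, $(\in)$, $(\notin)$, $(b\forall)$, $(b\exists)$, $(\exists)$, or $\SRX$, its principal formula lies in $\Gamma$ and is hence $\Sigma$, so the premise sequents remain $\Sigma$ and the IH applies directly to each. After collapsing one simply reapplies the same rule: every level and ordinal occurring in the original derivation lies in $\CH_\eta(\emptyset) = B(\eta+1)$, so any such value below $\Omega$ lies in $\psi(\eta+1) \leq \psi\hat\alpha$ by Lemma \ref{collapseprop}(vi), and since $\psi\hat\alpha$ is strongly critical with $\psi\hat\alpha \geq \Gamma_{\theta+1}$ by Lemma \ref{collapseprop}(v), the side condition $|s| < \Gamma_{\theta+1} + \psi\hat\alpha$ collapses to $|s| < \psi\hat\alpha$. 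For $\SRX$ one additionally invokes Boundedness (Lemma \ref{xboundedness}) on the collapsed premise to restrict the $\Sigma$ formula $A$ inside $\mathbb{L}_{\psi\hat{\alpha_0}}(X)$, and then introduces $\exists zA^z$ by the ordinary $(\exists)$-rule with witness $\mathbb{L}_{\psi\hat{\alpha_0}}(X)$.

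The critical case is $(\Cut)$. Let $C$ be the cut formula, with $rk(C) < \Omega+1$, coming from premises $\provx{\CH_\eta}{\alpha_0}{\Omega+1}{\Gamma,C}$ and $\provx{\CH_\eta}{\alpha_0}{\Omega+1}{\Gamma,\neg C}$. When $rk(C) < \Omega$ the formula $C$ is $\Delta_0$, so both $C$ and $\neg C$ are $\Sigma$; the IH applies to each premise yielding derivations of ordinal and cut rank $\psi\hat{\alpha_0}$ under $\CH_{\hat{\alpha_0}}$. Since $rk(C) \in \CH_\eta \cap \Omega \subseteq \psi(\eta+1) \leq \psi\hat\alpha$, a single cut followed by weakening via Lemmas \ref{Heta1}(iii) and \ref{xinversion}(i) produces the desired conclusion.

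The main obstacle is the subcase $rk(C) = \Omega$, in which $C$ has the form $\exists x B(x)$ with $B$ a $\Delta_0$ formula (the shape $\forall x B(x)$ is symmetric). On the $\Sigma$ side, the IH together with Boundedness yields
\[
\provx{\CH_{\hat{\alpha_0}}}{\psi\hat{\alpha_0}}{\psi\hat{\alpha_0}}{\Gamma,(\exists x\in\mathbb{L}_{\psi\hat{\alpha_0}}(X))B(x)}.
\]
For the $\Pi$ side one first enlarges the operator from $\CH_\eta$ to $\CH_{\hat{\alpha_0}}$ using Lemma \ref{Heta1}(iii), so that $\psi\hat{\alpha_0}$ becomes available in the operator by Lemma \ref{Heta2}(i); then inversion Lemma \ref{xinversion}(v) applied with $\gamma = \psi\hat{\alpha_0}$ gives
\[
\provx{\CH_{\hat{\alpha_0}}}{\alpha_0}{\Omega+1}{\Gamma,(\forall x\in\mathbb{L}_{\psi\hat{\alpha_0}}(X))\neg B(x)}.
\]
The right-hand formula is now $\Delta_0$, so the whole sequent is $\Sigma$ and the IH may be applied a second time, this time with base $\hat{\alpha_0}$ in place of $\eta$ (note that $\hat{\alpha_0} \in B(\hat{\alpha_0})$, since $\eta,\alpha_0 \in B(\eta+1) \subseteq B(\hat{\alpha_0})$). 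Writing $\tilde\beta := \hat{\alpha_0} + \omega^{\Omega+\beta}$ for the hat-function relative to $\hat{\alpha_0}$, this produces a derivation of ordinal and cut rank $\psi\tilde{\alpha_0}$. Because $\psi\hat{\alpha_0}$ is strongly critical, a short computation shows $rk\bigl((\exists x\in\mathbb{L}_{\psi\hat{\alpha_0}}(X))B(x)\bigr) < \psi\tilde{\alpha_0}$, so the two derivations may be cut together under $\CH_{\tilde{\alpha_0}}$. Finally the arithmetical estimate $\tilde{\alpha_0} = \eta + \omega^{\Omega+\alpha_0}\cdot 2 < \eta + \omega^{\Omega+\alpha} = \hat\alpha$, valid because $\alpha_0 + 1 \leq \alpha$, combined with Lemmas \ref{Heta1}(iii) and \ref{xinversion}(i), delivers $\provx{\CH_{\hat\alpha}}{\psi\hat\alpha}{\psi\hat\alpha}{\Gamma}$, completing the induction.
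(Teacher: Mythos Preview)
Your proof is correct and follows essentially the same approach as the paper: induction on $\alpha$, with the routine rules handled by showing the premise sequents stay $\Sigma$, and the critical $rk(C)=\Omega$ cut subcase resolved by applying the induction hypothesis plus Boundedness on the $\Sigma$ side, inversion (Lemma~\ref{xinversion}(v)) on the $\Pi$ side, and then a second application of the induction hypothesis with base $\hat{\alpha_0}$---your $\tilde{\alpha_0}$ is the paper's $\alpha_1$. One point you gloss over that the paper makes explicit: for the conjunctive rules $(b\forall)$ and $(\notin)$ the premises live under $\CH_\eta[s]$ rather than $\CH_\eta$, and one must observe that $\lev s<\lev t\in\CH_\eta\cap\Omega$ forces $\lev s<\psi(\eta+1)$ and hence $\CH_\eta[s]=\CH_\eta$ before the induction hypothesis can be invoked; your phrase ``every level and ordinal occurring in the original derivation lies in $\CH_\eta(\emptyset)$'' is the right ingredient but is not quite linked to this step.
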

\begin{proof}
We proceed by induction on $\alpha$. First note that from $\alpha\in\mathcal{H}_\eta$ we get $\hat\alpha,\psi\hat\alpha\in\mathcal{H}_{\hat\alpha}$ from Lemma \ref{Heta2}i).\\

\noindent If $\Gamma$ is an axiom then the result follows by Lemma \ref{xinversion}i). So suppose $\Gamma$ arose as the result of an inference, we shall distinguish cases according to the last inference of $\provx{\mathcal{H}_\eta}{\alpha}{\Omega+1}{\Gamma}$.\\

\noindent Case 1. Suppose $A\simeq\bigwedge(A_i)_{i\in y}\in\Gamma$ and $\provx{\mathcal{H}_\eta[i]}{\alpha_i}{\Omega+1}{\Gamma,A_i}$ with $\alpha_i<\alpha$ for each $i\in y$. Since $A$ is a $\Sigma$ formula, we must have $\text{sup}\{\lev i\:|\:i\in y\}<\Omega$, therefore as $k(A)\subseteq\mathcal{H}_\eta=B(\eta+1)$ we must have $\text{sup}\{\lev i\:|\:i\in y\}<\psi(\eta+1)$. It follows that for any $i\in y$ $\lev i\in\mathcal{H}_\eta$ and thus $\mathcal{H}_\eta[i]=\mathcal{H}_\eta$. This means that we may use the induction hypothesis to give
\begin{equation*}
\provx{\mathcal{H}_{\hat{\alpha_i}}}{\psi\hat{\alpha_i}}{\psi\hat{\alpha_i}}{\Gamma,A_i}\quad\text{for all $i\in y$}.
\end{equation*}
Now applying Lemma \ref{Heta1}ii) we get
\begin{equation*}
\provx{\mathcal{H}_{\hat{\alpha}}}{\psi\hat{\alpha_i}}{\psi\hat{\alpha_i}}{\Gamma,A_i}\quad\text{for all $i\in y$}.
\end{equation*}
Upon noting that $\psi\hat{\alpha_i}<\psi\hat\alpha$ by \ref{Heta2}ii) we may apply the appropriate inference to obtain
\begin{equation*}
\provx{\mathcal{H}_{\hat\alpha}}{\psi\hat\alpha}{\psi\hat\alpha}{\Gamma}.
\end{equation*}
Case 2. Now suppose that $A\simeq\bigvee(A_i)_{i\in y}\in\Gamma$ and $\provx{\mathcal{H}_\eta}{\alpha_0}{\Omega+1}{\Gamma,A_{i_0}}$ with $i_0\in y$, $|i_0|\in\mathcal{H}_\eta$ and $\alpha_0<\alpha$. We may immediately apply the induction hypothesis to obtain
\begin{equation*}
\provx{\mathcal{H}_{\hat\alpha}}{\psi\hat{\alpha_0}}{\psi\hat{\alpha_0}}{\Gamma,A_{i_0}}
\end{equation*}
Now we want to be able to apply the appropriate inference to derive $\Gamma$ but first we must check that $\lev{i_0}<\Gamma_{\theta+1}+\psi\hat\alpha$. Since $\lev{i_0}\in\mathcal{H}_\eta=B(\eta+1)$ we have
\begin{equation*}
\lev{i_0}<\psi(\eta+1)<\psi\hat\alpha\leq\Gamma_{\theta+1}+\psi\hat\alpha.
\end{equation*}
Therefore we may apply the appropriate inference to yield
\begin{equation*}
\provx{\mathcal{H}_{\hat\alpha}}{\psi\hat\alpha}{\psi\hat\alpha}{\Gamma}.
\end{equation*}
Case 3. Now suppose the last inference was $\SRX$ so we have $\exists zF^z\in\Gamma$ and $\provx{\mathcal{H}_\eta}{\alpha_0}{\Omega+1}{\Gamma,F}$ with $\alpha_0<\alpha$ and $F$ a $\Sigma$ formula. Applying the induction hypothesis we have
\begin{equation*}
\provx{\mathcal{H}_{\hat\alpha}}{\psi\hat{\alpha_0}}{\psi\hat{\alpha_0}}{\Gamma,F}.
\end{equation*}
Applying Boundedness \ref{xboundedness} we obtain
\begin{equation*}
\provx{\mathcal{H}_{\hat\alpha}}{\psi\hat{\alpha_0}}{\psi\hat{\alpha_0}}{\Gamma,F^{\mathbb{L}_{\psi\hat{\al_0}}(X)}}.
\end{equation*}
Now by Lemma \ref{Heta2} $\lev{\mathbb{L}_{\psi\hat{\al_0}}(X)}=\Gamma_{\theta+1}+\psi\hat{\al_0}<\Gamma_{\theta+1}+\psi\hat\al$, so we may apply $(\exists)$ to obtain
\begin{equation*}
\provx{\CH_{\hat\al}}{\psi\hat\al}{\psi\hat\al}{\Gamma,\exists zF^z}
\end{equation*}
as required.\\

\noindent Case 4. Finally suppose the last inference was $\Cut$, so for some $A$ with $rk(A)\leq\OO$ we have
\begin{align*}
\tag{1}&\provx{\CH_\eta}{\al_0}{\OO+1}{\Gamma, A}\quad\text{with $\al_0<\al$.}\\
\tag{2}&\provx{\CH_\eta}{\al_0}{\OO+1}{\Gamma, \neg A}\quad\text{with $\al_0<\al$.}
\end{align*}

\noindent 4.1 If $rk(A)<\Omega$ then $A$ is $\Delta_0$. In this case both $A$ and $\neg A$ are $\Sigma$ formulae so we may immediately apply the induction hypothesis to both (1) and (2) giving
\begin{align*}
\tag{3}&\provx{\CH_{\hat{\al_0}}}{\psi\hat{\alpha_0}}{\psi\hat{\alpha_0}}{\Gamma, A}\\
\tag{4}&\provx{\CH_{\hat{\al_0}}}{\psi\hat{\alpha_0}}{\psi\hat{\alpha_0}}{\Gamma, \neg A}.
\end{align*}
Since $k(A)\subseteq\mathcal{H}_\eta (\emptyset)=B(\eta+1)$ and $A$ is $\Delta_0$ it follows from Observation \ref{obs} that $rk(A)\in B(\eta+1)\cap\OO$. Thus $rk(A)<\psi(\eta+1)<\psi\hat\alpha$, so we may apply $\Cut$ to complete this case.\\

\noindent 4.2 Finally suppose $rk(A)=\Omega$. Without loss of generality we may assume that $A\equiv\exists zF(z)$ with $F$ a $\Delta_0$ formula. We may immediately apply the induction hypothesis to (1) giving
\begin{equation*}
\tag{5}\provx{\mathcal{H}_{\hat{\alpha_0}}}{\psi\hat{\alpha_0}}{\psi\hat{\alpha_0}}{\Gamma,A}.
\end{equation*}
Applying Boundedness \ref{xboundedness} to (5) yields
\begin{equation*}\tag{6}
\provx{\mathcal{H}_{\hat{\alpha_0}}}{\psi\hat{\alpha_0}}{\psi\hat{\alpha_0}}{\Gamma,A^{\mathbb{L}_{\psi\hat{\alpha_0}}(X)}}.
\end{equation*}
Now using Lemma \ref{xinversion}v) on (2) yields
\begin{equation*}\tag{7}
\provx{\mathcal{H}_{\hat{\alpha_0}}}{\alpha_0}{\Omega+1}{\Gamma,\neg A^{\mathbb{L}_{\psi\hat{\alpha_0}}(X)}}.
\end{equation*}
Observe that since $\eta,\alpha_0\in\mathcal{H}_\eta$ we have $\hat{\alpha_0}\in B(\eta+1)\subseteq B(\hat{\alpha_0})$. So since $\Gamma,\neg A^{\mathbb{L}_{\psi\hat{\alpha_0}}(X)}$ is a set of $\Sigma$-formulae we may apply the induction hypothesis to (7) giving
\begin{equation*}\tag{8}
\provx{\mathcal{H}_{\alpha_1}}{\psi\alpha_1}{\psi\alpha_1}{\Gamma,\neg A^{\mathbb{L}_{\psi\hat{\alpha_0}}}}\quad\text{where }\alpha_1:=\hat{\alpha_0}+\omega^{\Omega+\alpha_0}.
\end{equation*}
Now
\begin{equation*}
\alpha_1=\hat{\alpha_0}+\omega^{\Omega+\alpha_0}=\eta+\omega^{\Omega+\alpha_0}+\omega^{\Omega+\alpha_0}<\eta+\omega^{\Omega+\alpha}:=\hat\alpha.
\end{equation*}
Owing to Lemma \ref{Heta2}ii) we have $\psi\hat{\alpha_0},\psi\alpha_1<\psi\hat\alpha$, thus we may apply (Cut) to (6) and (8) giving
\begin{equation*}
\provx{\mathcal{H}_{\hat\alpha}}{\psi\hat\alpha}{\psi\hat\alpha}{\Gamma}
\end{equation*}
as required.
\end{proof}
\section{Embedding \textbf{KP} into $\RSX$}
\begin{definition}{\em
\begin{description}
\item[i)] Given ordinals $\alpha_1,\ldots,\alpha_n$. The expression $\omega^{\alpha_1}\#\ldots\#\omega^{\alpha_n}$ denotes the ordinal $\omega^{\alpha_{p(1)}}+\ldots+\omega^{\alpha_{p(n)}}$, where $p:\{1,\ldots,n\}\mapsto \{1,\ldots,n\}$ such that $\alpha_{p(1)}\geq\ldots\geq\alpha_{p(n)}$. More generally $\alpha\#0:=0\#\alpha:=0$ and $\alpha\#\beta:=\omega^{\alpha_1}\#\ldots\#\omega^{\alpha_n}\#\omega^{\beta_1}\#\ldots\#\omega^{\beta_m}$ for $\alpha=_{NF}\omega^{\alpha_1}+\ldots+\omega^{\alpha_n}$ and $\beta=_{NF}\omega^{\beta_1}+\ldots+\omega^{\beta_m}$.

\item[ii)] If $A$ is any $\RSX$-formula then $no(A):=\omega^{rk(A)}$.

\item[iii)] If $\Gamma=\{A_1,\ldots,A_n\}$ is a set of $\RSX$-formulae then $no(\Gamma):=no(A_1)\#\ldots\#no(A_n)$.

\item[iv)] $\Vdash\Gamma$ will be used to abbreviate that
\begin{equation*}
\provx{\mathcal{H}[\Gamma]}{no(\Gamma)}{0}{\Gamma}\quad\text{holds for any operator $\mathcal{H}$}
\end{equation*}
\item[v)] $\Vdash^\al_\rho\Gamma$ will be used to abbreviate that
\begin{equation*}
\provx{\mathcal{H}[\Gamma]}{no(\Gamma)\#\al}{\rho}{\Gamma}\quad\text{holds for any operator $\mathcal{H}$}
\end{equation*}
\noindent As might be expected $\Vdash^\al\GA$ and $\Vdash_\rho\GA$ stand for $\Vdash^\al_0\GA$ and $\Vdash^0_\rho\GA$ respectively.
\end{description}
}\end{definition}
\noindent The following lemma shows that under certain conditions we may use $\Vdash$ as a calculus.
\begin{lemma}\label{xcalc}{\em i) If $\GA$ follows from premises $\GA_i$ by an $\RSX$ inference other than (Cut) or $\SRX$ and without contractions then
\begin{equation*}
\text{if}\;\Vdash^\al_\rho\GA_i\quad\text{then}\quad\Vdash^\al_\rho\GA
\end{equation*}
ii) If $\Vdash^\al_\rho \GA,A,B$ then $\Vdash^\al_\rho\GA,A\vee B$.
}\end{lemma}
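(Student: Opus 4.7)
For part (i), the plan is to unfold the $\Vdash$-abbreviation and verify that the $\RSX$ inference $(I)$ carries over unchanged. Fix an arbitrary operator $\mathcal{H}$; the goal is to derive $\provx{\mathcal{H}[\Gamma]}{no(\Gamma)\#\alpha}{\rho}{\Gamma}$. Writing $\Gamma = \Gamma' \cup \{A\}$ with $A$ the principal formula of $(I)$ (the no-contraction hypothesis forces $A \notin \Gamma'$), each premise takes the form $\Gamma_i = \Gamma' \cup \{A_i\}$ for a minor formula $A_i$ of $A$. Applying the hypothesis $\Vdash^\alpha_\rho \Gamma_i$ with a judicious choice of inner operator -- namely $\mathcal{H}[\Gamma]$ for the propositional and bounded-quantifier rules, and $\mathcal{H}[\Gamma][s]$ when $(I)$ carries an auxiliary term $s$ -- produces a premise derivation whose control function matches what $(I)$ expects. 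For the first class of rules the identity $\mathcal{H}[\Gamma][\Gamma_i]=\mathcal{H}[\Gamma]$ follows from $k(A_i)\subseteq k(A)\subseteq k(\Gamma)$; for $(\forall),(\exists),(\in),(\notin)$ the identity $\mathcal{H}[\Gamma][s][\Gamma_i]=\mathcal{H}[\Gamma][s]$ reduces to the containment $k(A_i)\setminus k(A)\subseteq\{|s|\}$, which is immediate from the definitions of $k$ and of the abbreviation $s\dotin t$.

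The ordinal side-condition of $(I)$, that each premise ordinal strictly undercut the conclusion ordinal, is supplied by Lemma~\ref{ranklemma}: every minor formula has strictly smaller rank than the principal one, so $no(A_i)=\omega^{rk(A_i)}<\omega^{rk(A)}=no(A)$, and strict monotonicity of the natural sum on additive-principal arguments yields $no(\Gamma_i)\#\alpha<no(\Gamma)\#\alpha$. A single application of $(I)$ then delivers part (i).

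For part (ii) a contraction is unavoidable, so part (i) does not apply directly. Instead I would perform two successive $(\vee)$-inferences. Fix $\mathcal{H}$; since $k(A\vee B)=k(A)\cup k(B)$ one has $\mathcal{H}[\Gamma,A,B]=\mathcal{H}[\Gamma,A\vee B]$, and unfolding $\Vdash^\alpha_\rho\Gamma,A,B$ gives $\provx{\mathcal{H}[\Gamma,A\vee B]}{no(\Gamma,A,B)\#\alpha}{\rho}{\Gamma,A,B}$. Apply $(\vee)$ first with minor $B$ and principal $A\vee B$ to infer $\Gamma,A,A\vee B$ at ordinal $no(\Gamma,A,B)\#\alpha+1$; then apply $(\vee)$ again with minor $A$ and principal $A\vee B$ (a genuine contraction of $A\vee B$) to infer $\Gamma,A\vee B$ at ordinal $no(\Gamma,A,B)\#\alpha+2$. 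The identity $rk(A\vee B)=\max(rk(A),rk(B))+1$ ensures, by a routine ordinal computation, $no(A)\#no(B)+2\leq no(A\vee B)$, so $no(\Gamma,A,B)\#\alpha+2\leq no(\Gamma,A\vee B)\#\alpha$, and the weakening part of Lemma~\ref{xinversion}i) raises the ordinal bound to the one required by $\Vdash^\alpha_\rho\Gamma,A\vee B$.

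The substantive input throughout is Lemma~\ref{ranklemma}; everything else is bookkeeping. The only place that needs genuine attention is the operator matching in part (i) for the four rules with an auxiliary term, where one must make sure to invoke the $\Vdash$-hypothesis with $\mathcal{H}[\Gamma][s]$ rather than with the smaller operator $\mathcal{H}[\Gamma]$, so that the premise's control function agrees with the one demanded by $(I)$ and the conclusion drops back to $\mathcal{H}[\Gamma][s]$ (respectively $\mathcal{H}[\Gamma]$) without any further adjustment.
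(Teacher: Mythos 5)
Your proof is correct and follows essentially the same route as the paper's own (which is only a two-line sketch): part (i) is Lemma~\ref{ranklemma} for the ordinal side-conditions combined with an operator-matching observation for the term-indexed rules, and part (ii) is two $(\vee)$-inferences plus a contraction followed by the estimate $no(\Gamma,A,B)\#\alpha+2\leq no(\Gamma,A\vee B)\#\alpha$ and weakening via Lemma~\ref{xinversion}i). The one micro-claim worth a second look is ``$k(A_i)\setminus k(A)\subseteq\{\lev s\}$'' --- proper subterms of $s$ also contribute levels to $k(A_i)$ --- but this is precisely the point the paper compresses into ``$\mathcal{H}[\Gamma_i]\subseteq\mathcal{H}[i]$'' and is absorbed the same way by the conventions governing $\mathcal{H}[s]$.
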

\begin{proof}
Part i) follows from Lemma \ref{ranklemma}. It also needs to be noted that if the last inference was universal with premises $\{\GA_i\}_{i\in Y}$, then $\CH[\GA_i]\subseteq\CH[i]$.\\

\noindent For part ii) suppose $\Vdash^\al_\rho \GA,A,B$, so we have
\begin{equation*}
\provx{\mathcal{H}[\Gamma]}{no(\Gamma,A,B)\#\al}{\rho}{\Gamma,A,B}.
\end{equation*}
Two applications of $(\vee)$ and a contraction yields
\begin{equation*}
\provx{\mathcal{H}[\Gamma]}{no(\Gamma,A,B)\#\al+2}{\rho}{\Gamma,A\vee B}.
\end{equation*}
It remains to note that since $\omega^{rk(A\vee B)}$ is additive principal, Lemma \ref{ranklemma} gives us
\begin{equation*}
no(\Gamma,A,B)\#\al+2=no(\Gamma)\#\al\#\omega^{rk(A)}\#\omega^{rk(B)}+2<no(\Gamma)\#\al\#\omega^{rk(A\vee B)}=no(\Gamma,A\vee B)\#\al.
\end{equation*}
So we may complete the proof with an application of Lemma \ref{xinversion}i).
\end{proof}
\begin{lemma}\label{xsetuplemma}{\em
Let $A$ be an $\RSX$ formula and $s,t$ be $\RSX$ terms.
\begin{description}
\item{i)} $\Vdash A,\neg A$
\item{ii)} $\Vdash s\notin s$
\item{iii)} $\Vdash s\subseteq s$ where $s\subseteq s:\equiv (\forall x\in s)(x\in s)$
\item{iv)} If $\lev s<\lev t$ then $\Vdash s\dotin t\rightarrow s\in t$ and $\Vdash \neg(s\dotin t), s\in t$
\item{v)} $\Vdash s\neq t,t=s$
\item{vi)} If $\lev s<\lev t$ and $\Vdash \Gamma, A, B$ then $\Vdash\Gamma,s\dotin t\rightarrow A, s\dotin t\wedge B$
\item{vii)} If $\lev s<\Gamma_{\theta+1}+\al$ then $\Vdash s\in\mathbb{L}_\al(X)$
\end{description}
}\end{lemma}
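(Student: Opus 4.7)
The seven items form a bootstrap: (i) is the infinitary tautology lemma, and (ii)--(vii) are assembled from (i) via direct applications of the $\RSX$ inference rules, with case analysis on the syntactic form of each term according to Definition \ref{level1} and Abbreviations \ref{abbreviations}.

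For (i), I would proceed by induction on $rk(A)$. If $A$ is basic, say $A \equiv \bar{u} \in \bar{v}$, then in $V$ either $u \in v$ or $u \notin v$, so one of $A, \neg A$ makes $\{A, \neg A\}$ into an $\RSX$-axiom. For non-basic $A$, I would use the decomposition from the previous subsection: without loss of generality $A \simeq \bigvee (A_i)_{i\in y}$, whence $\neg A \simeq \bigwedge (\neg A_i)_{i\in y}$. The induction hypothesis provides $\Vdash A_i, \neg A_i$ for each $i$; applying the rule appropriate to $A$ (one of $(\in)$, $(b\exists)$, $(\exists)$, $(\vee)$) yields $\Vdash A, \neg A_i$, after which the dual rule for $\neg A$ yields $\Vdash A, \neg A$. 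Lemma \ref{ranklemma} ensures $rk(A_i) < rk(A)$ for each $i$, so the accumulated ordinals fit within the required bound $no(A)\#no(\neg A)$.

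For (ii)--(vii) I would use (i) as scaffolding and split on term forms. Item (ii): when $s \equiv \bar{u}$ the formula $\bar{u} \notin \bar{u}$ is an $\RSX$-axiom via well-foundedness of $V$; otherwise I apply $(\notin)$, reducing to $\Vdash r \dotin s \rightarrow s \neq r$ for each $\lev{r} < \lev{s}$, which reduces to $\Vdash s \neq r$ derivable via distinct-rank considerations together with (i). Item (iv) unfolds $s \dotin t$ by the three clauses of Abbreviations \ref{abbreviations} and in each case derives the goal via $(\in)$ together with the reflexivity instance of (i). Item (iii) follows from (iv) by a single $(b\forall)$. Item (v) applies (iii) twice inside a $(\wedge)$, using the abbreviation $t=s :\equiv (\forall x \in t)(x \in s) \wedge (\forall x \in s)(x \in t)$. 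Item (vi) is a direct rearrangement of the hypothesis, packaging $A$ into the implication $s\dotin t \rightarrow A$ via $(\vee)$ and $B$ into $s \dotin t \wedge B$ via $(\wedge)$, then combining by Lemma \ref{xcalc}. Item (vii) applies $(\in)$ with witness $s$ and the reflexivity $\Vdash s = s$ from (i) when $s$ is not of the form $\bar{u}$; when $s \equiv \bar{u}$, one uses $(\in)$ with a $\bar{v}$-witness for some $v \in u$ (if any) and the basic axiom $\bar{u} = \bar{u}$ from (i), or treats the case $u = \emptyset$ separately.

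The main obstacle is not any individual argument but the uniform handling of the three term forms ($\bar{u}$, $\mathbb{L}_\alpha(X)$, definable subset) together with the operator and ordinal bookkeeping: every application of $(\in)$, $(\exists)$, or $(b\exists)$ requires verifying the level constraint $\lev{s} < \Gamma_{\theta+1}+\alpha$, while every application of a universal rule requires tracking that $\mathcal{H}[s]$ is the correct operator extension for the index $s$ being quantified. Once these rank and level comparisons are laid out, each individual assembly is routine and the required ordinal bounds flow from Lemma \ref{ranklemma} and Lemma \ref{xcalc}.
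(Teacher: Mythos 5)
Your overall architecture for (i) matches the paper's (induction on $rk(A)$, decomposition into $\bigvee/\bigwedge$, dual rule applications, ranks controlled by Lemma \ref{ranklemma}), and the term-form case analysis is the right organizing principle throughout. However, there are two concrete gaps in the assembly of (ii)--(vii). First, your derivation of (v) is wrong: applying (iii) twice inside a $(\wedge)$ only yields reflexivity $\Vdash s=s$, whereas (v) is the excluded-middle-with-symmetry statement $\Vdash s\neq t,\; t=s$ for two \emph{different} terms. The correct route is two instances of (i), namely $\Vdash \neg(s\subseteq t), s\subseteq t$ and $\Vdash \neg(t\subseteq s), t\subseteq s$, each weakened by $(\vee)$ to put $s\neq t$ on the negative side, then combined by $(\wedge)$ into $t=s$; the point of the little argument is that $s=t$ and $t=s$ have their conjuncts in opposite order, so this is not literally an instance of (i). Second, your dependency order for (iii) and (iv) rests on "the reflexivity instance of (i)", but (i) only ever yields $\Vdash A,\neg A$, never $\Vdash A$ alone, so $\Vdash s=s$ is not available from (i). Reflexivity is exactly the content of (iii), which you propose to derive \emph{from} (iv) --- as written this is circular. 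The paper resolves this by proving (iii) by induction on $rk(s)$, obtaining (iv) as a by-product: for a non-basic $s$, the premise $\Vdash r=r$ needed to apply $(\in)$ comes from the induction hypothesis $\Vdash r\subseteq r$ at the strictly smaller rank of $r$. Your plan is salvageable by running (iii) and (iv) as a simultaneous induction on level, but the appeal to (i) must be replaced by that induction hypothesis.

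Two smaller points. In (ii), the derivation of $\Vdash s\neq r$ for $\lev r<\lev s$ is not a matter of "distinct-rank considerations": the actual witness is the induction hypothesis $\Vdash r\notin r$ itself, which exhibits $r$ as a member of $s$ not belonging to $r$, giving $(\exists x\in s)(x\notin r)$ and hence $s\neq r$ by $(\vee)$. In (vii), the case split on $s\equiv\bar u$ with "a $\bar v$-witness for some $v\in u$" is misdirected --- the $(\in)$ rule for the target $\mathbb{L}_\al(X)$ needs a term \emph{equal to} $s$, not an element of $s$, and the empty-set case is a red herring. The uniform argument works for every term form: take the witness to be $s$ itself, use $\Vdash s=s$ from (iii), note $s\dotin\mathbb{L}_\al(X)\wedge s=s\equiv s=s$, and apply $(\in)$, checking only $\lev s<\Gamma_{\theta+1}+\al$.
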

\begin{proof}
i) We use induction of $rk(A)$, and split into cases based upon the form of $A$.:\\

\noindent Case 1. Suppose $A\equiv \bar{u}\in\bar{v}$. In this case either $A$ or $\neg A$ is an axiom so there is nothing to show.\\

\noindent Case 2. Suppose $A\equiv r\in t$ where $\text{max}(\lev r,\lev t)\geq\Gamma_{\theta+1}$. By Lemma \ref{ranklemma} and the induction hypothesis we have $\Vdash s\dotin t\wedge r=s,s\dotin t\rightarrow r\neq s$ for all $\lev s<\lev t$. Thus we have the following template for derivations in $\RSX$:

\begin{prooftree}
\Axiom$\fCenter\Vdash s\dotin t\wedge r=s,s\dotin t\rightarrow r\neq s$
\LeftLabel{$(\in)$}
\UnaryInf$\fCenter\Vdash r\in t,s\dotin t\rightarrow r\neq s$
\LeftLabel{$(\notin)$}
\UnaryInf$\fCenter\Vdash r\in t,r\notin t$
\end{prooftree}

\noindent Case 3. Suppose $A\equiv (\exists x\in t)F(x)$. By Lemma \ref{ranklemma} and the induction hypothesis we have $\Vdash s\dotin t\wedge F(s), s\dotin t\rightarrow\neg F(s)$ for all $\lev s<\lev t$. We have the following template for derivations in $\RSX$:

\begin{prooftree}
\Axiom$\fCenter\Vdash s\dotin t\wedge F(s), s\dotin t\rightarrow\neg F(s)\quad\text{for all $\lev s<\lev t$}$
\LeftLabel{$(b\exists)$}
\UnaryInf$\fCenter\Vdash (\exists x\in t)F(x),s\dotin t\rightarrow\neg F(s)$
\LeftLabel{$(b\forall)$}
\UnaryInf$\fCenter\Vdash (\exists x\in t)F(x), (\forall x\in t)\neg F(x)$
\end{prooftree}

\noindent Case 4. $A\equiv A_0\vee A_1$. We have the following template for derivations in $\RSX$:
\begin{prooftree}
\Axiom$\fCenter\Vdash A_0,\neg A_0$
\LeftLabel{$(\vee)$}
\UnaryInf$\fCenter\Vdash A_0\vee A_1,\neg A_0$
\LeftLabel{$(\vee)$}
\Axiom$\fCenter\Vdash A_1,\neg A_1$
\UnaryInf$\fCenter\Vdash A_0\vee A_1,\neg A_1$
\LeftLabel{$(\wedge)$}
\BinaryInf$\fCenter\Vdash A_0\vee A_1, \neg A_0\wedge\neg A_1$
\end{prooftree}
All other cases may be seen as variations of those above.\\

\noindent ii) We proceed by induction on $rk(s)$. If $s$ is of the form $\bar{u}$ then $s\notin s$ is already an axiom. Inductively we have $\Vdash r\notin r$ for all $\lev r<\lev s$. Now suppose $s$ is of the form $\mathbb{L}_\alpha (X)$, in this case $r\notin r\equiv r\dotin s\wedge r\notin r$ so we have the following template for derivations in $\RSX$:

\begin{prooftree}
\Axiom$\fCenter\Vdash r\dotin s\wedge r\notin r$
\LeftLabel{$(b\exists)$}
\UnaryInf$\fCenter\Vdash (\exists x\in s)(x\notin r)$
\LeftLabel{$(\vee)$}
\UnaryInf$\fCenter\Vdash s\neq r$
\LeftLabel{\ref{abbreviations}ii)}
\UnaryInf$\fCenter\Vdash r\dotin s\rightarrow s\neq r$
\LeftLabel{$(\notin)$}
\UnaryInf$\fCenter\Vdash s\notin s$
\end{prooftree}
\noindent Now suppose $s$ is of the form $[x\in\mathbb{L}_\alpha(X)\:|\:B(x)]$, by i) we have $\Vdash B(r),\neg B(r)$ for any $\lev r<\lev s$. We have the following template for derivations in $\RSX$:

\begin{prooftree}
\Axiom$\fCenter\Vdash r\notin r\quad\Vdash B(r),\neg B(r)\quad\text{for any $\lev r<\lev s$}$
\LeftLabel{$(\wedge)$}
\UnaryInf$\fCenter\Vdash B(r)\wedge r\notin r,\neg B(r)$
\LeftLabel{$(b\exists)$}
\UnaryInf$\fCenter\Vdash (\exists x\in s)(x\notin r),\neg B(r)$
\LeftLabel{$(\vee)$}
\UnaryInf$\fCenter\Vdash s\neq r,\neg B(r)$
\LeftLabel{Lemma \ref{xcalc}ii)}
\UnaryInf$\fCenter\Vdash B(r)\rightarrow s\neq r$
\LeftLabel{$(\notin)$}
\UnaryInf$\fCenter\Vdash s\notin s$
\end{prooftree}
\noindent iii) Again we proceed by induction on $rk(s)$. If $s\equiv\bar{u}$ then $\Vdash \bar{v}\notin\bar{u},\bar{v}\in\bar{u}$ for any $\lev{\bar{v}}<\lev{\bar{u}}$ by part i), so we have the following template for derivations in $\RSX$:
\begin{prooftree}
\Axiom$\fCenter\Vdash\bar{v}\notin\bar{u},\bar{v}\in\bar{u}$
\LeftLabel{Lemma \ref{xcalc}ii)}
\UnaryInf$\fCenter\Vdash\bar{v}\in\bar{u}\rightarrow\bar{v}\in\bar{u}$
\LeftLabel{$(b\forall)$}
\UnaryInf$\fCenter\Vdash(\forall x \in s)(x\in s)$
\end{prooftree}
\noindent Suppose $s\equiv\mathbb{L}_\alpha (X)$, by the induction hypothesis we have $\Vdash r\subseteq r$ for all $\lev r<\lev s$. We have the following template for derivations in $\RSX$:
\begin{prooftree}
\Axiom$\fCenter\Vdash r\subseteq r$
\Axiom$\fCenter\Vdash r\subseteq r$
\LeftLabel{$(\wedge)$}
\BinaryInf$\fCenter\Vdash r=r$
\LeftLabel{$(\in)$}
\UnaryInf$\fCenter\Vdash r\in s$
\LeftLabel{\ref{abbreviations}ii)}
\UnaryInf$\fCenter\Vdash r\dotin s\rightarrow r\in s$
\LeftLabel{$(b\forall)$}
\UnaryInf$\fCenter\Vdash (\forall x\in s)(x\in s)$
\end{prooftree}
\noindent Finally suppose $s\equiv [x\in\mathbb{L}_\alpha (X)\:|\:B(x)]$, again by the induction hypothesis we have $\Vdash r\subseteq r$ for all $\lev r<\lev s$. Also by part i) we have $\Vdash \neg B(r),B(r)$ for all such $r$. We have the following template for derivations in $\RSX$:
\begin{prooftree}
\Axiom$\fCenter\Vdash\neg B(r),r\subseteq r$
\LeftLabel{$(\wedge)$}
\UnaryInf$\fCenter\Vdash\neg B(r),r=r$
\Axiom$\fCenter\Vdash\neg B(r),B(r)$
\LeftLabel{$(\wedge)$}
\BinaryInf$\fCenter\Vdash\neg B(r), B(r)\wedge r=r$
\LeftLabel{$(\in)$}
\UnaryInf$\fCenter\Vdash\neg B(r),r\in s$
\LeftLabel{Lemma \ref{xcalc}ii)}
\UnaryInf$\fCenter\Vdash B(r)\rightarrow r\in s$
\LeftLabel{$(b\forall)$}
\UnaryInf$\fCenter\Vdash (\forall x\in s)(x\in s)$
\end{prooftree}
\noindent iv) Was shown whilst proving iii).\\

\noindent v) By part i) we have $\Vdash\neg(s\subseteq t),s\subseteq t$ and $\Vdash\neg(t\subseteq s),t\subseteq s$ for all $\lev s<\lev t$. We have the following template for derivations in $\RSX$.
\begin{prooftree}
\Axiom$\fCenter\Vdash\neg(s\subseteq t),s\subseteq t$
\LeftLabel{$(\vee)$}
\UnaryInf$\fCenter\Vdash\neg(s\subseteq t)\vee\neg(t\subseteq s),s\subseteq t$
\Axiom$\fCenter\Vdash\neg(t\subseteq s),t\subseteq s$
\LeftLabel{$(\vee)$}
\UnaryInf$\fCenter\Vdash\neg(t\subseteq s)\vee\neg(s\subseteq t),t\subseteq s$
\LeftLabel{$(\wedge)$}
\BinaryInf$\fCenter\Vdash\neg (s\subseteq t)\vee\neg (t\subseteq s), s\subseteq t\wedge t\subseteq s$
\LeftLabel{\ref{abbreviations}i)}
\UnaryInf$\fCenter\Vdash s\neq t,t=s$
\end{prooftree}
\noindent vi) If $t\equiv\mathbb{L}_\alpha (X)$ then this result is trivial since $s\dotin t\rightarrow A :=A$ and $s\dotin t\wedge B:=B$.\\

\noindent Now if $t\equiv \bar{u}$ then $s\dotin t:=s\in t$ and if $t\equiv[x\in\mathbb{L}_\alpha (X)\:|\:C(x)]$ then $s\dotin t:= C(s)$. In either case we have the following template for derivations in $\RSX$:
\begin{prooftree}
\Axiom$\fCenter\Vdash\Gamma,A,B$
\LeftLabel{$(\vee)$}
\UnaryInf$\fCenter\Vdash\Gamma,s\dotin t\rightarrow A,B$
\Axiom$\fCenter\Vdash\Gamma,\neg (s\dotin t),s\dotin t\quad\text{by i)}$
\LeftLabel{$(\vee)$}
\UnaryInf$\fCenter\Vdash\Gamma,s\dotin t\rightarrow A,s\dotin t$
\LeftLabel{$(\wedge)$}
\BinaryInf$\fCenter\Vdash\Gamma, s\dotin t\rightarrow A, s\dotin t\wedge B$
\end{prooftree}
vii)  By part iii) we have $\Vdash s=s$ for all $\lev s<\Gamma_{\theta+1}+\al$ which means we have $\Vdash s\dotin\mathbb{L}_\al (X) \wedge s=s$ for all such $s$. From which one application of $(\in)$ gives the desired result.
\end{proof}

\begin{lemma}[Extensionality]\label{xextensionality}{\em
For any $\RSX$ formula $A(s_1,\ldots,s_n)$,
\begin{equation*}
\Vdash[s_1\neq t_1],\ldots,[s_n\neq t_n],\neg A(s_1,\ldots,s_n), A(t_1,\ldots,t_n).
\end{equation*}
Where $[s_i\neq t_i]:=\neg(s_i\subseteq t_i),\neg(t_i\subseteq s_i)$.
}\end{lemma}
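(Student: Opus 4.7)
The plan is to proceed by induction on $rk(A(\bar\emptyset,\ldots,\bar\emptyset))$, with the outermost logical structure of the $\KP$-formula $A$ determining the case split. The propositional and quantifier cases are handled uniformly: for $A \equiv B \wedge C$ (and dually $B \vee C$), apply the induction hypothesis to $B$ and $C$, combine via $(\wedge)$, and absorb the negation $\neg A \equiv \neg B \vee \neg C$ using two applications of $(\vee)$ together with Lemma \ref{xcalc}. For $A \equiv \forall x B(x,\bar a)$ (and dually for $\exists$), apply the induction hypothesis to $B(s,\bar a)$ vs $B(s,\bar t)$ for an arbitrary term $s$, then close with $(\forall)$ on the right and $(\exists)$ on the left. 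The bounded quantifier cases $A \equiv (\forall x \in a_i) B(x,\bar a)$ are the same, except that when the bounding term itself depends on $s_i$ one uses the hypothesis $s_i = t_i$ (extracted from $[s_i \neq t_i]$) to translate $r \dotin s_i$ into $r \dotin t_i$, together with Lemma \ref{xsetuplemma}vi).

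The substantive case is atomic. Say $A(a_1,a_2) \equiv a_1 \in a_2$; the goal is
\begin{equation*}
\Vdash [s_1 \neq t_1],[s_2 \neq t_2],s_1 \notin s_2,\,t_1 \in t_2.
\end{equation*}
I would derive $t_1 \in t_2$ by $(\in)$ using $s_1$ as witness, reducing to $\Vdash [s_1 \neq t_1],[s_2 \neq t_2],s_1 \notin s_2,\,s_1 \dotin t_2 \wedge s_1 = t_1$. The conjunct $s_1 = t_1$ is immediate from $[s_1 \neq t_1]$ by Lemma \ref{xsetuplemma}i). For $s_1 \dotin t_2$, I would analyse $s_1 \notin s_2$ by $(\notin)$, producing for each $r$ with $|r| < |s_2|$ a premise $\Vdash \ldots, r \dotin s_2 \rightarrow r \neq s_1$; in conjunction with the hypothesis $s_2 = t_2$, unfolded appropriately according to whether $t_2$ is of the form $\bar v$, $\mathbb{L}_\alpha(X)$, or $[x \in \mathbb{L}_\alpha(X) \mid B(x)]$, one obtains $s_1 \dotin t_2$ as required. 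The cases where $A$ is $a_1 \notin a_2$, or where one argument is a ground term $\bar u$, are symmetric or trivial via Lemma \ref{xsetuplemma}i).

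The main obstacle lies in two rank-bookkeeping issues in the atomic case. First, the rules $(\in)$ and $(\exists)$ carry the side condition $|s| < \Gamma_{\theta+1} + \alpha$ on the witness, so we must verify that every $s_i$ and $t_i$ chosen as a witness (or as a value of an eigenvariable) has sufficiently small level relative to the ordinal bound $\alpha = no(\Gamma)\#\alpha'$ carried by $\Vdash^{\alpha'}$; this works out because $|s_i|,|t_i|$ already appear in $k(\Gamma)$, which the operator $\CH[\Gamma]$ absorbs. Second, when $t_i$ has the form $[x \in \mathbb{L}_\alpha(X) \mid B(x)]$ the abbreviation $\dotin$ introduces $B$ as an auxiliary disjunct that must be cancelled by an extensionality step involving $B$ itself; however, the defining formula $B$ has strictly smaller rank than $t_i$, hence smaller rank than $A$, so the induction hypothesis applies and closes these residual obligations.
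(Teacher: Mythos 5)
Your reduction of the atomic case contains a genuine gap. To conclude $t_1\in t_2$ by the rule $(\in)$ you must supply a witness $s$ satisfying the side condition $\lev s<\lev{t_2}$, and you propose to take $s:=s_1$. But the lemma is asserted for arbitrary pairs of terms, and nothing forces $\lev{s_1}<\lev{t_2}$; when $\lev{s_1}\geq\lev{t_2}$ the inference you describe is not available at all, and the formula $s_1\dotin t_2$ you want as the first conjunct is likewise problematic (when $t_2\equiv[x\in\mathbb{L}_\alpha(X)\mid B(x)]$ it amounts to $B(s_1)$, which cannot be read off from the remaining disjuncts without essentially re-proving the lemma). The paper's derivation avoids this entirely: it never witnesses $t_1\in t_2$ by $s_1$. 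Instead it unfolds $s_1\notin s_2$ by $(\notin)$ over all $s$ with $\lev s<\lev{s_2}$, feeds each such $s$ as the witness into $\neg(s_2\subseteq t_2)\equiv(\exists x\in s_2)(x\notin t_2)$ via $(b\exists)$, unfolds the resulting $s\notin t_2$ by $(\notin)$ over all $t$ with $\lev t<\lev{t_2}$, and only then applies $(\in)$ with the legitimately small witness $t$, using the premise $t\dotin t_2\wedge t_1=t$. The conjunct $t_1=t$ comes from the induction hypothesis applied to the \emph{equality} formula with the pairs $(s_1,t_1)$ and $(s,t)$. This is also why the induction must be on $rk(A(s_1,\ldots,s_n))\,\#\,rk(A(t_1,\ldots,t_n))$ rather than on $rk(A(\bar\emptyset,\ldots,\bar\emptyset))$ as you propose: the formula invoked in that recursive call, $a=b$, is not a subformula of $a_1\in a_2$, and its rank evaluated at $\bar\emptyset$ is in fact \emph{larger} than that of $\bar\emptyset\in\bar\emptyset$, so your measure does not decrease where it is most needed.

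Your non-atomic cases follow the paper's pattern, but note that the same level issue resurfaces for bounded quantifiers: for $A\equiv(\exists y\in a_i)B(y,\vec a)$ the positive formula is $(\exists y\in t_i)B(y,\vec t)$, while unfolding $(\forall y\in s_i)\neg B(y,\vec s)$ produces witnesses $r$ with $\lev r<\lev{s_i}$ only, so passing from $r\dotin s_i$ to a witness admissible for $t_i$ again requires the transfer-through-equality device, not merely Lemma \ref{xsetuplemma}vi). You should rework the atomic case along the paper's lines before the rest of the argument can stand.
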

\begin{proof}
The proof is by induction on $rk(A(s_1,\ldots,s_n))\# rk(A(t_1,\ldots,t_n))$.\\

\noindent Case 1. Suppose $A(s_1,s_2)\equiv s_1\in s_2$. By the induction hypothesis we have $\Vdash[s_1\neq t_1],[s\neq t],s_1\neq s,t_1=t$ for all $\lev s<\lev{s_2}$ and all $\lev t<\lev{t_2}$. What follows is a template for derivations in $\RSX$, for ease of reading the principal formula of each inference is underlined (some lines do not necessarily represent single inferences, but in these cases it is clear how to extend the concept of "principal formula" in a sensible way).
\begin{prooftree}
\Axiom$\fCenter\Vdash[s_1\neq t_1],[s\neq t],s_1\neq s,t_1=t$
\LeftLabel{$(\vee)$}
\UnaryInf$\fCenter\Vdash[s_1\neq t_1],\underline{s\neq t},s_1\neq s,t_1=t$
\LeftLabel{Lemma \ref{xsetuplemma} vi)}
\UnaryInf$\fCenter\Vdash[s_1\neq t_1],\underline{t\dotin t_2\rightarrow s\neq t},s_1\neq s,\underline{t\dotin t_2\wedge t_1=t}$
\LeftLabel{$(\in)$}
\UnaryInf$\fCenter\Vdash[s_1\neq t_1],t\dotin t_2\rightarrow s\neq t,s_1\neq s,\underline{t_1\in t_2}$
\LeftLabel{$(\notin)$}
\UnaryInf$\fCenter\Vdash[s_1\neq t_1],\underline{s\notin t_2},s_1\neq s,t_1\in t_2$
\LeftLabel{Lemma \ref{xsetuplemma} vi)}
\UnaryInf$\fCenter\Vdash[s_1\neq t_1],\underline{s\dotin s_2\wedge s\notin t_2},\underline{s\dotin s_2\rightarrow s_1\neq s},t_1\in t_2$
\LeftLabel{$(b\exists)$}
\UnaryInf$\fCenter\Vdash[s_1\neq t_1],\underline{(\exists x\in s_2)(x\notin t_2)},s\dotin s_2\rightarrow s_1\neq s,t_1\in t_2$
\LeftLabel{$(\notin)$}
\UnaryInf$\fCenter\Vdash[s_1\neq t_1],(\exists x\in s_2)(x\notin t_2),\underline{s_1\notin s_2},t_1\in t_2$
\LeftLabel{Lemma \ref{xinversion}i)}
\UnaryInf$\fCenter\Vdash[s_1\neq t_1],\underline{s_2\neq t_2},s_1\notin s_2,t_1\in t_2$
\end{prooftree}

\noindent Case 2. Suppose $A(s_1)\equiv s_1\in s_1$. In this case $\neg A(s_1)\equiv s_1\notin s_1$ so the result follows from Lemma \ref{xsetuplemma}ii).\\

\noindent Case 3. Suppose $A(s_1,\ldots,s_n)\equiv (\exists y\in s_i)(B(y,s_1,\ldots,s_n))$ for some $1\leq i\leq n$. Inductively we have
\begin{equation*}
\Vdash[s_1\neq t_1],\ldots,[s_n\neq t_n],\neg B(r,s_1,\ldots,s_n), B(r,t_1,\ldots,t_n)
\end{equation*}
for all $\lev r<\lev{s_i}$. Now by applying \ref{xsetuplemma}iv) we obtain
\begin{equation*}
\Vdash[s_1\neq t_1],\ldots,[s_n\neq t_n],r\dotin s_i\rightarrow\neg B(r,s_1,\ldots,s_n), r\dotin s_i\wedge B(r,t_1,\ldots,t_n)
\end{equation*}
To which we may apply $(b\exists)$ followed by $(b\forall)$ to arrive at the desired conclusion.\\

\noindent Case 4. Suppose $A(s_1,\ldots,s_n)\equiv (\exists x\in r)B(x,s_1,\ldots,s_n)$ for some $r$ not present in $s_1,\ldots,s_n$. From the induction hypothesis we have
\begin{equation*}
\Vdash [s_1\neq t_1],\ldots,[s_n\neq t_n],p\dotin r\rightarrow\neg B(p,s_1,\ldots,s_n),p\dotin r\wedge B(p,t_1,\ldots,t_n)\quad\text{for all $\lev p<\lev r$.}
\end{equation*}
Applying $(b\exists)$ followed by $(b\forall)$ gives us the desired result.\\

\noindent The cases where $A(s_1,\ldots,s_n)\equiv\exists xB(x,s_1,\ldots,s_n)$ or $A(s_1,\ldots,s_n)\equiv B\vee C$ may be treated in a similar manner to case 4. All other cases are dual to one of the ones considered above.
\end{proof}

\begin{lemma}[Set Induction]\label{xfoundation}{\em For any $\RSX$-formula $F$:
\begin{equation*}
\force{\omega^{rk(A)}}{}\forall x[(\forall y\in x)F(y)\rightarrow F(x)]\rightarrow\forall xF(x)
\end{equation*}
where $A:=\forall x[(\forall y\in x)F(y)\rightarrow F(x)]$.
}\end{lemma}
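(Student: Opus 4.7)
The proof proceeds by a meta-level transfinite induction on the term level $|s|$ establishing
\[
\Vdash^{\alpha_s}\neg A,F(s)
\]
for every $\RSX$-term $s$, where $\alpha_s$ is an increasing function of $|s|$ bounded strictly below $\omega^{rk(A)}$ (for concreteness $\alpha_s=\omega\cdot|s|+k$ for a fixed $k$ suffices). Once this is in hand, one $(\vee)$, one $(\forall)$, a further $(\vee)$, and a contraction finish the derivation of $A\to\forall xF(x)$ with the required ordinal bound.

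For the inductive step, assume $\Vdash^{\alpha_r}\neg A,F(r)$ for every term $r$ with $|r|<|s|$. Using Lemma \ref{xsetuplemma}(i), (iv) and (vi) (together with a propositional $(\vee)$-move) I pass from $\neg A,F(r)$ to $\neg A,r\dotin s\to F(r)$, and then apply $(b\forall)$ to obtain
\[
\Vdash^{\beta_s}\neg A,(\forall y\in s)F(y),
\]
with $\beta_s=\sup_{|r|<|s|}(\alpha_r+c)+1$. Combining this with the logical axiom $\Vdash F(s),\neg F(s)$ (Lemma \ref{xsetuplemma}(i)) via $(\wedge)$ gives
\[
\Vdash^{\beta_s+1}\neg A,F(s),(\forall y\in s)F(y)\wedge\neg F(s),
\]
and an application of $(\exists)$ with witness $s$ produces
\[
\Vdash^{\beta_s+2}\neg A,F(s),\exists x\bigl[(\forall y\in x)F(y)\wedge\neg F(x)\bigr].
\]
Since the displayed existential is literally $\neg A$, a contraction yields $\Vdash^{\alpha_s}\neg A,F(s)$ with $\alpha_s:=\beta_s+2$. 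The side-condition $|s|<\Gamma_{\theta+1}+(\beta_s+2)$ required by $(\exists)$ is automatic: the recursion delivers $\beta_s$ large enough that $|s|\le\Gamma_{\theta+1}+\beta_s$ (trivial when $|s|<\Gamma_{\theta+1}$, and otherwise $\beta_s\ge\gamma$ whenever $|s|=\Gamma_{\theta+1}+\gamma$).

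To conclude, note that $|s|<\Omega\le rk(A)$ forces $\alpha_s<\omega^{rk(A)}$ for every term $s$. From $\Vdash^{\alpha_s}\neg A,F(s)$ apply $(\vee)$ to obtain $\Vdash^{\alpha_s+1}\neg A\vee\forall xF(x),F(s)$, then $(\forall)$ over all terms to reach $\Vdash^{\sigma}\neg A\vee\forall xF(x),\forall xF(x)$ for some $\sigma<\omega^{rk(A)}$, and finally one further $(\vee)$ together with a contraction delivers the target sequent within the ambient bound $\omega^{rk(A)}$. The main obstacle is purely bookkeeping: the shorthand $\Vdash^\alpha$ quantifies universally over operators, so the substitutions $\CH\mapsto\CH[r]$, $\CH\mapsto\CH[s]$ needed at the $(b\forall)$- and $(\forall)$-premises are harmless, and Lemma \ref{RSXop} guarantees that the relevant term-levels and the $\alpha_r$'s remain inside the operator that is being carried along.
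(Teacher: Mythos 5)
Your derivation tree is the same as the paper's: induction on $\lev{s}$ producing $\neg A, F(s)$ via $(b\forall)$, a logical axiom, $(\wedge)$, and an $(\exists)$ whose principal formula $\neg A$ merges with the copy already in the sequent, followed by $(\forall)$ and $(\vee)$. The problem is precisely the step you dismiss as ``purely bookkeeping''. You state the inductive claim in the $\Vdash^{\alpha_s}$ format, which by definition unpacks to a derivation of ordinal $no(\Gamma)\#\alpha_s$. Hence your premise $\Vdash^{\beta_s+1}\neg A,F(s),(\forall y\in s)F(y)\wedge\neg F(s)$ commits you to the actual ordinal $no(\neg A,F(s))\#\,\omega^{rk((\forall y\in s)F(y)\wedge\neg F(s))}\#(\beta_s+1)$, while the conclusion $\Vdash^{\alpha_s}\neg A,F(s)$ has actual ordinal $no(\neg A,F(s))\#\alpha_s$; the $(\exists)$-rule demands a strict increase, so you need
$\omega^{rk((\forall y\in s)F(y)\wedge\neg F(s))}\#(\beta_s+1)<\alpha_s$.
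By Observation \ref{obs} the rank of $(\forall y\in s)F(y)\wedge\neg F(s)$ is at least $rk(s)+1=\omega\cdot\lev{s}+n+1$, so the left-hand side is at least $\omega^{\omega\cdot\lev{s}+1}$, which strictly exceeds your proposed $\alpha_s=\omega\cdot\lev{s}+k$. The recursion therefore does not close: the slack $\alpha_s$ must absorb the $no$-value of the transient formula $(\forall y\in s)F(y)\wedge\neg F(s)$ that disappears at the contraction, not merely count inference steps, and that value grows like $\omega^{\omega\cdot\lev{s}}$.

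The repair is either to take $\alpha_s$ of order $\omega^{\omega\cdot\lev{s}+c}$ for a constant $c$ depending on $F$ (Lemma \ref{ranklemma} guarantees $rk((\forall y\in s)F(y)\wedge\neg F(s))<rk(A)$, so this still stays below $\omega^{rk(A)}$ and the final $(\forall)$/$(\vee)$ steps go through), or, as the paper does, to abandon the $\Vdash^{\alpha_s}$ format for the inductive claim altogether and prove the raw statement $\provx{\CH[A,s]}{\omega^{rk(A)}\#\omega^{\lev{s}+1}}{0}{\neg A, F(s)}$. An operator-controlled derivation's ordinal tag is not required to dominate $no$ of its end sequent, so with the explicit intermediate tags $\eta+2,\eta+3,\eta+4$, where $\eta=\omega^{rk(A)}\#\omega^{\lev{s}}$, every local ordinal condition (including the contraction, which then costs only $+1$) is immediate, and $\eta+4<\omega^{rk(A)}\#\omega^{\lev{s}+1}$ closes the induction. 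As written, your concrete bound is false, and since the whole point of the lemma is the ordinal bound $\omega^{rk(A)}$ feeding into the embedding theorem, this is a genuine gap rather than a cosmetic one.
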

\begin{proof}
Claim:
\begin{equation*}
\tag{*}\provx{\mathcal{H}[A,s]}{\omega^{rk(A)}\#\omega^{\lev s+1}}{0}{\neg A,F(s)}\quad\text{for any term $s$.}
\end{equation*}
We begin by verifying (*) using induction on $\lev s$. From the induction hypothesis we know that
\begin{equation}
\tag{1}\provx{\mathcal{H}[A,t]}{\omega^{rk(A)}\#\omega^{\lev t+1}}{0}{\neg A,F(t)}\quad\text{for all $\lev t<\lev s$.}
\end{equation}
By applying $(\vee)$ if necessary to (1) we obtain
\begin{equation*}
\tag{2}\provx{\mathcal{H}[A,t,s]}{\omega^{rk(A)}\#\omega^{\lev t+1}+1}{0}{\neg A,t\dotin s\rightarrow F(t)}\quad\text{for all $\lev t<\lev s$.}
\end{equation*}
To which we may apply $(b\forall)$ yielding
\begin{equation*}
\tag{3}\provx{\mathcal{H}[A,s]}{\eta+2}{0}{\neg A,(\forall y\in s)F(y)}\quad\text{where $\eta:=\omega^{rk(A)}\#\omega^{\lev s}$}.
\end{equation*}
Observe that $no(\neg F(s),F(s))<\omega^{rk(A)}$, so by Lemma \ref{xsetuplemma}i) we have
\begin{equation*}
\tag{4}\provx{\mathcal{H}[A,s]}{\eta+2}{0}{\neg F(s), F(s)}.
\end{equation*}
Applying $(\wedge)$ to (3) and (4) yields
\begin{equation*}
\tag{5}\provx{\mathcal{H}[A,s]}{\eta+3}{0}{\neg A,(\forall y\in s)F(y)\wedge\neg F(s),F(s)}.
\end{equation*}
To which we may apply $(\exists)$ to otain
\begin{equation*}
\tag{6}\provx{\mathcal{H}[A,s]}{\eta+4}{0}{\neg A,\exists x[(\forall y\in x)F(y)\wedge\neg F(x)],F(s)}.
\end{equation*}
It remains to observe that $\neg A\equiv\exists x[(\forall y\in x)F(y)\wedge\neg F(x)]$ and that $\eta+4<\omega^{rk(A)}\#\omega^{\lev s+1}$, and hence we may apply Lemma \ref{xinversion}i) to provide
\begin{equation*}
\tag{7}\provx{\mathcal{H}[A,s]}{\omega^{rk(A)}\#\omega^{\lev s+1}}{0}{\neg A,F(s)}
\end{equation*}
so the claim is verified.\\

\noindent  Applying $(\forall)$ to (*) gives
\begin{equation*}
\provx{\mathcal{H}[A]}{\omega^{rk(A)}\#\OO}{0}{\neg A,\forall xF(x)}.
\end{equation*}
Now by two applications of $(\vee)$ we may conclude
\begin{equation*}
\provx{\mathcal{H}[A]}{\omega^{rk(A)}\#\OO+2}{0}{A\rightarrow\forall xF(x)}.
\end{equation*}
It remains to note that $no(A\rightarrow\forall xF(x))\geq\omega^{\OO+1}>\OO+2$, so we have
\begin{equation}
\force{\omega^{rk(A)}}{0}A\rightarrow(\forall x\in\mathbb{L}_\alpha (X))F(x)
\end{equation}
as required.
\end{proof}
\begin{lemma}[Infinity]\label{xinfinity}{\em
Suppose $\omega<\mu<\Omega$, then
\begin{equation*}
\Vdash (\exists x\in\mathbb{L}_\mu(X))[(\exists z\in x)(z\in x)\wedge(\forall y\in x)(\exists z\in x)(y\in z)].
\end{equation*}
}\end{lemma}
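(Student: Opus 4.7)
The plan is to use $\mathbb{L}_\omega(X)$ as the witness for the outer existential quantifier. Since $\omega<\mu$ we have $\lev{\mathbb{L}_\omega(X)}=\Gamma_{\theta+1}+\omega<\Gamma_{\theta+1}+\mu=\lev{\mathbb{L}_\mu(X)}$, so this witness is admissible in the final $(b\exists)$ inference. Writing $A(x)$ for the conjunction $(\exists z\in x)(z\in x)\wedge(\forall y\in x)(\exists z\in x)(y\in z)$, the task reduces, via one $(b\exists)$ applied to $\Vdash A(\mathbb{L}_\omega(X))$ and via one $(\wedge)$, to deriving the two conjuncts with $\mathbb{L}_\omega(X)$ in place of $x$ separately.

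For the first conjunct $(\exists z\in\mathbb{L}_\omega(X))(z\in\mathbb{L}_\omega(X))$ I would take $\mathbb{L}_0(X)$ as witness; its level is $\Gamma_{\theta+1}<\Gamma_{\theta+1}+\omega$. Lemma \ref{xsetuplemma}vii) yields $\Vdash \mathbb{L}_0(X)\in\mathbb{L}_\omega(X)$, and because the abbreviation \ref{abbreviations}ii) collapses $s\dotin t\wedge B(s)$ to $B(s)$ when $t\equiv\mathbb{L}_\alpha(X)$, a single application of $(b\exists)$ with witness $\mathbb{L}_0(X)$ produces $(\exists z\in\mathbb{L}_\omega(X))(z\in\mathbb{L}_\omega(X))$.

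For the second conjunct $(\forall y\in\mathbb{L}_\omega(X))(\exists z\in\mathbb{L}_\omega(X))(y\in z)$ I would argue uniformly in $s$ with $\lev{s}<\Gamma_{\theta+1}+\omega$. Inspecting Definition \ref{level1}, any such level has the form $\Gamma_\gamma$ with $\gamma\leq\theta$ or $\Gamma_{\theta+1}+k$ with $k<\omega$; hence there always exists $n<\omega$ with $\lev{s}<\Gamma_{\theta+1}+n=\lev{\mathbb{L}_n(X)}<\Gamma_{\theta+1}+\omega$. Taking $\mathbb{L}_n(X)$ as the witness, Lemma \ref{xsetuplemma}vii) gives $\Vdash s\in\mathbb{L}_n(X)$, and $(b\exists)$ produces $\Vdash(\exists z\in\mathbb{L}_\omega(X))(s\in z)$. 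A step of $(b\forall)$ with bounding term $\mathbb{L}_\omega(X)$ then universally quantifies over $s$. Combining both conjuncts by $(\wedge)$ yields $\Vdash A(\mathbb{L}_\omega(X))$, and the final $(b\exists)$ with witness $\mathbb{L}_\omega(X)$ delivers the statement.

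The only genuinely delicate point is the verification of the side conditions on each $(b\exists)$ inference, in particular that the chosen witness $s$ satisfies $\lev{s}<\Gamma_{\theta+1}+\alpha$, where $\alpha$ is the ordinal bound at that point, and that the operator $\CH[\Gamma]$ in the definition of $\Vdash$ contains the ordinal parameters of every chosen witness; both are routine, because the bound $no(\Gamma)$ associated with $\Vdash$ is of the shape $\omega^{rk(\cdot)}$ and comfortably exceeds each of the finite levels $\lev{\mathbb{L}_n(X)}$ as well as $\lev{\mathbb{L}_\omega(X)}$, and the levels of all witnesses used lie in $\CH[\Gamma]$ by clauses (H1)--(H3) of Definition \ref{operator}.
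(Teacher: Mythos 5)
Your proposal is correct and follows essentially the same route as the paper's proof: $\mathbb{L}_\omega(X)$ witnesses the outer existential, $\mathbb{L}_0(X)$ witnesses the first conjunct, and for the second conjunct each $s$ with $\lev s<\Gamma_{\theta+1}+\omega$ is placed into some $\mathbb{L}_n(X)$ with $n<\omega$ via Lemma \ref{xsetuplemma}vii), after which $(b\exists)$, $(b\forall)$, $(\wedge)$ and a final $(b\exists)$ assemble the statement. The side-condition checks you flag are handled the same way in the paper.
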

\begin{proof}
The following gives a template for derivations in $\RSX$, the idea is that $\mathbb{L}_\omega (X)$ serves as a witness inside $\mathbb{L}_\mu (X)$.
\begin{prooftree}
\Axiom$\fCenter\quad\text{Lemma \ref{xsetuplemma}vii)}$
\UnaryInf$\fCenter\Vdash s\in\mathbb{L}_k (X)\quad\text{for any $\lev s<\lev{\mathbb{L}_k(X)}$ and $k<\omega$.}$
\LeftLabel{\ref{abbreviations}ii)}
\UnaryInf$\fCenter\Vdash\mathbb{L}_k (X)\dotin\mathbb{L}_\omega (X)\wedge s\in\mathbb{L}_k (X)$
\LeftLabel{$(b\exists)$}
\UnaryInf$\fCenter\Vdash(\exists z\in\mathbb{L}_\omega (X))(s\in\mathbb{L}_k (X))$
\LeftLabel{\ref{abbreviations}ii)}
\UnaryInf$\fCenter\Vdash s\dotin\mathbb{L}_\omega (X)\rightarrow (\exists z\in\mathbb{L}_\omega (X))(s\in z)$
\LeftLabel{$(b\forall)$}
\UnaryInf$\fCenter\Vdash (\forall y\in\mathbb{L}_\omega(X))(\exists z\in\mathbb{L}_\omega (X))(y\in z)$
\Axiom$\fCenter\Vdash\mathbb{L}_0 (X)\in\mathbb{L}_\omega (X)$
\LeftLabel{\ref{abbreviations}ii)}
\UnaryInf$\fCenter\Vdash\mathbb{L}_0 (X)\dotin\mathbb{L}_\omega (X)\wedge\mathbb{L}_0 (X)\in\mathbb{L}_\omega (X)$
\LeftLabel{$(b\exists)$}
\UnaryInf$\fCenter\Vdash (\exists z\in\mathbb{L}_\omega (X))(z\in\mathbb{L}_\omega (X))$
\LeftLabel{$(\wedge)$}
\BinaryInf$\fCenter\Vdash (\forall y\in\mathbb{L}_\omega(X))(\exists z\in\mathbb{L}_\omega (X))(y\in z)\wedge (\exists z\in\mathbb{L}_\omega (X))(z\in\mathbb{L}_\omega (X))$
\LeftLabel{\ref{abbreviations}ii)}
\UnaryInf$\fCenter\Vdash \mathbb{L}_\omega(X)\dotin\mathbb{L}_\mu(X)\wedge [(\forall y\in\mathbb{L}_\omega(X))(\exists z\in\mathbb{L}_\omega (X))(y\in z)\wedge (\exists z\in\mathbb{L}_\omega (X))(z\in\mathbb{L}_\omega (X))]$
\LeftLabel{$(b\exists)$}
\UnaryInf$\fCenter\Vdash (\exists x\in\mathbb{L}_\mu(X))[(\exists z\in x)(z\in x)\wedge(\forall y\in x)(\exists z\in x)(y\in z)]$
\end{prooftree}
\end{proof}
\begin{lemma}[$\Delta_0$-Separation]\label{xseparation}{\em
Suppose $A(a,b_1,\ldots,b_n)$ be a $\Delta_0$-formula of $\KP$ with all free variables indicated, $\mu$ a limit ordinal and $\lev s,\lev{t_0},\ldots,\lev{t_n}<\Gamma_{\theta+1}+\mu$.
\begin{equation*}
\Vdash(\exists y\in\mathbb{L}_\mu (X))[(\forall x\in y)(x\in s\wedge A(x,t_1,\ldots,t_n))\wedge(\forall x\in s)(A(x,t_1,\ldots,t_n)\rightarrow x\in y)]
\end{equation*}
}\end{lemma}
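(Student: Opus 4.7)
The plan is to exhibit an explicit class-term witness and reduce both halves of the bounded existential to propositional tautologies. Since $\mu$ is a limit ordinal, $\Gamma_{\theta+1}+\mu$ is the supremum of $\Gamma_{\theta+1}+\beta$ for $\beta<\mu$, so I can fix $\alpha_0<\mu$ with $\lev s, \lev{t_1},\ldots,\lev{t_n}<\Gamma_{\theta+1}+\alpha_0$. Because $A$ is $\Delta_0$, the formula $B(x)\equiv x\in s\wedge A(x,t_1,\ldots,t_n)$ is $\Delta_0$ and hence unaffected by relativisation to $\mathbb{L}_{\alpha_0}(X)$, so the $\RSX$-term
\[
y^*\;:=\;[x\in\mathbb{L}_{\alpha_0}(X)\mid x\in s\wedge A(x,t_1,\ldots,t_n)]
\]
is well formed, with $\lev{y^*}=\Gamma_{\theta+1}+\alpha_0<\lev{\mathbb{L}_\mu(X)}$.

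The driving observation is that by Abbreviation \ref{abbreviations}(ii), for any $r$ with $\lev r<\lev{y^*}$ the expression $r\dotin y^*$ unfolds to $r\in s\wedge A(r,t_1,\ldots,t_n)$. Consequently the first conjunct $(\forall x\in y^*)(x\in s\wedge A(x,t_1,\ldots,t_n))$ is proved, premise-wise under $(b\forall)$, from the propositional tautology $(r\in s\wedge A(r,t_1,\ldots,t_n))\rightarrow(r\in s\wedge A(r,t_1,\ldots,t_n))$, which is furnished by Lemma \ref{xsetuplemma}(i). For the second conjunct $(\forall x\in s)(A(x,t_1,\ldots,t_n)\rightarrow x\in y^*)$, fix $r$ with $\lev r<\lev s$; then also $\lev r<\lev{y^*}$, so I may apply the rule $(\in)$ with witness $r$ itself, reducing to the premise $r\dotin y^*\wedge r=r$. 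This premise expands to $(r\in s\wedge A(r,t_1,\ldots,t_n))\wedge r=r$, which is assembled via $(\wedge)$ from Lemma \ref{xsetuplemma}(iii) (for $r=r$) and the $(\vee)$-weakened tautologies from Lemma \ref{xsetuplemma}(i). A few further propositional steps yield $\Vdash r\dotin s\rightarrow(A(r,t_1,\ldots,t_n)\rightarrow r\in y^*)$, and then $(b\forall)$ completes this conjunct. A final $(\wedge)$ joins the two universal statements, after which a single $(b\exists)$ inference with witness $y^*$ delivers the target formula.

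The only real subtleties are bookkeeping. First, one must select $\alpha_0<\mu$ so that both the class term $y^*$ is well formed and its level sits strictly below $\lev{\mathbb{L}_\mu(X)}$; this is exactly what the limit hypothesis on $\mu$ provides. Second, the abbreviation $r\dotin s$ in the second conjunct branches into three cases according to the syntactic form of $s$, but in each case the resulting expression is a transparent propositional wrapping dispatched by one or two $(\vee)$-inferences. Third, the side conditions $\lev{\cdot}<\Gamma_{\theta+1}+\alpha$ on witnesses for $(\in)$ and $(b\exists)$ must be checked, but these hold automatically because all relevant levels sit below $\Gamma_{\theta+1}+\alpha_0<\Gamma_{\theta+1}+\mu$, while the ordinal length absorbed by $\Vdash$ accommodates the rank of the conclusion. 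No cuts are introduced, so the derivation is cut-free throughout.
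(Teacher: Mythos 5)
Your proof follows essentially the same route as the paper: the same comprehension-term witness, the same unfolding of $\dotin$ via Abbreviation \ref{abbreviations}(ii), and the same assembly of the two conjuncts from Lemma \ref{xsetuplemma} before a final $(\wedge)$ and $(b\exists)$. The one point you gloss over is that $\alpha_0$ cannot be an arbitrary ordinal below $\mu$: every sequent mentioning $y^*$ must satisfy the operator-control condition, so $\lev{y^*}$ has to lie in $\mathcal{H}[\Gamma](\emptyset)$ for the concluding sequent $\Gamma$, which forces you to pick $\alpha_0$ definably from the levels already controlled (the paper takes $\max\{\lev s,\lev{t_1},\ldots,\lev{t_n}\}+1$ and peels off $\Gamma_{\theta+1}$); note also that discharging $\neg(r\dotin s),\,r\in s$ in the second conjunct uses Lemma \ref{xsetuplemma}(iv), not just (i).
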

\begin{proof}
Let $\alpha:=\text{max}\{\lev s,\lev{t_0},\ldots,\lev{t_n}\}+1$ and note that $\alpha<\Gamma_{\theta+1}+\mu$ since $\mu$ ia a limit. Now let $\beta$ be the unique ordinal such that $\alpha=\Gamma_{\theta+1}+\beta$ if such an ordinal exists, if not set $\beta:=0$. Now define
\begin{equation*}
t:=[z\in\mathbb{L}_\beta (X)\:|\:z\in s\wedge B(z)]
\end{equation*}
where $B(z):=A(z,t_1,\ldots,t_n)$. We have the following templates for derivations in $\RSX$:
\begin{prooftree}
\Axiom$\fCenter{\phantom{\Vdash}}\quad\quad\quad\text{Lemma \ref{xsetuplemma} i)}$
\UnaryInf$\fCenter\Vdash\neg(r\in s\wedge B(r)),r\in s\wedge B(r)\quad\text{for all $\lev r<\al$}$
\LeftLabel{Lemma \ref{xcalc}ii)}
\UnaryInf$\fCenter\Vdash(r\in s\wedge B(r))\rightarrow r\in s\wedge B(r)$
\LeftLabel{\ref{abbreviations}ii)}
\UnaryInf$\fCenter\Vdash r\dotin t\rightarrow r\in s\wedge B(r)$
\LeftLabel{$(b\forall)$}
\UnaryInf$\fCenter\Vdash(\forall x\in t)(x\in s\wedge B(r))$
\end{prooftree}
In the following derivation $r$ ranges over terms $\lev r<\lev s$.
\begin{prooftree}
\Axiom$\fCenter\phantom{\Vdash}\text{Lemma \ref{xsetuplemma} iv)}$
\UnaryInf$\fCenter\Vdash\neg(r\dotin s),r\in s$
\Axiom$\fCenter\phantom{\Vdash}\text{Lemma \ref{xsetuplemma} i)}$
\UnaryInf$\fCenter\Vdash\neg B(r),B(r)$
\LeftLabel{$(\wedge)$}
\BinaryInf$\fCenter\Vdash\neg(r\dotin s),\neg B(r),r\in s\wedge B(r)$
\Axiom$\fCenter\phantom{\Vdash}\text{Lemma \ref{xsetuplemma} iii)}$
\UnaryInf$\fCenter\Vdash r=r$
\LeftLabel{$(\wedge)$}
\BinaryInf$\fCenter\Vdash\neg(r\dotin s),\neg B(r),(r\in s\wedge B(r))\wedge r=r$
\LeftLabel{\ref{abbreviations}ii)}
\UnaryInf$\fCenter\Vdash\neg(r\dotin s),\neg B(r),r\dotin t\wedge r=r$
\LeftLabel{$(\in)$}
\UnaryInf$\fCenter\Vdash\neg(r\dotin s),\neg B(r), r\in t$
\LeftLabel{Lemma \ref{xcalc}ii)}
\UnaryInf$\fCenter\Vdash\neg(r\dotin s),(B(r)\rightarrow r\in t)$
\LeftLabel{Lemma \ref{xcalc}ii)}
\UnaryInf$\fCenter\Vdash r\dotin s\rightarrow (B(r)\rightarrow r\in t)$
\LeftLabel{$(b\forall)$}
\UnaryInf$\fCenter\Vdash (\forall x\in s)(B(x)\rightarrow x\in t)$
\end{prooftree}
Now applying $(\wedge)$ to the two preceding derivations and noting that $\lev t<\Gamma_{\theta+1}+\mu$ gives us
\begin{equation*}
\Vdash t\dotin\mathbb{L}_\mu (X)\wedge [(\forall x\in t)(x\in s\wedge B(r))\wedge (\forall x\in s)(B(x)\rightarrow x\in t)]
\end{equation*}
to which we may apply $(b\exists)$ to obtain
\begin{equation*}
\Vdash(\exists y\in\mathbb{L}_\mu (X))[(\forall x\in y)(x\in s\wedge B(x))\wedge(\forall x\in s)(B(x)\rightarrow x\in y)].
\end{equation*}
It should also be checked that
\begin{equation*}
t\in\mathcal{H}[(\exists y\in\mathbb{L}_\mu (X))[(\forall x\in y)(x\in s\wedge B(x))\wedge(\forall x\in s)(B(x)\rightarrow x\in y)]]
\end{equation*}
but this is the case since
\begin{equation*}
\lev s,\lev{t_0},\ldots,\lev{t_n}\in k((\exists y\in\mathbb{L}_\mu (X))[(\forall x\in y)(x\in s\wedge B(x))\wedge(\forall x\in s)(B(x)\rightarrow x\in y)])
\end{equation*}
and $\lev t=\text{max}\{\text{max}\{\lev s,\lev{t_0},\ldots,\lev{t_n}\}+1,\Gamma_{\theta+1}\}$.
\end{proof}
\begin{lemma}[Pair and Union]\label{xpairandunion}{\em Let $\mu$ be a limit ordinal and let $s,t$ be $\RSX$-terms such that $\lev s,\lev t<\Gamma_{\theta+1}+\mu$, then
\begin{description}
\item[i)] $\Vdash(\exists z\in\mathbb{L}_\mu (X))(s\in z\wedge t\in z)$
\item[ii)] $(\exists z\in\mathbb{L}_\mu (X))(\forall y\in s)(\forall x\in y)(x\in z)$
\end{description}
}\end{lemma}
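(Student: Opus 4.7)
The plan is to mirror the structure of Lemmas \ref{xinfinity} and \ref{xseparation}: in each case introduce an explicit $\RSX$-term witness for the outer existential, then verify the required inner formula using the setup lemmas \ref{xsetuplemma} and \ref{xcalc}, paying careful attention to how the abbreviation $\dotin$ unfolds.

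For (i), since $\mu$ is a limit and $\lev s, \lev t < \Gamma_{\theta+1}+\mu$, I can pick an ordinal $\alpha < \mu$ with $\max\{\lev s, \lev t\} < \Gamma_{\theta+1}+\alpha$. This allows me to form the pair-term
\[
p := [x\in \mathbb{L}_\alpha(X) \mid x = s \vee x = t],
\]
whose level $\Gamma_{\theta+1}+\alpha$ is still below $\Gamma_{\theta+1}+\mu$. Because $p$ is a class abstract, the abbreviation $r \dotin p$ unfolds to the defining disjunction, so $s \dotin p$ becomes $s = s \vee s = t$ and likewise for $t$. Using Lemma \ref{xsetuplemma}(iii) to derive $\Vdash s = s$ and $\Vdash t = t$, one application of $(\vee)$ each yields $\Vdash s \dotin p$ and $\Vdash t \dotin p$. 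Pairing these via $(\wedge)$ with reflexivity and invoking $(\in)$ produces $\Vdash s \in p$ and $\Vdash t \in p$; a $(\wedge)$ followed by $(b\exists)$ with witness $p \dotin \mathbb{L}_\mu(X)$ (using the unfolding for $\mathbb{L}_\mu(X)$) completes (i).

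For (ii), I would choose $\alpha < \mu$ with $\lev s < \Gamma_{\theta+1}+\alpha$ and define the union-term
\[
u := [x\in \mathbb{L}_\alpha(X) \mid (\exists y\in s)(x\in y)].
\]
I then aim to derive $\Vdash (\forall y\in s)(\forall x\in y)(x\in u)$. For each $r_1, r_2$ with $\lev{r_2} < \lev{r_1} < \lev s$, Lemma \ref{xsetuplemma}(iv) gives $\Vdash r_2\dotin r_1 \rightarrow r_2 \in r_1$; a single $(b\exists)$ supplying the witness $r_1 \dotin s$ then yields $\Vdash r_2\dotin r_1 \rightarrow (\exists y\in s)(r_2 \in y)$. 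Conjoining this with $\Vdash r_2 = r_2$ from Lemma \ref{xsetuplemma}(iii) and applying $(\in)$ (noting $r_2 \dotin u$ is exactly $(\exists y\in s)(r_2\in y)$) yields $\Vdash r_2\dotin r_1 \rightarrow r_2 \in u$. Two successive $(b\forall)$ inferences give the universal statement, and a final $(b\exists)$ with witness $u \dotin \mathbb{L}_\mu(X)$ closes (ii).

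The main technical obstacle is the bookkeeping around the abbreviation $\dotin$, which unfolds differently depending on the syntactic class of the outer term (basic $\bar u$, hierarchy level $\mathbb{L}_\alpha(X)$, or class abstract) per Abbreviations \ref{abbreviations}(ii), together with verifying the accompanying side-conditions on term levels — particularly the constraints $\lev{\cdot} < \Gamma_{\theta+1}+\alpha$ demanded at every application of $(\in)$ and $(b\exists)$. The existence of a suitable $\alpha < \mu$ with $\max\{\lev s, \lev t\} < \Gamma_{\theta+1}+\alpha$ is precisely where the hypothesis that $\mu$ is a limit is used.
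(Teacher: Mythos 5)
Your argument is correct, but it takes a genuinely different route from the paper's. You construct explicit pair and union witnesses as comprehension terms, $[x\in\mathbb{L}_\alpha(X)\mid x=s\vee x=t]$ and $[x\in\mathbb{L}_\alpha(X)\mid(\exists y\in s)(x\in y)]$, and then verify membership directly. The paper instead exploits the weak ("superset") formulation of the Pair and Union axioms: since $\exists z(s\in z\wedge t\in z)$ does not demand that $z$ be exactly $\{s,t\}$, it simply takes $z:=\mathbb{L}_\beta(X)$ for the least suitable $\beta$ and invokes Lemma \ref{xsetuplemma}vii) to get $\Vdash s\in\mathbb{L}_\beta(X)$ and $\Vdash t\in\mathbb{L}_\beta(X)$ (and, for Union, $\Vdash r\in\mathbb{L}_\beta(X)$ for every $\lev r<\lev s$). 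This avoids the comprehension-term case of $\dotin$ and the $(\in)$/reflexivity manipulations entirely, at the cost of producing a much larger witness; your version yields the "honest" pair and union, which is more informative but requires more of the machinery of Lemma \ref{xsetuplemma}.

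Two points in your write-up need tightening. First, in (ii) the step "a single $(b\exists)$ supplying the witness $r_1\dotin s$" is not available as stated: the premise of $(b\exists)$ must be the conjunction $r_1\dotin s\wedge(r_2\in r_1)$, and for an arbitrary term $s$ the formula $r_1\dotin s$ is not outright derivable. You must carry $\neg(r_1\dotin s)$ as a side formula (introduced via Lemma \ref{xsetuplemma}i) and the pattern of Lemma \ref{xsetuplemma}vi)), and it is then absorbed into the antecedent $r_1\dotin s\rightarrow\cdots$ that the outer $(b\forall)$ requires anyway. Second, "pick an ordinal $\alpha<\mu$ with $\max\{\lev s,\lev t\}<\Gamma_{\theta+1}+\alpha$" is under-specified for an operator-controlled derivation: the level of your witness term must lie in $\CH[\Gamma](\emptyset)$, where $\Gamma$ is the concluding sequent, and an arbitrary $\alpha$ need not. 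You must take the canonical choice (as the paper does, and as it spells out explicitly at the end of the $\Delta_0$-Separation lemma): $\alpha$ determined so that $\Gamma_{\theta+1}+\alpha=\max\{\max\{\lev s,\lev t\}+1,\Gamma_{\theta+1}\}$, which is generated from $k(\Gamma)$ and $\Gamma_{\theta+1}$ under $+$ and hence controlled by any operator via (H1)--(H3). With these repairs your proof goes through.
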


\begin{proof}
Let $\alpha:=\text{max}\{\lev s,\lev t\}+1$, now let $\beta$ be the unique ordinal such that $\alpha=\Gamma_{\theta+1}+\beta$ if such an ordinal exists, otherwise set $\beta:=0$. Now by Lemma \ref{xsetuplemma}vii) we have
\begin{equation*}
\Vdash s\in\mathbb{L}_\beta (X)\quad\text{and}\quad\Vdash t\in\mathbb{L}_\beta (X).
\end{equation*}
Now by $(\wedge)$ and noticing that $\beta<\mu$ since $\mu$ is a limit, we have
\begin{equation*}
\Vdash\mathbb{L}_\beta (X)\dotin\mathbb{L}_\mu (X)\wedge (s\in\mathbb{L}_\beta (X)\wedge t\in\mathbb{L}_\beta (X)).
\end{equation*}
To which we may apply $(b\exists)$ to obtain the desired result.\\

\noindent ii) Let $\beta$ be the unique ordinal such that $\lev s=\Gamma_{\theta+1}+\beta$ if such an ordinal exists, otherwise let $\beta=0$. By Lemma \ref{xsetuplemma}vii) we have $\Vdash r\in\mathbb{L}_\beta(X)$ for any $\lev r<\lev s$. In the following template for derivations in $\RSX$, $r$ and $t$ range over terms such that $\lev r<\lev t<\lev s$:

\begin{prooftree}
\Axiom$\fCenter\Vdash r\in\mathbb{L}_\beta (X)$
\LeftLabel{$(\vee)$ if necessary}
\UnaryInf$\fCenter\Vdash r\dotin t\rightarrow r\in\mathbb{L}_\beta (X)$
\LeftLabel{$(b\forall)$}
\UnaryInf$\fCenter\Vdash (\forall x\in t)(x\in\mathbb{L}_\beta (X))$
\LeftLabel{$(\vee)$ if necessary}
\UnaryInf$\fCenter\Vdash t\dotin s\rightarrow (\forall x\in t)(x\in\mathbb{L}_\beta (X))$
\LeftLabel{$(b\forall)$}
\UnaryInf$\fCenter\Vdash (\forall y\in s)(\forall x\in y)(x\in\mathbb{L}_\beta (X))$
\LeftLabel{\ref{abbreviations}ii)}
\UnaryInf$\fCenter\Vdash\mathbb{L}_\beta (X)\dotin\mathbb{L}_\mu (X)\wedge (\forall y\in s)(\forall x\in y)(x\in\mathbb{L}_\beta (X))\quad\text{since $\beta<\mu$}$
\LeftLabel{$(b\exists)$}
\UnaryInf$\fCenter\Vdash (\exists z\in\mathbb{L}_\mu (X))(\forall y\in s)(\forall x\in y)(x\in z)$
\end{prooftree}
\end{proof}

\begin{lemma}[$\Delta_0$-Collection]\label{xcollection}{\em
Suppose $F(a,b)$ is any $\Delta_0$ formula of $\KP$.
\begin{equation*}
\Vdash (\forall x\in s)\exists yF(x,y)\rightarrow\exists z(\forall x\in s)(\exists y\in z)F(x,y)
\end{equation*}
}\end{lemma}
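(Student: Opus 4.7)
Let $A$ denote the $\Sigma$-formula $(\forall x\in s)\exists y F(x,y)$. Observe that $A$ really is $\Sigma$: it contains only one universal quantifier and that one is bounded. Its restriction $A^z$ is precisely the $\Delta_0$-formula $(\forall x\in s)(\exists y\in z)F(x,y)$, so the goal $(\forall x\in s)\exists yF(x,y)\rightarrow\exists z(\forall x\in s)(\exists y\in z)F(x,y)$ is, unfolding $\rightarrow$, the formula $\neg A\vee\exists z A^z$. The plan is therefore to start from the logical axiom involving $A$ and its negation, and then to invoke the rule $\SRX$ to turn the displayed $A$ into $\exists z A^z$.

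First I would apply Lemma \ref{xsetuplemma}i) to obtain
\begin{equation*}
\Vdash A,\neg A.
\end{equation*}
Since $A$ contains the unbounded existential quantifier $\exists y$, we have $\Omega\in k(A)\subseteq k(\{A,\neg A\})$, so $\Omega$ belongs to the controlling operator; moreover $rk(A)\geq\Omega$ by clause 7 of the rank definition, hence $no(A,\neg A)\geq\omega^\Omega>\Omega$. Thus the two side conditions of $\SRX$ (namely $\alpha_0<\alpha$ and $\Omega<\alpha$) can be satisfied by the trivial bump $\alpha:=no(A,\neg A)+1$. Applying $\SRX$ with $\Gamma=\{\neg A\}$ and principal formula $A$ produces
\begin{equation*}
\Vdash^{1}\neg A,\exists zA^z,
\end{equation*}
where the operator clearly still controls everything in sight because $k(\exists zA^z)=k(A^z)\cup\{\Omega\}\subseteq k(A)\cup\{\Omega\}$, which is already absorbed.

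Finally, Lemma \ref{xcalc}ii) collapses the two-formula sequent into a single disjunction, yielding
\begin{equation*}
\Vdash^{1}\neg A\vee\exists zA^z,
\end{equation*}
i.e.\ $\Vdash(\forall x\in s)\exists yF(x,y)\rightarrow\exists z(\forall x\in s)(\exists y\in z)F(x,y)$ after absorbing the small additive constants into the $\#$-sum that defines $\Vdash$ (or, if preferred, the ordinal bump disappears by invoking Lemma \ref{xinversion}i)). There is no real obstacle here: the entire content of the lemma is packaged in the $\SRX$ rule, which was introduced precisely to internalise $\Sigma$-reflection. The only points requiring some care are the verifications that (a) $A$ is genuinely a $\Sigma$-formula so that $\SRX$ is applicable, and (b) the ordinal constraints $\alpha_0,\Omega<\alpha$ are met—both of which are immediate from the rank calculation above.
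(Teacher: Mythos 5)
Your proposal is correct and follows essentially the same route as the paper: start from the tautology $\Vdash A,\neg A$ of Lemma \ref{xsetuplemma}i), apply $\SRX$ to replace the displayed $A$ by $\exists zA^z$, form the implication with $(\vee)$ (your Lemma \ref{xcalc}ii)), and absorb the finite ordinal overhead by checking it against $no(\neg A\vee\exists zA^z)=\omega^{rk(A)+1}$. The only cosmetic quibble is that $rk(A)\geq\Omega$ follows from clause 5/6 applied to the bounded universal together with clause 7 for the inner $\exists y$ (or directly from Observation \ref{obs}iii)), not from clause 7 applied to $A$ itself.
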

\begin{proof}
By Lemma \ref{xsetuplemma}i) we have
\begin{equation*}
\Vdash\neg(\forall x\in s)\exists yF(x,y), (\forall x\in s)\exists yF(x,y).
\end{equation*}
Applying $\SRX$ yields
\begin{equation*}
\CH[(\forall x\in s)\exists yF(x,y)]\;\prov{\al+1}{0}{\neg(\forall x\in s)\exists yF(x,y),\exists z(\forall x\in s)(\exists y\in z)F(x,y)}
\end{equation*}
where $\al:=\omega^{rk((\forall x\in s)\exists yF(x,y))}\#\omega^{rk((\forall x\in s)\exists yF(x,y))}$. Now two applications of $(\vee)$ provides
\begin{equation*}
\CH[(\forall x\in s)\exists yF(x,y)]\;\prov{\al+3}{0}{(\forall x\in s)\exists yF(x,y)\rightarrow\exists z(\forall x\in s)(\exists y\in z)F(x,y)}.
\end{equation*}
It remains to note that
\begin{equation*}
\al+3<\omega^{rk(\forall x\in s)\exists yF(x,y)+1}=no((\forall x\in s)\exists yF(x,y)\rightarrow\exists z(\forall x\in s)(\exists y\in z)F(x,y))
\end{equation*}
so the proof is complete.
\end{proof}
\begin{theorem}\label{xembed}{\em
If $\KP\vdash\Gamma(a_1,\ldots,a_n)$ where $\Gamma(a_1,\ldots,a_n)$ is a finite set of formulae whose free variables are amongst $a_1,\ldots,a_n$, then there is some $m<\omega$ (which we may compute from the derivation) such that
\begin{equation*}
\provx{\mathcal{H}[s_1,\ldots,s_n]}{\Omega\cdot\omega^m}{\Omega+m}{\Gamma(s_1,\ldots,s_n)}
\end{equation*}
for any operator $\CH$ and any $\RSX$ terms $s_1,\ldots,s_n$.
}\end{theorem}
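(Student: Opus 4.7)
The plan is to proceed by induction on the length of a $\KP$ derivation of $\Gamma(a_1,\ldots,a_n)$, showing that each axiom and inference rule can be simulated in $\RSX$ within the stated bounds. Since $\KP$ has free variables but $\RSX$ does not, we simultaneously prove the result for all substitutions of the free variables by $\RSX$ terms $s_1,\ldots,s_n$; at each inductive step the bound $m$ produced depends only on the $\KP$ derivation, not on the substituted terms. The number $m$ will be read off from the derivation via a natural recursion that adds a small constant at every inference.

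For the base cases one simply invokes the lemmas already established: Lemma \ref{xsetuplemma}i) handles logical axioms, Lemma \ref{xextensionality} handles extensionality, Lemmas \ref{xpairandunion}, \ref{xseparation}, \ref{xfoundation}, \ref{xinfinity} and \ref{xcollection} handle pair, union, $\Delta_0$-separation, set induction, infinity and $\Delta_0$-collection respectively. Each of these gives a derivation in $\Vdash$ form, which then lifts to $\provx{\mathcal H[s_1,\ldots,s_n]}{\Omega\cdot\omega^m}{\Omega+m}{\ldots}$ by Lemma \ref{xinversion}i), using that the $no$-value of any $\RSX$-formula obtained by substituting terms into a $\KP$-formula is bounded by $\Omega\cdot\omega^{m'}$ for some fixed $m'$, because all ordinal parts below $\Omega$ in the rank are built up from $|s_i|$ (which are absorbed into $\mathcal H[s_1,\ldots,s_n]$) and a finite logical overhead.

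For the inductive step each rule of the Tait calculus is simulated by the corresponding $\RSX$ rule. Propositional rules $(\wedge)$ and $(\vee)$ translate directly by Lemma \ref{xcalc}. The bounded quantifier rules $(b\forall)$ and $(b\exists)$ also translate directly, since in the premises of $(b\forall)$ the eigenvariable can be replaced by an arbitrary $\RSX$ term $s$ with $|s|<|t|$, and the induction hypothesis yields the required family of premises uniformly (with $\mathcal H$ replaced by $\mathcal H[s]$). The unbounded $(\forall)$ rule is simulated by the $\RSX$-rule $(\forall)$, substituting an arbitrary term for the eigenvariable; the $(\exists)$ rule is simulated by $(\exists)$, since the witnessing term has a level below $\Gamma_{\theta+1}+\Omega\cdot\omega^m$ once $m$ is chosen large enough. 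Cuts in $\KP$ are translated into cuts in $\RSX$; here one uses Observation \ref{obs} and Lemma \ref{rksub} to conclude that the rank of the cut-formula is at most $\Omega+k$ for some finite $k$ determined by its logical structure, so we may absorb this into the rank bound $\Omega+m$.

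The main obstacle is the bookkeeping: at every inference step we must increment $m$ so that both $\Omega\cdot\omega^m$ and $\Omega+m$ strictly dominate the bounds for all premises, while remaining large enough to accommodate the $no$-values of the $\RSX$-formulae arising from substitution. This is handled by choosing $m$ to grow by a fixed constant at each inference and using the additive principal character of $\Omega\cdot\omega^m$ together with Lemma \ref{xinversion}i) to absorb any lower-order terms. A secondary subtlety is that $\KP$'s unbounded $(\exists)$ rule has no restriction on the witness in the finitary system, whereas $\RSX$ requires $|s|<\Gamma_{\theta+1}+\alpha$; since $\alpha$ will be at least $\Omega\cdot\omega^m$ and the witness $s$ obtained by substitution has level in $\mathcal H[s_1,\ldots,s_n]$, this restriction is easily met.
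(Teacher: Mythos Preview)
Your overall strategy matches the paper's: induction on the $\KP$ derivation, with the axioms handled by the preparatory lemmas and the inference rules simulated by their $\RSX$ analogues. However, the claim that the bounded quantifier rules ``translate directly'' hides two genuine gaps that the paper spends most of its effort on.

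First, the $\KP$ rules $(b\forall)$ and $(b\exists)$ have premises involving the literal atom $c\in a_i$, whereas the $\RSX$ rules $(b\forall)$ and $(b\exists)$ require premises of the shape $s\dotin t\rightarrow F(s)$ and $s\dotin t\wedge F(s)$. The abbreviation $s\dotin t$ is \emph{not} $s\in t$: for $t\equiv\mathbb{L}_\alpha(X)$ it is empty, and for $t\equiv[x\in\mathbb{L}_\alpha(X)\mid B(x)]$ it is $B(s)$. So after applying the induction hypothesis you have $\Gamma(\bar s),\, r\in s_i\rightarrow F(r,\bar s)$, but to fire the $\RSX$ rule you need $\Gamma(\bar s),\, r\dotin s_i\rightarrow F(r,\bar s)$. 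The paper bridges this by a case split on the form of $s_i$, using inversion (Lemma~\ref{xinversion}iii),iv)) together with cuts against Lemma~\ref{xsetuplemma}iv),vii) to convert the $\in$ into the required $\dotin$; these cuts are where the increase in cut rank to $\Omega+m'$ actually comes from.

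Second, in the $(b\exists)$ case the $\KP$ witness $c$ may be one of the $a_j$, and after substitution $s_j$ can have level $\geq\lev{s_i}$, so the side condition $\lev s<\lev t$ of $\RSX$'s $(b\exists)$ is violated and you cannot apply the rule at all. The paper remedies this by proving an auxiliary sequent $\Vdash\neg(s_1\in s_i\wedge F(s_1,\bar s)),\,(\exists x\in s_i)F(x,\bar s)$ via extensionality (Lemma~\ref{xextensionality}) and the $(\notin)$ rule, and then cuts against it. Your proposal does not address either of these points; without them the induction step for bounded quantifiers does not go through.
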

\begin{proof}
Suppose $\Gamma(a_1,\ldots,a_n)\equiv\{A_1(a_1,\ldots,a_n),\ldots,A_k(a_1,\ldots,a_n)\}$. Note that for any choice of terms $s_1,\ldots,s_n$ and each $1\leq i\leq k$
\begin{align*}
rk(A_i(s_1,\ldots,s_n))&=\omega\cdot\text{max}(k(A_i(s_1,\ldots,s_n)))+m_i\quad\text{for some $m_i<\omega$}\\
&\leq\omega\cdot\Omega+m_i=\Omega+m_i.
\end{align*}
Therefore
\begin{equation*}
no(A_i(s_1,\ldots,s_n))=\omega^{rk(A_i(s_1,\ldots,s_n))}\leq\omega^{\Omega+m_i}=\omega^\Omega\cdot\omega^{m_i}=\Omega\cdot\omega^{m_i}.
\end{equation*}
So letting $m=\text{max}(m_1,\ldots,m_k)+1$ we have
\begin{align*}
no(\Gamma(s_1,\ldots,s_n))&\leq\Omega\cdot\omega^{m_1}\#\ldots\#\Omega\cdot\omega^{m_n}\\
&=\Omega\cdot (\omega^{m_1}\#\ldots\#\omega^{m_n})\\
&\leq\Omega\cdot\omega^m
\end{align*}
The proof now proceeds by induction on the \textbf{KP} derivation. If $\Gamma(a_1,\ldots,a_n)$ is an axiom of \textbf{KP} then the result follows from \ref{xsetuplemma}i), \ref{xextensionality}, \ref{xfoundation}, \ref{xinfinity}, \ref{xseparation}, \ref{xpairandunion} or \ref{xcollection}.\\

\noindent Now suppose that $\Gamma(a_1,\ldots,a_n)$ arises as the result of an inference rule.\\

\noindent Case 1. Suppose the last inference was $(b\forall)$, so $(\forall x\in a_i)F(x,\bar a)\in\GA(\bar a)$ and we are in the following situation in $\KP$
\begin{prooftree}
\AxiomC{$\Gamma(\bar{a}), c\in a_i\rightarrow F(c,\bar{a})$}
\LeftLabel{$(b\forall)$}
\UnaryInfC{$\Gamma(\bar{a})$}
\end{prooftree}
where $c$ is different from $a_1,\ldots,a_n$. Inductively we have some $m<\omega$ such that
\begin{equation*}
\tag{1}\CH[\bar{s},r]\;\prov{\Omega\cdot\omega^m}{\OO+m}{\GA(\bar s), r\in s_i\rightarrow F(r,\bar s)}\quad\text{for all $\lev r<\lev{s_i}$.}
\end{equation*}
1.1 If $s_i$ is of the form $\bar u$ we may immediately apply $(b\forall)$ to complete this case.\\

\noindent Suppose $s_i\equiv\mathbb{L}_\al(X)$ for some $\al$. Applying Lemma \ref{xinversion}iii) to (1) gives
\begin{equation*}
\tag{2}\CH[\bar{s},r]\;\prov{\Omega\cdot\omega^m}{\OO+m}{\GA(\bar s), \neg(r\in s_i), F(r,\bar s)}.
\end{equation*}
Since $\lev r<\lev s$, by Lemma \ref{xsetuplemma}vii) we have
\begin{equation*}
\tag{3}\Vdash r\in s.
\end{equation*}
Applying $\Cut$ to (1) and (2) yields
\begin{equation*}
\tag{4}\CH[\bar{s},r]\;\prov{\Omega\cdot\omega^m+1}{\OO+m}{\GA(\bar s), F(r,\bar s)}.
\end{equation*}
To which we may apply $(b\forall)$ to complete this case.\\

\noindent Suppose $s_i\equiv[x\in\mathbb{L}_\al(X)\;|\;B(x)]$, again we may apply Lemma \ref{xinversion}iii) to (1) to obtain
\begin{equation*}
\tag{5}\CH[\bar{s},r]\;\prov{\Omega\cdot\omega^m}{\OO+m}{\GA(\bar s), \neg(r\in s_i), F(r,\bar s)}.
\end{equation*}
Since $\lev r<\lev s$ by Lemma \ref{xsetuplemma}iv) we have
\begin{equation*}
\tag{6}\Vdash \neg (r\dotin s),r\in s.
\end{equation*}
Applying $\Cut$ to (5) and (6) yields
\begin{equation*}
\tag{7}\CH[\bar{s},r]\;\prov{\Omega\cdot\omega^m+1}{\OO+m}{\GA(\bar s), \neg(r\dotin s_i), F(r,\bar s)}.
\end{equation*}
Now two applications of $(\vee)$ provide
\begin{equation*}
\tag{8}\CH[\bar{s},r]\;\prov{\Omega\cdot\omega^m+3}{\OO+m}{\GA(\bar s), r\dotin s_i\rightarrow F(r,\bar s)}.
\end{equation*}
To which we may apply $(b\forall)$ to complete this case.\\

\noindent Case 2. Suppose the last inference was $(\forall)$ so $\forall xA(x,\bar a)\in\GA(\bar a)$ and we are in the following situation in $\KP$
\begin{prooftree}
\AxiomC{$\Gamma(\bar{a}), F(c,\bar{a})$}
\LeftLabel{$(\forall)$}
\UnaryInfC{$\Gamma(\bar{a})$}
\end{prooftree}
where $c$ is different from $a_1,\ldots a_n$. Inductively we have some $m<\omega$ such that
\begin{equation*}
\CH[\bar s,r]\;\prov{\Omega\cdot\omega^m}{\OO+m}{\GA(\bar s), F(r,\bar s)}\quad\text{for all terms $r$.}
\end{equation*}
We may immediately apply $(\forall)$ to complete this case.\\

\noindent Case 3. Suppose the last inference was $(b\exists)$ so $(\exists x\in s_i)F(x,\bar s)\in\GA(\bar s)$ and we are in the following situation in $\KP$

\begin{prooftree}
\AxiomC{$\Gamma(\bar{a}), c\in a_i\wedge F(c,\bar{a})$}
\LeftLabel{$(b\exists)$}
\UnaryInfC{$\Gamma(\bar{a})$}
\end{prooftree}
3.1 Suppose $c$ is different from $a_1,\ldots,a_n$. Using the induction hypothesis we find some $m<\omega$ such that
\begin{equation*}
\tag{9}\CH[\bar{s}]\;\prov{\Omega\cdot\omega^m}{\OO+m}{\GA(\bar s), \bar\emptyset\in s_i\wedge F(\bar\emptyset,\bar s)}.
\end{equation*}
3.1.1 If $s_i$ is of the form $\bar u$ we may immediately apply $(b\exists)$ to complete the case.\\

\noindent 3.1.2 Suppose $s_i$ is of the form $\mathbb{L}_\al(X)$. Applying Lemma \ref{xinversion}iv) to (1) yields
\begin{equation*}
\tag{10}\CH[\bar{s}]\;\prov{\Omega\cdot\omega^m}{\OO+m}{\GA(\bar s), F(\bar\emptyset,\bar s)}.
\end{equation*}
Noting that in this case $\bar\emptyset\dotin s\wedge F(\bar\emptyset,\bar s)\equiv F(\bar\emptyset,\bar s)$, we may apply $(b\exists)$ to complete this case.\\

\noindent 3.1.3 Suppose $s_i$ is of the form $[x\in\mathbb{L}_\al(X)\;|\;B(x)]$. First we must verify the following claim
\begin{equation*}
\tag{*}\Vdash \neg(\bar\emptyset\in s_i\wedge F(\bar\emptyset,\bar s)),\bar\emptyset\dotin s_i\wedge F(\bar\emptyset,\bar s).
\end{equation*}
Note that owing to Lemma \ref{xextensionality} we have $\Vdash [r\neq\bar\emptyset],\neg B(r),B(\emt)$ for all $\lev r<\lev{s_i}$. In the following template for derivations in $\RSX$ $r$ ranges over terms $\lev r<\lev{s_i}$.
\begin{prooftree}
\Axiom$\fCenter\Vdash[r\neq\bar\emptyset],\neg B(r),B(\bar{\emptyset})$
\LeftLabel{Lemma \ref{xcalc}ii)}
\UnaryInf$\fCenter\Vdash r\neq\bar\emptyset,\neg B(r),B(\emt)$
\LeftLabel{Lemma \ref{xcalc}ii)}
\UnaryInf$\fCenter\Vdash B(r)\rightarrow r\neq\bar\emptyset,B(\emt)$
\LeftLabel{$(\notin)$}
\UnaryInf$\fCenter\Vdash \neg(\emt\in s_i),B(\emt)$
\Axiom$\fCenter\quad\text{Lemma \ref{xsetuplemma}i)}$
\UnaryInf$\fCenter\Vdash \neg F(\emt,\bar s),F(\emt,\bar s)$
\LeftLabel{$(\wedge)$}
\BinaryInf$\fCenter\Vdash\neg(\emt\in s_i),\neg F(\emt,\bar s),B(\emt)\wedge F(\emt,\bar s)$
\LeftLabel{Lemma \ref{xcalc}ii)}
\UnaryInf$\fCenter\Vdash\neg(\emt\in s_i)\vee\neg F(\emt,\bar s),B(\emt)\wedge F(\emt,\bar s)$
\end{prooftree}
Now applying $\Cut$ to (9) and (*) we get
\begin{equation*}
\tag{11}\CH[\bar{s}]\;\prov{\Omega\cdot\omega^m+1}{\OO+m^{\prime}}{\GA(\bar s), \bar\emptyset\dotin s_i\wedge F(\bar\emptyset,\bar s)}.
\end{equation*}
Note the possible increase in cut rank. We may apply $(b\exists R)$ to (11) to complete this case.\\

\noindent 3.2 Suppose $c$ is one of $a_1,\ldots,a_n$, without loss of generality let us assume $c=a_1$. Applying the induction hypothesis we can compute some $m<\omega$ such that
\begin{equation*}
\tag{12}\CH[\bar s]\;\prov{\Omega\cdot\omega^m}{\OO+m}{\GA(\bar s),s_1\in s_i\wedge F(s_1,\bar s).}
\end{equation*}
Note that in fact 3.2 subsumes 3.1 since we can conclude (12) from the induction hypothesis regardless of whether or not $c$ is a member of $\bar a$. To help with clarity 3.1 is left in the proof above, but in later embeddings we shall dispense with such cases.\\

\noindent If $s_1$ and $s_i$ are of the form $\bar u$ and $\bar v$ with $\lev {s_1}<\lev {s_i}$ then we may immediately apply $(b\exists)$ to complete this case. If this is not the case then we verify the following claim
\begin{equation*}
\tag{**}\Vdash\neg(s_1\in s_i\wedge F(s_1,\bar s)),(\exists x\in s_i)F(x,\bar s).
\end{equation*}
To prove (**) we split into cases based on the form of $s_i$.\\

\noindent 3.2.1 Suppose $s_i$ is of the form $\bar u$.\\

\noindent 3.2.1.1 If $s_1$ is also of the form $\bar v$ [remember that by assumption $\lev{s_1}\geq\lev{s_i}$] then $\neg (s_1\in s_i),F(s_1,\bar s),(\exists x\in s_i)F(x,\bar s)$ is an axiom so we may apply $(\vee)$ twice to complete this case.\\

\noindent 3.2.1.2 Now suppose $s_1$ is not of the form $\bar v$. We have following template for derivations in $\RSX$, here $r$ ranges over terms with $\lev r<\lev{s_i}$.
\begin{prooftree}
\Axiom$\fCenter\quad\text{Lemma \ref{xsetuplemma}i)}$
\UnaryInf$\fCenter\Vdash\neg(r\in s_i),r\in s_i$
\Axiom$\fCenter\quad\text{Lemma \ref{xextensionality}}$
\UnaryInf$\fCenter\Vdash r\neq s_1,\neg F(s_1,\bar s),F(r,\bar s)$
\LeftLabel{$(\wedge)$}
\BinaryInf$\fCenter\Vdash\neg(r\in s_i),r\neq s_1,\neg F(s_1,\bar s),r\in s_i\wedge F(r,\bar s)$
\LeftLabel{$(b\exists)$}
\UnaryInf$\fCenter\Vdash\neg(r\in s_i),r\neq s_1,\neg F(s_1,\bar s),(\exists x\in s_i)F(x,\bar s)$
\LeftLabel{Lemma \ref{xcalc}ii)}
\UnaryInf$\fCenter\Vdash r\in s_i\rightarrow r\neq s_1,\neg F(s_1,\bar s),(\exists x\in s_i)F(x,\bar s)$
\LeftLabel{$(\notin)$}
\UnaryInf$\fCenter\Vdash \neg(s_1\in s_i),\neg F(s_1,\bar s),(\exists x\in s_i)F(x,\bar s)$
\LeftLabel{Lemma \ref{xcalc}ii)}
\UnaryInf$\fCenter\Vdash \neg(s_1\in s_i)\vee\neg F(s_1,\bar s),(\exists x\in s_i)F(x,\bar s)$
\end{prooftree}
3.2.2 Now suppose $s_i$ is of the form $\mathbb{L}_\al(X)$. In the following template for derivations in $\RSX$ $r$ ranges over terms with $\lev r<\lev{s_i}$.
\begin{prooftree}
\Axiom$\fCenter\quad\text{Lemma \ref{xextensionality}}$
\UnaryInf$\fCenter\Vdash r\neq s_1,\neg F(s_1,\bar s), F(x,\bar s)$
\LeftLabel{\ref{abbreviations}ii)}
\UnaryInf$\fCenter\Vdash r\neq s_1,\neg F(s_1,\bar s),r\dotin s_i\wedge F(x,\bar s)$
\LeftLabel{$(b\exists)$}
\UnaryInf$\fCenter\Vdash r\neq s_1,\neg F(s_1,\bar s),(\exists x\in s_i)F(x,\bar s)$
\LeftLabel{\ref{abbreviations}ii)}
\UnaryInf$\fCenter\Vdash r\dotin s_i\rightarrow r\neq s_1,\neg F(s_1,\bar s),(\exists x\in s_i)F(x,\bar s)$
\LeftLabel{$(\notin)$}
\UnaryInf$\fCenter\Vdash \neg(s_1\in s_i),\neg F(s_1,\bar s),(\exists x\in s_i)F(x,\bar s)$
\LeftLabel{Lemma \ref{xcalc}ii)}
\UnaryInf$\fCenter\Vdash \neg(s_1\in s_i)\vee\neg F(s_1,\bar s),(\exists x\in s_i)F(x,\bar s)$
\end{prooftree}
3.2.3 Finally suppose $s_i$ is of the form $[x\in\mathbb{L}_\al\;|\;B(x)]$. In the following template for derivations in $\RSX$ $r$ ranges over terms with $\lev r<\lev{s_i}$.
\begin{prooftree}
\Axiom$\fCenter\text{Lemma \ref{xsetuplemma}i)}$
\UnaryInf$\fCenter\Vdash \neg B(r),B(r)$
\Axiom$\fCenter\quad\text{Lemma \ref{xextensionality}}$
\UnaryInf$\fCenter\Vdash r\neq s,\neg F(s_1,\bar s),F(r,\bar s)$
\LeftLabel{$(\wedge)$}
\BinaryInf$\fCenter\Vdash \neg B(r), r\neq s_1,\neg F(s_1,\bar s),B(r)\wedge F(r,\bar s)$
\LeftLabel{$(b\exists)$}
\UnaryInf$\fCenter\Vdash \neg B(r), r\neq s_1,\neg F(s_1,\bar s),(\exists x\in s_i)F(x,\bar s)$
\LeftLabel{Lemma \ref{xcalc}ii)}
\UnaryInf$\fCenter\Vdash B(r)\rightarrow r\neq s_1,\neg F(s_1,\bar s),(\exists x\in s_i)F(x,\bar s)$
\LeftLabel{$(\notin)$}
\UnaryInf$\fCenter\Vdash \neg(s_1\in s_i),\neg F(s_1,\bar s),(\exists x\in s_i)F(x,\bar s)$
\LeftLabel{Lemma \ref{xcalc}ii)}
\UnaryInf$\fCenter\Vdash \neg(s_1\in s_i)\vee\neg F(s_1,\bar s),(\exists x\in s_i)F(x,\bar s)$
\end{prooftree}
This completes the proof of the claim (**). It remains to note that we may apply $\Cut$ to (**) and (12) to complete Case 3.\\

\noindent Case 4. Suppose the last inference was $(\exists)$ so $\exists xF(x,\bar s)\in\GA(\bar s)$ and we are in the following situation in $\KP$:
\begin{prooftree}
\AxiomC{$\Gamma(\bar{a}), F(c,\bar{a})$}
\LeftLabel{$(\exists)$}
\UnaryInfC{$\Gamma(\bar{a})$}
\end{prooftree}
Let $p=s_j$ if $c=a_j$ otherwise let $p=\emt$, from the induction hypothesis we can compute some $m<\omega$ such that
\begin{equation*}
\CH[\bar s]\;\prov{\OO\cdot\omega^m}{\OO+m}{\GA(\bar s),F(p,\bar s)}.
\end{equation*}
Applying $(\exists)$ completes this case.\\

\noindent Case 5. If the last inference was $(\wedge)$ or $(\vee)$ the result follows immediately by applying the corresponding $\RSX$ inference to the induction hypotheses.\\

\noindent Case 6. Finally suppose the last inference was $\Cut$. So we are in the following situation in $\KP$
\begin{prooftree}
\Axiom$\fCenter\Gamma(\bar a), B(\bar a,\bar b)$
\Axiom$\fCenter\Gamma(\bar a),\neg B(\bar a,\bar b)$
\LeftLabel{(Cut)}
\BinaryInf$\fCenter\Gamma(\bar{a})$
\end{prooftree}
Here $\bar b:=b_1,\ldots,b_l$ denotes the free variables occurring in $B$ that are different from $a_1,\ldots,a_n$. Let $\bar\emt$ denote the sequence of $l$ occurrences of $\emt$. From the induction hypothesis we find $m_1$ and $m_2 $ such that
\begin{align*}
&\CH[\bar s]\;\prov{\OO\cdot\omega^{m_1}}{\OO+m_1}{\GA(\bar s),B(\bar s,\bar\emt)}\\
&\CH[\bar s]\;\prov{\OO\cdot\omega^{m_1}}{\OO+m_2}{\GA(\bar s),\neg B(\bar s,\bar\emt)}
\end{align*}
To which we may apply $\Cut$ to complete the proof.
\end{proof}
\section{A well ordering proof in KP}
The aim of this section is to give a well ordering proof in $KP$ for initial segments of formal ordinal terms from $T(\theta)$. First let
\begin{eqnarray}\label{en}
e_0 &:=&\Omega+1\\ \nonumber
e_{n+1}&:=&\omega^{e_n}.
\end{eqnarray}
Each $e_n$ is  a formal term belonging to every representation system $T(\theta)$ from \ref{ttheta}. Although the term is the same, the order type of terms in $T(\theta)$ below $e_n$ will be dependent upon $\theta$.
 We aim to verify that for every $n<\omega$
\begin{align*}
\textbf{KP}\vdash A_n(\theta):=&\exists\alpha\exists f[\text{dom}(f)=\alpha\:\wedge\:\text{range}(f)=\{a\in T(\theta)\:|\:a\prec\psi_\theta (e_n))\}\\
&\wedge\forall\gamma,\delta\in\text{dom}(f)(\gamma<\delta\rightarrow f(\gamma)\prec f(\delta))]
\end{align*} where in the above formula $\prec$ denotes the ordering on $T(\theta)$.
Formally $A_n(\theta)$ is a $\Sigma$-formula of $\KP$ in which $\theta$ is a parameter (free variable) ranging over ordinals. For the remainder of this section we argue informally in \textbf{KP}. The symbols $\alpha,\beta,\gamma,\ldots$ are to be \textbf{KP}-variables ranging over ordinals and are ordered by $<$, the symbols $a,b,c,\ldots$ are seen as \textbf{KP}-variables ranging over codes of formal terms from $T(\theta)$, these are ordered by $\prec$. For the remainder of this section the variable $\theta$ will remain free as we argue in \textbf{KP}, for ease of reading we shall simply $\OO$ and $\psi$ instead of $\OO_\theta$ and $\psi_\theta$. This proof is an adaptation to the relativised case of a well ordering proof in \cite{mi99} or \cite{repl}.
\begin{definition}{\em The set Acc$_\theta$ is defined by
\begin{align*}
\text{Acc}_\theta:=&\{a\in T(\theta)\mid a\prec\Omega\;\wedge\;\exists\alpha\exists f[\text{dom}(f)=\alpha\:\wedge\:\text{range}(f)=\{b\::\:b\preceq a\}\\
&\wedge\forall\gamma,\delta\in\text{dom}(f)(\gamma<\delta\rightarrow f(\gamma)\prec f(\delta))]\}.
\end{align*}
}\end{definition}
\begin{lemma}[$\text{Acc}_\theta$-induction]\label{WO1}{\em
For any \textbf{KP}-formula $F(a)$ we have
\begin{equation*}
(\forall a\in\text{Acc}_\theta)[(\forall b\prec a) F(b)\rightarrow F(a)]\rightarrow (\forall a\in\text{Acc}_\theta)F(a).
\end{equation*}
}\end{lemma}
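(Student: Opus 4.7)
The strategy is to reduce $\prec$-induction on $\text{Acc}_\theta$ to ordinary set induction on ordinals, the latter being directly available in $\textbf{KP}$ via its Set Induction axiom. The essential observation is that every $a \in \text{Acc}_\theta$ carries a witness, namely an ordinal $\alpha$ and an order-isomorphism $f \colon \alpha \to \{b \in T(\theta) : b \preceq a\}$, so that $\prec$-induction on the initial segment below $a$ can be transported to ordinary $<$-induction below $\alpha$, which is what $\textbf{KP}$ already provides.

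Assume the hypothesis $(\forall a \in \text{Acc}_\theta)[(\forall b \prec a)F(b) \rightarrow F(a)]$ and fix an arbitrary $a \in \text{Acc}_\theta$ with witness $(\alpha, f)$. The plan is to establish the auxiliary formula
$$\Phi(\gamma) :\equiv \gamma \in \alpha \rightarrow F(f(\gamma))$$
for every ordinal $\gamma$ by set induction (with $\alpha$ and $f$ as parameters), and then conclude $F(a)$ by picking the unique $\gamma_0 < \alpha$ with $f(\gamma_0) = a$. For the inductive step, assume $\Phi(\delta)$ for all $\delta < \gamma$ and fix $\gamma < \alpha$. In order to invoke the main hypothesis on $f(\gamma)$ two things must be checked: (i) $f(\gamma) \in \text{Acc}_\theta$, witnessed by the restricted pair $(\gamma+1, f\!\restriction\!(\gamma+1))$, since order-preservation of $f$ forces the range of this restriction to be exactly $\{b : b \preceq f(\gamma)\}$; and (ii) $F(b)$ holds for every $b \prec f(\gamma)$. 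For (ii), any such $b$ satisfies $b \preceq a$, so by the range condition on $f$ there is a $\delta < \alpha$ with $b = f(\delta)$, and order-preservation then forces $\delta < \gamma$, whence $F(b)$ follows from $\Phi(\delta)$.

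The main technical point, more bookkeeping than genuine obstacle, is verifying that restrictions of $f$ genuinely supply $\text{Acc}_\theta$-witnesses for the elements below $a$, and that the induction formula $\Phi$ is a legitimate $\textbf{KP}$-formula with $\alpha, f$ as parameters so that the Set Induction axiom applies. Once this is in place the proof is immediate, and the argument highlights that the definition of $\text{Acc}_\theta$ in terms of an explicit order-isomorphism to an ordinal is precisely what is needed in order to internalise well-foundedness of $\prec$ on initial segments inside $\textbf{KP}$.
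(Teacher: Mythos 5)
Your proof is correct and follows essentially the same route as the paper: both reduce $\prec$-induction on $\text{Acc}_\theta$ to ordinary $\in$-induction on ordinals (via the full Set Induction schema of $\KP$, needed because $F$ has unbounded complexity), using the witnessing order-isomorphisms to transport well-foundedness. The only difference is cosmetic --- the paper runs a minimal-counterexample argument on $o(a_0)$ ranging over all counterexamples in $\text{Acc}_\theta$, while you run a direct positive induction along the single witness $(\alpha,f)$ of a fixed $a$; both require the same small verification that elements $\prec a$ inherit $\text{Acc}_\theta$-witnesses by restriction.
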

\begin{proof}
For $a\in\text{Acc}_\theta$ let $o(a)$ and $f_a$ be the unique ordinal and function such that $o(a)=\text{dom}(f_a)$, $\{b\;:\:b\preceq a\}=\text{range}(f_a)$ and $\forall\gamma,\delta\in o(a)(\gamma<\delta\rightarrow f_a(\gamma)\prec f_a (\delta))$. Now for a contradiction let us assume that
\begin{equation*}
(\forall a\in\text{Acc}_\theta)[(\forall b\prec a)F(b)\rightarrow F(a)]\quad\text{but}\quad\neg F(a_0) \text{ for some } a_0\in\text{Acc}_\theta
\end{equation*}
Using set induction/foundation we may pick $a_0$ such that $o(a_0)$ is minimal. (Note that here we must make use of the full set induction schema of \textbf{KP} since the formula $F$ is of unbounded complexity) Now for any $b\prec a_0$ we have $o(b)< o(a_0)$, thus by our choice of $a_0$ we get $F(b)$, thus we have
\begin{equation*}
(\forall b\prec a_0)F(b).
\end{equation*}
So by assumption we have $F(a_0)$, contradiction.
\end{proof}
\begin{lemma}\label{WO2}{\em Acc$_\theta$ has the following closure properties:
\begin{description}
\item[i)] $b\in\text{Acc}_\theta\wedge a\prec b\quad\rightarrow\quad a\in \text{Acc}_\theta$
\item[ii)] $(\forall a\prec b)(a\in\text{Acc}_\theta)\quad\rightarrow\quad b\in\text{Acc}_\theta$
\item[iii)] $a,b\in\text{Acc}_\theta\quad\rightarrow\quad a+b\in\text{Acc}_\theta$
\item[iv)] $a,b\in\text{Acc}_\theta\quad\rightarrow\quad\varphi ab\in\text{Acc}_\theta$
\item[v)] $(\forall\beta\leq\theta)\;\Gamma_\beta\in\text{Acc}_\theta$
\end{description}
}\end{lemma}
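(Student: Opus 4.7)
The six items cascade: (i) requires only manipulation of a given enumeration, (ii) is the key construction step, and (iii)--(v) all follow by repeated use of $\text{Acc}_\theta$-induction (Lemma \ref{WO1}) layered on top of (ii). For (i), from $b \in \text{Acc}_\theta$ take the witnessing $\alpha = o(b)$ and order-preserving bijection $f_b : \alpha \to \{c : c \preceq b\}$, locate the unique $\gamma < \alpha$ with $f_b(\gamma) = a$, and restrict $f_b$ to $\gamma+1$. For (ii), assume every $a \prec b$ lies in $\text{Acc}_\theta$ and observe that the witnessing pair $(o(a), f_a)$ is uniquely determined by $a$; apply $\Delta_0$-Collection (together with $\Delta_0$-Separation) to the set $\{a : a \prec b\}$ to gather the $f_a$'s into a single set. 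Uniqueness forces the compatibility $a \prec a' \prec b \Rightarrow f_a \subseteq f_{a'}$. Set $\alpha := \sup\{o(a) : a \prec b\}$ and define $F := (\bigcup_{a \prec b} f_a)\cup\{\langle\alpha,b\rangle\}$; one checks that $F$ is order-preserving with range $\{c : c \preceq b\}$, which witnesses $b \in \text{Acc}_\theta$.

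For (iii), apply $\text{Acc}_\theta$-induction to $F(b) :\equiv (\forall a \in \text{Acc}_\theta)(a + b \in \text{Acc}_\theta)$. Under the inductive hypothesis $(\forall b' \prec b)F(b')$, use (ii) to reduce $a + b \in \text{Acc}_\theta$ to verifying that each $c \prec a + b$ is in $\text{Acc}_\theta$. A normal-form analysis of $+$ in $T(\theta)$ splits this into the cases $c \preceq a$ (handled by (i)) and $c = a + b'$ for some $b' \prec b$ (handled by the inductive hypothesis applied to $b'$ and $a$). Item (iv) uses a nested $\text{Acc}_\theta$-induction, outer on $a$ and inner on $b$, invoking (ii), (iii) and Theorem \ref{phifunction} to dispose of $c =_{NF}\varphi\gamma\delta\prec\varphi a b$ via the trichotomy $\gamma<a$, $\gamma=a$, $\gamma>a$; this is the standard predicative argument that $\text{Acc}_\theta$ meets each Veblen class. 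For (v), proceed by set induction on $\beta \leq \theta$: granted $\Gamma_\gamma \in \text{Acc}_\theta$ for every $\gamma < \beta$ (vacuous when $\beta = 0$), every $c \prec \Gamma_\beta$ is built from $0$ and the $\Gamma_\gamma$ ($\gamma < \beta$) using $+$ and $\varphi$, so a subsidiary induction on the complexity $Gc$ via (i), (iii) and (iv) places each such $c$ in $\text{Acc}_\theta$; (ii) then gives $\Gamma_\beta \in \text{Acc}_\theta$.

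The main obstacle is (ii): since membership in $\text{Acc}_\theta$ is $\Sigma$ rather than $\Delta_0$, gathering the $f_a$'s is not a direct application of $\Delta_0$-Collection. The plan is to exploit the provable uniqueness of $(o(a), f_a)$ in $\KP$ so that the assignment $a \mapsto f_a$ becomes $\Delta_0$-describable relative to an auxiliary set, enabling $\Sigma$-Replacement (derivable in $\KP$ from $\Delta_0$-Collection and Separation) to produce a set containing all the $f_a$'s. Only after this is secured do the union and the verification that $F$ enumerates $\{c : c \preceq b\}$ in $\prec$-order become routine.
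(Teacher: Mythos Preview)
Your proposal is correct and follows essentially the same approach as the paper's proof: the same restriction argument for (i), the union-of-enumerations construction for (ii), $\text{Acc}_\theta$-induction with case analysis for (iii), nested $\text{Acc}_\theta$-induction plus term-complexity induction for (iv), and set induction on $\beta$ with subsidiary term-complexity induction for (v). Your discussion of (ii) is in fact more scrupulous than the paper's, which simply invokes ``$\Delta_0$-Collection and Union'' for $\bigcup_{a\prec b} f_a$; you correctly observe that the assignment $a\mapsto f_a$ is prima facie $\Sigma$ and that one must appeal to $\Sigma$-Replacement (derivable in $\KP$) via the uniqueness of the witnessing pair. One small slip: you write ``six items'' where there are five, and in (iv) you mention only the $c=_{NF}\varphi\gamma\delta$ case explicitly---the paper also handles the strongly-critical and additive cases via the same term-complexity induction, which your phrase ``standard predicative argument'' is presumably meant to cover.
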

\begin{proof}
i) Using the notation defined at the start of the proof of Lemma \ref{WO1} we may define
\begin{equation*}
o(a):=\{\delta\in o(b)\:|\: f_b(\delta)\preceq a\}\quad\text{and}\quad f_a:=f_b|_{o(a)+1}
\end{equation*}
thus witnessing that $a\in\text{Acc}_\theta$.\\

\noindent ii) Let us assume that $(\forall a\prec b)(a\in\text{Acc}_\theta)$, we must verify that $b\in\text{Acc}_\theta$. Using $\Delta_0$-Separation and Infinity we may form the set $\{a\:|\:a\prec b\}$, therefore $f:=\cup_{a\prec b}f_a$ is a set by $\Delta_0$-Collection and Union. Let $\beta:=\text{dom}(f)$. Setting $o(b):=\beta+1$ and $f_b:=f\cup\{(\beta,b)\}$ furnishes us with the correct witnesses to confirm that $b\in\text{Acc}_\theta$.\\

\noindent iii) Firstly we must specify what $a+b$ means, since it may not be the case that the string $a+b$ is a term in $T(\theta)$. However, we may define a $\theta$-primitive recursive function $+:T(\theta) \times T(\theta)\rightarrow T(\theta)$ which corresponds to ordinal addition.\\

\noindent Let us assume that $(\forall c\prec b)(a+c\in\text{Acc}_\theta)$, now if we can show that $a+b\in\text{Acc}_\theta$ then the desired result will follow from $\text{Acc}_\theta$-induction (\ref{WO1}). Now let $d\prec a+b$, either $d\preceq a$ in which case $d\in\text{Acc}_\theta$ by i) or $d\succ a$ and thus $d=a+c$ for some unique $c\prec b$. Such a $c$ may be determined in a $\theta$-primitive recursive fashion, hence $d\in\text{Acc}_\theta$ by assumption. Thus we have
\begin{equation*}
(\forall d\prec a +b)(d\in\text{Acc}_\theta).
\end{equation*}
From which we may use ii) to obtain $a+b\in\text{Acc}_\theta$, completing the proof.\\

\noindent iv) Again a function $\varphi: T(\theta)\times T(\theta)\rightarrow T(\theta)$ may be defined in a $\theta$-primitve recursive fashion. It is our aim to show $(\forall x,y\in\text{Acc}_\theta)(\varphi xy\in\text{Acc}_\theta)$, to this end let
\begin{equation*}
F(a):=(\forall b\in\text{Acc}_\theta)(\varphi ab\in\text{Acc}_\theta)
\end{equation*}
and assume
\begin{equation*}\tag{*}\label{iv1}
(\forall z\prec a)F(z)
\end{equation*}
by \ref{WO1} it suffices to verify $F(a)$. So let us assume
\begin{equation*}\tag{**}\label{iv2}
a,b\in\text{Acc}_\theta\quad\text{and}\quad (\forall y\prec b)(\varphi ay\in\text{Acc}_\theta)
\end{equation*}
now we must verify $\varphi ab\in\text{Acc}_\theta$. To do this we prove that
\begin{equation*}
d\prec\varphi ab\Rightarrow d\in \text{Acc}_\theta
\end{equation*}
by induction on $Gd$; the term complexity of $d$.\\

\noindent 1) If $d$ is strongly critical then $d\preceq a$ or $d\preceq b$ in which case $d\in\text{Acc}_\theta$ by (\ref{iv1}) or (\ref{iv2}).\\

\noindent 2) If $d\equiv\varphi d_0 d_1$ then we have the following subcases:\\

\noindent 2.1) If $d_0\prec a$ and $d_1\prec\varphi ab$ then since $Gd_1<Gd$ we get $d_1\in\text{Acc}_\theta$ from the induction hypothesis. So by (\ref{iv1}) we get $d\equiv\varphi d_0d_1\in\text{Acc}_\theta$\\

\noindent 2.2) If $d\equiv\varphi ad_1$ and $d_1\prec b$ then $d\in\text{Acc}_\theta$ by (\ref{iv2}).\\

\noindent 2.3 If $a\prec d_0$ and $d\prec b$ then $d\in\text{Acc}_\theta$ since $b\in\text{Acc}_\theta$.\\

\noindent 3. If $d\equiv d_1+\ldots+d_n$ and $n>1$ we get $d_1,\ldots,d_n\in\text{Acc}_\theta$ from the induction hypothesis and thus $d\in\text{Acc}_\theta$ follows from iii).\\

\noindent Thus we have verified that
\begin{equation*}
(\forall b\in\text{Acc}_\theta)[(\forall y\prec b)(\varphi ay\in\text{Acc}_\theta)\rightarrow\varphi ab\in\text{Acc}_\theta]
\end{equation*}
So, from $\text{Acc}_\theta$-induction we get $(\forall b\in\text{Acc}_\theta)(\varphi ab\in\text{Acc}_\theta)$, ie. $F(a)$ completing the proof.\\

\noindent v) We aim to show that
\begin{equation*}
(\forall\beta\leq\theta)[(\forall\gamma<\beta)(\Gamma_\gamma\in\text{Acc}_\theta)\rightarrow\Gamma_\beta\in\text{Acc}_\theta]
\end{equation*}
from which we may use transfinite induction along $\theta$ (since $\theta$ is an ordinal) to obtain the desired result.\\

\noindent So suppose $\beta\leq\theta$ and $(\forall\delta<\beta)(\Gamma_\delta\in\text{Acc}_\theta)$. Now suppose $b\prec\Gamma_\beta$, by induction on the term complexity of $b$ we verify that $b\in\text{Acc}_\theta$.\\

\noindent If $b\equiv 0$ we are trivially done by ii) or if $b\equiv\Gamma_\delta$ for some $\delta<\beta$ then we know $b\in\text{Acc}_\theta$ by assumption.\\

\noindent If $b\equiv b_0+\ldots+b_n$ or $b\equiv\varphi b_0 b_1$ then we may use parts iii) and iv) and the induction hypothesis since the components $b_i$ have smaller term complexity.\\

\noindent It cannot be the case that $b\equiv\psi b_0$ since $\psi a\succ\Gamma_\theta$ for every $a$.\\

\noindent Thus using ii) we get that $\Gamma_\beta\in\text{Acc}_\theta$ and the proof is complete.
\end{proof}
\begin{definition}{\em By recursion through the construction of ordinal terms in $T(\theta)$ we define the set $SC_{\prec\Omega}(a)$ which lists the most recent strongly critical ordinal below $\Omega$ used in the build up of the ordinal term $a$:
\begin{description}
\item[1)] $SC_{\prec\Omega}(0):=SC_{\prec\Omega}(\Omega):=\emptyset$
\item[2)] $SC_{\prec\Omega}(a):=\{a\}$ if $a\equiv\Gamma_\beta$ for some $\beta\leq\theta$ or $a\equiv\psi a_0$.
\item[3)] $SC_{\prec\Omega}(a_1+\ldots+a_n):=\cup_{1\leq i\leq n}SC_{\prec\Omega}(a_i)$
\item[4)] $SC_{\prec\Omega}(\varphi a_0 a_1):=SC_{\prec\Omega}(a_0)\cup SC_{\prec\Omega}(a_1)$
\item[5)] $SC_{\prec\OO}(\psi a):=\{\psi a\}$.
\end{description}
Now let
\begin{equation*}
M_\theta:=\{a\in T(\theta)\:|\:SC_{\prec\Omega}(a)\subseteq \text{Acc}_\theta\}
\end{equation*}
and
\begin{equation*}
a\prec_{M_\theta} b:= a,b\in M_\theta\wedge a\prec b.
\end{equation*}
Finally for a definable class $U$ we define the following formula
\begin{equation*}
\text{Prog}_{M_\theta}(U):=(\forall y\in M_\theta)[(\forall z\prec_{M_\theta} y)(z\in U)\rightarrow (y\in U)]
\end{equation*}
}\end{definition}
\begin{lemma}\label{WO3}{\em
\begin{equation*}
\text{Acc}_\theta= M_\theta\cap\Omega:=\{a\in M_\theta\:|\:a\prec\Omega\}
\end{equation*}
}\end{lemma}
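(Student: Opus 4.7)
The plan is to prove the two inclusions separately, both by straightforward induction on the term complexity $Ga$, exploiting the closure properties of $\text{Acc}_\theta$ furnished by Lemma \ref{WO2}. No new set-theoretic machinery beyond what is already in play is required.

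For the inclusion $\text{Acc}_\theta \subseteq M_\theta \cap \Omega$, suppose $a \in \text{Acc}_\theta$. By definition of $\text{Acc}_\theta$ one immediately has $a \prec \Omega$, so only $SC_{\prec\Omega}(a) \subseteq \text{Acc}_\theta$ remains. First I would verify, by a short induction on $Ga$ walking through each clause of the definition of $SC_{\prec\Omega}$, that every $c \in SC_{\prec\Omega}(a)$ satisfies $c \preceq a$; in the sum and $\varphi$ clauses one uses the fact that the subterms are $\preceq a$ in normal form and then applies the inductive hypothesis. Given this, Lemma \ref{WO2}i) yields $c \in \text{Acc}_\theta$ for every $c \in SC_{\prec\Omega}(a)$, and the desired inclusion follows.

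For the reverse inclusion $M_\theta \cap \Omega \subseteq \text{Acc}_\theta$, fix $a \in M_\theta$ with $a \prec \Omega$ and proceed by induction on $Ga$, splitting on the outermost symbol. The cases $a \equiv 0$, $a \equiv \Gamma_\beta$ (with $\beta \leq \theta$) and $a \equiv \psi a_0$ are immediate: the first by applying Lemma \ref{WO2}ii) vacuously; the other two because $a \in SC_{\prec\Omega}(a) \subseteq \text{Acc}_\theta$ by hypothesis. For $a \equiv a_1 + \ldots + a_n$ one has $SC_{\prec\Omega}(a_i) \subseteq SC_{\prec\Omega}(a) \subseteq \text{Acc}_\theta$, $a_i \prec \Omega$, and $Ga_i < Ga$; the inductive hypothesis yields each $a_i \in \text{Acc}_\theta$, whence iterated application of Lemma \ref{WO2}iii) delivers $a \in \text{Acc}_\theta$. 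For $a \equiv \varphi a_0 a_1$ one first notes that $a \prec \Omega$ forces $a_0, a_1 \prec \Omega$, since clause (R4) of Section 2 is triggered precisely when both $\varphi$-arguments lie below $\Omega$ (if either were $\succeq \Omega$ the term would lie in the range of (R5) and be $\succeq \Omega$); the inductive hypothesis then gives $a_0, a_1 \in \text{Acc}_\theta$, whereupon Lemma \ref{WO2}iv) closes the argument.

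The whole argument is essentially mechanical once the closure properties of $\text{Acc}_\theta$ are established. The single step requiring any thought is the observation in the $\varphi$ case that subterms of an ordinal term in $T(\theta)$ whose value lies below $\Omega$ must themselves lie below $\Omega$; this is the only non-trivial use of the hypothesis $a \prec \Omega$ in the induction, and it reduces to the normal-form analysis carried out in Section 2. Everything else is pattern matching against the definitions of $SC_{\prec\Omega}$, $M_\theta$, and the closure clauses of Lemma \ref{WO2}.
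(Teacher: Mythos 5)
Your proposal is correct and follows essentially the same route as the paper: the forward inclusion via the observation that every element of $SC_{\prec\Omega}(a)$ is $\preceq a$ together with downward closure of $\text{Acc}_\theta$, and the reverse inclusion by induction on $Ga$ with a case split on the outermost symbol, invoking the closure of $\text{Acc}_\theta$ under $+$ and $\varphi$ and the fact that $SC_{\prec\Omega}$ of a subterm is contained in $SC_{\prec\Omega}$ of the whole term. The only additions are harmless elaborations the paper leaves implicit, such as the normal-form argument that $a\prec\Omega$ forces both arguments of $\varphi$ below $\Omega$.
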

\begin{proof}
Suppose that $a\in\text{Acc}_\theta$ and observe that $(\forall x\in SC_{\prec\Omega}(a))(x\preceq a)$, thus $SC_{\prec\Omega}(a)\subseteq\text{Acc}_\theta$ by \ref{WO2}i) thus we have verified that $a\in M_\theta\cap\Omega$.\\

\noindent Now let us suppose that $a\in M_\theta\cap\Omega$, so we know that $SC_{\prec\Omega}(a)\subseteq\text{Acc}_\theta$. By induction on the term complexity $Ga$ we verify that $a\in\text{Acc}_\theta$.\\

\noindent Clearly $0\in\text{Acc}_\theta$ and if $a\equiv\Gamma_\beta$ for some $\beta\leq\theta$ then $a\in\text{Acc}_\theta$ by Lemma \ref{WO2}v).\\

\noindent If $a\equiv a_1+\ldots+a_n$ then we get $a_1,\ldots,a_n\in M_\theta\cap\Omega$ since $SC_{\prec\Omega}(a_i)\subseteq SC_{\prec\Omega}(a)$ for each $i$. Now using the induction hypothesis we get $a_1,\ldots,a_n\in\text{Acc}_\theta$ and so by Lemma \ref{WO2}ii) we have $a\in\text{Acc}_\theta$.\\

\noindent If $a\equiv \varphi bc$ then we get $b,c\in M_\theta\cap\Omega$, so using the induction hypothesis we get $b,c\in\text{Acc}_\theta$ and so by Lemma \ref{WO2}iii) we have $a\in\text{Acc}_\theta$.\\

\noindent If $a\equiv\psi a_0$ then $SC_{\prec\Omega}(a)=\{a\}$ so we have $a\in\text{Acc}_\theta$ by assumption.
\end{proof}
\begin{definition}{\em For a definable class $U$ let
\begin{equation*}
U^\delta:=\{b\in M_\theta\:|\:(\forall a\in M_\theta)[M_\theta\cap a\subseteq U\rightarrow M_\theta\cap a+\omega^b\subseteq U]\}
\end{equation*}
where $M_\theta\cap a:=\{b\in M_\theta\:|\:b\prec a\}$.
}\end{definition}
\begin{lemma} \label{WO4}{\em
$\textbf{KP}\vdash\text{Prog}_{M_\theta}(U)\rightarrow\text{Prog}_{M_\theta}(U^\delta)$
}\end{lemma}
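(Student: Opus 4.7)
The plan is to unpack both sides directly. Assume $\text{Prog}_{M_\theta}(U)$, fix $b\in M_\theta$, and suppose inductively (for the progressivity of $U^\delta$ at $b$) that $c\in U^\delta$ for every $c\prec_{M_\theta} b$; we need to show $b\in U^\delta$. Unpacking the definition of $U^\delta$, this means fixing an arbitrary $a\in M_\theta$ with $M_\theta\cap a\subseteq U$ and proving that $M_\theta\cap(a+\omega^b)\subseteq U$.

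Let $d\in M_\theta$ with $d\prec a+\omega^b$. If $d\prec a$ then $d\in U$ by hypothesis on $a$, so we may assume $a\preceq d$. Using the $\theta$-primitive-recursive arithmetic of $T(\theta)$ from Theorem \ref{ttheta}, write $d=a+e$ with $0\preceq e\prec\omega^b$, and express $e$ in Cantor normal form as $e=\omega^{c_1}+\ldots+\omega^{c_n}$ with $c_1\succeq\ldots\succeq c_n$ and $c_1\prec b$. By tracking $SC_{\prec\Omega}$ through its defining clauses, each $c_i$ satisfies $SC_{\prec\Omega}(c_i)\subseteq SC_{\prec\Omega}(d)\subseteq\text{Acc}_\theta$, so $c_i\in M_\theta$; since also $c_i\preceq c_1\prec b$, the inductive assumption yields $c_i\in U^\delta$.

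Now carry out a finite induction on $k\leq n$ proving the joint statement that $M_\theta\cap a_k\subseteq U$ and $a_k\in U$, where $a_k:=a+\omega^{c_1}+\ldots+\omega^{c_k}$ and $a_0:=a$. The base case holds by the standing hypothesis on $a$ together with one application of $\text{Prog}_{M_\theta}(U)$ at $a$. For the inductive step, $a_k\in M_\theta$, $M_\theta\cap a_k\subseteq U$, and $c_{k+1}\in U^\delta$; unpacking the definition of $U^\delta$ at the witness $a_k$ yields $M_\theta\cap a_{k+1}=M_\theta\cap(a_k+\omega^{c_{k+1}})\subseteq U$, and a further appeal to $\text{Prog}_{M_\theta}(U)$ at $a_{k+1}$ delivers $a_{k+1}\in U$. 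Taking $k=n$ gives $d=a_n\in U$, completing the case analysis.

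The only mildly delicate aspect is legitimising the arithmetic manoeuvres inside \textbf{KP}: namely, that $a+\omega^b$, the decomposition $d=a+e$, the Cantor normal form of $e$, and the intermediate terms $a_k$ are all well-defined elements of $T(\theta)$ lying in $M_\theta$. This is underwritten by the $\theta$-primitive recursiveness of Theorem \ref{ttheta} together with the fact that $SC_{\prec\Omega}$ of any compound term is contained in the union of the $SC_{\prec\Omega}$-sets of its constituents, so membership in $M_\theta$ propagates through all the constructions. The induction on $k\leq n$ is merely an induction over a natural number parameter and hence poses no issue in \textbf{KP}; no appeal to $\text{Acc}_\theta$-induction (Lemma \ref{WO1}) is needed at this stage, since the transfinite content has been shifted into the outer progressivity statement being proved.
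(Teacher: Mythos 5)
Your proposal is correct and follows essentially the same route as the paper: decompose $d\prec a+\omega^b$ as $a+\omega^{c_1}+\ldots+\omega^{c_n}$ with $c_i\prec b$, and climb from $a$ to $d$ by repeatedly invoking the hypothesis $c_i\in U^\delta$ and finishing with $\text{Prog}_{M_\theta}(U)$. The only (harmless) differences are presentational: you carry $a_k\in U$ along through the finite induction rather than applying $\text{Prog}_{M_\theta}(U)$ only at the end, and you make explicit the $SC_{\prec\Omega}$-bookkeeping showing $c_i, a_k\in M_\theta$, which the paper leaves implicit.
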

\begin{proof}
Assume
\begin{align*}
&\text{Prog}_{M_\theta}(U) \tag{1}\\
&b\in M_\theta\tag{2}\\
&(\forall x\prec_{M_\theta} b)(z\in U^\delta)\tag{3}
\end{align*}
Under these assumptons we need to verify that $b\in U^\delta$. Since we already have that $b\in M_\theta$ by (2), it suffices to verify
\begin{equation*}
(\forall a\in M_\theta)[M_\theta\cap a\subseteq U\rightarrow M_\theta\cap a+\omega^b\subseteq U]
\end{equation*}
to this end we assume that
\begin{equation*}\tag{4}
a\in M_\theta\quad\text{and} \quad M_\theta\cap a\subseteq U
\end{equation*}
Now choose some $d\in M_\theta\cap a+\omega^b$, we must show that $d\in U$ under the assumptions (1)-(4).\\

\noindent If $d\prec a$ then we have $d\in U$ by (4).\\

\noindent If $d=a$ then using (1) and (4) we have $a\in U$.\\

\noindent If $d\succ a$ then since $d\prec a+\omega^b$, we may find $d_1,\ldots,d_k$ such that
\begin{equation*}
d=a+\omega^{d_1}+\ldots+\omega^{d_k}\quad\text{and}\quad d_k\preceq \ldots\preceq d_1\prec b
\end{equation*}
Since $M_\theta\cap a\subseteq U$ we get $M_\theta\cap a+\omega^{d_1}\subseteq U$ from (3).\\

\noindent In a similar fashion using (3) a further $k-1$ times we obtain
\begin{equation*}
M_\theta\cap a+\omega^{d_1}+\ldots+\omega^{d_k}\subseteq U
\end{equation*}
Finally using one application of Prog$_{M_\theta} (U)$ (assumption (1)) we have $d\in U$ and thus the proof is complete.
\end{proof}
\begin{definition}{\em We define the class $X_\theta$ in \textbf{KP} as
\begin{equation*}
X_\theta:=\{a\in M_\theta\:|\:(\exists x\in Ka)(x\succeq a)\vee\psi a\in\text{Acc}_\theta\}
\end{equation*}
}\end{definition}
\noindent Recall that the function $k$ was defined in Definition \ref{K} and can be computed in a $\theta$-primitive recursion fashion. The class $X_\theta$ may be thought of as those $a\in M_\theta$ for which either $\psi a$ is undefined or $\psi a\in\text{Acc}_\theta$.
\begin{lemma}\label{WO5}{\em
$\textbf{KP}\vdash\text{Prog}_{M_\theta}(X_\theta).$
}\end{lemma}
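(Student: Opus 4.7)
Fix $a\in M_\theta$ and assume the progressivity hypothesis $(\forall b\prec_{M_\theta} a)(b\in X_\theta)$; the goal is $a\in X_\theta$. If $(\exists x\in Ka)(x\succeq a)$ the first disjunct in the definition of $X_\theta$ gives the conclusion immediately, so assume $(\forall x\in Ka)(x\prec a)$. By Lemma \ref{KK} this is equivalent to $a\in B(a)$, so $\psi a$ is a bona fide term of $T(\theta)$. I will show $\psi a\in\text{Acc}_\theta$. By Lemma \ref{WO2}ii), it suffices to prove $c\in\text{Acc}_\theta$ for every $c\prec\psi a$, which, since $\psi a\prec\Omega$, reduces via Lemma \ref{WO3} to proving $c\in M_\theta$.

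I would prove this by induction on the term complexity $Gc$. The cases $c\equiv 0$ and $c\equiv\Gamma_\beta$ follow from Lemma \ref{WO2}v), and if $c\equiv c_1+\ldots+c_n$ or $c\equiv_{NF}\varphi c_0 c_1$ then the components have strictly smaller complexity and remain $\prec c\prec\psi a$, so the induction hypothesis together with Lemma \ref{WO2}iii),iv) delivers $c\in\text{Acc}_\theta$. The real content is the case $c\equiv\psi c_0$. Since $\psi c_0\prec\psi a$ and both $c_0\in B(c_0)$ and $a\in B(a)$, Lemma \ref{collapseprop}xi) gives $c_0\prec a$. The plan is to invoke the outer progressivity hypothesis at $c_0$, which requires the sub-claim $c_0\in M_\theta$.

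To verify the sub-claim, take any $\xi\in SC_{\prec\Omega}(c_0)$. If $\xi\equiv\Gamma_\beta$ with $\beta\leq\theta$, then $\xi\in\text{Acc}_\theta$ by Lemma \ref{WO2}v). Otherwise $\xi\equiv\psi e$ for some $e$; since $\psi e$ is a subterm of $c_0$, $e$ belongs to $Kc_0$, and $c_0\in B(c_0)$ combined with Lemma \ref{KK} forces $e\prec c_0$. Monotonicity of $B$ (Lemma \ref{collapseprop}i)) and the fact that $e\in B(e)$ (a formal well-formedness condition on the term $\psi e$) yield $e\in B(c_0)\cap c_0$, and Lemma \ref{collapseprop}ii) then gives $\psi e\prec\psi c_0=c\prec\psi a$. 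Since $G(\psi e)\leq Gc_0<Gc$, the induction hypothesis applies and furnishes $\psi e\in\text{Acc}_\theta$. Therefore $SC_{\prec\Omega}(c_0)\subseteq\text{Acc}_\theta$, i.e.\ $c_0\in M_\theta$. Combined with $c_0\prec a$ this gives $c_0\prec_{M_\theta}a$, so by the outer hypothesis $c_0\in X_\theta$; but $c_0\in B(c_0)$ rules out the first disjunct of $X_\theta$ for $c_0$, forcing $\psi c_0\in\text{Acc}_\theta$, i.e.\ $c\in\text{Acc}_\theta$, closing the induction.

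The main obstacle is the sub-claim $c_0\in M_\theta$: one might fear that $\psi$-subterms of $c_0$ could be ordinals $\succeq\psi a$, blocking the use of the induction hypothesis. The key structural observation dissolving this worry is that the formal well-formedness of $\psi c_0$ forces every such subterm $\psi e$ to satisfy $e\in B(c_0)\cap c_0$, and Lemma \ref{collapseprop}ii) then collapses $\psi e$ strictly below $\psi c_0$, hence strictly below $\psi a$. With this in hand the induction on $Gc$ proceeds without any further subtlety, and the whole argument is carried out in $\KP$ using the full set induction schema (as already invoked in Lemma \ref{WO1}).
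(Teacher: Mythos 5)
Your proposal is correct and follows essentially the same route as the paper: reduce $a\in X_\theta$ to $\psi a\in\text{Acc}_\theta$, prove $c\prec\psi a\Rightarrow c\in\text{Acc}_\theta$ by induction on term complexity, and in the $\psi c_0$ case establish $c_0\in M_\theta$ by showing $SC_{\prec\Omega}(c_0)\subseteq\text{Acc}_\theta$ so that the progressivity hypothesis can be applied at $c_0$. Your argument merely spells out in more detail the steps the paper asserts without comment (that $c_0\prec a$ via Lemma \ref{collapseprop}xi) and that the strongly critical $\psi$-subterms of $c_0$ collapse strictly below $\psi a$), which is a welcome clarification but not a different proof.
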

\begin{proof}
Assume
\begin{align*}
&a\in M_\theta\tag{1}\\
&(\forall z\prec_{M_\theta} a)(z\in X_\theta)\tag{2}
\end{align*}
We need to verify that $a\in X_\theta$. If $(\exists x\in Ka)(x\succeq a)$ then we are done, so assume $(\forall x\in Ka)(x\prec a)$ and thus $\psi a\in T(\theta)$ and we must verify that $\psi a\in\text{Acc}_\theta$. To achieve this we verify that
\begin{equation*}\tag{*}
b\prec\psi a\:\Rightarrow\:b\in\text{Acc}_\theta
\end{equation*}
from which we would be done by \ref{WO2}ii). To verify (*) we proceed by induction on $Gb$, the term complexity of $b$.\\

\noindent If $b\equiv 0$ or $b\equiv\Gamma_\beta$ for some $\beta\leq\theta$ we are done by \ref{WO2}v).\\

\noindent If $b\equiv b_0+\ldots+b_n$ or $b\equiv\varphi b_0 b_1$ then the result follows by the induction hypothesis and \ref{WO2}ii) or \ref{WO2}iii).\\

\noindent So suppose that $b\equiv\psi b_0$. It must be the case that $(\forall x\in Kb_0)(x\prec b_0)$ and $b_0\prec a$. We must now show that $b_0\in M_\theta$ in order to use (2) to conclude that $b_0\in X_\theta$. The claim is that
\begin{equation*}\tag{**}
\text{SC}_{\prec\Omega}(b_0)\subseteq\text{Acc}_\theta\quad\text{and thus}\quad b_0\in M_\theta
\end{equation*}
Suppose $d\in\text{SC}_{\prec\Omega}(b_0)$ then either $d\equiv\Gamma_\beta$ for some $\beta\leq\theta$ in which case $d\in\text{Acc}_\theta$ by \ref{WO2}v) or $d\equiv\psi d_0\prec\psi a$ for some $d_0$. But
\begin{equation*}
Gd\leq Gb_0<Gb
\end{equation*}
and thus $d\in\text{Acc}_\theta$ by induction hypothesis. Thus the claim (**) is verified. Now using (2) we obtain $b_0\in X_\theta$ which implies $b\equiv\psi b_0\in\text{Acc}_\theta$.
\end{proof}
\begin{lemma}\label{WO6}{\em
For any $n<\omega$ and any definable class $U$
\begin{equation*}
\textbf{KP}\vdash\text{Prog}_{M_\theta}(U)\:\rightarrow\: M_\theta\cap e_n\subseteq U\wedge e_n\in U.
\end{equation*}
}\end{lemma}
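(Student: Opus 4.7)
The plan is to proceed by external induction on $n$, leveraging Lemma~\ref{WO4} at the successor step and Lemma~\ref{WO1} (together with Lemma~\ref{WO3}) at the base case. In particular, the $\delta$-jump serves as a bookkeeping device that lets us trade one layer of progressivity for one more tower-exponent of $\omega$.

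For the base case $n=0$, where $e_0 = \Omega+1$, assume $\text{Prog}_{M_\theta}(U)$. I would first establish $\text{Acc}_\theta \subseteq U$ by applying Lemma~\ref{WO1} to the formula $a \in U$: given $a \in \text{Acc}_\theta$ with the inductive hypothesis $(\forall b \prec a)(b \in U)$, Lemma~\ref{WO3} gives $a \in M_\theta$, and since every $z$ with $z \prec_{M_\theta} a$ satisfies $z \prec a$ hence $z \in U$, progressivity yields $a \in U$. Next, observe $\Omega, \Omega+1 \in M_\theta$ (in both cases $SC_{\prec\Omega}$ evaluates to $\emptyset$). A first application of $\text{Prog}_{M_\theta}(U)$ at $y=\Omega$, using $M_\theta \cap \Omega = \text{Acc}_\theta \subseteq U$, yields $\Omega \in U$; a second application at $y = \Omega+1$ then produces $\Omega+1 \in U$. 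Since $M_\theta \cap e_0 = \text{Acc}_\theta \cup \{\Omega\}$, both halves of the base case are verified simultaneously.

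For the successor step, assume the claim at $n$. Given $\text{Prog}_{M_\theta}(U)$, Lemma~\ref{WO4} yields $\text{Prog}_{M_\theta}(U^\delta)$, and applying the inductive hypothesis to $U^\delta$ gives $e_n \in U^\delta$. Unwinding the definition of $U^\delta$ with $a := 0$ (noting that $0 \in M_\theta$ and that $M_\theta \cap 0 = \emptyset \subseteq U$ vacuously), we obtain
$$ M_\theta \cap (0 + \omega^{e_n}) \;=\; M_\theta \cap e_{n+1} \;\subseteq\; U. $$
A routine induction on $n$ gives $SC_{\prec\Omega}(e_{n+1}) = \emptyset$, so $e_{n+1} \in M_\theta$, and one final application of $\text{Prog}_{M_\theta}(U)$ at $y = e_{n+1}$ delivers $e_{n+1} \in U$.

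The only delicate point is the base case, which invokes Lemma~\ref{WO1} for the formula $a \in U$ of unbounded complexity; this is precisely the reason why the proof of Lemma~\ref{WO1} cannot avoid the full set-induction schema of $\textbf{KP}$. The successor step, by contrast, is essentially mechanical once Lemma~\ref{WO4} is in hand, since each increment from $e_n$ to $e_{n+1}$ is absorbed by a single application of the $\delta$-jump and a final appeal to progressivity.
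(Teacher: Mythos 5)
Your proof is correct and follows essentially the same route as the paper: the base case via $\text{Acc}_\theta$-induction (Lemma \ref{WO1}) together with Lemma \ref{WO3} and two applications of progressivity at $\Omega$ and $\Omega+1$, and the successor step via Lemma \ref{WO4} applied to $U^\delta$, instantiated at $a=0$, $b=e_n$, followed by one final appeal to $\text{Prog}_{M_\theta}(U)$. The additional details you supply (that $SC_{\prec\Omega}(e_n)=\emptyset$, and that $M_\theta\cap e_0=\text{Acc}_\theta\cup\{\Omega\}$) are accurate and merely make explicit what the paper leaves implicit.
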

\begin{proof}
We proceed by induction on $n$ [outside of \textbf{KP}].\\

\noindent If $n=0$ then Prog$_{M_\theta}(U)$ says that
\begin{equation*}
(\forall a\in\text{Acc}_\theta)[(\forall b\prec a)(b\in U)\rightarrow a\in U].
\end{equation*}
So using Acc$_\theta$-induction (Lemma \ref{WO1}) we obtain Acc$_\theta\subseteq U$. Hence from \ref{WO3} we get $M_\theta\cap\Omega\subseteq U$. Now $\Omega,\Omega+1\in M_\theta$ so using  Prog$_{M_\theta}(U)$ a further two times we have $\Omega +1:=e_0\in U$ as required.\\

\noindent Now suppose the result holds up to $n$; since the induction hypothesis holds for \textit{all} definable classes we have that that
\begin{equation*}
\textbf{KP}\vdash\text{Prog}_{M_\theta}(U^\delta)\rightarrow M_\theta\cap e_n\subseteq U^\delta\wedge e_n\in U^\delta
\end{equation*}
and by Lemma \ref{WO4} we have
\begin{equation*}\tag{1}
\textbf{KP}\vdash\text{Prog}_{M_\theta}(U)\rightarrow M_\theta\cap e_n\subseteq U^\delta\wedge e_n\in U^\delta.
\end{equation*}
Now we argue informally in \textbf{KP}. Suppose Prog$_{M_\theta}(U)$, then from (1) we obtain
\begin{equation*}
M_\theta\cap e_n\subseteq U^\delta\quad\wedge\quad e_n\in U^\delta.
\end{equation*}
This says that
\begin{equation*}
(\forall b\in M_\theta\cap(e_n+1))(\forall a\in M_\theta)[M_\theta\cap a\subseteq U\rightarrow M_\theta\cap a+\omega^b\subseteq U].
\end{equation*}
Now if we put $a=0$ and $b=e_n$ (noting that $e_n\in M_\theta$) we obtain
\begin{equation*}
M_\theta\cap\omega^{e_n}\subseteq U
\end{equation*}
from which Prog$_{M_\theta}(U)$ implies $\omega^{e_n}\in U$ as required.
\end{proof}
\begin{theorem}\label{WO}{\em For every $n<\omega$
\begin{equation*}
\textbf{KP}\vdash\forall \theta\,\psi (e_n)\in\text{Acc}_\theta
\end{equation*}
and hence $\textbf{KP}\vdash \forall \theta\,A_n(\theta)$.
}\end{theorem}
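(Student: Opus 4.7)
The proof combines the preceding lemmas as follows. The plan is to apply Lemma \ref{WO6} with $U := X_\theta$ to obtain $e_n \in X_\theta$, then argue that the first disjunct in the definition of $X_\theta$ must fail for $e_n$, so that $\psi(e_n) \in \mathrm{Acc}_\theta$.

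First I would verify the minor book-keeping that $e_n \in M_\theta$ for every $n<\omega$. Since $e_0 \equiv \Omega+1$ and $e_{n+1}\equiv \omega^{e_n}$, and since $\omega^{e_n}$ is the formal term $\varphi 0 e_n$, a straightforward induction using the clauses of Definition \ref{K} shows $K e_n = \emptyset$ and $SC_{\prec\Omega}(e_n)=\emptyset$. Hence $SC_{\prec\Omega}(e_n)\subseteq \mathrm{Acc}_\theta$ vacuously, so $e_n \in M_\theta$. This also shows that the first disjunct $(\exists x\in Ke_n)(x\succeq e_n)$ in the definition of $X_\theta$ cannot hold.

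Next I would invoke Lemma \ref{WO5}, which gives $\mathrm{Prog}_{M_\theta}(X_\theta)$. Feeding this into Lemma \ref{WO6} with the definable class $U=X_\theta$ yields $e_n\in X_\theta$ for every fixed $n<\omega$ (the induction on $n$ being carried out in the meta-theory, exactly as in the proof of \ref{WO6}). Combining with the observation of the previous paragraph, the definition of $X_\theta$ then forces $\psi(e_n)\in \mathrm{Acc}_\theta$, which is the first assertion of the theorem.

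For the second assertion, I would unpack the definition of $\mathrm{Acc}_\theta$: there exist an ordinal $\alpha_0$ and a strictly $\prec$-increasing bijection $f_0:\alpha_0 \to \{b : b\preceq \psi(e_n)\}$. Since $f_0$ is strictly increasing and $\psi(e_n)$ is the $\prec$-maximum of the range, $\alpha_0$ cannot be a limit, so $\alpha_0 = \beta+1$ with $f_0(\beta)=\psi(e_n)$. Setting $f := f_0\!\restriction\!\beta$ provides witnesses to $A_n(\theta)$. This last step is routine, so the real content of the proof is all packaged into the previous lemmas; the only genuine obstacle — which is already settled by Lemma \ref{WO5} — is the progressiveness of $X_\theta$, i.e.\ controlling the subterms of elements of $K a$ that can themselves be of the form $\psi b_0$, which is why Lemma \ref{WO5} had to be proved by a nested induction on term complexity inside the $\mathrm{Acc}_\theta$-induction.
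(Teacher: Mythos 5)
Your proposal is correct and follows essentially the same route as the paper: Lemma \ref{WO5} gives $\text{Prog}_{M_\theta}(X_\theta)$, Lemma \ref{WO6} then yields $e_n\in X_\theta$, and since $Ke_n=\emptyset$ the first disjunct in the definition of $X_\theta$ fails, forcing $\psi(e_n)\in\text{Acc}_\theta$. The paper leaves the $Ke_n=\emptyset$ observation and the passage from $\text{Acc}_\theta$ to $A_n(\theta)$ implicit; you have simply spelled them out.
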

\begin{proof}
By \ref{WO5} we have $\text{Prog}_{M_\theta}(X_\theta)$ recalling that
\begin{equation*}
X_\theta:=\{a\in M_\theta\:|\:(\exists x\in Ka)(x\succeq a)\vee\psi a\in\text{Acc}_\theta\}.
\end{equation*}
So from \ref{WO6} we get $e_n\in X_{\theta}$ for any $n<\omega$ and thus $\psi (e_n)\in\text{Acc}_\theta$.
\end{proof}
\section{The provably total set functions of \textbf{KP}}
At this point we should perhaps remind ourselves that the ordinal $\psi\alpha$ depends on a parameter $\theta$ which is the rank of $TC(\{X\})$ as $\psi$ is defined simultaneously with the sets $B_{\theta}(\alpha)$. After Definition \ref{theta} we adopted the convention to drop
the subscript $\theta$ from $\psi_{\theta}$. For the next application we have to be aware of this dependence.
For each $n<\omega$ we define the following recursive set function
\begin{equation*}
G_n(X):=L_{\psi_{\theta}(e_n)}(X)
\end{equation*} where $\theta$ is the rank of $TC(\{X\})$.
For a formula $A(a,b)$ of $\KP$ let
\begin{equation*}
\forall x\exists !yA(x,y):=\forall x\forall y_1\forall y_2[A(x,y_1)\wedge A(x,y_2)\rightarrow y_1=y_2]\wedge \forall x\exists y A(x,y).
\end{equation*}
\begin{definition}{\em If $T$ is a theory formulated in the language of set theory, $f$ a set function and $\mathfrak{X}$ a class of formulae. We say that $f$ is $\mathfrak{X}$ definable in $T$ if there is some $\mathfrak{X}$-formula $A_f(a,b)$ with exactly the free variables $a,b$ such that
\begin{description}
\item[i)] $V\models A_f(x,y)\leftrightarrow f(x)=y$.
\item[ii)] $T\vdash\forall x\exists!yA_f(x,y)$.
\end{description}
}\end{definition}
\begin{theorem}\label{xconc}{\em Suppose $f$ is a set function that is $\Sigma$ definable in $\KP$, then there is some $n$ (which we may compute from the finite derivation) such that
\begin{equation*}
V\models\forall x(f(x)\in G_n(x)).
\end{equation*}
Moreover $G_m$ is $\Sigma$ definable in $\KP$ for each $m<\omega$.
}\end{theorem}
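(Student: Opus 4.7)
The proof combines the infinitary machinery built up in Sections 3--5. Fix an $A_f$ witnessing that $f$ is $\Sigma$-definable, so $\KP \vdash \forall x\exists y\, A_f(x,y)$. Given any set $x$, work in $\RSX$ with $X:=x$ and write $\theta := \mathrm{rank}(TC(\{x\}))$. The plan is a five step chain: embed, eliminate predicative cuts, collapse, bound, interpret.

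First, the Embedding Theorem \ref{xembed} yields an $m<\omega$, computable from the finite $\KP$-proof and independent of $x$, such that
$$\provx{\CH_0[\bar x]}{\OO\cdot\omega^m}{\OO+m}{\exists y\,A_f(\bar x,y)}.$$
Next, iterate predicative cut elimination (Theorem \ref{xpredce}) $m-1$ times to lower the cut rank from $\OO+m$ to $\OO+1$; each step converts length $\beta$ into $\varphi 0\beta=\omega^\beta$, and starting from $\OO\cdot\omega^m$ one verifies (using $\omega^{\OO}=\OO$ and $e_{k+1}=\omega^{e_k}$) that the resulting length $\alpha$ stays below $e_{m+1}$. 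Now apply the Collapsing Theorem \ref{ximpredce}; it is applicable because $\exists y\,A_f(\bar x,y)$ is $\Sigma$ and $0\in B(0)$, yielding
$$\provx{\CH_{\hat\alpha}}{\psi\hat\alpha}{\psi\hat\alpha}{\exists y\,A_f(\bar x,y)}$$
with $\hat\alpha=\omega^{\OO+\alpha}$. Since $\alpha<e_{m+1}$ and $\OO+e_{m+1}=e_{m+1}$, we get $\hat\alpha<e_{m+2}$, hence $\psi\hat\alpha<\psi(e_{m+2})$ by Lemma \ref{collapseprop}. Boundedness (Lemma \ref{xboundedness}) then supplies
$$\provx{\CH_{\hat\alpha}}{\psi\hat\alpha}{\psi\hat\alpha}{(\exists y\in\mathbb{L}_{\psi\hat\alpha}(X))\,A_f(\bar x,y)},$$
and the standard (routine) soundness of cut-free $\RSX$-derivations for $\Sigma$-formulae, interpreting $\mathbb{L}_\beta(X)$ as $L_\beta(x)$, yields a true witness $y\in L_{\psi\hat\alpha}(x)\subseteq L_{\psi(e_{m+2})}(x)=G_{m+2}(x)$. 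Setting $n:=m+2$ delivers the first claim, and this $n$ is computed uniformly from the $\KP$-derivation via the bookkeeping in step (1).

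For the moreover clause the heavy lifting has already been done by Theorem \ref{WO}: $\KP$ proves $\psi_\theta(e_m)\in\mathrm{Acc}_\theta$, which gives an ordinal $\xi$ together with an order-isomorphism between $\xi$ and the $\psi_\theta(e_m)$-initial segment of $T(\theta)$. Since $\KP$ admits transfinite recursion along a set-ordinal and the recursion step $L_{\beta+1}(x):=\{B\subseteq L_\beta(x):B$ is definable over $\langle L_\beta(x),{\in}\rangle\}$ is $\Delta$ in the parameters, one constructs $\langle L_\beta(x):\beta\leq\xi\rangle$ as a set inside $\KP$. Hence $G_m(x)$ is defined by the $\Sigma$-formula expressing: \emph{there exist an ordinal $\xi$ order-isomorphic (via some $h$) to the $\psi_\theta(e_m)$-segment of $T(\theta)$ and an $L$-hierarchy $\langle L_\beta(x)\rangle_{\beta\leq\xi}$ relative to $x$ whose value at the top equals $y$}.

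The main technical obstacle is the ordinal bookkeeping in the second step, namely tracking that iterated $\omega$-exponentiation starting from $\OO\cdot\omega^m$ stays below $e_{m+1}$ and, in turn, that $\hat\alpha$ stays below $e_{m+2}$; this is the only arithmetic computation that must be done with care. Everything else is orchestration of previously established lemmas, and the uniformity of the constants across $x$ (so that a single $n$ works for all inputs) is immediate from the fact that neither $m$ nor the cut-elimination bounds depend on the specific term $\bar x$.
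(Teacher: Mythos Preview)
Your outline follows the paper's route---embed, predicatively eliminate cuts down to rank $\Omega+1$, collapse, bound, read off truth---and the ordinal bookkeeping you record is correct. There is, however, a genuine slip at the soundness step: after Collapsing (Theorem~\ref{ximpredce}) the derivation has cut rank $\psi\hat\alpha$, \emph{not} $0$, so your appeal to ``soundness of cut-free $\RSX$-derivations'' is not literally available. The paper handles this by a \emph{second} application of predicative cut elimination (Theorem~\ref{xpredce}) to drive the cut rank from $\psi\hat\alpha$ down to $0$, at the cost of replacing the length $\psi\gamma$ by $\varphi(\psi\gamma)(\psi\gamma)$ (where $\gamma=e_{k+2}$); this is why the paper ends with $n=k+3$ rather than your $n=m+2$. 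Your argument can be repaired either by inserting this missing step, or by observing that the soundness induction actually tolerates cuts of rank $<\Omega$: the cut formula is $\Delta_0$, and classically if both $\bigvee(\Gamma\cup\{A\})$ and $\bigvee(\Gamma\cup\{\neg A\})$ hold in $V$ then so does $\bigvee\Gamma$. Either fix is easy, but one of them must be made explicit.

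A minor notational point: Boundedness (Lemma~\ref{xboundedness}) yields $(\exists y\in\mathbb{L}_{\psi\hat\alpha}(X))\,A_f(\bar x,y)^{\mathbb{L}_{\psi\hat\alpha}(X)}$, not $(\exists y\in\mathbb{L}_{\psi\hat\alpha}(X))\,A_f(\bar x,y)$. Since $A_f$ is merely $\Sigma$, not $\Delta_0$, the superscript matters; you then need upward persistence of $\Sigma$-formulae to conclude that the witness found in $L_{\psi\hat\alpha}(X)$ really satisfies $A_f(x,y)$ in $V$. Your treatment of the ``moreover'' clause via Theorem~\ref{WO} and $\Sigma$-recursion matches the paper exactly.
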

\begin{proof}
Let $A_f(a,b)$ be the $\Sigma$ formula expressing $f$ such that $\KP\vdash\forall x\exists!yA_f(x,y)$ and fix an arbitrary set $X$. Let $\theta$ be the rank of $X$. Applying Theorem \ref{xembed} we can compute some $k<\omega$ such that
\begin{equation*}
\CH_0\;\prov{\OO\cdot\omega^k}{\OO+k}{\forall x\exists!yA_f(x,y)}.
\end{equation*}
Applying Lemma \ref{xinversion} iv) twice we get
\begin{equation*}
\CH_0\;\prov{\OO\cdot\omega^k}{\OO+k}{\exists yA_f(X,y)}.
\end{equation*}
Applying Theorem \ref{xpredce} (predicative cut elimination) we get
\begin{equation*}
\CH_0\;\prov{e_{k+1}}{\OO+1}{\exists yA_f(X,y)}.
\end{equation*}
Now by Theorem \ref{ximpredce} (collapsing) we have
\begin{equation*}
\CH_{e_{k+2}}\;\prov{\psi_{\theta}(e_{k+2})}{\psi_{\theta}(e_{k+2})}{\exists yA_f(X,y)}.
\end{equation*}
Applying Theorem \ref{xpredce} (predicative cut elimination) again yields
\begin{equation*}
\CH_\gamma\;\prov{\varphi(\psi_{\theta}\gamma)(\psi_{\theta}\gamma)}{0}{\exists yA_f(X,y)}\quad\text{where $\gamma:=e_{k+2}$.}
\end{equation*}
Now by Lemma \ref{xboundedness} (boundedness) we obtain
\begin{equation*}
\tag{1}\CH_\gamma\;\prov{\al}{0}{(\exists y\in\mathbb{L}_{\al}(X))A_f(X,y)^{\mathbb{L}_\al(X)}}\quad\text{where $\al:=\varphi(\psi_{\theta}\gamma)(\psi_{\theta}\gamma)$.}
\end{equation*}
Since (1) contains no instances of $\Cut$ or $\SRX$, it follows by induction on $\al$ that
\begin{equation*}
L_\al(X)\models\exists yA_f(X,y)
\end{equation*}
It remains to note that $L_\al(X)\subseteq G_{k+3}(X)$ to complete this direction of the proof.\\

\noindent For the other direction we argue informally in $\KP$. Let $X$ be an arbitrary set, we may specify the rank of $X$ in a $\Delta_0$ manner(\cite{ba} p. 29). By Theorem \ref{WO} we can find an ordinal of the same order type as $\psi_{\theta}(e_n)$ with $\theta$ being the rank of $TC(\{X\})$.
 We can now generate $L_{\psi_{\theta}(e_n)}(X)$ by $\Sigma$-recursion (\cite{ba} p. 26 Theorem 6.4).
\end{proof}
\noindent The comparison of Theorem \ref{PA-fun} with Theorem \ref{xconc} provides a pleasing relation between the arithmetic and set theoretic worlds.
\begin{remark}\label{rem8.3}{\em In fact the first part of \ref{xconc} can be carried out {\em inside $\KP$}, i.e. If $f$ is $\Sigma$ definable in $\KP$ then we can compute some $n$ such that $\KP\vdash\forall x(\exists!y\in G_n(x))A_f(x,y)$. This is not immediately obvious since it appears we need induction up to $\psi_{\theta}(\varepsilon_{\OO +1})$, which we do not have access to in $\KP$. The way to get around this is to note that we could, in fact, have managed with an infinitary system based on an ordinal representation built out of $B_\theta(e_m)$, provided $m$ is high enough, and we may compute how high $m$ needs to be from the finite derivation. We do have access to induction up to $\psi(e_m)$ for any ordinal $\theta$ in $\KP$ by Theorem \ref{WO}.
}\end{remark}

\section{Applications to semi-intuitionistic $\KP$}
$\PA$ is conservative over its intuitionistic cousin (called Heyting arithmetic, $\mathbf{HA}$) for $\Pi^0_2$-statements. One might wonder whether a corresponding result holds in set theory for $\Pi_2$-statements. As it turns out, such a result does not obtain for $\KP$ and its intuitionistic version $\IKP$,\footnote{See \cite{mar,book} for a definition of $\IKP$.} however,
adding the law of excluded middle for atomic formulas to $\IKP$ yields conservativity for $\Pi_2$ theorems.

A semi-intuitionistic version of $\IKP$ is obtained by assuming the law of excluded middle for atomic formulas, i.e., \begin{eqnarray}
\label{AEM} &&\forall x\forall y\,(x\in y\,\vee\, \neg x\in y).\end{eqnarray}
 Semi-intuitionistic versions of $\KP$ have become important in Feferman's work in connection with discussions of definiteness
of concepts and the continuum hypothesis (cf. \cite{F1,F2,F3,F4,R-CH}).

\begin{theorem} $\KP$ is $\Pi_2$ conservative over the semi-intuitionistic theory {\em $\IKP$} plus (\ref{AEM}).
\end{theorem}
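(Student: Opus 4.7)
The strategy is to verify that the chain of reasoning culminating in the classification Theorem \ref{xconc} can be reproduced inside $\IKP + (\ref{AEM})$. Let $\forall x\,\exists y\, A(x,y)$ be a $\Pi_2$-theorem of $\KP$ with $A$ a $\Delta_0$-formula. I would first apply the embedding of Theorem \ref{xembed}, predicative cut elimination (Theorem \ref{xpredce}), collapsing (Theorem \ref{ximpredce}), and boundedness (Lemma \ref{xboundedness}) to extract --- exactly as in the proof of Theorem \ref{xconc} --- a specific $n<\omega$ (computable from the $\KP$-derivation) together with a cut-free $\RSX$-derivation of
\[
(\exists y \in \mathbb{L}_{\psi_\theta(e_n)}(x))\,A(x,y)^{\mathbb{L}_{\psi_\theta(e_n)}(x)},
\]
where $\theta$ is the rank of $TC(\{x\})$.

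The next step is to re-run the whole construction inside $\IKP + (\ref{AEM})$. The representation system $T(\theta)$ of Section 2 is a syntactic, set primitive recursive object (Theorem \ref{ttheta}) and is formalised verbatim. The infinitary calculus $\RSX$ and its cut-elimination theorems rely on transfinite induction along $T(\theta)$, and the well-ordering proof of Section 5 (in particular Lemma \ref{WO1}) uses only set induction --- available in full in $\IKP$ --- together with $\Delta_0$-Separation, $\Delta_0$-Collection, and the $\theta$-primitive recursively decidable case distinctions on term codes. None of these ingredients needs more than $(\ref{AEM})$, so that Remark \ref{rem8.3} carries over to give $\IKP + (\ref{AEM}) \vdash \forall x\,(\exists y \in G_n(x))\,A(x,y)$ for the extracted $n$, from which the unbounded conclusion $\forall x\,\exists y\, A(x,y)$ follows immediately.

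The decisive step is reading the existence of the witness off the cut-free derivation inside $\IKP + (\ref{AEM})$. The key observation is that cut-free $\RSX$-derivations of $\Sigma$-sentences already encode essentially intuitionistic content: every inference rule other than (Cut) corresponds to an intuitionistic step, while the basic formulas $\bar u \in \bar v$ and $\bar u \notin \bar v$ are decidable by $(\ref{AEM})$. One proves by transfinite induction along $T(\theta)$ --- available by Theorem \ref{WO} --- a soundness statement of the form: whenever $\vdash^{\alpha} \Gamma$ cut-free with $\Gamma$ a set of $\Sigma$-sentences, then one of the members of $\Gamma$ holds in the universe. Crucially, this avoids any appeal to $\Delta_0$-LEM because a witness for any existential $\Sigma$-conclusion is already furnished by the $(b\exists)$- or $(\exists)$-inference which introduced it, so one tracks witnesses rather than performing case analyses on compound formulas.

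The principal obstacle will be checking that no classical reasoning is hidden in the case analyses within the cut-elimination proofs (on the form of the last rule, on the rank of a cut formula, on the shape of a given $\RSX$-term) and within the well-ordering proof. Each such case distinction must be seen to be decided either syntactically on term codes or by $(\ref{AEM})$ applied to basic $\in$-formulas, rather than by a classical instance of excluded middle on a compound $\Delta_0$-formula. Once this bookkeeping is discharged, combining the three steps above yields $\IKP + (\ref{AEM}) \vdash \forall x\,\exists y\, A(x,y)$, establishing the claimed $\Pi_2$-conservativity.
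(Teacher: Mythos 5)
Your overall architecture coincides with the paper's: argue inside $T:=\IKP+(\ref{AEM})$, reproduce the relativised ordinal analysis (embedding, predicative cut elimination, collapsing, boundedness) to obtain, for an arbitrary set $X$, a cut-free, $\SRX$-free derivation of $(\exists y\in\mathbb{L}_\alpha(X))A(X,y)^{\mathbb{L}_\alpha(X)}$ with $\alpha$ depending uniformly on the rank of $TC(\{X\})$ (Remark \ref{rem8.3} keeping everything below a fixed $e_m$), and then extract truth in $L_\alpha(X)$ by induction on the derivation. The gap lies in your ``decisive step''. The claim that the soundness induction avoids any appeal to $\Delta_0$-LEM by ``tracking witnesses'' fails at the universal branching rules. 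Consider a $(b\forall)$ (or $(\notin)$) inference with conclusion $\Gamma,(\forall x\in t)F(x)$ and infinitely many premises $\Gamma,\;s\dotin t\rightarrow F(s)$: the induction hypothesis gives, for each $s$, that the \emph{disjunction} of $\Gamma,\;s\dotin t\rightarrow F(s)$ holds, and to pass to the conclusion you must commute this into ``($\Gamma$ holds) or $(\forall x\in t)F(x)$ holds'', i.e.\ an instance of $\forall s\,(P\vee Q(s))\rightarrow P\vee\forall s\,Q(s)$. This is not intuitionistically valid, there is no witness to track, and the only way through is to \emph{decide} $P$ --- the disjunction of the (compound, $\Delta_0$) side formulas in $\Gamma$. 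So excluded middle for $\Delta_0$-formulas is genuinely used, exactly where you declare it avoidable.

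The good news is that this is also unnecessary to avoid: in $T$, the law of excluded middle for $\Delta_0$-formulas is \emph{derivable} from (\ref{AEM}) together with $\Delta_0$-Separation (decide $\{z\in\{\emptyset\}\,:\,C\}=\{\emptyset\}$ via (\ref{AEM})), and this is precisely what the paper invokes in its soundness argument; atomic excluded middle is separately needed already in reproducing Lemma \ref{xsetuplemma}i), Case 1, inside $T$. With $\Delta_0$-LEM in hand your remaining bookkeeping (syntactic case distinctions on term codes, decidability of $\prec$ on $T(\theta)$) is fine, though you should also note that the well-ordering proof of Section 5 as written (e.g.\ Lemma \ref{WO1}) argues by contradiction and minimal counterexample, and must be recast as a direct application of the set-induction schema to go through in $\IKP$. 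Once the soundness step is repaired by using, rather than avoiding, $\Delta_0$-LEM, your argument matches the paper's and establishes the conservativity.
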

\begin{proof} Let $T$ be the theory $\IKP$ augmented by (\ref{AEM}). Assume that $\KP\vdash \forall x\exists y A(x,y)$, where
$A(a,b)$ is  $\Delta_0$. We now argue in $T$. Let $X$ be an arbitrary set. As in the proof of Theorem \ref{xembed} we can determine an $\alpha$ (uniformly depending on the rank of $TC(\{X\})$ such that
\begin{eqnarray}\label{9.1.1}
&&\CH_\gamma\;\prov{\al}{0}{(\exists y\in\mathbb{L}_{\al})A_f(X,y)^{\mathbb{L}_\al}.}\quad\text{}
\end{eqnarray} To see that we can do this inside $T$ note that the $m$ in Remark \ref{rem8.3} does not depend on $\theta$.
Since (\ref{9.1.1}) contains no instances of $\Cut$ or $\SRX$, it follows by induction on $\al$ that
\begin{equation*}
L_\al(X)\models\exists yA(X,y).
\end{equation*}
Excluded middle for atomic formulas is required at several points. For instance it is needed in Lemma \ref{xsetuplemma}i), Case 1.
Also when showing that all sequents $\Lambda$ occurring in the derivation (\ref{9.1.1}) are true in $L_{\al}(X)$\footnote{This means that the disjunction over all formulas in $\Lambda$ is true in $L_{\al}(X)$.} one needs to invoke
the law of excluded middle for $\Delta_0$-formulas. The latter follows from (\ref{AEM}) with the help of $\Delta_0$-Separation.
\end{proof}

\section{A relativised ordinal analysis of $\KPP$}
With the help of \cite{powerKP} and the foregoing machinery one can also characterize the provable power recursive set
functions of  Power Kripke-Platek set theory, $\KPP$.
For background on $\KPP$ see \cite{powerKP}. To introduce its axioms we need the notion of subset bounded formula.
\begin{deff}\label{deltap} {\em
We use subset bounded quantifiers $\exists x\subseteq y \;\ldots$ and $\forall x\subseteq y\;\ldots$ as abbreviations
for $\exists x(x\subseteq y\und \ldots)$ and $\forall x(x\subseteq y\to \ldots)$, respectively.

The $\Delta_0^{\mathcal P}$-formulae are the smallest class of formulae containing the atomic formulae closed under $\wedge,\vee,\to,\neg$ and the quantifiers
  $$\forall x\in a,\;\exists x\in a,\; \forall x\subseteq a,\; \exists x\subseteq a.$$
  A formula is in $\Sigma^{\mathcal P}$ if belongs to the smallest collection of formulae which contains the $\Delta_0^{\mathcal P}$-formulae and is closed under $\wedge,\vee$ and the quantifiers
  $\forall x\in a,\;\exists x\in a,\; \forall x\subseteq a$ and  $\exists x$.  A formula is $\Pi^{\mathcal P}$ if belongs to the smallest collection of formulae which contains the $\Delta_0^{\mathcal P}$-formulae and is closed under $\wedge,\vee$, the quantifiers
  $\forall x\in a,\;\exists x\in a,\; \forall x\subseteq a$ and  $\forall x$.
}\end{deff}
\begin{deff}\label{KPP}{\em  $\KPP$ has the same language as $\ZF$. Its axioms are the following: Extensionality, Pairing, Union, Infinity, Powerset,
       $\Delta_0^{\mathcal P}$-Separation,    $\Delta_0^{\mathcal P}$-Collection and
       Set Induction (or Class Foundation).
       \\[1ex]
  The  transitive models of $\KPP$ have been termed {\bf power admissible} sets in \cite{friedaCount}.
}\end{deff}

\begin{rem}{\em
 Alternatively, $\KPP$ can be obtained from $\KP$ by adding  a function symbol $\PC$ for the  powerset function as a primitive symbol to the language and the axiom
$$\forall y\,[y\in \PC(x)\leftrightarrow y\subseteq x]$$
and extending the schemes of $\Delta_0$ Separation and Collection to the $\Delta_0$-formulae of this new language.}
\end{rem}

\begin{lem}\label{weaker}
$\KPP$ is {\bf not} the same theory as $\KP+\POW$, where $\POW$ denotes the Powerset Axiom. Indeed, $\KP+\POW$ is a much weaker theory than $\KPP$
 in which one cannot prove the existence of $V_{\omega+\omega}$.
 \end{lem}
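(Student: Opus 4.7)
The plan is to prove both halves of the lemma: first, that $\KPP \vdash \exists y\,(y = V_{\omega+\omega})$, and second, that $\KP+\POW \not\vdash \exists y\,(y = V_{\omega+\omega})$, which together establish both the non-equivalence and the ``much weaker'' clause.

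For the positive half I would work inside $\KPP$ and exploit that ``$y = \PC(x)$'' is $\Delta_0^{\mathcal P}$, since it unpacks as $(\forall z\subseteq x)(z\in y)\wedge(\forall z\in y)(z\subseteq x)$. Consequently the predicate $\mathrm{Hier}(f,n,y)$ expressing ``$f$ is a function on $n+1$, $f(0)=V_\omega$, $f(i+1)=\PC(f(i))$ for $i<n$, and $f(n)=y$'' is $\Delta_0^{\mathcal P}$, and with a suitable $\Delta_0^{\mathcal P}$-bound on $f$ (e.g.\ by $\PC((n{+}1)\times\PC(y))$) the predicate ``$y = V_{\omega+n}$'' itself becomes $\Delta_0^{\mathcal P}$. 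Applying $\Delta_0^{\mathcal P}$-Collection over $\omega$ yields a set $b$ containing each $V_{\omega+n}$; $\Delta_0^{\mathcal P}$-Separation then extracts the set $\{V_{\omega+n}:n<\omega\}$ from $b$, and its union is $V_{\omega+\omega}$.

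For the negative half, the obstruction is that in the $\in$-only language of $\KP+\POW$ the predicate ``$y = \PC(x)$'' is only $\Pi_1$, so ``$y = V_{\omega+n}$'' becomes a genuine $\Sigma_2$ predicate (an existential over a $\Pi_1$-statement about the hierarchy function). Since $\KP+\POW$ proves only up to $\Sigma_1$-Collection and lacks any form of $\Sigma_1$-Separation, the bounding argument of the positive half cannot be carried out: even after using $\Sigma_1$-Collection one cannot pare a witnessing set down to $\{V_{\omega+n}:n<\omega\}$ nor identify $V_{\omega+\omega}$ inside its union. To convert this into a formal independence result I would exhibit a transitive model $M$ of $\KP+\POW$ with $V_\omega\in M$ but $V_{\omega+\omega}\notin M$. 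A natural candidate is a hierarchy $M=\bigcup_{\xi<\eta} M_\xi$ started at $V_\omega$, closed at each stage under pairing, union, powerset (as taken in $V$) and $\Delta_0$-definability in the $\in$-only language, with $\eta$ the first ordinal at which $\Delta_0$-Collection is internally satisfied; by construction each $V_{\omega+n}$ enters $M$ at a finite stage but the sequence $\langle V_{\omega+n}\rangle_{n<\omega}$ is never collected into a member of $M$.

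The main obstacle will be rigorously verifying that such an $M$ both exists and satisfies every axiom of $\KP+\POW$ without ever bringing $V_{\omega+\omega}$ into the hierarchy, the crucial point being that the failure of $\Sigma_1$-Replacement in the $\in$-only language is precisely what blocks the collection step. A cleaner alternative is to invoke the standard theory of power-admissible sets (cf.\ \cite{friedaCount}), where independence results of exactly this flavour are developed and which simultaneously underwrites the ``much weaker'' clause of the lemma.
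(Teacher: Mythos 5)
The paper itself does not prove this lemma; it simply cites \cite[Lemma 2.4]{powerKP}, so your attempt at a self-contained argument goes beyond what the text offers. Your positive half is essentially right: the whole point of $\KPP$ is that ``$y=\PC(x)$'' is $\Delta_0^{\mathcal P}$ (via the subset-bounded quantifier $\forall z\subseteq x$), so the graph of $n\mapsto V_{\omega+n}$ is $\Sigma^{\mathcal P}$, and $\Delta_0^{\mathcal P}$-Collection (which yields $\Sigma^{\mathcal P}$-Collection by the usual argument) together with $\Delta_0^{\mathcal P}$-Separation produces $\{V_{\omega+n}:n<\omega\}$ and hence its union. The only point worth flagging is that the premise $\forall n\in\omega\,\exists y\,(y=V_{\omega+n})$ must itself be proved by induction on $n$ for a $\Sigma^{\mathcal P}$-formula; this is available because $\KPP$ has the full Set Induction scheme.

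The negative half is where the genuine gap sits. Your bespoke hierarchy $M=\bigcup_{\xi<\eta}M_\xi$ with ``$\eta$ the first ordinal at which $\Delta_0$-Collection is internally satisfied'' is not known to exist, and nothing in your sketch rules out that the closure process needed to validate $\Delta_0$-Collection eventually forces a set of rank $\geq\omega+\omega$ into $M$; the claim that the sequence $\langle V_{\omega+n}\rangle_n$ ``is never collected'' is exactly what has to be proved and is not established. The standard fix is to take $M=H_{\beth_\omega}$, the sets whose transitive closure has cardinality below $\beth_\omega$. Powerset holds because $\beth_\omega$ is a strong limit and $M$ contains \emph{all} subsets of each of its elements; $\Delta_0$-Separation is immediate; Set Induction holds in any transitive class; and $\Delta_0$-Collection follows by a L\"owenheim--Skolem argument: given $a,p\in M$ with $\forall x\in a\,\exists y\,\varphi(x,y,p)$ true in $M$, take $N\prec H_\lambda$ (for large $\lambda$) with $\mathrm{tc}(\{a,p\})\subseteq N$ and $|N|<\beth_\omega$; the transitive collapse $\bar N$ lies in $M$, fixes $a$ and $p$, and serves as a collection bound by $\Delta_0$-absoluteness. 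Each $V_{\omega+n}$ has cardinality $\beth_n$ and so belongs to $M$, while $|V_{\omega+\omega}|=\beth_\omega$, so $V_{\omega+\omega}\notin M$; and since $M$ contains all subsets of its elements, any $y\in M$ satisfying the natural definition of $V_{\omega+\omega}$ inside $M$ would really be $V_{\omega+\omega}$. This cleanly delivers $\KP+\POW\not\vdash\exists y\,(y=V_{\omega+\omega})$ and vindicates the diagnosis you correctly made, namely that ``$y=\PC(x)$'' is only $\Pi_1$ in the $\in$-language, so $\Sigma$-Collection cannot drive the iteration.
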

 \begin{proof} \cite[Lemma 2.4]{powerKP}.\end{proof}

 \subsection{The infinitary proof system $\RSOPX$ }
 The infinitary proof system $\RSOP$ of \cite{powerKP} is based on a formal analogue of the von Neumann hierarchy along the Bachmann-Howard ordinal. For our purposes both have to be relativised to a given set $X$.
 \begin{definition}{\em
Let $X$ be any set. We may relativise the von Neumann hierarchy to $X$ as follows:
\begin{align*}
V_0(X)&:=TC(\{X\})\quad\text{the \textit{transitive closure} of $\{X\}$}\\
V_{\alpha+1}(X)&:=\{B\::B\subseteq V_\alpha (X)\}\\
V_\theta (X) &:=\displaystyle\bigcup_{\xi<\theta} V_\xi (X)\quad\text{when $\theta$ is a limit.}
\end{align*}
}\end{definition}
Let $X$ be an arbitrary (well founded) set and let $\theta$ be the set-theoretic rank of $X$ (hereby referred to as the $\in$-rank). Henceforth all ordinals are assumed to belong to the ordinal notation system  $T(\theta)$ developed in section 3. The system $\RSOPX$ will be the relativised version of the infinitary proof system $\RSOP$ from \cite{powerKP}.
\begin{definition}\label{level2}{\em
We give an inductive definition of the set $\mathcal{T}^{\mathcal P}$ of $\RSOPX$ terms. To each term $t\in\mathcal{T}^{\mathcal P}$ we assign an ordinal level $\lev t$.
\begin{description}
\item[(i)] For every $u\in TC(\{X\})$, $\bar{u}\in\mathcal{T}^{\mathcal P}$ and $\lev{\bar{u}}:=\Gamma_{\text{rank}(u)}$.
\item[(ii)]For every $\alpha<\Omega$, $\mathbb{V}_\alpha (X)\in\mathcal{T}^{\mathcal P}$ and $\lev{\mathbb{V}_\al(X)}:=\Gamma_{\theta+1} + \alpha$.
\item[(iii)]  For each { $\alpha<\Omega$}, we have infinitely many free variables $a_1^{\alpha},a_2^{\alpha},a_3^{\alpha},\ldots$ which  are
terms of level $\Gamma_{\theta+1}+\alpha$.
\item[(iv)]If  $\alpha<\Omega$, $A(a,b_1,\ldots,b_n)$ is a $\Delta_0^{\mathcal P}$ formula of $\KPP$ with all free variables displayed and $s_1,\ldots,s_n$ are terms in $\mathcal{T}^{\mathcal P}$ then
\begin{equation*}
[x\in\mathbb{V}_\alpha (X) | A(x,s_1,\ldots,s_n)]
\end{equation*}
is a term of level $\Gamma_{\theta+1} + \alpha$.
\end{description}
The {\it $\RSOPX$--formulae} are the expressions of the form
$F(s_1,\ldots,s_n)$, where $F(a_1,\ldots,a_n)$ is a
formula of $\KPP$ with all free variables exhibited and $s_1,\ldots,s_n$ are $\RSOPX$-terms.
We set \begin{eqnarray*}
\lev{F(s_1,\ldots,s_n)}&=&\{\lev{s_1},\ldots,\lev{s_n}\}.
\end{eqnarray*}
For a sequent $\Gamma=\{A_1,\ldots,A_n\}$ we define
\begin{eqnarray*}\lev{\Gamma} & :=& \lev{A_1}\cup\ldots\cup\lev{A_n}\,.
\end{eqnarray*}
A formula is a {\bf $\Deltaop$-formula} of $\RSOPX$ if it is of the form $F(s_1,\ldots,s_n)$ with
$F(a_1,\ldots,a_n)$ being a $\Deltaop$-formula of $\KPP$ and $s_1,\ldots,s_n$ $\RSOPX$-terms.

As in the case of the Tait-style version of $\KPP$ in \cite[Sec. 3]{powerKP}, we let $\neg A$ be the formula which arises from $A$
by (i)  putting $\neg$ in front of each atomic formula, (ii) replacing
$\wedge, \vee,(\forall x \!\in\! s), (\exists x \!\in\! s),(\forall x\subseteq s),(\exists x\subseteq s), \forall x,\exists x$ by
$\vee, \wedge, (\exists x \!\in\! s), (\forall x \!\in\! s),(\exists x\subseteq s),(\forall x\subseteq s),\exists x,\forall x$, respectively, and
(iii) dropping double negations. $A\to B$ stands for $\neg A\,\vee\,B$.

}\end{definition}

\begin{remark}{\em There is a crucial difference between Definition \ref{level1} and Definition \ref{level2} when it comes to measuring the level of a comprehension term. The level of $[x\in\mathbb{V}_\alpha (X) | A(x,s_1,\ldots,s_n)]$ does not take the terms $s_1,\ldots,s_n$ into account. They may be of arbitrary (especially higher) level.
}\end{remark}

Since we also want to keep track of the complexity of cuts appearing in
derivations, we endow each formula with  an ordinal rank.
\begin{deff}{\em The {\it rank } of a term or formula is determined as
follows.}
\begin{enumerate}
\item $rk(\bar{u}):=\Gamma_{rank(u)}$ for $u$ in the transitive closure of $X$.
\item $rk({\mathbb V}_{\alpha}(X)):=\Gamma_{\theta+1}+\omega\cdot\alpha$.
\item $rk([x\In{\mathbb V}_{\alpha}(X)\mid F(x)]):=\max\{\Gamma_{\theta+1}+\om\cdot\al+1,rk(F(\bar{0}))+2\}$.
\item $rk(s\In t):=rk(s\nIn t):=\max\{\lev s +6,\lev t +1\}$.
\item $rk((\exists x\In t)F(x)):=rk((\forall x\In t)F(x)):=
max\{ rk( t)+3,rk(F(\bar{0}))+2\}$. 
\item $rk((\exists x\subb t)F(x)):=rk((\forall x\subb t)F(x)):=
\max\{rk( t) +3,rk(F(\bar{0}))+2\}.$
\item $rk(\exists x\,F(x)):=rk(\forall x\,F(x)):=
\max\{\Omega,rk(F(\bar{0}))+2\}.$
\\[1ex]
\item $rk(A\wedge B):=rk(A\vee B):=max\{rk(A),rk(B)\}+1.$\end{enumerate}
\end{deff}

\begin{deff}{\em
The {\em axioms} of $\RSOPX$ are:
\begin{itemize}
\item[(X1)] $\Gamma,\, \bar{u}\in\bar{v}$ $\;$ if $u,v\in TC(X)$ and $u\in v$.
\item[(X2)] $\Gamma,\, \bar{u}\notin\bar{v}$ $\;$ if $u,v\in TC(X)$ and $u\notin v$.
 \item[(A1)] $\Gamma,\,A,\,\neg A$ for $A$ in $\Deltaop$.
\item[(A2)] $\Gamma,\,t=t$.
\item[(A3)] $\Gamma,\, s_1\ne t_1,\ldots,s_n\ne t_n,\neg A(s_1,\ldots,s_n),\,A(t_1,\ldots,t_n)$\\[1ex]
 for $A(s_1,\ldots,s_n)$ in $\Deltaop$.
\item[(A4)]  $\Gamma,\, s\in \Va(X)$ if $\lev{s}<\lev{{\mathbb V}_{\alpha}(X)}$.
\item[(A5)]  $\Gamma,\, s\subseteq \Va(X)$ if $\lev{s}\leq\lev{{\mathbb V}_{\alpha}(X)}$.
\item[(A6)] $\Gamma, t\notin [x\in \Va(X)\mid F(x,\vec s\,)],\, F(t, \vec s\,)$ \\[1ex] whenever
$F(t,\vec s\,)$ is $\Deltaop$ and
$\lev{t}<\lev{{\mathbb V}_{\alpha}(X)}$.
\item[(A7)] $\Gamma, \neg F(t, \vec s\,),\, t\in [x\in \Va(X)\mid F(x,\vec s\,) ] $\\[1ex]  whenever  $F(t,\vec s\,)$ is $\Deltaop$ and $\lev{t}<\lev{{\mathbb V}_{\alpha}(X)}$.
 \end{itemize}
 }\end{deff}

We adopt the notion of operator from Definition \ref{operator}. If $s$ is an $\RSOPX$-term, the operator { ${\mathcal H}[s]$}
is defined by
\begin{eqnarray*} {\mathcal H}[s](X) &=& {\mathcal H}(X\,\cup\,\{\lev{s}\}).\end{eqnarray*}
Likewise, if $\mathfrak X$ is a formula or a sequent we define
\begin{eqnarray*} {\mathcal H}[{\mathfrak X}](X) &=& {\mathcal H}(X\,\cup\,\lev{\mathfrak X}\,).\end{eqnarray*}

\begin{deff}{\em Let $\CH$ be an operator and let $\Lambda$ be a finite
set of $\RSOPX$--formulae.
$\provx{\CH}{\al}{\rho}{\Lambda}$ is defined by recursion on $\al$.

If $\Lambda$ is an {\bf axiom} and $\lev\Lambda\,\cup\,\{\alpha\}\subseteq {\mathcal H}(\emptyset)$, then
 $\provx{\CH}{\al}{\rho}{\Lambda}$.

 Moreover, we have inductive clauses pertaining to the inference rules of $\RSOPX$,
 which all come with the additional requirement that
$$\lev \Lambda\,\cup\,\{\alpha\}\subseteq {\mathcal H}(\emptyset)$$ where $\Lambda$ is the sequent of the conclusion.
We shall not repeat this requirement below.

Below the third column gives the requirements that the ordinals have to satisfy for each of the inferences.
For instance in the case of $(\forall)_{\infty}$, to be able to conclude that $\provx{\mathcal H} {\alpha}
{\rho} {\GA, \forall x F(x)}$, it is required that for all terms $s$ there exists $\alpha_s$
such that $\provx{{\mathcal H}[s]} {\alpha_{s}}
{\rho} {\GA,  F(s)}$ and $\lev s<\alpha_s+1<\alpha$. The side conditions for the rules $(b\forall)_{\infty},(pb\forall)_{\infty},
(\not\in)_{\infty} ,(\not\subb)_{\infty}$ below have to be read in the same vein.

Below we shall write $\lev s \kld \lev t$ and $\lev s \kldg \lev t$ for $\lev s < \max(\Gamma_{\theta+1},\lev t)$ and
$\lev s \leq \max(\Gamma_{\theta+1},\lev t)$, respectively.

The clauses are the following:

$$ \begin{array}{lcr}
(\wedge) & \ifthree{\provx{\mathcal H} {\alpha_0}{\rho}  {\GA,
A_0}}{\provx{\mathcal H} {\alpha_0} {\rho}{\GA, A_1}}{\provx
{\mathcal H} {\alpha}{\rho} {\GA,A_0\wedge A_1}}
&\begin{array}{r}\alpha_{0} < \alpha
\end{array} \\[0.6cm]
(\vee) & \infone{\mathcal H} {\alpha_0} {\rho}
{\Lambda,A_{i}} {\mathcal H} {\alpha} {\rho} {\GA,A_0\vee A_1}
&\begin{array}{r}\alpha_{0} < \alpha\\ i\in\{0,1\}
\end{array} \\[0.6cm]
\Cut & \ifthree{\provx{\mathcal H} {\alpha_0}{\rho}  {\Lambda,
B}}{\provx{\mathcal H} {\alpha_0} {\rho}{\Lambda, \neg B}}{\provx
{\mathcal H} {\alpha}{\rho} {\Lambda}}
&\begin{array}{r}\alpha_{0} < \alpha\\
rk(B)<\rho\end{array} \\[0.6cm]
(b\forall)_{\infty} & \infone{ {\mathcal H}[s]} {\alpha_{s}}
{\rho} {\GA, s\In t\to F(s)\mbox{ for all }\lev s<\lev t}
 {\mathcal H} {\alpha} {\rho} {\Gamma,(\forall x\In t)F(x)}
&\lev{s}\leq\alpha_{s} < \alpha
\\[0.6cm]
(b\exists) & \infone{\mathcal H} {\alpha_0} {\rho}
{\GA,s\In t\wedge F(s)} {\mathcal H} {\alpha} {\rho} {\GA,(\exists x\In t)F(x)}
&\begin{array}{r}\alpha_{0} < \alpha\\ \lev s <\lev t \\ \lev{s}<\al
\end{array} \\[0.6cm]
(pb\forall)_{\infty} & \infone{ {\mathcal H}[s]} {\alpha_{s}}
{\rho} {\GA, s\subb t\to F(s)\mbox{ for all }\lev s\kldg\lev t}
 {\mathcal H} {\alpha} {\rho} {\Gamma,(\forall x\subb t)F(x)}
&\lev{s}\leq\alpha_{s} < \alpha \\[0.6cm]
(pb\exists) & \infone{\mathcal H} {\alpha_0} {\rho}
{\GA,s\subb t\wedge F(s)} {\mathcal H} {\alpha} {\rho} {\GA,(\exists x\subb t)F(x)}
&\begin{array}{r}\alpha_{0} < \alpha\\ \lev s\kldg \lev t \\ \lev{s}<\al
\end{array}
\\[0.6cm]
(\forall)_{\infty} & \infone{ {\mathcal H}[s]} {\alpha_{s}}
{\rho} {\GA,  F(s)\mbox{ for all } s}
 {\mathcal H} {\alpha} {\rho} {\Gamma,\forall x F(x)}
&\lev{s}<\alpha_{s}+1 < \alpha \\[0.6cm]
(\exists) & \infone{\mathcal H} {\alpha_0} {\rho}
{\GA, F(s)} {\mathcal H} {\alpha} {\rho} {\GA,\exists x F(x)}
&\begin{array}{r}\alpha_{0}+1 < \alpha\\  \lev{s}<\al
\end{array} 
\end{array}$$
$$ \begin{array}{lcr}
(\not\in)_{\infty} & \infone{ {\mathcal H}[r]} {\alpha_{r}}
{\rho} {\GA, r\In t\to r\ne s\mbox{ for all }\lev r<\lev t}
 {\mathcal H} {\alpha} {\rho} {\Gamma,s\not\in t}
&\lev{r}\leq\alpha_{r} < \alpha \\[0.6cm]
(\in) & \infone{\mathcal H} {\alpha_0} {\rho}
{\GA,r\In t\wedge r=s} {\mathcal H} {\alpha} {\rho} {\GA,s\In t}
&\begin{array}{r}\alpha_{0} < \alpha\\ \lev r <\lev t \\ \lev{r}<\al
\end{array} \\[0.6cm]
(\not\subb)_{\infty} & \infone{ {\mathcal H}[r]} {\alpha_{r}}
{\rho} {\GA, r\subb t\to r\ne s\mbox{ for all }\lev r\kldg\lev t}
 {\mathcal H} {\alpha} {\rho} {\Gamma,s\not\subb t}
&\lev{r}\leq\alpha_{r} < \alpha \\[0.6cm]
(\subb) & \infone{\mathcal H} {\alpha_0} {\rho}
{\GA,r\subb t\wedge r=s} {\mathcal H} {\alpha} {\rho} {\GA,s\subb t}
&\begin{array}{r}\alpha_{0} < \alpha\\ \lev r\kldg \lev t \\ \lev{r}<\al
\end{array} \\[0.6cm]

\SRP &
\infone{\CH}{\al_0}{\rho}{\Gamma,A}{\CH}{\al}{\rho}{\Gamma,
\exists z\,A^z}
&\begin{array}{r} \al_0+1,\Omega<\al\\
 A\In\Sigmap\end{array}
\end{array}$$
}\end{deff}

\begin{rem}{\em Suppose $\provx{\CH}{\alpha}{\rho}{\Gamma(s_1,\ldots,s_n)}$ where $\Gamma(a_1,\ldots,a_n)$ is a
sequent of $\KPP$ such that all variables $a_1,\ldots,a_n$ do occur in $\Gamma(a_1,\ldots,a_n)$ and $s_1,\ldots,s_n$ are $\RSOPX$-terms.
Then we have that $\lev{s_1},\ldots,\lev{s_n}\in \CH(\emptyset)$. Standing in sharp contrast to the ordinal analysis of $\KP$, however,
 the terms $s_i$ may and often will contain subterms that the operator $\CH$ does {\bf not} control, that is, subterms $t$ with $\lev t\not\in\CH(\emptyset)$.
}\end{rem}

The embedding of $\KPP$ into $\RSOPX$ and the ordinal analysis of $\RSOPX$ can be carried in much the same way
as for $\RSOP$ in \cite{powerKP} with only minor amendments necessary to deal with terms and axioms pertaining to the given set $X$.
Below we list the main steps.

\begin{theorem}\label{xembedp}{\em
If $\KPP\vdash\Gamma(a_1,\ldots,a_n)$ where $\Gamma(a_1,\ldots,a_n)$ is a finite set of formulae whose free variables are amongst $a_1,\ldots,a_n$, then there is some $m<\omega$ (which we may compute from the derivation) such that
\begin{equation*}
\provx{\mathcal{H}[s_1,\ldots,s_n]}{\Omega\cdot\omega^m}{\Omega+m}{\Gamma(s_1,\ldots,s_n)}
\end{equation*}
for any operator $\CH$ and any $\RSOPX$ terms $s_1,\ldots,s_n$.
}\end{theorem}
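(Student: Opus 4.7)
The plan is to adapt the proof of Theorem~\ref{xembed} (the embedding of $\KP$ into $\RSX$) to the present setting by induction on the given $\KPP$-derivation of $\Gamma(a_1,\ldots,a_n)$. As in the pure $\KP$ case, the induction proceeds by threading through a uniform computation of the bound $m$: each $\KPP$-inference adds a bounded amount to $m$, so starting from an $m$ that handles the leaves one obtains an $m$ that handles the root. The heart of the proof consists in verifying that every axiom of $\KPP$ admits a derivation $\provx{\mathcal{H}[\Gamma]}{\Omega\cdot\omega^{m}}{\Omega+m}{\Gamma(s_1,\ldots,s_n)}$ uniformly in the chosen operator $\mathcal{H}$ and the $\RSOPX$-terms $s_1,\ldots,s_n$.

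For the preparatory step I would establish, in the style of Lemmata \ref{xsetuplemma}--\ref{xcollection}, the following analogues for $\RSOPX$: the tautology lemma $\Vdash A,\neg A$ (using the new subset-bounded inferences where appropriate), the extensionality lemma for arbitrary $\RSOPX$-formulae, the set-induction lemma (its proof is literally the same as Lemma~\ref{xfoundation}), and infinitary derivations of Pair, Union, Infinity and $\Delta_{0}^{\mathcal{P}}$-Separation. For $\Delta_{0}^{\mathcal{P}}$-Separation the natural witness is the term $[x\in\mathbb{V}_{\beta}(X)\mid x\in s\wedge B(x)]$, where $\beta$ is chosen so that $\Gamma_{\theta+1}+\beta$ dominates the levels of the parameters; crucially, because Definition~\ref{level2} no longer requires the parameters of a comprehension term to be of lower level than the term itself, this construction works for arbitrary $\Delta_{0}^{\mathcal{P}}$-formulae. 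The $\Delta_{0}^{\mathcal{P}}$-Collection axiom is handled exactly as in Lemma~\ref{xcollection}, one application of $\SRP$ suffices since $\Delta_{0}^{\mathcal{P}}\subseteq\Sigma^{\mathcal{P}}$.

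The genuinely new step is the Powerset axiom: one must show that for every term $s$ and every limit $\mu$ with $\lev s<\Gamma_{\theta+1}+\mu$ one has $\Vdash(\exists z\in\mathbb{V}_{\mu}(X))(\forall y\subseteq s)(y\in z)$. The natural witness is $\mathbb{V}_{\beta}(X)$ for $\beta$ chosen so that $\lev s\leq\Gamma_{\theta+1}+\beta<\Gamma_{\theta+1}+\mu$; one then uses axiom (A5) to derive $r\subseteq\mathbb{V}_{\beta}(X)$ for any $\lev r\kldg\lev s$, followed by $(pb\forall)_{\infty}$ to form $(\forall y\subseteq s)(y\in\mathbb{V}_{\beta}(X))$ and finally $(b\exists)$. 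This is exactly where the subset-bounded rules of $\RSOPX$ pay off, and it is the main technical point where the proof differs qualitatively from Theorem~\ref{xembed}.

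For the inductive step I would replay the case analysis of Theorem~\ref{xembed} on the last $\KPP$-rule. The cases $(\wedge),(\vee),(\forall),(\exists),(b\forall),(b\exists),(\mathrm{Cut})$ transcribe verbatim; the only genuinely new cases are $(pb\forall)$ and $(pb\exists)$, which are handled by dualising the treatment of $(b\forall)$ and $(b\exists)$ in the $\KP$-proof: for $(pb\exists)$, when the eigenterm substituted for the bound variable is one of $s_1,\ldots,s_n$, one needs the auxiliary derivation of $\neg(s_{1}\subseteq s_{i}\wedge F(s_{1},\vec s\,)),(\exists x\subseteq s_{i})F(x,\vec s\,)$, which one obtains by mimicking the sub-cases 3.2.1--3.2.3 of the proof of Theorem~\ref{xembed} but using the subset-bounded inferences $(\not\subb)_{\infty}$ and $(\subb)$ and the $\Delta_{0}^{\mathcal{P}}$-extensionality lemma. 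The main obstacle in organising the proof is bookkeeping: one must ensure that the constant additive increments to the ordinal bound and rank incurred by these auxiliary derivations are absorbed into a single computable $m$, which requires the same additive-principal estimates that appear in Theorem~\ref{xembed}. Apart from this uniformisation of $m$ and the new Powerset/subset-bounded clauses, no conceptual novelty beyond the $\KP$-case is needed.
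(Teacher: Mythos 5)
Your proposal is correct and follows essentially the same route as the paper, which simply defers to the proof of \cite[Theorem 6.9]{powerKP} (the embedding of $\KPP$ into the unrelativised system $\RSOP$), noting that only minor amendments are needed for the terms and axioms pertaining to the given set $X$. One small slip worth fixing: in your Powerset case the premises of $(pb\forall)_{\infty}$ require $r\subseteq s\to r\in\mathbb{V}_{\beta'}(X)$ for the chosen witness $\mathbb{V}_{\beta'}(X)$, so the axiom you need is (A4) (giving $r\in\mathbb{V}_{\beta'}(X)$ whenever $\lev r<\Gamma_{\theta+1}+\beta'$), not (A5).
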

\begin{proof} This can be proved in the same way as \cite[Theorem 6.9]{powerKP}.
\end{proof}

\begin{thm}[Cut elimination I]\label{xpredcep}
\begin{eqnarray*} \provx{{\mathcal H}}{\alpha}{\Omega+n+1}{\Gamma} &\Rightarrow &
\provx{{\mathcal H}}{\omega_n(\alpha)}{\Omega+1}{\Gamma}
\end{eqnarray*} where $\omega_0(\beta):=\beta$ and
$\omega_{k+1}(\beta):=\omega^{\omega_k (\beta)}$.
\end{thm}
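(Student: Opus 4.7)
My plan is to follow the familiar two-tier pattern for predicative cut elimination above $\Omega$, mirroring the strategy of Theorem \ref{xpredce} but using $\omega$-exponentiation rather than $\varphi$-functions, and adapting the $\RSX$-proof of \cite{powerKP} to the relativised setting. I will proceed by induction on $n$, with the base case $n=0$ being immediate. The inductive step will be reduced to a single ``one-step'' cut elimination lemma of the form
\[
\provx{\mathcal H}{\alpha}{\rho+1}{\Gamma}\;\Rightarrow\;\provx{\mathcal H}{\omega^{\alpha}}{\rho}{\Gamma}\qquad(\rho\geq \Omega+1),
\]
applied $n$ times in succession; composing these yields the $\omega_n(\alpha)$ blow-up as required.

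To establish the one-step lemma I first prove a Reduction Lemma for $\RSOPX$: if $C$ is a cut formula with $rk(C)=\rho\geq \Omega+1$ and $C\simeq \bigvee(C_i)_{i\in y}$ (equivalently $\neg C\simeq \bigwedge(\neg C_i)_{i\in y}$), and
\[
\provx{\mathcal H}{\alpha}{\rho}{\Lambda,\neg C}\quad\text{and}\quad \provx{\mathcal H}{\beta}{\rho}{\Gamma, C},
\]
then $\provx{\mathcal H}{\alpha\#\beta}{\rho}{\Lambda,\Gamma}$. The proof is by induction on $\beta$, driven by an inversion lemma (analogous to Lemma \ref{xinversion}) and the observation that $rk(C_i)<rk(C)$ for each $i\in y$. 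Since the $\SRP$-rule only produces formulas of rank exactly $\Omega$, it never delivers a cut formula of rank $\geq\Omega+1$ as principal, so $\SRP$ does not interfere with reduction at these ranks. Given the Reduction Lemma, the one-step elimination is proved by induction on $\alpha$: the only nontrivial case is a terminal cut of rank $\rho$, where both premises are first brought down by the induction hypothesis and then combined using the Reduction Lemma, producing the desired $\omega^{\alpha}$ bound via the closure of operators under $+$ and $\omega$-exponentiation (Definition \ref{operator}(H2)).

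The main obstacle is verifying that the Reduction Lemma really applies to every cut formula of rank $\geq\Omega+1$ in $\RSOPX$. One has to check that the ``$\bigvee/\bigwedge$'' decomposition covers all the relevant cases: set-bounded quantifiers $(\forall x\in t), (\exists x\in t)$, the new power-bounded quantifiers $(\forall x\subseteq t), (\exists x\subseteq t)$, unbounded quantifiers $\forall x, \exists x$, and membership formulas $s\in t$ whose principal inferences are $(\in)/(\notin)_\infty$ or $(\subseteq)/(\notsubseteq)_\infty$. The power-bounded case is the delicate one, because the indexing set $\{r:\lev r\kldg\lev t\}$ can have terms of the same level as $t$, and one must carefully track the $\lev r\le \alpha_r$ side conditions when inverting through $(pb\forall)_\infty$. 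For cut formulas $C$ that are basic (i.e., of the form $\bar u\in\bar v$ or $\bar u\notin\bar v$), one argues directly from the axioms (X1), (X2): one of $\Lambda$ and $\Gamma$ already follows from an axiom, and weakening (Lemma \ref{xinversion}(i)) finishes the case.

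Finally, to complete the proof of the theorem, one iterates: from $\provx{\mathcal H}{\alpha}{\Omega+n+1}{\Gamma}$ the one-step lemma gives $\provx{\mathcal H}{\omega^\alpha}{\Omega+n}{\Gamma}$, and after $n$ applications we obtain $\provx{\mathcal H}{\omega_n(\alpha)}{\Omega+1}{\Gamma}$, noting throughout that $\omega_k(\alpha)\in \mathcal H(\emptyset)$ by closure of $\mathcal H$ under $\omega$-exponentiation and that the cut-rank bound remains $\geq \Omega+1$ so the one-step lemma stays applicable. Since the details at each stage duplicate the corresponding arguments in \cite[\S6]{powerKP} with only cosmetic changes to accommodate the constants $\bar u$ for $u\in TC(\{X\})$ and the base level $\Gamma_{\theta+1}$, I would mostly refer to that treatment for the routine verifications.
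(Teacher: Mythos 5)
Your proposal is correct and follows essentially the same route as the paper: the paper obtains the result as the iterated special case $\alpha=0$ of the predicative cut elimination Theorem \ref{xpredce} (equivalently, $n$ applications of your one-step lemma, since $\varphi 0\beta=\omega^{\beta}$), deferring the $\RSOPX$-specific verifications to \cite[Theorem 7.1]{powerKP}. Your explicit Reduction Lemma and your attention to the power-bounded quantifier cases are exactly the routine adaptations the paper leaves to that reference.
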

\prf The proof is the special case of Theorem \ref{xpredce} when $\rho=\Omega+n$ and $\alpha=0$. See also
\cite[Theorem 7.1]{powerKP}. \qed
For a formula $C$ of $\RSOPX$, $C^{\mathbb{V}_{\delta}(X)}$ is obtained from $C$ by replacing all unbounded quantifiers $Qz$ in $C$ by $(Qz\in \mathbb{V}_{\delta}(X))$.
\begin{lemma}[Boundedness for $\RSOPX$]\label{xboundednessp}{\em
If $C$ is a $\Sigma^{\mathcal P} $ formula, $\alpha\leq\beta<\Omega$, $\beta\in\mathcal{H}$ and $\provx{\mathcal{H}}{\alpha}{\rho}{\Gamma,C}$ then $\provx{\mathcal{H}}{\alpha}{\rho}{\Gamma,C^{\mathbb{V}_\beta (X)}}$.
}\end{lemma}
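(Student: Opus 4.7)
The plan is to mimic the proof of Lemma \ref{xboundedness} for $\RSX$, proceeding by induction on $\alpha$. If $C$ is $\Delta_0^{\mathcal P}$ then $C\equiv C^{\mathbb{V}_\beta(X)}$ and there is nothing to show; if $C$ is basic or if $C$ was not the principal formula of the last inference, I would simply apply the induction hypothesis to the premise(s) and reapply the same inference, since the superscript operation leaves $\Gamma$ untouched, affects neither the logical structure of formulas other than $C$ nor the ordinal side conditions, and commutes with bounded and subset-bounded quantifiers.

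When $C$ is principal, several rules are immediately ruled out: $\SRP$ cannot occur since it requires $\Omega<\alpha$, while $(\forall)_\infty$ cannot produce a $\Sigma^{\mathcal P}$ formula as principal formula since unbounded universal quantifiers are not permitted in $\Sigma^{\mathcal P}$. For the remaining principal cases $(\wedge)$, $(\vee)$, $(b\forall)_\infty$, $(b\exists)$, $(pb\forall)_\infty$, $(pb\exists)$, $(\in)$, $(\not\in)_\infty$, $(\subseteq)$ and $(\not\subseteq)_\infty$ the superscript commutes with the outermost connective (only the unbounded quantifiers strictly inside $C$ are restricted), so in each case the IH applied to the premises together with the same inference rule suffices.

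The crucial case is when the last inference is $(\exists)$ with $C\equiv\exists x\,F(x)$, so we have $\provx{\mathcal H}{\alpha_0}{\rho}{\Gamma,C,F(s)}$ with $\alpha_0+1<\alpha$ and $\lev s<\alpha$. Here $C^{\mathbb{V}_\beta(X)}\equiv(\exists x\in\mathbb{V}_\beta(X))\,F(x)^{\mathbb{V}_\beta(X)}$. The IH yields
\[
\provx{\mathcal H}{\alpha_0}{\rho}{\Gamma,C^{\mathbb{V}_\beta(X)},F(s)^{\mathbb{V}_\beta(X)}}.
\]
Since $\lev s<\alpha\leq\beta<\Gamma_{\theta+1}+\beta=\lev{\mathbb{V}_\beta(X)}$, axiom (A4) gives $\provx{\mathcal H}{\alpha_0}{\rho}{\Gamma,C^{\mathbb{V}_\beta(X)},s\in\mathbb{V}_\beta(X)}$, using that $\beta\in\mathcal H$ by hypothesis and $\Gamma_{\theta+1}\in\mathcal H$ by (H1), so $\lev{\mathbb V_\beta(X)}\in\mathcal H$. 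An application of $(\wedge)$ then produces a derivation of $\Gamma,C^{\mathbb{V}_\beta(X)}, s\in\mathbb{V}_\beta(X)\wedge F(s)^{\mathbb{V}_\beta(X)}$ at ordinal $\alpha_0+1<\alpha$, and a final application of $(b\exists)$ — whose side conditions $\lev s<\lev{\mathbb V_\beta(X)}$ and $\lev s<\alpha$ we have just verified — yields $\provx{\mathcal H}{\alpha}{\rho}{\Gamma,C^{\mathbb{V}_\beta(X)}}$ as desired.

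I anticipate the main obstacle to be purely bookkeeping rather than conceptual: one must consistently verify throughout the case analysis that $\mathcal H$ still controls every ordinal introduced by the superscript operation (in particular $\beta$ and $\Gamma_{\theta+1}+\beta$) and that the strict inequalities on ordinals and term levels survive the insertion of the intermediate $(\wedge)$ step in the $(\exists)$ case. Once these control conditions are verified, the argument runs parallel to the $\RSX$ version in Lemma \ref{xboundedness}.
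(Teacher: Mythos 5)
Your proposal is correct and follows exactly the route the paper intends: its own proof of this lemma is just the remark that it is ``similar to'' the boundedness lemma for $\RSX$, and you carry out that adaptation by induction on $\alpha$ in the expected way. Your extra $(\wedge)$/(A4) step in the $(\exists)$ case is the right fix for the fact that $\RSOPX$ lacks the $\dotin$ abbreviation of $\RSX$, and the level and operator-control checks you record ($\lev s<\alpha\leq\beta<\Gamma_{\theta+1}+\beta$ and $\Gamma_{\theta+1}+\beta\in\mathcal H$) are precisely the ones needed.
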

\begin{proof} Similar to Lemma \ref{xboundednessp}.\end{proof}

\begin{theorem}[Collapsing for $\RSOPX$]\label{ximpredcep}{\em Suppose $\Gamma$ is a set of $\Sigma^{\mathcal P}$ formulae such that $\lev{\Gamma}\subseteq B(\eta)$ and $\eta\in B(\eta)$.
\begin{equation*}
\text{If}\quad\provx{\mathcal{H}_\eta}{\alpha}{\Omega+1}{\Gamma}\quad\text{then}\quad\provx{\mathcal{H}_{\hat\alpha}}{\psi\hat\alpha}{\psi\hat\alpha}{\Gamma}
\end{equation*} where $\hat{\alpha}=\eta+\omega^{\Omega+\alpha}$.
}\end{theorem}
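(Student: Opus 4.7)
The proof will proceed by transfinite induction on $\alpha$, in direct parallel with the proof of Theorem \ref{ximpredce} for $\RSX$. The standing observation is that, because $\alpha\in\mathcal{H}_\eta=B(\eta+1)$ and $\eta\in B(\eta)$, Lemma \ref{Heta2}(i) gives $\hat\alpha,\psi\hat\alpha\in\mathcal{H}_{\hat\alpha}$, which licenses the operator in the conclusion; and Lemma \ref{Heta2}(ii) guarantees $\psi\widehat{\alpha_0}<\psi\hat\alpha$ whenever $\alpha_0<\alpha$ with $\alpha_0\in\mathcal{H}_\eta$, supplying the strict decrease needed to reapply the rules of $\RSOPX$ at the collapsed level. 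If $\Gamma$ is an axiom, the conclusion follows by weakening (the analogue of Lemma \ref{xinversion}(i) for $\RSOPX$).

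For the inductive cases I would split on the last inference. The $\bigvee$-type inferences $(\vee)$, $(b\exists)$, $(pb\exists)$, $(\in)$, $(\subb)$, and $(\exists)$ are handled uniformly: the induction hypothesis applied to the single premise yields a derivation at ordinal $\psi\widehat{\alpha_0}<\psi\hat\alpha$, and for the side conditions on the witnessing term $s$ one uses that $\lev s<\Omega$ and $\lev s\in\mathcal{H}_\eta\subseteq B(\eta+1)$, so $\lev s<\psi(\eta+1)\leq\psi\hat\alpha\leq\Gamma_{\theta+1}+\psi\hat\alpha$, permitting reapplication of the same rule. The $\bigwedge$-type inferences $(\wedge)$, $(b\forall)_\infty$, $(pb\forall)_\infty$, $(\not\in)_\infty$, and $(\not\subb)_\infty$ require the observation that, since $\Gamma\subseteq\Sigma^{\mathcal P}$, the principal formula carries no unbounded universal quantifier, so every indexing term $s$ has $\lev s<\Omega$; combined with $\lev t\in\mathcal{H}_\eta$ this forces $\lev s<\psi(\eta+1)$, hence $\lev s\in\mathcal{H}_\eta$ and $\mathcal{H}_\eta[s]=\mathcal{H}_\eta$ by Lemma \ref{RSXop}(iii). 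Collapsing each premise separately and rebroadening operators via Lemma \ref{Heta1}(ii) lets the inference be reapplied at the collapsed level. The $\SRP$ case is settled by collapsing the premise, applying Boundedness (Lemma \ref{xboundednessp}) to restrict the $\Sigma^{\mathcal P}$ formula to $\mathbb{V}_{\psi\widehat{\alpha_0}}(X)$, and then firing $(\exists)$.

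The main obstacle, as in the $\RSX$ case, will be $\Cut$. When $rk(C)<\Omega$, the formula $C$ is $\Delta_0^{\mathcal P}$ so both $C$ and $\neg C$ are $\Sigma^{\mathcal P}$; the induction hypothesis applies to both premises, and a fresh cut at rank $rk(C)<\psi(\eta+1)<\psi\hat\alpha$ finishes the case. The delicate situation is $rk(C)=\Omega$. One reduces to $C\equiv\exists z\,F(z)$ with $F$ a $\Delta_0^{\mathcal P}$-formula. Applying the induction hypothesis to the premise for $C$ and then Boundedness yields
\begin{equation*}
\provx{\mathcal{H}_{\widehat{\alpha_0}}}{\psi\widehat{\alpha_0}}{\psi\widehat{\alpha_0}}{\Gamma,C^{\mathbb{V}_{\psi\widehat{\alpha_0}}(X)}}.
\end{equation*}
For the premise for $\neg C$, one invokes the $\RSOPX$ analogue of the inversion Lemma \ref{xinversion}(v) to replace $\forall z\,\neg F(z)$ by the bounded form $(\forall z\in\mathbb{V}_{\psi\widehat{\alpha_0}}(X))\neg F(z)$, producing a $\Sigma^{\mathcal P}$ sequent to which the induction hypothesis applies at the ordinal $\alpha_1:=\widehat{\alpha_0}+\omega^{\Omega+\alpha_0}<\hat\alpha$. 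A final cut of the two collapsed derivations, at rank bounded by $\psi\hat\alpha$ (again via Lemma \ref{Heta2}(ii)), completes the proof.

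The principal technical care required in $\RSOPX$ (as opposed to $\RSX$) is that comprehension terms over $\mathbb{V}_\alpha(X)$ may contain parameter subterms of arbitrarily high level; however, because the collapsing argument only ever scrutinises the levels actually recorded in $\lev{\Gamma}$ and those of indexing terms produced by the inferences, these unconstrained subterms never enter the bookkeeping. The inversion lemma for unbounded quantifiers, and the $(pb\forall)_\infty$/$(\not\subb)_\infty$ cases, are where one must most carefully verify that the power-set primitives preserve the structure needed for the induction to go through.
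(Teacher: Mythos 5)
Your proposal is correct and follows essentially the same route as the paper, which simply ports the collapsing argument of Theorem \ref{ximpredce} (i.e.\ Theorem 7.4 of \cite{powerKP}) to $\RSOPX$: the induction on $\alpha$, the use of Lemma \ref{Heta2} for the ordinal bookkeeping, the observation that $\Sigma^{\mathcal P}$-sequents force all branching terms in $\bigwedge$-type inferences below $\psi(\eta+1)$ so the operators are unchanged, and the two-stage treatment of cuts of rank $\Omega$ via Boundedness and inversion are exactly the intended argument. Your closing remark that only the top-level term levels recorded in $\lev{\Gamma}$ enter the operator control, so that high-level parameters inside comprehension terms cause no harm, is precisely the point on which the paper relies when asserting that the proof carries over.
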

\begin{proof} The proof is essentially the same as that of \cite[Theorem 7.4]{powerKP}.\end{proof}

For the characterisation theorem for $\KPP$, we need to show that derivability in $\RSOPX$ entails
truth for $\Sigmaop$-formulae. Since $\RSOPX$-formulae contain variables we need the notion of assignment.
Let $VAR$ be the set of free variables of $\RSOPX$.
A variable assignment $\su$ is a function
$${\su}:VAR\longrightarrow V_{\psi(\varepsilon_{\Omega+1})}$$
satisfying ${\su}(a^{\alpha})\in V_{\alpha+1}(X)$.
$\su$ can be canonically lifted to all $\RSOPX$-terms as follows:
\begin{eqnarray*} \su(\bar{u}) &=& u\mbox{ for $u$ in $TC(\{X\})$} \\
\su(\Vb{\alpha}(X)) &=& V_{\alpha}(X)\\
  \su([x\in \Vb{\alpha}(X)\mid F(x,s_1,\ldots,s_n)]) &=& \{x\in V_{\alpha}(X)\,:\, F(x,\su(s_1),\ldots,\su(s_n))\}\,.
  \end{eqnarray*}
  Note that $\su(s)\in V_{\psi(\varepsilon_{\Omega+1})}(X)$ holds for all $\RSOPX$-terms $s$.
  Moreover, we have $\su(s)\in V_{\lev s+1}(X)$.

  \begin{thm}[Soundness]\label{sound2} Let $\CH$ be an operator with $\CH(\emptyset)\subseteq B(\varepsilon_{\Omega+1})$ and $\alpha,\rho<\psi(\varepsilon_{\Omega+1})$.
  Let $\Gamma(s_1,\ldots,s_n)$ be a sequent consisting only of $\Sigmaop$-formulae with constants from $TC(\{X\})$. Suppose
  $$\provx{\CH}{\alpha}{\rho}{\Gamma(s_1,\ldots,s_n)}\,.$$
  Then, for all   variable assignments $\su$,
  $$ V_{\psi(\varepsilon_{\Omega+1})}(X)\models
  \Gamma(\su(s_1),\ldots,\su(s_n))\,,$$
  where the latter, of course, means that $V_{\psi(\varepsilon_{\Omega+1})}$ is a model of the disjunction
  of the formulae in $\Gamma(\su(s_1),\ldots,\su(s_n))$.

  \end{thm}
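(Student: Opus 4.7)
The proof will proceed by transfinite induction on $\alpha$, with a case analysis on the last inference. The base case consists of verifying that every axiom of $\RSOPX$ holds in $V_{\psi(\varepsilon_{\Omega+1})}(X)$ under the given variable assignment $\su$. The axioms (X1), (X2) follow from the fact that $\su$ restricts to the identity on constants $\bar{u}\in TC(\{X\})$. The logical axiom (A1) and the reflexivity axiom (A2) are immediate, and (A3) amounts to extensionality for $\Deltaop$-formulae, proven by a straightforward side induction on the formula structure using the fact that $\su$ interprets terms faithfully. The axioms (A4)--(A7) follow directly from the definition of $V_{\alpha}(X)$ and the interpretation $\su([x\in{\mathbb V}_\alpha(X)\mid F(x,\vec s\,)]) = \{x\in V_\alpha(X) : F(x,\su(\vec s\,))\}$, together with the observation that $\su(t)\in V_{\lev{t}+1}(X)$ whenever $\lev t<\lev{{\mathbb V}_\alpha(X)}$.

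For the inductive step, the cases $(\wedge)$, $(\vee)$, $(\exists)$, $(b\exists)$, $(pb\exists)$, $(\in)$, and $(\subb)$ reduce directly to the induction hypothesis applied to the premises, since the principal formula is built from immediate witnesses or disjuncts. The rule $(\forall)_\infty$ cannot occur as the final inference because $\forall xF(x)$ is not in $\Sigmap$. The rule $\Cut$ requires more care: since $rk(B)<\rho<\psi(\varepsilon_{\Omega+1})<\Omega$, $B$ contains no unbounded quantifiers, so $B\in\Deltaop$ and hence both $B$ and $\neg B$ lie in $\Sigmap$; then the IH applies to both premises, and $V_{\psi(\varepsilon_{\Omega+1})}(X)$ being a two-valued structure for $\Deltaop$-formulae produces the conclusion. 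The rule $\SRP$ is handled via the boundedness Lemma \ref{xboundednessp}: from the premise $\provx{\CH}{\alpha_0}{\rho}{\Gamma,A}$ with $A\in\Sigmap$, Lemma \ref{xboundednessp} yields $\provx{\CH}{\alpha_0}{\rho}{\Gamma,A^{{\mathbb V}_\delta(X)}}$ for a suitable $\delta<\psi(\varepsilon_{\Omega+1})$ (chosen so that $\alpha_0\leq\delta$ and $\delta\in\CH(\emptyset)$), and the IH then delivers $V_{\psi(\varepsilon_{\Omega+1})}(X)\models\Gamma,A^{V_\delta(X)}$, whence $V_\delta(X)$ serves as the required witness for $\exists z\,A^z$.

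The main obstacle will be the universal infinitary rules $(b\forall)_\infty$, $(pb\forall)_\infty$, $(\not\in)_\infty$, and $(\not\subb)_\infty$, which demand that for every element $y$ of the relevant bounded domain in the model one produces a term $r$ of the correct level with $\su(r)=y$. For $(b\forall)_\infty$ applied to $(\forall x\in t)F(x)$, given $y\in\su(t)\subseteq V_{\lev t}(X)$, the plan is to locate the least $\beta$ with $y\in V_{\beta+1}(X)$, select a free variable $a^{\beta}_j$ of level $\Gamma_{\theta+1}+\beta<\lev t$ not appearing in $\Gamma$ or $t$, and form an assignment $\su'$ agreeing with $\su$ on every term appearing in $\Gamma$ and $t$ with $\su'(a^{\beta}_j)=y$. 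Applying the IH to the premise at $r:=a^{\beta}_j$ under $\su'$ then yields $V_{\psi(\varepsilon_{\Omega+1})}(X)\models\Gamma,\,y\in\su(t)\to F(y,\su(\vec s\,))$; combined with $y\in\su(t)$, this delivers $F(y,\su(\vec s\,))$, and ranging over all $y$ gives the conclusion. The cases $(pb\forall)_\infty$, $(\not\in)_\infty$, $(\not\subb)_\infty$ are analogous, with the representation for elements of a power-set bounded quantifier using the fact that any subset $y\subseteq\su(t)$ lies in $V_{\lev t+1}(X)$, again permitting the choice of a free variable at an appropriate level. When the bound term is of the form $\bar u$, elements of $u$ are already represented by constants $\bar v\in TC(\{X\})$ of strictly smaller level, requiring no assignment modification.

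Throughout, the essential book-keeping needed is that every term whose level enters the argument is controlled by an operator with $\CH(\emptyset)\subseteq B(\varepsilon_{\Omega+1})$, ensuring all relevant ordinals stay below $\psi(\varepsilon_{\Omega+1})$ so that the interpretation $\su$ consistently lands in $V_{\psi(\varepsilon_{\Omega+1})}(X)$. The delicate point is verifying that the modified assignments $\su'$ preserve the membership conditions for the new variables (i.e.\ $\su'(a^{\beta}_j)\in V_{\beta+1}(X)$), which is guaranteed by the choice of $\beta$ above.
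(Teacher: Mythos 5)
Your proposal is correct and follows essentially the same route as the paper: induction on $\alpha$ with truth\mbox{-}preservation of each rule, where the universal infinitary rules $(b\forall)_{\infty}$, $(pb\forall)_{\infty}$, $(\not\in)_{\infty}$, $(\not\subb)_{\infty}$ are handled exactly as in the paper by instantiating the premise at a fresh free variable of appropriate level and exploiting that the induction hypothesis holds for all assignments. The only (cosmetic) divergence is that, since $\alpha<\psi(\varepsilon_{\Omega+1})<\Omega$, the inferences $\SRP$ and $(\forall)_{\infty}$ cannot occur anywhere in the derivation for ordinal reasons, so your treatment of $\SRP$ via Boundedness, while workable, addresses a vacuous case.
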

  \prf The proof is basically the same as for \cite[Theorem 8.1]{powerKP}. It proceeds by induction on $\alpha$. Note that, owing to $\alpha,\rho<\Omega$, the proof tree pertaining to
  $ \provx{\CH}{\alpha}{\rho}{\Gamma(s_1,\ldots,s_n)}$ neither contains any instances of $\SRP$ nor of $(\forall)_{\infty}$, and that all cuts are performed with
  $\Deltaop$-formulae. The proof is straightforward as all the axioms of $\RSOP$ are true under the interpretation and all other rules are
  truth preserving with respect to this interpretation. Observe that we make essential use of the free variables when showing
  the soundness of $(b\forall)_{\infty}$, $(pb\forall)_{\infty}$, $(\not\in)_{\infty}$ and $(\not\subb)_{\infty}$.
  We treat $(pb\forall)_{\infty}$ as an example. So assume $(\forall x\subseteq s_i)F(x,\vec s\,)\in\Gamma(\vec s\,)$ and $$\provx{\CH[r]}{\alpha_r}{\rho}{\Gamma(s_1,\ldots,s_n),r\subseteq s_i\to F(r,\vec s\,)}$$
  holds for all terms $r$ with $\lev r\leq \lev {s_i}$ for some $\alpha_r<\alpha$.
   In particular we have $$\provx{\CH[a^{\beta}]}{\alpha'}{\rho}{\Gamma(s_1,\ldots,s_n),a^{\beta}\subseteq s_i\to F(a^{\beta},\vec s\,)}$$ where $\beta=\lev {s_i}$ and $a^{\beta}$ is a free variable not occurring
   in $\Gamma(s_1,\ldots,s_n)$  and $\alpha'=\alpha_{a^{\beta}}$.
    By the induction hypothesis we have
    $$ V_{\psioi(\varepsilon_{\Omega+1})}\models
  \Gamma(\su(s_1),\ldots,\su(s_n)), \su'(a^{\beta})\subseteq \su(s_i)\to F(\su'(a^{\beta}),\su(s_1),\ldots,\su(s_n)\,) $$ where $\su'$ is an arbitrary variable assignment. This entails that either
  $$ V_{\psioi(\varepsilon_{\Omega+1})}\models
  \Gamma(\su(s_1),\ldots,\su(s_n))$$ or  $$ V_{\psioi(\varepsilon_{\Omega+1})}\models\su'(a^{\beta})\subseteq \su(s_i)\to F(\su'(a^{\beta}),\su(s_1),\ldots,\su(s_n)\,)$$ for all assignments $\su'$. In the former case we have found what we want and in the latter case we arrive at $ V_{\psioi(\varepsilon_{\Omega+1})}\models (\forall x\subseteq \su(s_i)) F(x,\su(s_1),\ldots,\su(s_n)\,)$ and therefore also have $ V_{\psioi(\varepsilon_{\Omega+1})}\models
  \Gamma(\su(s_1),\ldots,\su(s_n))$.
     \qed

\subsection{The provably total set functions of $\KPP$}
For each $n<\omega$ we define the following recursive set function
\begin{equation*}
G_n^{\mathcal P}(X):=V_{\psi_{\theta}(e_n)}(X)
\end{equation*}
where $e_n$ was defined in (\ref{en}) and $\theta$ stands for the rank of the transitive closure of $X$.

\begin{theorem}\label{xconcp}{\em Suppose $f$ is a set function that is $\Sigma^{\mathcal P}$ definable in $\KPP$, then there is some $n$ (which we may compute from the finite derivation) such that
\begin{equation*}
V\models\forall x(f(x)\in G_n^{\mathcal P}(x)).
\end{equation*}
Moreover $G_m^{\mathcal P}$ is $\Sigma^{\mathcal P}$ definable in $\KPP$ for each $m<\omega$.
}\end{theorem}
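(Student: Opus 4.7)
The plan is to mirror the proof of Theorem \ref{xconc}, substituting the infinitary machinery of $\RSOPX$ for that of $\RSX$. Fix a $\Sigma^{\mathcal P}$-formula $A_f(a,b)$ with $\KPP\vdash\forall x\,\exists!y\,A_f(x,y)$, take an arbitrary set $X$, and let $\theta$ be the rank of $TC(\{X\})$. By Theorem \ref{xembedp} we can compute $k<\omega$ (independent of $X$) such that
\begin{equation*}
\provx{\mathcal H_0}{\OO\cdot\omega^k}{\OO+k}{\forall x\,\exists!y\,A_f(x,y)}.
\end{equation*}
Inverting the two outer quantifiers (using the power-set analogue of Lemma \ref{xinversion}), specializing to $\bar X$, and applying Theorem \ref{xpredcep} collapses the cut-rank to $\OO+1$, yielding $\provx{\mathcal H_0}{\omega_k(\OO\cdot\omega^k)}{\OO+1}{\exists y\,A_f(\bar X,y)}$. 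Now apply Collapsing (Theorem \ref{ximpredcep}) with $\eta=0$ to obtain $\provx{\mathcal H_{\gamma}}{\psi_\theta\hat\gamma}{\psi_\theta\hat\gamma}{\exists y\,A_f(\bar X,y)}$ for some ordinal $\gamma<e_{k+2}$, then a second round of predicative cut-elimination drives the cut-rank to $0$. Boundedness (Lemma \ref{xboundednessp}) then converts this into
\begin{equation*}
\provx{\mathcal H_{\gamma}}{\alpha}{0}{(\exists y\in\mathbb V_{\alpha}(X))\,A_f(\bar X,y)^{\mathbb V_\alpha(X)}}
\end{equation*}
for a suitable $\alpha<\psi_\theta(e_{k+3})$. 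Feeding this into Soundness (Theorem \ref{sound2}) we conclude $V_{\psi_\theta(e_{k+3})}(X)\models\exists y\,A_f(\bar X,y)$, so $f(X)\in V_{\psi_\theta(e_{k+3})}(X)=G^{\mathcal P}_{k+3}(X)$, and $n:=k+3$ works uniformly in $X$.

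For the second assertion we need to carry out inside $\KPP$ a well-ordering proof for $\psi_\theta(e_m)$ analogous to Section 5, for arbitrary $\theta$. The key observation is that the proof of Theorem \ref{WO} only uses $\Delta_0$-Separation, $\Delta_0$-Collection, Union, and the full schema of Set Induction; all of these are available in $\KPP$. Thus for each $m<\omega$,
\begin{equation*}
\KPP\vdash\forall\theta\,\bigl(\psi_\theta(e_m)\in\mathrm{Acc}_\theta\bigr),
\end{equation*}
giving, for any set $X$, a genuine ordinal $\xi_m(X)$ of order type $\psi_\theta(e_m)$ (where $\theta=\mathrm{rank}(TC(\{X\}))$). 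The rank function is $\Delta_1^{\mathcal P}$ (indeed already $\Delta_0$ when applied to transitive sets), so $\xi_m$ is $\Sigma^{\mathcal P}$-definable in $\KPP$. One then generates the relativised von Neumann hierarchy along $\xi_m(X)$ by $\Sigma^{\mathcal P}$-recursion, which in $\KPP$ is available because Powerset is an axiom and $\Delta_0^{\mathcal P}$-Collection validates the usual recursion theorem in the power-admissible setting (cf.\ the standard treatment in \cite{powerKP}). This yields a $\Sigma^{\mathcal P}$-formula defining $G_m^{\mathcal P}(x)=V_{\psi_\theta(e_m)}(x)$.

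The main obstacle will be the second half, because the $\Sigma^{\mathcal P}$-recursion theorem used to build $V_{\xi_m(X)}(X)$ in $\KPP$ is strictly stronger than the $\Sigma$-recursion used for $L_{\psi(e_m)}(X)$ in $\KP$; one has to verify carefully that the recursion step ``$V_{\alpha+1}(X)=\mathcal P(V_\alpha(X))$'' can be handled by a single application of the Powerset axiom and that the union step at limits is legitimized by $\Delta_0^{\mathcal P}$-Collection applied to the $\Sigma^{\mathcal P}$-graph of the recursion. As in Remark \ref{rem8.3}, the first half can be internalized in $\KPP$ because $m$ can be computed from the $\KPP$-derivation of $\forall x\,\exists!y\,A_f(x,y)$, so $\KPP$ itself proves transfinite induction up to $\psi(e_m)$ by Theorem \ref{WO} applied inside $\KPP$, and this suffices to justify the infinitary cut-elimination steps for the fixed finite derivation.
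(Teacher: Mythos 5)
Your overall route is the paper's route: embed via Theorem \ref{xembedp}, invert, cut-eliminate down to $\Omega+1$ via Theorem \ref{xpredcep}, collapse via Theorem \ref{ximpredcep}, apply Boundedness, then Soundness, and take $n=k+3$; and for the converse, run the well-ordering proof of Theorem \ref{WO} inside $\KPP$ and build $V_{\psi_\theta(e_m)}(X)$ by $\Sigma^{\mathcal P}$-recursion. That all matches.

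There is, however, one step you assert that is neither available nor (it is generally believed) possible: ``a second round of predicative cut-elimination drives the cut-rank to $0$.'' In the $\KP$ analysis (Theorem \ref{xconc}) this second round is legitimate and is what lets one read the witness off a genuinely cut-free derivation by induction on its length. For $\RSOPX$ the situation is different: the only cut-elimination result stated is Theorem \ref{xpredcep}, which reduces ranks \emph{above} $\Omega$ down to $\Omega+1$; there is no predicative cut-elimination below $\Omega$ for this system. The obstruction is the power-bounded quantifiers: in the reduction step for a cut on $(\exists x\subseteq t)F(x)$ against $(\forall x\subseteq t)\neg F(x)$, the witnessing term $s$ is only constrained by $\lev s\,\dot{\leq}\,\lev t$, so the premise $s\subseteq t\wedge F(s)$ need not have smaller rank than the cut formula, and the analogue of Lemma \ref{ranklemma} fails for these connectives. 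This is exactly why the $\KPP$ analysis replaces the ``cut-free derivation plus induction on derivation length'' argument by the Soundness Theorem \ref{sound2}, which is designed to tolerate cuts on $\Delta_0^{\mathcal P}$-formulae of any rank $<\psi(\varepsilon_{\Omega+1})$. Fortunately your error is inessential: simply delete the second cut-elimination round, apply Boundedness to the derivation of cut rank $\psi_\theta(e_{k+2})$ produced by Collapsing, and feed that directly into Theorem \ref{sound2}; the rest of your argument, including the internalization in $\KPP$ and the $\Sigma^{\mathcal P}$-recursion for $G_m^{\mathcal P}$, then goes through as in the paper.
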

\begin{proof}
Let $A_f(a,b)$ be the $\Sigma^{\mathcal P}$ formula expressing $f$ such that $\KPP\vdash\forall x\exists!yA_f(x,y)$ and fix an arbitrary set $X$. Let $\theta$ be the rank of $X$. Applying Theorem \ref{xembedp} we can compute some $k<\omega$ such that
\begin{equation*}
\CH_0\;\prov{\OO\cdot\omega^k}{\OO+k}{\forall x\exists!yA_f(x,y)}.
\end{equation*}
Applying inversion as in Lemma \ref{xinversion} iv) twice we get
\begin{equation*}
\CH_0\;\prov{\OO\cdot\omega^k}{\OO+k}{\exists yA_f(X,y)}.
\end{equation*}
Applying Theorem \ref{xpredcep}  we get
\begin{equation*}
\CH_0\;\prov{e_{k+1}}{\OO+1}{\exists yA_f(X,y)}.
\end{equation*}
Now by Theorem \ref{ximpredcep} (collapsing) we have
\begin{equation*}
\CH_{e_{k+2}}\;\prov{\psi_{\theta}(e_{k+2})}{\psi_{\theta}(e_{k+2})}{\exists yA_f(X,y)}.
\end{equation*}
Now by Lemma \ref{xboundednessp} (boundedness) we obtain
\begin{eqnarray}\label{M1}
&&\CH_\gamma\;\prov{\psi_{\theta} (\gamma)}{\psi_{\theta} (\gamma)}{(\exists y\in\mathbb{V}_{\psi_{\theta}(\gamma)}(X))A_f(X,y)^{\mathbb{V}_{\psi_{\theta}(\gamma)}}}\quad\text{where $\gamma:=e_{k+2}$.}
\end{eqnarray}
The Soundness Theorem \ref{sound2} applied to (\ref{M1}) now yields that
$$\mathbb{V}_{\psi_{\theta}(\gamma)}\models \exists y\,A_f(X,y).$$
It remains to note that $V_\al(X)\subseteq G_{k+3}^{\mathcal P}(X)$ to complete this direction of the proof.\\

\noindent For the other direction we argue informally in $\KPP$. Let $X$ be an arbitrary set.
 By Theorem \ref{WO} we can find an ordinal of the same order type as $\psi_{\theta}(e_n)$. We can now generate $V_{\psi_{\theta}(e_n)}(X)$ by $\Sigma^{\mathcal P}$-recursion (similar to \cite{ba} p. 26 Theorem 6.4).
\end{proof}

\begin{remark}{\em As was the case for $\KP$, the first part of \ref{xconc} can be carried out {\em inside $\KPP$}, i.e. If $f$ is $\Sigma^{\mathcal P}$ definable in $\KPP$ then we can compute some $n$ such that $$\KPP\vdash\forall x(\exists!y\in G_n^{\mathcal P}(x))A_f(x,y)\,.$$ This is not immediately obvious since it appears we need induction up to $\psi_{\theta}(\varepsilon_{\OO +1})$, which we do not have access to in $\KPP$. The way to get around this is to note that we could, in fact, have managed with an infinitary system based on an ordinal representation built out of $B_\theta(e_m)$, provided $m$ is high enough, and we may compute how high $m$ needs to be from the finite derivation. We do have access to induction up to $\psi_{\theta}(e_m)$ in $\KPP$ by Theorem \ref{WO}.
}\end{remark}

\section{Adding global choice: $\KPP+\GAC$}
Here we extend the relativised ordinal analysis to $\KPP$ with global choice.
Since the global axiom of choice, $\GAC$, is less familiar, let us spell out the details. By $\KPP+\GAC$ we mean an extension of $\KPP$ where the language contains a new binary relation symbol $\RR$ and  the axiom schemes of
$\KPP$ are extended to this richer language and
the following axioms pertaining to $\RR$ are added:
\begin{eqnarray}\label{RR} (i) && \forall x\forall y\forall z[\RR(x,y)\wedge\RR(x,z)\to y=z]\\
    (ii) && \forall x[ x\ne \emptyset \to \exists y\in x\,\RR(x,y)].
    \end{eqnarray}
 Section 3 of \cite{powerKPAC} describes an extension of $\RSOP$ that incorporates the new symbol $\RR$. We can now relativise this system to a given set $X$ as we did with $\RSOP$ in the previous section. Let us call the relativized version
 $\RSOPXR$.    The ordinal analysis of  $\RSOPXR$ can be performed with almost no changes as for $\RSOPX$ in the foregoing section.
 On account of the relativization we arrive at stronger versions of \cite[Corollary 3.1]{powerKPAC} and \cite[Theorem]{powerKPAC}
 which incorporate the parameter $X$. A $\Pi_2^{\mathcal P}$-formula is a formula of the form $\forall y\,A(y)$ with $A(y)$  in $\Sigma^{\mathcal P}$.
 \begin{thm} Let $B$ be $\Pi_2^{\mathcal P}$-sentence of the language without the predicate $\RR$. If $\KPP+\GAC\vdash B$, then $\KPP+\AC\vdash B$.
 \end{thm}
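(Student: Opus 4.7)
The plan is to transfer, in a relativised form, the conservativity argument for $\KPP+\GAC$ over $\KPP+\AC$ from \cite{powerKPAC}. Concretely, one embeds the derivation of $B$ into $\RSOPXR$ exactly as one embeds $\KPP$ into $\RSOPX$, performs predicative cut-elimination, collapsing and boundedness to obtain a tame derivation inside $V_{\psi_\theta(\gamma)}(X)$ for a suitable $\gamma<\varepsilon_{\OO+1}$, and then uses ordinary $\AC$ in $\KPP+\AC$ to interpret the global choice predicate $\RR$ on this set-sized universe so that the Soundness Theorem applies.

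In detail, suppose $\KPP+\GAC\vdash B$ with $B\equiv\forall y\,A(y)$, $A(a)\in\Sigma^{\mathcal P}$ and $\RR$ not occurring in $B$. We argue inside $\KPP+\AC$. Fix an arbitrary set $X$ of rank $\theta$; it suffices to prove $A(X)$. The embedding theorem for $\RSOPXR$ (the $\RR$-augmented analogue of Theorem \ref{xembedp}) furnishes some $k<\omega$ with
$$\CH_0\;\prov{\OO\cdot\omega^k}{\OO+k}{\forall y\,A(y)}.$$
Inversion for $(\forall)_{\infty}$ gives $\CH_0\;\prov{\OO\cdot\omega^k+1}{\OO+k}{A(\bar X)}$; predicative cut-elimination (Theorem \ref{xpredcep}) reduces the cut rank to $\OO+1$; collapsing (Theorem \ref{ximpredcep}) yields
$$\CH_{e_{k+2}}\;\prov{\psi_\theta(e_{k+2})}{\psi_\theta(e_{k+2})}{A(\bar X)};$$
and boundedness (Lemma \ref{xboundednessp}) finally produces
$$\CH_\gamma\;\prov{\psi_\theta(\gamma)}{\psi_\theta(\gamma)}{A(\bar X)^{\Vb{\psi_\theta(\gamma)}(X)}},\qquad\gamma:=e_{k+2}.$$
This derivation involves no further $\SRP$ and no $(\forall)_{\infty}$, and every cut formula is bounded by $V_{\psi_\theta(\gamma)}(X)$.

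To extract truth we invoke the Soundness Theorem for $\RSOPXR$, which is the sole place where $\AC$ is used. By Theorem \ref{WO} and the $\Sigma^{\mathcal P}$-recursion available in $\KPP$ we can form the set $V_{\psi_\theta(\gamma)}(X)$ inside $\KPP+\AC$. An application of $\AC$ to this set produces a binary relation $\RR^\ast$ on $V_{\psi_\theta(\gamma)}(X)$ satisfying the axioms (\ref{RR}) restricted to members of $V_{\psi_\theta(\gamma)}(X)$. Interpreting the symbol $\RR$ by $\RR^\ast$ makes every $\RR$-axiom of $\RSOPXR$ true in $V_{\psi_\theta(\gamma)}(X)$, and the proof of the Soundness Theorem (Theorem \ref{sound2}, augmented as in \cite{powerKPAC} by the clauses handling $\RR$) goes through unchanged, yielding $V_{\psi_\theta(\gamma)}(X)\models A(X)$. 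Since $\Sigma^{\mathcal P}$-formulae persist upwards from transitive structures closed under subsets, $A(X)$ holds in $V$. As $X$ was arbitrary, $\KPP+\AC\vdash B$.

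The main obstacle is verifying that the Soundness Theorem carries over to $\RSOPXR$: one has to check that under the interpretation $\RR\mapsto\RR^\ast$ all $\RR$-axioms of $\RSOPXR$ become true in $V_{\psi_\theta(\gamma)}(X)$, and that the choice of $\RR^\ast$ can be effected inside $\KPP+\AC$ (which is legitimate because $V_{\psi_\theta(\gamma)}(X)$ is, as noted above, $\Sigma^{\mathcal P}$-definable from $X$ in $\KPP$). The remaining ingredients---embedding, predicative cut-elimination, collapsing and boundedness---are routine adaptations of the theorems proved for $\RSOPX$ in the previous section, supplemented by the trivial inference-schematic clauses for $\RR$ from \cite{powerKPAC}.
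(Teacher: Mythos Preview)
Your proposal is correct and follows essentially the same route the paper intends: the paper's proof simply says ``Basically as in \cite[Theorem 3.2]{powerKPAC}'', and what you have written is precisely the relativised version of that argument---embed into $\RSOPXR$, perform predicative cut-elimination, collapsing and boundedness, then inside $\KPP+\AC$ use $\AC$ on the set $V_{\psi_\theta(\gamma)}(X)$ to interpret $\RR$ and invoke the augmented Soundness Theorem. The only cosmetic point is that inversion (Lemma \ref{xinversion}(iv)) does not increase the ordinal, so the ``$+1$'' after applying $(\forall)_\infty$-inversion is unnecessary.
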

 \begin{proof} Basically as in \cite[Theorem 3.2]{powerKPAC}. \end{proof}
 The acronym $\CZF$ stands for Constructive Zermelo-Fraenkel set theory. For details see \cite{mar,book}.

 \begin{cor}\label{endbericht}\begin{itemize}
 \item[(i)] $\KPP+\GAC$, $\KPP+\AC$, and $\CZF+\AC$ prove the same $\Pi_2^{\mathcal P}$-sentences.

\item[(ii)] The three theories are of the same proof-theoretic strength as
 $\KPP$. More precisely, they prove the same $\Pi^1_4$-sentences of the language of second order arithmetic when
 identified with their canonical translation into the language of set theory.
 \end{itemize}
 \end{cor}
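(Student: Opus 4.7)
The plan for (i) is to chain three $\Pi_2^{\mathcal P}$-conservativities. First, $\KPP+\AC$ is a subtheory of $\KPP+\GAC$: given any set-indexed family, $\Delta_0^{\mathcal P}$-Separation restricts the global relation $\RR$ to the family and produces the required choice function. Second, $\KPP+\GAC$ is $\Pi_2^{\mathcal P}$-conservative over $\KPP+\AC$ by the preceding theorem. Third, the equivalence with $\CZF+\AC$ is obtained by combining (a) every axiom of $\CZF$ being derivable in classical $\KPP$, so $\CZF+\AC\subseteq\KPP+\AC$, with (b) the realisability/translation argument of \cite[Corollary 5.2]{powerKPAC}, which adapts smoothly to the relativised setting of this paper and yields the converse $\Pi_2^{\mathcal P}$-conservativity of $\KPP+\AC$ over $\CZF+\AC$.

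For (ii), the plan rests on two observations. First, any $\Pi^1_4$-sentence $\forall X\,\exists Y\,\forall Z\,\exists W\,\varphi(X,Y,Z,W)$ of second-order arithmetic, under the canonical translation into set theory, becomes
$\forall X{\subseteq}\omega\,\exists Y{\subseteq}\omega\,\forall Z{\subseteq}\omega\,\exists W{\subseteq}\omega\,\varphi^{\ast}(X,Y,Z,W)$,
in which all four second-order quantifiers are subset-bounded by $\omega$ and $\varphi^{\ast}$ is $\Delta_0$; hence the whole sentence lies in $\Delta_0^{\mathcal P}$ and \emph{a fortiori} in $\Pi_2^{\mathcal P}$. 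So it suffices to obtain $\Pi_2^{\mathcal P}$-conservativity of each of the three theories directly over $\KPP$. Second, this conservativity is achieved by rerunning the ordinal-analytic chain of Theorem \ref{xconcp} with the $\RSOPXR$-analogues of Theorems \ref{xembedp}, \ref{xpredcep}, \ref{ximpredcep}, Lemma \ref{xboundednessp} and the Soundness Theorem \ref{sound2}, whose existence is noted in Section 9: any $\KPP+\GAC$-proof of an $\RR$-free $\Sigma^{\mathcal P}$-consequence $\exists y\,A(y)$ relative to a parameter set $X$ yields a $\theta$-primitive-recursive ordinal $\gamma$ with $V_{\psi_\theta(\gamma)}(X)\models\exists y\,A(y)$, and since $V_{\psi_\theta(\gamma)}(X)$ is closed under powerset for sets of the relevant rank, this $\Sigma^{\mathcal P}$-witness is upward absolute to $V$. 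As in Remark \ref{rem8.3}, the whole extraction formalises inside $\KPP$, giving the required conservativity. Combined with (i), this shows that $\KPP$, $\KPP+\AC$, $\CZF+\AC$, and $\KPP+\GAC$ all prove the same $\Pi^1_4$-sentences.

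The principal obstacle is the conservativity in (ii) over $\KPP$ itself rather than over $\KPP+\AC$: an $\RSOPXR$-derivation uses $\RR$-axioms as hypotheses, yet the extracted witness must be provable in $\KPP$ with no choice available in the metatheory. The decisive point is that the Soundness Theorem for $\RSOPXR$ treats the symbol $\RR$ as a further parameter, interpretable by any relation on the carrier set that satisfies the $\RR$-axioms---for instance the canonical well-ordering of the $L$-hierarchy relativised to $X$, which is available inside $\KPP$. Since the $\Sigma^{\mathcal P}$-conclusion is $\RR$-free, its truth in $V_{\psi_\theta(\gamma)}(X)$ is independent of this choice, and so the witness extraction goes through in $\KPP$ without any appeal to choice in the metatheory.
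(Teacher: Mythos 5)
Your handling of the $\KPP+\GAC$ versus $\KPP+\AC$ comparison is fine and matches the paper: $\AC$ follows from $\GAC$ via $\Delta_0^{\mathcal P}$-Separation in the extended language, and the preceding theorem gives the converse conservativity. The other two legs both have genuine gaps. For (i), the inclusion $\CZF+\AC\subseteq\KPP+\AC$ is false: $\CZF$ contains Strong Collection for \emph{arbitrary} formulae, which is not derivable from $\Delta_0^{\mathcal P}$-Collection even over classical logic, so no direct subtheory argument is available. The paper instead obtains the passage from $\CZF+\AC$ to the classical theories by a realizability interpretation of $\CZF+\AC$ in $\KPP+\GAC$ (adapting \cite{acend}), using that realizability of a $\Pi_2^{\mathcal P}$-sentence entails its truth; note that this is the \emph{opposite} direction to the one you assign to the realizability argument. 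The converse passage, from $\KPP+\GAC$ to $\CZF+\AC$, is obtained by formalising the entire relativised ordinal analysis (well-ordering proof, embedding, cut elimination, soundness) inside $\CZF+\AC$ itself, which is possible because $\CZF+\AC$ proves restricted excluded middle and hence Powerset, and because Theorem \ref{WO} goes through with $\CZF+\AC$ in place of $\KP$.

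For (ii), your observation that the set-theoretic translation of a $\Pi^1_4$-sentence is $\Delta_0^{\mathcal P}$ is correct, but the conservativity you then require---over plain $\KPP$, with no choice in the metatheory---is exactly what the machinery of Section 9 does not deliver, and your proposed fix fails. The Soundness Theorem interprets the infinitary derivation in a von Neumann level $V_{\psi_\theta(\gamma)}(X)$; to verify the $\RR$-axioms there one needs a choice function on that whole set, and the canonical well-ordering of $L(X)$ is of no help because the elements of $V_{\psi_\theta(\gamma)}(X)$ (for instance arbitrary subsets of $\omega$) need not belong to $L(X)$. In $\KPP$ alone no such choice function on $V_{\omega+1}(X)$ is definable or provably existent, which is precisely why the theorem preceding the corollary is stated as conservativity over $\KPP+\AC$ rather than over $\KPP$. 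Had your argument worked, it would show that the four theories agree on \emph{all} sentences of second-order arithmetic, not merely $\Pi^1_4$ ones; the restriction to $\Pi^1_4$ in the statement reflects the fact that the comparison with choice-free $\KPP$ needs the separate result \cite[Corollary 3.5]{repl}, which is how the paper derives (ii) from (i).
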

 \begin{proof} (i) For $\KPP+\GAC$ and $\KPP+\AC$ this follows from the foregoing Theorem.
 A question left open in \cite{acend} was that of the strength of constructive Zermelo-Fraenkel set theory with the axiom of choice. There $\CZF+\AC$ was interpreted in $\KPP+V=L$ (\cite[Theorem 3.5]{acend}). However, the realizability interpretation
works with $\GAC$ as well. Moreover, for this notion of realizability, realizability of a $\Pi_2^{\mathcal P}$-sentence $B$ entails its truth. Therefore if $\CZF+\AC\vdash B$, then $\KPP+\GAC\vdash B$.

 Conversely note that $\CZF+\AC$ proves  the law of excluded middle for $\Delta_0^{\mathcal P}$-formulae. This amount of classical
  logic suffices to prove the power set axiom from the subset collection axiom.
The proof-theoretic ordinal of $\CZF$ is also the Bachmann-Howard ordinal.  Moreover, in Theorem \ref{WO}, $\KP$ can be replaced by
$\CZF+\AC$.   As a result, the ordinal analysis for $B$ utilizing $\RSOPXR$, can be carried out in $\CZF+\AC$ itself and the proof of the pertaining soundness is also formalisable in $\CZF+\AC$,
 whence the latter theory proves $B$.

(ii) follows from (i) viewed in conjunction with \cite[Corollary 3.5]{repl}.
  \end{proof}

Finally, we remark that  the three theories of Corollary \ref{endbericht}  can be added to the list of proof-theoretically equivalent theories  presented in \cite[Theorem 15.1]{ml}.

\section{The provably total set functions of other theories}
Part of the machinery developed here could also be used to give a characterization of the total set functions of extensions of $\KP$ such as the theories  $\mathbf{KPi}$ and $\mathbf{KPM}$ that are describing a recursively inaccessible and a recursively Mahlo universe of sets, respectively (see \cite{jp,r91,mi99realm}). This however would also require an interpretation of collapsing functions as acting on set-theoretic ordinals along the lines of \cite{mi93,r94a}.

\paragraph{Acknowledgement}
Part of the material is based upon research supported by the EPSRC of the UK through grant No. EP/K023128/1. This research was also supported by a Leverhulme Research Fellowship and a Marie Curie International Research Staff Exchange
 Scheme Fellowship within the 7th European Community Framework Programme.
 This publication was also made possible through the support of a
grant from the John Templeton Foundation.

\end{document}